\newcommand{\Lnorm}[2]{\left\| #2 \right\|_{{#1}}}
\def\cA{{\mathcal A}}
\def\cB{{\mathcal B}}
\def\cW{{\mathcal W}}
\DeclareMathOperator{\Prob}{\mathbb{P}}   
\newcommand{\1}{\mathds{1}}
\numberwithin{equation}{section}
\newcommand{\eps}{\varepsilon}
\newcommand{\pt}{\partial}
\newcommand{\rd}{{\rm d}}
\newcommand{\bR}{{\mathbb R}}
\newcommand{\bZ}{{\mathbb Z}}
\newcommand{\bx}{{\bf{x}}}
\newcommand{\by}{{\bf{y}}}
\newcommand{\bu}{{\bf{u}}}
\newcommand{\bv}{{\bf{v}}}
\newcommand{\bw}{{\bf{w}}}
\newcommand{\bz} {{\bf {z}}}
\newcommand{\al}{\alpha}
\newcommand{\de}{\delta}
\newcommand{\be}{\begin{equation}}
\newcommand{\ee}{\end{equation}}
\newcommand{\bbb}{{\bf{b}}}
\newcommand{\e}{{\varepsilon}}
\newcommand{\la}{\lambda}
\newcommand{\om}{{\omega}}
\newcommand{\si}{\sigma}
\newcommand{\cG}{{\cal G}}
\newcommand{\cF}{{\cal F}}
\def\cA{{\mathcal A}}
\def\cW{{\mathcal W}}
\newcommand{\fa}{{\frak a}}
\newcommand{\cS}{{\mathcal S}}
\newcommand{\cR}{{\mathcal R}}
\newcommand{\cU}{{\mathcal U}}
\def\RR{{\mathbb R}}
\renewcommand{\b}[1]{\bm{\mathrm{#1}}} 
\renewcommand{\cal}{\mathcal}
\newcommand{\wh}{\widehat}
\newcommand{\wt}{\widetilde}
\newcommand{\ii}{\mathrm{i}} 
\newcommand{\dd}{\mathrm{d}}
\renewcommand{\epsilon}{\varepsilon}
\renewcommand{\leq}{\leqslant}
\renewcommand{\geq}{\geqslant}
\renewcommand{\le}{\leq}
\renewcommand{\ge}{\geq}
\renewcommand{\P}{\mathbb{P}}
\newcommand{\E}{\mathbb{E}}
\newcommand{\R}{\mathbb{R}}
\newcommand{\N}{\mathbb{N}}
\newcommand{\Z}{\mathbb{Z}}
\newcommand{\NN}{\mathbb{N}}
\DeclareMathOperator{\var}{Var}
\DeclareMathOperator{\supp}{supp}
\DeclareMathOperator{\OO}{O}
\DeclareMathOperator{\oo}{o}
\theoremstyle{plain} 
\newtheorem{theorem}{Theorem}[section]
\newtheorem*{theorem*}{Theorem}
\newtheorem{lemma}[theorem]{Lemma}
\newtheorem*{lemma*}{Lemma}
\newtheorem{corollary}[theorem]{Corollary}
\newtheorem*{corollary*}{Corollary}
\newtheorem{proposition}[theorem]{Proposition}
\newtheorem*{proposition*}{Proposition}
\newtheorem{definition}[theorem]{Definition}
\newtheorem*{definition*}{Definition}
\newtheorem*{example*}{Example}
\newtheorem{remark}[theorem]{Remark}
\newtheorem*{remark*}{Remark}
\newtheorem*{remarks*}{Remarks}
\renewcommand{\subsection}{\@startsection
{subsection}
{2}
{0mm}
{-\baselineskip}
{0 \baselineskip}
{\normalfont\itshape}} 
\newcommand{\nc}{\normalcolor}
\def\bR{{\mathbb R}}
\def\bZ{{\mathbb Z}}
\newcommand{\Tr}{\mbox{Tr\,}}
\renewcommand{\b}[1]{\boldsymbol{\mathrm{#1}}} 
\def\@empty{}
\def\author#1{\par
    {\centering{\authorfont#1}\par\vspace*{0.05in}}
}
\def\titlefont{\fontsize{13}{15}\bfseries\boldmath\selectfont\centering{}}
\def\authorfont{\fontsize{13}{15}}
\def\abstractfont{\fontsize{8}{10}}
\let\affiliationfont\rhfont
\def\address#1{\par
    {\centering{\affiliationfont#1\par}}\par\vspace*{11pt}
}
\def\keywords#1{\par
    \vspace*{8pt}
    {\authorfont{\leftskip18pt\rightskip\leftskip
    \noindent{\it\small{Keywords}}\/:\ #1\par}}\vskip-12pt}
\def\body{
\setcounter{footnote}{0}
\def\thefootnote{\alph{footnote}}
\def\@makefnmark{{$^{\rm \@thefnmark}$}}
}
\def\title#1{
    \thispagestyle{plain}
    \vspace*{-14pt}
    \vskip 79pt
    {\centering{\titlefont #1\par}}%
    \vskip 1em
}
\renewenvironment{abstract}{\par%
    \vspace*{6pt}\noindent 
    \abstractfont
    \noindent\leftskip18pt\rightskip18pt
}{%
  \par}
\renewcommand{\b}[1]{\boldsymbol{\mathrm{#1}}} 
\newcommand{\for}{\qquad \text{for} \quad}
\renewcommand{\section}{\@startsection
{section}
{1}
{0mm}
{-2\baselineskip}
{1\baselineskip}
{\normalfont\large\scshape\centering}} 
\newcommand{\fq}{{\frak q}}
\begin{document}


\title{Fixed energy universality for generalized Wigner matrices}

\vspace{1.2cm}
\noindent\begin{minipage}[b]{0.5\textwidth}

 \author{P. Bourgade}

\address{Cambridge University\\
bourgade@math.ias.edu}
 \end{minipage}
\begin{minipage}[b]{0.5\textwidth}

 \author{L. Erd{\H o}s}

\address{Institute of Science and Technology Austria\\
lerdos@ist.ac.at}

 \end{minipage}

\noindent\begin{minipage}[b]{0.5\textwidth}

 \author{H.-T. Yau}

\address{Harvard University\\
htyau@math.harvard.edu}
 \end{minipage}
\begin{minipage}[b]{0.5\textwidth}

 \author{J. Yin}

\address{University of Wisconsin, Madison\\
jyin@math.wisc.edu}

 \end{minipage}

~\vspace{0.3cm}

\begin{abstract}
We prove the Wigner-Dyson-Mehta conjecture at fixed energy  in the bulk of the spectrum
 for generalized symmetric and Hermitian Wigner matrices.
Previous results concerning  the universality of random matrices  either require an
averaging in the energy parameter
or they hold only  for Hermitian matrices if the energy parameter is fixed.
We develop a homogenization theory of the Dyson Brownian motion and show that microscopic universality
follows from mesoscopic statistics. 
\end{abstract}

\keywords{Universality, Homogenization, Dyson Brownian motion.} 
\vspace{1cm}

\tableofcontents

\let\thefootnote\relax\footnote{\noindent 
The work of P. B. is partially supported by the NSF grant DMS-1208859. 
The work of L. E. is partially supported  by ERC Advanced Grant, RANMAT 338804.
The work of H.-T. Y. is partially supported by the NSF grant DMS-1307444 and the Simons investigator fellowship.
The work of J. Y. is partially supported by NSF Grant DMS-1207961. The major part of
this research was conducted when all authors were visiting IAS and were also supported by
the NSF Grant DMS-1128255.}
\newpage

\section{Introduction}

E. Wigner discovered that energy levels of large quantum systems exhibit  remarkably
simple universality patterns. He introduced a fundamental model, the Wigner matrix ensemble,
and postulated that the statistics of the eigenvalue gaps, i.e. differences of 
consecutive eigenvalues, depend only on the symmetry
class of the model  and are independent of the details of the ensemble.    
Although  the central  universal objects in Wigner's original work   
were the eigenvalue gap distributions, 
 the subsequent developments showed that the correlation functions play a key role. 
In fact,  a few years after Wigner's pioneering work, Gaudin,  Mehta and Dyson 
computed explicitly the eigenvalue  correlation functions for the Gaussian cases
and expressed the eigenvalue gap distributions in terms of them. 
Later on, Mehta formalized a version of the (Wigner-Dyson-Mehta) universality conjecture   
 in his seminal book  
\cite{Meh1991} by   stating    that the appropriately rescaled correlation
functions for any Wigner ensemble coincide with those for the Gaussian  cases 
as $N$,  the size of the matrix,  tends to infinity. 
This holds for both real symmetric and complex Hermitian ensembles
 (Conjectures 1.2.1 and 1.2.2 in \cite{Meh1991}). 
The topology  of the convergence, however,  was not specified explicitly.

One possible topology for the correlation functions is the pointwise convergence. But  the convergence in this topology   
cannot  hold  for Wigner ensembles with  discrete (e.g. Bernoulli) matrix elements  
so  it could   only be used  for a  certain  subclass of Wigner matrices. 
Thus a reasonably strong  topology suitable  for the universality of the whole class of Wigner matrices is
the vague convergence of the local correlation functions, rescaled around
a fixed energy $E$; in short we will call it {\it fixed energy universality}
(see Section~\ref{sec:main} for the precise definitions). Certainly, instead of fixing 
the energy $E$, we can also take weak convergence in $E$ or equivalently, taking some average in 
the energy. We will call  universality in  this  weaker  topology {\it averaged  energy universality}.
 Finally, one can go back to  Wigner's original point of 
view and ask for universality of the gap distributions.

The  Wigner-Dyson-Mehta 
 conjecture has been widely open until the recent work \cite{ErdPecRamSchYau2010}
where a general scheme to approach it  was outlined and carried out 
for complex Hermitian matrices. The basic idea is  first to establish   a  local version
of the  semicircle law 
and use it as an input to control the correlation function asymptotics in the Brezin-Hikami formula
(which is related to Harish-Chandra/Itzykson-Zuber  formula). This provides 
universality for  the so-called Gaussian divisible models with  a  very  small  Gaussian component, or ``noise'' 
(previously, this universality was established by Johansson \cite{Joh2001} when the noise is of order one).  
The last step is  an approximation of  a general  Wigner  ensemble by Gaussian divisible ones
and this  leads  to the fixed energy universality for Hermitian Wigner matrices   whose matrix elements
have smooth distributions. 
The various restrictions on  the laws of matrix elements  were greatly  relaxed in subsequent works
\cite{TaoVu2011, ERSTVY, ErdKnoYauYin2012, ErdYau2011}.
In particular, using the local semicircle law \cite{ErdSchYau2009} as a main input, Tao-Vu 
\cite{TaoVu2011}
proved   a comparison theorem which  provides an approximation result  
for Wigner matrices satisfying  a four moment matching condition. 
Finally, the 
conditions for tail distributions 
of the matrix elements were greatly relaxed in \cite{ErdKnoYauYin2012, ErdYau2011,TaoVu2011EJP}. 
For a concise review on the recent progress on the universality for random matrices, see, e.g., \cite{ErdYau2012}.

For real symmetric matrices,  no algebraic formula  in the spirit of
Brezin-Hikami is known.  A completely new method based on  relaxation 
of the Dyson Brownian Motion (DBM)  to local  equilibrium was  developed
in a series of papers \cite{ErdSchYau2011,ErdSchYauYin2012,ErdYauYin2012Univ}.
This approach is very robust and applies to all  symmetry classes of  random matrices,
including also  sample covariance matrices and sparse matrices 
\cite{ErdSchYauYin2012,  ErdKnoYauYin2012},  
but it yields only the {\it average energy universality}. 
Although the energy  averaging  is on a very small scale,  it so far  cannot be 
completely  removed  with this method.

We now  comment on a parallel development for the universality of the eigenvalue gaps which was
Wigner's  original interest.
Correlation functions at a fixed energy $E$ carry full information about
the distribution of the eigenvalues near $E$.    In particular, 
the Fredholm determinant  and the Jimbo-Miwa-Mori-Sato formulae yield the probability that no eigenvalues  appear 
in a neighborhood around $E$. 
The universality of the distribution of  the gap with a {\it fixed label}  
(which we will call gap universality),  e.g. the difference between,  say, 
the $N/2$-th and $(N/2-1)$-th eigenvalues, however cannot be deduced rigorously from 
 the fixed energy universality. Conversely, the gap universality  does   not imply the 
fixed energy universality  either. 
 The reason is that 
eigenvalues with a fixed label fluctuate on a scale
larger than the mean eigenvalue spacing,  so fixed  energy and fixed label
universalities are not equivalent. 
The gap universality
 was established in \cite{ErdYau2012singlegap} via a De Giorgi-Nash-Moser type H\"older
regularity result for a discrete parabolic equation with time dependent random coefficients  $B_{ij}(t)=(x_i(t)-x_j(t))^{-2}$
where $\bx(t)$ is  the DBM trajectory. The  gap universality for the 
 special case of   Hermitian matrices satisfying the four moment matching condition  was proved earlier in \cite{Tao2013}.

To summarize, the Wigner-Dyson-Mehta  conjecture was completely resolved  in the sense 
of averaged energy and  fixed label  gap universalities for both symmetric and Hermitian ensembles. 
In the sense of fixed energy universality, it  was proved for the Hermitian matrices, but not for real symmetric ones.  
In the current paper,  we settle this last remaining case of the 
Wigner-Dyson-Mehta  conjecture by proving  the  universality of local correlation functions at any fixed energy $E$
in the bulk spectrum for generalized Wigner matrices of any symmetry classes. 
Our theorem in particular implies the following three new results for real symmetric matrices
(including the Bernoulli cases):
(1) existence of the density of states on microscopic scales for generalized Wigner matrices,
(2)    the  extension of the  Jimbo-Miwa-Mori-Sato formula of the gap probability to
 generalized real Wigner matrices, (3)  the precise distribution of the condition number
 or  the smallest (in absolute value) eigenvalue of generalized  Wigner matrices. 
 Our proof also applies to the third symmetry class, the symplectic matrices, but we
will focus on the real symmetric case as this is  the most complicated case  
from  the technical point of view.

The essence of the current work is a homogenization theory for the  discrete parabolic
 equation with time dependent random coefficients  $B_{ij}(t)=(x_i(t)-x_j(t))^{-2}$. 
By a rigidity property of the DBM trajectories, the 
random coefficients are close to  deterministic ones, $B_{ij}(t) \approx (\gamma_i-\gamma_j)^{-2}$
if $|i-j|\gg 1$ (the typical locations $\gamma_i$ are defined in (\ref{eqn:typ})).
The continuous version of the corresponding heat kernel is explicitly known; 
in fact locally it is
given by $e^{-t|p|}(i,j)$ where $|p|=\sqrt{-\Delta}$.
By coupling two DBM for two different initial conditions $\bx(0)$ and $\by(0)$
(one for Wigner, one for a reference Gaussian ensemble),
we show that after a sufficiently long time, the difference between $x_i(t)$ and $y_i(t)$
is given by the deterministic heat kernel acting on the difference of the initial data.
Due to the scaling properties of the explicit heat kernel, this latter involves only 
mesoscopic linear statistics of the initial conditions which are more accessible than
microscopic ones. Homogenization thus enables us to transfer 
mesoscopic statistics to microscopic ones. The main steps of the proof 
will be described in the next section in more details.

 {\it Convention.} For two $N$-dependent positive quantities $a=a_N$, $b=b_N$ we say that $a$ and $b$ 
are comparable, $a\sim b$, if there  exists a constant $C>0$, independent of $N$, such that $C^{-1}\le a/b\le C$.

\section{Main result and sketch of the proof}\label{sec:main}

\subsection{The model and the result.}\ We consider the following class of random matrices. 

\begin{definition}\label{def:wig}
A generalized Wigner matrix $H_N$ is a  Hermitian or symmetric $N\times N$ matrix whose upper-triangular elements $h_{ij}=\overline{h_{ji}}$, $i\leq j$, are independent random variables with mean zero and variances $\sigma_{ij}^2=\E(|h_{ij}|^2)$  that satisfy  the following two conditions:
\begin{enumerate}[(i)]
\item Normalization: for any $j\in\llbracket 1,N\rrbracket$, $\sum_{i=1}^N\sigma_{ij}^2=1$.
\item Non-degeneracy:  $ \sigma_{ij}^2\sim N^{-1}$ for all $i,j\in\llbracket 1,N\rrbracket$. 
\end{enumerate}
In the Hermitian case, we furthermore
assume that $\var\Re(h_{ij})\sim \var\Im(h_{ij})$ for $i\neq j$ and that one of the following holds: (1)  $\Re(h_{ij}), \Im(h_{ij})$ are independent, 
or (2)  the law of $h_{ij}$ is isotropic,  i.e. $|h_{ij}|$ is independent of $\arg h_{ij}$, which is uniform on $(0,2\pi)$.
\end{definition}

\noindent We additionally assume that there exists $p>0$ large but fixed such that
\begin{equation}\label{eqn:forHansonWright}
\sup_{i,j,N}\E\left((\sqrt{N}|h_{ij}|)^p\right)<\infty.
\end{equation}
For example $p=10$ is sufficient for our purpose, in this work we will not try to get the lowest possible exponent $p$, for the clarity of exposition.
We denote by 
$$
x_1\leq\dots\leq x_N
$$
the $N$ random eigenvalues of a generalized Wigner matrix $H_N$. Let $\mu^{(N)}(\bu)$ be the associated probability distribution of the spectrum, where $\bu=(u_1,\dots,u_N)$ is an element of the simplex
$\Sigma = \{ \bu \; : \; u_1\le \ldots \le u_N\}\subset \R^N$.
The universal limiting point process for random spectra will be uniquely characterized by the limits of the $k$-point correlation functions
for $k=1,2,\ldots $ as $N\to\infty$. These are defined by
\be\label{def:corr}
\rho_k^{(N)}(u_1,\dots,u_k)=\int_{\RR^{N-k}}\widetilde\mu^{(N)}(\bu)\rd u_{k+1}\dots\rd u_N,
\ee
where $\widetilde\mu^{(N)}$ is the symmetrized version of $\mu^{(N)}$, defined on $\RR^N$ 
instead of the simplex: $\widetilde\mu^{(N)}(\bu)=\frac{1}{N!}\mu^{(N)}(\bu^{(\sigma)})$ where $\bu^{(\sigma)}=(u_{\sigma(1)},\dots,u_{\sigma(N)})$ with $u_{\sigma(1)}\leq \dots\leq u_{\sigma(N)}$.
The limiting density ($k=1$ point correlation function) 
of the  eigenvalues 
is the Wigner semicircle law and it will be denoted
$$\rd \varrho(x)=\varrho(x)\rd x=\frac{1}{2\pi}\sqrt{(4-x^2)_+}\rd x.$$
 In the fundamental particular case where $H_N$ is a  real symmetric  matrix from the  Gaussian Orthogonal Ensemble (GOE), 
  the correlation functions are known to converge on microscopic scales, the limit being expressible as a determinant  \cite{MehGau1960,Dys1962,Meh1991,Dys1970}: for any $\bv\in\RR^k$ and $E\in(-2,2)$, we have
\begin{equation}\label{GOEconv}
\frac{1}{\varrho(E)^k}\rho_{k}^{(N,\text{\tiny GOE})}\left(E+\frac{\bv}{N\varrho(E)}\right)=\frac{1}{\varrho(E)^k}\rho_{k}^{(N)}\left(E+\frac{v_1}{N\varrho(E)},\dots,E+\frac{v_k}{N\varrho(E)}\right)
\to \rho_{k}^{(\text{\tiny GOE})}\left(\bv\right),
\end{equation}
where this limit is independent of $E\in(-2,2)$. For  complex Hermitian  matrices from the 
 Gaussian Unitary Ensemble (GUE),  the same statement holds with a different limit
$\rho_{k}^{(\text{\tiny GUE})}\left(\bv\right)$.

Bulk universality for generalized Wigner matrices was considered for various convergence types, notably the two following ones.
We state them only in the symmetric case, the Hermitian setting being similar.\\

\noindent{\it Fixed energy universality  (in the bulk).  } For any   $k\geq 1$,  $F:\RR^k\to\RR$  continuous and compactly supported and
 for any $\kappa>0$, we have,  uniformly in $E\in[-2+\kappa,2-\kappa]$, 
$$
\lim_{N\to\infty}\frac{1}{\varrho(E)^k}\int \rd\bv F(\bv)\rho_{k}^{(N)}\left(E+\frac{\bv}{N\varrho(E)}\right)
=
\int\rd\bv F(\bv)\rho_{k}^{(\text{\tiny GOE})}\left(\bv\right).
$$

\noindent{\it Averaged energy universality  (in the bulk).} For any  $k\geq 1$,  $F:\RR^k\to\RR$ continuous and compactly supported, 
and for any  $\e,\kappa>0$, we have, uniformly in $E\in[-2+\kappa,2-\kappa]$,
$$
\lim_{N\to\infty}\frac{1}{\varrho(E)^k}\int_{E}^{E+s}\frac{\rd x}{s}\int \rd \bv F(\bv)\rho_{k}^{(N)}\left(x+\frac{\bv}{N\varrho(E)}\right)\rd \bv
=
\int\rd \bv F(\bv)\rho_{k}^{(\text{\tiny GOE})}\left(\bv\right), \qquad  s:= N^{-1+\e}. 
$$

Fixed energy universality obviously implies averaged energy universality. 
As mentioned in the introduction, fixed energy universality was proved for Hermitian matrices from the generalized Wigner ensemble. This required the use of the Br\'ezin-Hikami-Johansson formula, a tool with no known analogue for symmetric matrices.  General methods developed in the past five years, such as the local relaxation flow, allowed to prove universality for the symmetric class only in the sense of averaged energy. Our result establishes universality at fixed energy, with no need for any averaging, for the symmetric class. It  also  provides a new proof for the Hermitian class.

\begin{theorem}[Universality at fixed energy]\label{thm:main}
For symmetric or Hermitian  matrices from the generalized Wigner ensemble satisfying (\ref{eqn:forHansonWright}), fixed energy universality holds in the bulk of the spectrum.
\end{theorem}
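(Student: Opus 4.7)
The plan is to combine the standard three-step scheme---local semicircle law, DBM relaxation, Green function comparison---with a new \emph{homogenization} input that upgrades averaged-energy universality to fixed-energy universality. The known rigidity estimates for generalized Wigner matrices serve as the a priori input; the real issue is to carry out the DBM step without any energy averaging, which is precisely where previous methods for symmetric matrices failed.

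I would run the Dyson Brownian motion (in its Ornstein--Uhlenbeck form) from two initial conditions on a common Brownian noise: the generalized Wigner eigenvalues $\bx(0)$ and, in parallel, an independent GOE sample $\by(0)$. The coupled difference $u_i(t) = x_i(t) - y_i(t)$ then satisfies, to leading order, the linearized discrete parabolic equation
\[
\partial_t u_i(t) = \sum_{j \neq i} B_{ij}(t)\bigl(u_j(t) - u_i(t)\bigr), \qquad B_{ij}(t) = \frac{1}{N(x_i(t) - x_j(t))^2},
\]
with singular, time-dependent random coefficients. The core new step is to \emph{homogenize}: by rigidity, $x_i(t)\approx \gamma_i$ up to $N^{-1+\e}$ on mesoscopic scales, so $B_{ij}(t)$ is well approximated by the deterministic $\bar B_{ij} = (N(\gamma_i-\gamma_j)^2)^{-1}$ as soon as $|i-j|$ exceeds some mesoscopic threshold $\ell$. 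The corresponding deterministic semigroup is, locally in the bulk, governed by the half-Laplacian heat kernel $e^{-t\sqrt{-\Delta}}$. I would prove that, for times in a window like $N^{-1+\delta} \le t \le N^{-\delta}$, the random evolution and the frozen deterministic evolution starting from the same initial data agree at the microscopic scale $1/N$, modulo a negligible error.

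Granted the homogenization, the microscopic behaviour of $u_i(t)$ is governed by $e^{-t\sqrt{-\Delta}}$ applied to the initial difference $u_i(0) = x_i(0) - y_i(0)$. Because the kernel of this semigroup averages on a spatial scale of order $Nt$, which is mesoscopic when $t \gg N^{-1}$, the microscopic value of $u_i(t)$ depends only on \emph{mesoscopic} linear statistics of $\bx(0)$ and $\by(0)$. Such linear statistics obey central limit theorems with explicit universal variances, and in particular coincide (up to deterministic shifts that can be absorbed by recentering) between generalized Wigner matrices and the GOE. This transfers microscopic statistics from $\by(t)$---which equals the GOE process, stationary under DBM---to $\bx(t)$, yielding fixed energy universality for the Gaussian-divisible ensemble at time $t$. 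A final Green function comparison argument, which at a fixed energy $E$ can be carried out without any smoothing in $E$, removes the small added Gaussian component.

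The main obstacle is the homogenization step itself, i.e.\ the quantitative control of the random parabolic equation with the singular coefficient $B_{ij}(t) = (x_i(t)-x_j(t))^{-2}/N$ against its frozen counterpart, with errors tracked at the microscopic scale. The De Giorgi--Nash--Moser type H\"older estimate of \cite{ErdYau2012singlegap} is far too crude for this purpose: one needs the asymptotic \emph{shape} of the propagator, not just its continuity. The argument must exploit cancellation specific to the half-Laplacian structure of the deterministic operator, combined with short-range regularization (discarding $B_{ij}$ for $|i-j| \le \ell$) and an iterated energy/$\ell^2$ estimate that reinjects rigidity at progressively finer scales. This is where the bulk of the technical work is expected to lie.
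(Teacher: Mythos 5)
Your high-level architecture---coupled DBM, homogenization of the random parabolic coefficients against a deterministic semigroup, reduction to mesoscopic linear statistics, and a fixed-energy Green function comparison at the end---is essentially the architecture of the paper. The homogenization input and the observation that the propagator averages on the mesoscopic scale $Nt$ are exactly right. However, there is a genuine gap in the step ``transfers microscopic statistics from $\by(t)$ to $\bx(t)$.''

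The homogenization gives $x_\ell(t)\approx y_\ell(t)-\wt\zeta_t^{\by}+\wt\zeta_t^{\bx}$, where $\wt\zeta_t^{\bx}$, $\wt\zeta_t^{\by}$ are mesoscopic linear statistics of the initial data. These shifts are \emph{random}, not deterministic, and they are of size $\sim\sqrt{\tau\log N}/N$, i.e.\ much \emph{larger} than the microscopic spacing $1/N$. You therefore cannot ``absorb by recentering'': shifting a GOE spectrum by an independent random amount of that size could in principle change a microscopic observable at the point $E$. Moreover $y_\ell(t)$ and $\wt\zeta_t^{\by}$ are both functionals of the same $\by$ process, so they are highly correlated, which makes the claim nontrivial even at a heuristic level. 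The missing ingredient is a deconvolution (``reverse heat flow'') argument: one has to prove that the function $F(a):=\E\,\mathcal{Q}(\by(t)-\wt\zeta_t^{\by},E-a)$ is approximately constant in $a$. The paper does this by (i) showing that the convolution $F*\mu_{\bz}$ is approximately constant, using a third independent GOE sample $\bz$, the homogenization applied to $\bz$, and translation-invariance of GOE microscopic statistics with polynomial error; (ii) proving a precise central-limit estimate for the \emph{characteristic function} of the mesoscopic GOE statistic $\wt\zeta_t^{\bz}$ (not just the Laplace transform, and with control for $\lambda$ beyond the natural $O(1)$ range, because the Fourier transform can vanish); and (iii) inverting the near-Gaussian convolution, which is stable only because the test function has compact Fourier support in $[-m,m]$ and one imposes the unusual compatibility constraint $\tau\lesssim 1/m^2$. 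None of this appears in your proposal, and without it the mesoscopic reduction does not close.

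Relatedly, your phrase ``Such linear statistics \dots coincide \dots between generalized Wigner matrices and the GOE'' implicitly asks for a mesoscopic CLT for generalized Wigner matrices. The paper deliberately avoids needing this: only the GOE mesoscopic CLT (Theorem~\ref{thm:GaussFluct} and Lemma~\ref{n100}) is required, because the constancy of $F$ makes the actual law of $\wt\zeta_t^{\bx}$ irrelevant. Trying to prove a mesoscopic CLT for generalized Wigner matrices at the precision required here would be a separate and harder project. Two smaller points: the H\"older estimate of \cite{ErdYau2012singlegap} is not merely ``too crude'' to be relevant---it is a crucial input to the homogenization proof itself (Lemma~\ref{holder}, Step~3 of the proof of Theorem~\ref{thm:appr1}), used in tandem with the energy bound and kernel decay estimates; and because $\tau$ must be taken small, the semigroup is \emph{not} approximated by $e^{-t\sqrt{-\Delta}}$, and one needs the exact kernel $p_t$ that accounts for the curvature of the semicircle (Lemma~\ref{lem:diagonalization}).
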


The above theorem implies for example that the joint interval probabilities converge,
 more precisely, 
for  disjoint intervals $I_1,\dots,I_\ell$, and integers  $n_1,\dots,n_\ell\in\NN$,  the limit 
$$
\lim_{N\to\infty}\P\left(\left|\{x_i\in E+\frac{I_j}{N\varrho(E)}\}\right|=n_j,1\leq j\leq \ell\right)
$$
exists. It is independent of $E\in(-2,2)$ and of the details of the distributions of the matrix entries, in particular they can
be computed in the Gaussian ensemble where more explicit formulas
are available. 
For example, the gap probability for Bernoulli random matrices converges on the microscopic scale:
$$
\lim_{N\to\infty}\P\left(\{x_i\in E+\frac{[0,t]}{N\pi\varrho(E)}\}=\varnothing\right)=E_{1}(0,t),
$$
where $E_1$ can be made explicit from the solution to a Painlev\'e  equation of fifth type \cite{JimMiwMorSat1980,TraWid1996}. \\

Before giving the main ideas of the proof, we introduce the typical locations of eigenvalues with respect to the semicircular distribution: they are defined by  
\begin{equation}\label{eqn:typ}
\int_{-2}^{\gamma_k}\rd\varrho=\frac{k  + \frac{1}{2} \nc}{N},  \qquad k=1,2,\ldots, N.\nc
\end{equation}

\subsection{Sketch of the proof.}\ We now outline the main steps towards the proof of Theorem \ref{thm:main}, in the symmetric case. As mentioned in the introduction, it does not rely on improvements of existing methods such as the local relaxation flow.
 The Dyson Brownian motion plays again a key role in the following approach, but surprisingly our method requires
 understanding its behaviour for relatively
large time, $t=N^{-\tau}$,  for some small $\tau$,
 instead of $t=N^{-1+\e}$ for small $\e$.\\

\noindent{\it First step. Coupling and discrete integral operator.} 
We run a coupled DBM with two different initial conditions,
one from the Wigner ensemble we wish to study and one from
a comparison Gaussian ensemble.
At time $0$, let $\bx=\bx(0)$ be the ordered spectrum of a generalized Wigner matrix, and let $\by(0)$ 
denote the eigenvalues of an independent GOE matrix.   In the actual proof
we have to start the coupling at a time $t_0\sim N^{-\tau_0}$, $\tau_0> \tau$ instead of time 0, but
we neglect this technical issue in the current presentation. 
Consider the unique strong solutions for the following Dyson Brownian motion, more precisely its Ornstein-Uhlenbeck version:
\begin{align}\label{DBMdynamics}
&\rd x_\ell(t)=\sqrt{\frac{2}{N}}\rd B_{\ell}(t)+\left(\frac{1}{N}\sum_{k\neq \ell}\frac{1}{x_\ell(t)-x_k(t)}-\frac{1}{2}x_\ell(t)\right)\rd t,\\
&\rd y_\ell(t)=\sqrt{\frac{2}{N}}\rd B_{\ell}(t)+\left(\frac{1}{N}\sum_{k\neq \ell}\frac{1}{y_\ell(t)-y_k(t)}-\frac{1}{2}y_\ell(t)\right)\rd t.\notag
\end{align}
Note that the underlying Brownian trajectories $(B_\ell)_{1\leq \ell\leq N}$ are the same.
Then the normalized differences $\delta_\ell(t):=e^{t/2}(x_\ell(t)-y_\ell(t))$ satisfy an integral equation of parabolic type, namely
\begin{equation}\label{aftercoupling}
\partial_t \delta_\ell(t) = 
\sum_{k\neq \ell} b_{k\ell}(t)(\delta_k(t)-\delta_\ell(t)),\ \ b_{k\ell}(t)=\frac{1}{N(x_\ell(t)-x_k(t))(y_\ell(t)-y_k(t))}.
\end{equation}

\noindent{\it Second step. Homogenization.}
We consider the following  continuous analogue of (\ref{aftercoupling}):
\begin{equation}\label{contop}
 \partial_tf_t= -Kf_t, \qquad (K f)(x):=\int_{-2}^2\frac{f(x)-f(y)}{(x-y)^2}\varrho(y)\rd y. \nc
\end{equation}
A key step in our approach consists in proving that (\ref{contop}) gives a good approximation for (\ref{aftercoupling}). Indeed, if the initial conditions match in the sense that  $f_0$  is smooth enough and $f_0(\gamma_k)=\delta_k(0)$, 
then for any $t=N^{-\tau}$,  with a sufficiently small  $\tau>0$ 
there exists $\e>0$ such that for any bulk index $\ell$ (i.e.  $\ell\in\llbracket \al N,(1-\al)N\rrbracket$ for some small fixed $\al>0$) we have
\begin{equation}\label{homo}
\delta_\ell(t)=(e^{-t K}f_0)_\ell+\OO(N^{-1-\e}).
\end{equation}
The above homogenization result holds for generic trajectories $\bx(t)$, $\by(t)$. It relies on optimal rigidity estimates for these eigenvalues \cite{ErdYauYin2012Rig},  a level repulsion bound similar to \cite{ErdSchYau2010}, 
 and H\"older continuity for equations of type (\ref{aftercoupling}), obtained in \cite{ErdYau2012}.\\

\noindent{\it Third step. The continuous heat kernel.} The heat kernel for the equation (\ref{contop})  can be
expressed by an explicit formula, see   (\ref{pt}). For
short times,  i.e.  $\tau$ close to 1 it almost coincides with $e^{-t|p|}$ where $|p|=\sqrt{-\Delta}$.

 However, we will need $\tau$ close to $0$, hence the effect of the curvature from the semicircle
 law cannot be  neglected, and the explicit formula  will be useful.
This allows us to compute explicitly $e^{-tK} f_0$ and to rewrite (\ref{homo}) as follows. There exists $\e>0$ such that, for 
a fixed $E$ in the bulk and for any $\ell$
satisfying $|\gamma_\ell-E|<N^{-1+\e}$, we have
\begin{equation}\label{appro}
x_\ell(t)-y_\ell(t)=\wt\zeta_t^\bx-\wt\zeta_t^\by+\OO(N^{-1-\e}),
\end{equation}
where 
\begin{equation}\label{linearstat}
\wt\zeta_t^\bx:=\frac{1}{N}\sum_{k=1}^N \left(P_t(x_k(0))-P_t(\gamma_k)\right),
\end{equation}
 is a smooth  linear statistics of $\bx=\bx(0)$  on the  {\it  mesoscopic} scale $t=N^{-\tau}\ll 1$. \nc
Here $P_t$ is an  explicit function, the
antiderivative of the heat kernel (\ref{pt}) (see (\ref{eqn:Pt})). 
We repeat the above steps with the initial condition $\bx$ replaced by $\bz$,
 the spectrum of another  GOE independent of $\bx$ and $\by$. 
In summary, 
 we proved
\begin{equation}\label{resulthomo}
x_\ell(t)=y_\ell(t)-\wt\zeta_t^\by+\wt\zeta_t^\bx+\OO(N^{-1+\e}),\ \ \ 
z_\ell(t)=y_\ell(t)-\wt\zeta_t^\by+\wt\zeta_t^\bz+\OO(N^{-1+\e}).
\end{equation}

\noindent{\it Fourth step: Reformulation of universality through mesoscopic observables.}\
For any continuous and compactly supported test
function $Q:\RR^k\to\RR$ and $E\in(-2,2)$, define
$$\mathcal{Q}(\bx,E):=\sum_{i_1,\dots,i_k=1}^N 
Q(N(x_{i_1}-E),N(x_{i_2}-x_{i_1}),\dots,N(x_{i_k}-x_{i_1})).$$
Theorem \ref{thm:main} can be restated as
\begin{equation}\label{aim}
\E\mathcal{Q}(\bx(0),E)=
\E\mathcal{Q}(\bz(0),E)+\oo(1).
\end{equation}
Let $\widehat Q$ denote the Fourier transform in the first variable.
By a standard approximation argument, it is sufficient to prove (\ref{aim}) 
for any $Q$ such 
 that $\widehat Q$ is compactly supported, in $[-m,m]$, say. We will first prove that (\ref{aim}) holds for the corresponding DBM trajectories after
 some time $t=N^{-\tau}$, where $\tau$ will depend on $m$:
\begin{equation}\label{aim2}
\E\mathcal{Q}(\bx(t),E)=
\E\mathcal{Q}(\bz(t),E)+\oo(1).\\
\end{equation}
Using the representation (\ref{resulthomo}), we easily see that (\ref{aim2}) holds if
we have
\begin{equation}\label{aim3}
\E\mathcal{Q}(\by(t)-\wt\zeta_t^\by,E-\wt\zeta_t^\bx)=
\E\mathcal{Q}(\by(t)-\wt\zeta_t^\by,E-\wt\zeta_t^\bz)+\oo(1).
\end{equation}
Note that $\by(t)$ and $\wt\zeta_t^\by$ are independent from $\wt\zeta_t^\bx$ and $\wt\zeta_t^\bz$.
Moreover, (\ref{aim3}) is simpler than the original microscopic universality problem, because $\wt\zeta_t^\bx$ and $\wt\zeta_t^\bz$ are mesoscopic observables.
In the next two steps, we explain how (\ref{aim3}) can be proved under the following, strange, compatibility assumption between the dynamics time and the Fourier support of the test function:
\begin{equation}\label{tau}
\tau\leq \frac{c}{m^2}.
\end{equation}

\noindent{\it Fifth step. Mesoscopic fluctuations for the Gaussian orthogonal ensemble.} 
To justify (\ref{aim3}), we first prove that the distribution of  $\zeta_t^\bz:=N \wt\zeta_t^\bz$ 
  is very close to a Gaussian, with variance of order $\tau\log N$. This central limit theorem for the linear statistics of type (\ref{linearstat}) relies on the method initiated in \cite{Joh1998}. For reasons apparent in the next step, we will need to control
the the distribution of  $\zeta_t^\bz$  even beyond
its natural scale $(\tau\log N)^{1/2}$. More precisely, we will prove that, for some fixed constants $b$ and $c>0$,  we have
\begin{equation}\label{gaussmes}
\widehat\mu_\bz(\la)=\widehat\mu_{\bz,t}(\la)=\E\left(e^{-\ii\la \zeta_t^\bz}\right)=e^{-\frac{\la^2}{2}\tau\log N-\ii\la b}+\OO(N^{-c}).
\end{equation}
For {\it macroscopic} linear statistics, corresponding to  $t$ independent of $N$ in our notation, 
Johansson proved the central limit theorem in \cite{Joh1998}
 by considering the logarithm of their Laplace transform. The proof of (\ref{gaussmes}) involves additional technicalities because $\widehat\mu_\bz$ may vanish, see Section \ref{sec:gauss}. In particular we will need rigidity estimates from \cite{BouErdYau2013} to prove (\ref{gaussmes}).\\

\noindent{\it Sixth step. Reverse heat flow.}
For any fixed $a,h\in\RR$, consider the functions
$$
F(a):=\E\mathcal{Q}(\by(t)-\wt\zeta_t^\by,E-a),\quad F_h(a):=F(a-h)-F(a).
$$
We can express the convolution of $F_h$ with $\mu_\bz$ as follows:
\begin{multline}\label{speed}
(F_h\ast \mu_\bz)(a)=
\E\mathcal{Q}(\by(t)-\wt\zeta_t^\by,E-a+h-\wt\zeta_t^\bz)-
\E\mathcal{Q}(\by(t)-\wt\zeta_t^\by,E-a-\wt\zeta_t^\bz)\\
=
\E\mathcal{Q}(\bz(t),E-a+h)-
\E\mathcal{Q}(\bz(t),E-a)+\OO(N^{-c})=\OO(N^{-c}).
\end{multline}
for some $c>0$. Here the second equality follows from 
the second formula in \eqref{resulthomo}. The last
step in \eqref{speed} uses that
on  microscopic scales and with
a high accuracy the  distribution
of the GOE spectrum is translation invariant;
a fact that follows from an effective polynomial speed of the
the convergence (\ref{GOEconv}) uniformly in $E$ in the bulk.

From the estimate  \eqref{speed} on $F_h\ast\mu_\bz$ we  bound $F_h$. This is a reverse heat flow type of question, because $\mu_\bz$ is almost a Gaussian distribution, from the previous step. One can reverse the heat flow because (1) $F_h$ is analytic, explaining our original Fourier support restriction on $Q$, and (2) the estimate (\ref{gaussmes}) is precise enough. Namely, taking the Fourier transform in (\ref{speed}), we obtain
$$
\widehat F_h(\la)=\OO\left(N^{-c}\widehat\mu_\bz(\la)^{-1}\right)=\OO(N^{m^2\tau-c})
$$ 
for any $\la$ in the Fourier support of $Q$, where we used (\ref{gaussmes}). This explains why we need (\ref{tau}) in order to prove that $\widehat F_h$, and then $F_h$, are $\oo(1)$. We therefore obtained
$$
\E\mathcal{Q}(\by(t)-\wt\zeta_t^\by,E-a+h)=\E\mathcal{Q}(\by(t)-\wt\zeta_t^\by,E-a)+\oo(1)
$$
uniformly in $a,h$. It is then elementary, by simple convolution, to prove (\ref{aim3}) and therefore (\ref{aim2}).\\

\noindent{\it Seventh step. Green function comparison theorem.} 
Finally, to obtain universality for the eigenvalues $\bx(0)$ of
the initial Wigner ensemble from the time evolved ones $\bx(t)$, $t=N^{-\tau}$,
we use a Green function comparison theorem, 
of type close to the one introduced in \cite{ErdYauYin2012Univ}: (\ref{aim2}) implies (\ref{aim}).

\section{Homogenization}

From this section, we only consider symmetric matrices which is the most involved case
since the level repulsion estimate requires an additional regularization. Section~\ref{sub:regul}
would not be  necessary for $\beta>1$, the rest of the proof is insensitive to the value of $\beta$. 

\subsection{Regularized dynamics.} \label{sub:regul} \ Our goal is to estimate the coupled difference $\delta_\ell(t)=e^{t/2}(x_\ell(t)-y_\ell(t))$, which satisfies the dynamics (\ref{aftercoupling}).  Notice that the singularity of the coefficient $b_{jk}$ is not integrable
and this will create serious difficulties in the analysis. 
We first perform a cutoff to tame this singularity
 which requires a level repulsion estimate. Since such estimate
holds  only for large enough time, we will perform the regularization
only after an initial time $t_0:=N^{-\tau_0}/2$ with some $\tau_0>0$. We then show that  
 the difference between the original and cutoff dynamics is negligible for times $t\in (t_0, 1)$.
 The estimates in this section
 are valid for any fixed $\tau_0$.  
 We assume  $\tau_0\le 1$ which is the relevant regime.   

We recall the equation for the $\bx(t)$ dynamics from \eqref{DBMdynamics} and 
for $t\in (t_0,1)$, we define its  regularized version as 
\be 
\rd \widehat x_j(t)=\sqrt{\frac{2}{N}}\rd B_{j}(t)+\left(\frac{1}{N}\sum_{k\neq j}\frac{1}{x_j(t)-x_k(t) + \e_{jk}} - \frac{1}{2}\widehat x_j(t)  \right)\rd t\label{hat}
\ee
with $\e_{jk} =\e$ for $j>k$, $\e_{jk}:=-\e$ for $j<k$,
and we set $\widehat x_i(s):=x_i(s)$, for $s\leq t_0$. 
Notice that $\widehat \bx(t)$  may not 
preserve the ordering,  but we do not need this property.  
Let $q_i:= N( x_i - \widehat x_i)$
 denote the rescaled difference  between the original dynamics 
and the regularized one. 
It satisfies  the equation ($t>t_0$)
\be\label{DBMd}
 \frac { \dd q_{i}} { \dd  t}  =  \Omega_i- \frac {q_i} 2, \qquad \mbox{with}\quad
 \Omega_i(t):=\sum_{j\not= i}\frac{\e_{ij}}{(x_{i}-x_{j})(t)((x_{i}-x_{j})(t) + \e_{ij})   }.
\ee
Since $q_i(t_0)=0$, we  can solve this equation by
\be
  q_i(t) = \int_{t_0}^t   e^{-(t-s)/2} \Omega_i(s) \rd s, \qquad t\ge t_0.
\ee
Let $p>2$ and $p'$ be its conjugate exponent. We have 
\be\label{schwa}
\E \sup_{t_0 \le t \le 1}  \left | \int_{t_0}^t \Omega_i(s) \rd s \right | 
\le \sup_{t_0 \le t \le 1} \E |\Omega_i (t)| 
\le \sup_{t_0 \le t \le 1} \sum_{j\not= i}
\left(\E \frac{1}{|x_{i}-x_{j}|^{p'} (t)}\right)^{1/p'} 
\left( \E \frac{\e^p}{\big|(x_{i}-x_{j})(t) + \e_{ij}\big|^p} \right)^{1/p}.
\ee 
Recall the rigidity estimate
from  \cite{ErdYauYin2012Rig} 
  asserting that  for any $\xi,D>0$,  if  for some large  moment $p=p(\xi,D)$  (\ref{eqn:forHansonWright}) is satisfied, then  there exists $C>0$ such that
\be\label{rigi}
\P (   \mathcal{G}_{\xi,\bx,i}) \ge  1-C N^{-D}, \quad \mbox{with}\quad 
 \mathcal{G}_{\xi,\bx,i}: = \{|x_i(t)-\gamma_i|\leq N^{-2/3+\xi}(\widehat i)^{-1/3},0\leq t\leq 1\},
\ee
where $\widehat i:=\min(i,N+1-i)$  (the subscript $\bx$ refers to the $\bx(t)$ process). 
 The original rigidity estimate in   \cite{ErdYauYin2012Rig} was formulated for any fixed generalized
Wigner ensemble, i.e. for a fixed $t$. A minor continuity argument in the time variable
ensures that rigidity holds simultaneously for all times in a compact interval
(see Lemma 9.3 of \cite{ErdYau2012singlegap} for a similar argument). 
Note also that the original rigidity estimate from  \cite{ErdYauYin2012Rig}
assumes subexponential decay of the entries distribution, but this is easily weakened to
the finite moment assumption (\ref{eqn:forHansonWright}) (see remark 2.4 in \cite{ErdKnoYauYin2013}).

Denote by $g_i(t):=x_{i+1}(t)-x_i(t)$ the gap at $i$-th location. 
A trivial estimate yields that 
\[
\left(\E \mathcal{G}_{\xi,\bx,i}\frac{\e^p}{( g_i(t) + \e)^p} \right)^{1/p} \le \left( \E \mathcal{G}_{\xi,\bx,i}\frac{\e^2}{ (g_i(t) + \e)^2} \right)^{1/p}
\]
(with a slight abuse of notations we write $\mathcal{G}_{\xi,\bx,i}$ 
instead of its characteristic function within the expectation). 
Using   the  level repulsion estimate, i.e.,  Corollary \ref{cor:Wegner}, 
 we have for any $\xi>0$ that 
\be\label{lr}
\left( \E \mathcal{G}_{\xi,\bx,i}\frac{\e^2}{ (g_i(t) + \e)^2} \right)^{1/p} \le C_0 \e^{2/p}  N^{2/p} N^{(C_0\tau+\xi)/p}|\log\e|^{1/p},
\qquad  t =  N^{-\tau} \nc \in [t_0, 1] 
\ee
where $C_0$ is the constant  from Corollary \ref{cor:Wegner}.
 We  introduced the notation $t=N^{-\tau}$ and we will use $t$ and $\tau$ 
in parallel, similarly to the notation $t_0=N^{-\tau_0}/2$. 
The other factor in \eqref{schwa}  is even easier to estimate  and it gives
$$
\left(\E \frac{1}{|x_{i}-x_{j}|^{p'} (t)}\right)^{1/p'} \le  N^{1/p'} N^{(C_0\tau+\xi)/p'} .
$$
\nc Choosing for example  $\e= N^{-3C_0-100 }$,   $2<p<3$,  and $\xi$ small, 
 we therefore proved that
\be
\P(\cG_{\bx}) \ge 1 -  C N^{-2}, \quad \mbox{with} \quad \mathcal{G}_{\bx}:=
\{ \sup_{t_0 \le t \le 1, i\in\llbracket 1,N\rrbracket} | x_i - \widehat x_i|(t) \le  N^{-2C_0-50}   \}.
\ee
Hence
the trajectories of $\widehat \bx$ and $\bx$ are very close to each other.

We also regularize the $\by(t)$ dynamics, i.e. we have 
\begin{align}\label{DBMdynamics3}
&\rd y_j(t)=\sqrt{\frac{2}{N}}\rd B_{j}(t)+\left(\frac{1}{N}\sum_{k\neq j}\frac{1}{y_j(t)-y_k(t)}
- \frac 1 2 {y_j(t)} \right)\rd t,\\
&\rd \widehat y_j(t)=\sqrt{\frac{2}{N}}\rd B_{j}(t)+\left(\frac{1}{N}\sum_{k\neq j}\frac{1}{y_j(t)-y_k(t) + \e_{jk}} 
- \frac 1 2 {\widehat y_\ell(t)}\right)\rd t.\nonumber
\end{align}
with the same definition  as previously for $\e_{ij}$, and $\widehat y_j(t):=y_j(t)$ for $t<t_0$. 
Note that  $B_j$ represents the same Brownian motion in each of the
  equations  \eqref{DBMdynamics}, \eqref{hat} and \eqref{DBMdynamics3}.
 With a similar argument,  we can assume that $\by$ is very close to $\widehat \by$ on another set $\cG_{\by}$. We now define 
$\cG_1:= \cG_\bx \cap \cG_\by$.  

Now we analyse the difference of the two cutoff dynamics. Setting
 $w_i:= Ne^{t/2}( \widehat  x_i - \widehat y_i)$,  $\bw$ satisfies 
an equation of the form ($t>t_0$) 
\begin{align}\label{DBMd}
 &\frac { \dd w_{i}} { \dd  t} =\frac{1}{N}\sum_{j\not= i}\frac{w_j-w_i}{[(x_{i}-x_{j})(t) + \e_{ij}][(y_{i}-y_{j})(t) + \e_{ij}]} + 
\zeta_i
\end{align}
with an error term $\zeta_i $ satisfying
\[
|\zeta_i (t) |  \le  \sum_{j\not= i}\frac{  |x_i - \widehat x_i| +  |x_j - \widehat x_j| + |y_i - \widehat y_i|+|y_j - \widehat y_j|  }{\left((x_{i}-x_{j})(t) + \e_{ij}\right)\left((y_{i}-y_{j})(t) + \e_{ij}\right)}.\notag
\]
With the level repulsion estimate  as in \eqref{lr},  we have
\be\label{schwar}
  \E \cG_1 |\zeta_i (t)| \le   CN^{-2C_0-50}   \sum_{j\not= i}    \left( \E \frac{1}{\left((x_{i}-x_{j})(t) + \e_{ij}\right)^2}  \right)^{1/2} 
\left(\E \frac{1}{\left((y_{i}-y_{j})(t) + \e_{ij}\right)^2} \right)^{1/2}
\le C N^{-C_0-30}.
\ee
Therefore, there is a set $\cG_2$ with $\P(\cG_2) \ge 1 - C N^{-1}$ such that on this set we have 
\be\label{xi}
\sup_{t_0\le t \le 1} \max_i \int_{t_0}^t|\zeta_i(s)|\rd s\le N^{-C_0-20}.
\ee

We now show that 
  $\zeta_i$ is negligible in the equation \eqref{DBMd}.  
 This follows from the stability of  
 the   parabolic equation
\be\label{ve1}
  \pt_{ t} \bv( t)= -\cB( t) \bv( t),
\ee
where $\cB$ is the positive and positivity preserving matrix defined by
\begin{align}
& \left(\cB ( t)\bv\right)_i=\sum_{j=1}^N B_{ij}(t) (v_i-v_j),\\
\label{Be}
&B_{ij}(t):= 
\left\{
\begin{array}{l}
\frac{1}{N (x_{i}-x_{j})(t) (y_{i}-y_{j})(t)}\ \quad \mbox{if}\ t\le t_0,\\
\frac{1}{N \left((x_{i}-x_{j})(t) + \e_{ij}\right)\left((y_{i}-y_{j})(t) + \e_{ij}\right)}\ \quad \mbox{if}\ t>t_0.
\end{array}
\right.
\end{align}
Indeed, suppose that $\bw$ satisfies  \eqref{DBMd}, i.e.
\be\label{we}
  \pt_{ s} \bw( t)= -\cB( t) \bw(t) + \b\zeta(t)
\ee
with  the same initial data  at time $t_0$, $\bv(t_0)=\bw(t_0)$, 
  and $\b\zeta$ satisfying the estimate \eqref{xi}. Then we have 
\be\label{we}
  \pt_{ t} (\bw - \bv)( t)= -\cB( t) (\bw-\bv) ( t) +  \b \zeta(t)
\ee
with vanishing initial data.
Let $U_\cB(s,t)$ denote the semigroup associated with \eqref{ve1}
from time $s$ to time $t>s$, i.e.
\be\label{ucAdef}
 \pt_t U_\cB( s,  t) = -\cB(t)  U_\cB( s,  t)
\ee
for any $ t\ge s$ and $U_\cB(s,  s)=I$.
 By the Duhamel formula, we have  
\[
 (\bw - \bv)(t) = \int_{t_0}^t \cU_\cB (s, t) \b \zeta (s) \rd s .
\]
Since $\cU_\cB$ is a contraction (in any $L^p$ norm, in particular in $L^\infty$), using (\ref{xi}) in $\mathcal{G}_2$ we have 
$$
\Lnorm \infty {(\bw - \bv)(t)} \le  N^{-C_0-20}, \qquad  t\in [t_0, 1], 
$$
i.e., the effect of the perturbative term $\b \zeta$ on the solution  is negligible.

To summarize, we proved that  the set $\mathcal{G}=\mathcal{G}_1\cap\mathcal{G}_2$ satisfies  $\Prob(\mathcal{G})\geq 1-C N^{-1}$ uniformly
 in  $0\leq \tau\le 1$   
 and in this set  $\mathcal{G}$ we have,  for any $i$ and $t\in( t_0 ,1)$, that
\begin{equation}\label{eqn:regularisee}
Ne^{t/2}(x_i(t)-y_i(t))=v_i(t)+\OO\left(N^{-1}\right)
\end{equation}
where $\bv$ satisfies (\ref{ve1}) with initial condition  $\bv^0=  N (\bx^0-\by^0)$.

\subsection{Continuous space operator.}\ We now construct an operator in the continuum which approximates the discrete operator defined by $\cB$.  
Recall the definition of the typical location $\gamma_k$ from (\ref{eqn:typ}).
If we replace  $x_i$  and $y_i$ 
by $\gamma_i$ and neglect the regularization $\e$, \nc
 we have the following  classical  operator $\cU$ on $\ell^2(\llbracket  1, N\rrbracket)$:
\be\label{Ufdef}
    \left(\cU \bu\right)_j : = \sum_{i\neq j} \frac{1}{N |\gamma_i-\gamma_j|^2} (u_j-u_i).
\ee
We now define an operator $K$ acting on smooth functions on $[-2,2]$ as
\be\label{Kdef}
(K f)(x)=
\int_{-2}^2\frac{f(x)-f(y)}{(x-y)^2}\rd \varrho(y),
\ee
where the integral is in the principal value sense. Then $K$ is the continuum limit of $\cU$ in the sense 
that, for large $N$,
$
(\cU \bu^f)_j\approx (K f)(\gamma_j)$, where $\bu^f_j =  f(\gamma_j). 
$
The following lemma provides an explicit formula for the evolution kernel $e^{-t K}$.

\begin{lemma}\label{lem:diagonalization}
Let $f$ be smooth with all derivatives uniformly bounded.
For any $x,y\in(-2,2)$, denote $x=2\cos\theta$, $y=2\cos\phi$ with 
$\theta,\phi\in(0,\pi)$.
Then 
\be
(e^{-t K } f)(x) = \int p_t(x,y) f(y) \rd \varrho(y) 
\ee
where the kernel is given by
\begin{align}\label{pt}
p_t(x,y)   & : =  \frac{1-e^{-t}}{(1+e^{-t}- 2 e^{-t/2} \cos (\theta+\phi))(1+e^{-t}- 2 e^{-t/2} \cos (\theta-\phi))}
=\frac{1-e^{-t}}{|e^{\ii(\theta+\phi)}-e^{-t/2}|^2\, |e^{\ii(\theta-\phi)}-e^{-t/2}|^2}.
\end{align}
\end{lemma}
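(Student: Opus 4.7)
My approach is to diagonalize $K$ in the basis of Chebyshev polynomials of the second kind. Setting $\phi_n(x) := U_n(x/2)$, the classical identity $U_n(\cos\theta) = \sin((n+1)\theta)/\sin\theta$ together with $d\varrho(2\cos\theta) = (2/\pi)\sin^2\theta\,d\theta$ shows that $\{\phi_n\}_{n\geq 0}$ is an orthonormal basis of $L^2([-2,2],d\varrho)$. I would first prove $K\phi_n = (n/2)\phi_n$, so that the spectral representation gives
\begin{equation*}
p_t(x,y) = \sum_{n\geq 0} e^{-nt/2}\phi_n(x)\phi_n(y),
\end{equation*}
and then sum this series in closed form to recover (\ref{pt}). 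The small-$n$ sanity checks $K\cdot 1=0$, $Kx=x/2$, $K(x^2-1)=x^2-1$ are consistent with the claimed eigenvalues.

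For the eigenvalue identity I would use two ingredients. First, $K$ is symmetric on polynomials via the Dirichlet-form representation
\begin{equation*}
\langle Kf,g\rangle = \tfrac12\iint \frac{(f(x)-f(y))(g(x)-g(y))}{(x-y)^2}\,d\varrho(x)\,d\varrho(y),
\end{equation*}
which is absolutely convergent for smooth $f,g$. Second, I would show that $K$ maps polynomials of degree $n$ to polynomials of degree $\leq n$, with leading coefficient multiplied by $n/2$. Writing $x^n - y^n = (x-y)\sum_{k=0}^{n-1}x^{n-1-k}y^k$ yields $K[x^n](x) = \sum_{k=0}^{n-1}x^{n-1-k}\,\mathrm{p.v.}\!\int \tfrac{y^k}{x-y}\,d\varrho(y)$. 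The inner principal-value integral is identified from the Stieltjes transform of the semicircle: starting from the standard identity $\int \tfrac{U_k(y/2)}{z-y}d\varrho(y) = \psi(z)^{k+1}$ with $\psi(z) = (z-\sqrt{z^2-4})/2$ and taking boundary values $z\to x+i0$ gives $\mathrm{p.v.}\!\int \tfrac{U_k(y/2)}{x-y}d\varrho(y) = T_{k+1}(x/2)$, a polynomial in $x$ of degree $k+1$ with leading coefficient $1/2$. Triangularity propagates this to $\mathrm{p.v.}\!\int \tfrac{y^k}{x-y}d\varrho(y)$, so that $K[x^n] = (n/2)x^n + O(x^{n-1})$. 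Since $\phi_n$ is monic of degree $n$, this gives $K\phi_n = (n/2)\phi_n + \sum_{m<n}c_m\phi_m$; symmetry then forces $c_m = \langle K\phi_n,\phi_m\rangle = \langle \phi_n,K\phi_m\rangle = 0$, because $K\phi_m$ has degree $\leq m<n$ and $\phi_n$ is orthogonal to all lower-degree polynomials.

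With the eigenvalues in hand, I would combine the product-to-sum identity
\begin{equation*}
\phi_n(2\cos\theta)\phi_n(2\cos\phi) = \tfrac{1}{2\sin\theta\sin\phi}\bigl[\cos((n+1)(\theta-\phi)) - \cos((n+1)(\theta+\phi))\bigr]
\end{equation*}
with the elementary sum $\sum_{n\geq 0}r^n\cos((n+1)\alpha) = (\cos\alpha - r)/(1-2r\cos\alpha+r^2)$ applied at $r=e^{-t/2}$. Bringing the two resulting rational functions over the common denominator $|e^{i(\theta-\phi)}-e^{-t/2}|^2\,|e^{i(\theta+\phi)}-e^{-t/2}|^2$ produces a numerator proportional to $(1-e^{-t})(\cos(\theta-\phi)-\cos(\theta+\phi)) = 2\sin\theta\sin\phi\,(1-e^{-t})$, which exactly cancels the $1/(2\sin\theta\sin\phi)$ prefactor and yields (\ref{pt}). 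Absolute convergence of the spectral series for $t>0$, via the crude bound $|\phi_n(2\cos\theta)|\leq 1/|\sin\theta|$, justifies the kernel representation and, by density of polynomials in $L^2(d\varrho)$, extends the formula from polynomial $f$ to arbitrary smooth $f$.

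The main technical point, in my view, is the rigorous justification that $K[x^n]$ is a literal polynomial on $(-2,2)$: one must verify that the principal-value integrals $\mathrm{p.v.}\!\int y^k/(x-y)\,d\varrho(y)$ exist pointwise as ordinary functions of $x$ and coincide with the real part of the boundary values of the analytic function $\psi(z)^{k+1}$. This is a Plemelj--Sokhotski-type computation requiring careful tracking of the branch of $\sqrt{z^2-4}$; once it is carried out, the remainder of the proof is purely algebraic.
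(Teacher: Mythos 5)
Your proof is correct, and it takes a genuinely different route from the paper's for the key eigenvalue identity $K\phi_n = \tfrac{n}{2}\phi_n$. The paper proceeds by a self-contained double induction on $n$: one induction establishes $K P_{n+1} = \tfrac{n+1}{2}P_{n+1}$ from the three-term recursion $P_{n+1}=xP_n-P_{n-1}$ together with the equilibrium relation $\int \frac{d\varrho(y)}{x-y}=\tfrac{x}{2}$, and a nested induction proves the auxiliary identity $\int \frac{P_n(x)-P_n(y)}{x-y}\,d\varrho(y)=P_{n-1}(x)$. You instead combine two structural facts — symmetry of $K$ via its Dirichlet form, and the upper-triangularity of $K$ on monomials with diagonal $n/2$, the latter via Plemelj boundary values of $\psi(z)^{k+1}$ — and then invoke the standard orthogonal-polynomial argument (orthogonality of $\phi_n$ to all lower degrees forces $K\phi_n$ to have no lower-order components). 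Your route is more conceptual and generalizes cleanly to other weights and operators of this type, at the cost of needing the Sokhotski–Plemelj machinery and the known Stieltjes-transform identity $\int \frac{U_k(y/2)}{z-y}d\varrho(y)=\psi(z)^{k+1}$; the paper's route is more elementary, requiring only the Chebyshev recursion and the single known integral $\int\frac{d\varrho(y)}{x-y}=\tfrac{x}{2}$, and so sits better in a paper that is not otherwise invoking boundary-value theory at this point. The closed-form summation of $\sum_n e^{-nt/2}\phi_n(x)\phi_n(y)$ is essentially identical in both (the paper expands each sine into two exponentials, giving four geometric series; you use product-to-sum to get two, which collapse the same way), and your algebra checks out: the numerator simplifies to $(1-e^{-t})(\cos(\theta-\phi)-\cos(\theta+\phi))=2\sin\theta\sin\phi\,(1-e^{-t})$, cancelling the prefactor exactly as claimed. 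One small presentational point: you rely implicitly on the completeness of $\{\phi_n\}$ in $L^2(d\varrho)$ both for the spectral representation and for the orthogonality step; this classical fact should be stated rather than left tacit.
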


\noindent{\it Remark 1.} The above formula is the same as the one in 
\cite[page 462]{Bia1997}. Lemma \ref{lem:diagonalization} shows that Biane's $q$-Ornstein Uhlenbeck generator coincides (for $q=0$) with the convolution kernel $(\ref{Kdef})$.\\

\noindent{\it Remark 2.} If we neglect the curvature of the semicircle,  i.e. $\gamma_i$'s
are equidistant on scale $1/N$, \nc and  formally extend the operator $\cU$ to $\bZ$, 
we obtain  the following translation invariant operator $\cU^\infty$ on $\ell^2(\bZ)$:
\be\label{Uinfdef}
    (\cU^\infty \bu)_j : = \sum_{i\in \bZ\setminus\{ j\}} \frac{ N }{|i-j|^2} (u_j-u_i), \qquad \bu\in \ell^2(\bZ).
\ee
The Fourier transform of the kernel $1/k^2$ is given by
$
   \sum_{k\in \bZ\setminus\{ 0\}} \frac{1}{k^2} (1- e^{-ikp}) = \frac{\pi}{12}|p|,
$
i.e. 
$$
   \wh{ (\cU^\infty \bu)}(p) = c_0  N |p| \wh u(p), \qquad  c_0: =  \frac{\pi}{12}, \quad p\in [-\pi, \pi], 
$$
where 
$
   \wh u(p): = \sum_{k\in \bZ} e^{-ipk} u_k$ and $u_k = \frac{1}{2\pi}\int_{-\pi}^\pi e^{ipk} \wh u(p) \rd p.
$
Therefore the heat kernel of $\cU^\infty$ can be computed by Fourier transform for any $t\ge 0$:
\be\label{Uinfty}
   e^{-t\cU^\infty}(0,k) = \frac{1}{2\pi} \int_{-\pi}^\pi e^{-tc_0 N |p|} e^{-ikp} \rd p 
  = \frac{1}{N} \frac{2c_0 t}{(tc_0)^2 + (k /N  )^2}\Big( 1- (-1)^ke^{-\pi c_0 t}\Big).
\ee
The operator  $\cU^\infty$ is the discrete analogue of the operator $N \sqrt{-\Delta}$. 
 The heat kernel $e^{-t N \sqrt{-\Delta}}$
is closely related to $p_t(x,y)$, but, compared with \eqref{pt},  there are substantial differences near the edges and also when $t$ is large.

\begin{proof}[Proof of Lemma \ref{lem:diagonalization}]
Let $U_n$ be the Chebishev polynomial of the second kind, defined by
$$
U_N(\cos\theta)=\frac{\sin((n+1)\theta)}{\sin\theta}.
$$
and we defined $P_n(x)=U_n(x/2)$. The proof relies on the following diagonalization:
for any $n\geq 0$, 
\begin{equation}\label{eqn:diago}
K P_n =\frac{n}{2}P_n.
\end{equation}
For $n=0$, (\ref{eqn:diago}) is obvious. For $n=1$ this is the classical equilibrium relation
\begin{equation}\label{eqn:initialmiracle}
\int_{-2}^2\frac{\rd \varrho(y)}{x-y}=\frac{x}{2}.
\end{equation}
The following recursion relation is classical:
\begin{equation}\label{eqn:rec}
P_{n+1}(x)=x P_n(x)-P_{n-1}(x).
\end{equation}
This yields, assuming that (\ref{eqn:diago}) holds up to the index $n$, 
\begin{align*}
K P_{n+1}(x)&=x K P_n(x)-K P_{n-1}(x)
+\int_{-2}^2 \frac{P_n(y)}{x-y}\rd \varrho(y)\\
&=x \frac{n}{2}P_n(x)-\frac{n-1}{2}P_{n-1}(x)
+P_n(x)\int_{-2}^2 \frac{\rd\varrho(y)}{x-y}
+\int_{-2}^2 \frac{P_n(y)-P_n(x)}{x-y}\rd \varrho(y)\\
&=\frac{n+1}{2} x P_n(x)-\frac{n-1}{2}P_{n-1}(x)
+\int_{-2}^2 \frac{P_n(y)-P_n(x)}{x-y}\rd\varrho(y)
\end{align*}
where we used (\ref{eqn:initialmiracle}).
Hence  (\ref{eqn:diago})  will be proved with $n+1$ instead of $n$ if
\begin{equation}\label{eqn:sufficient}
\int_{-2}^2 \frac{P_n(x)-P_n(y)}{x-y}\rd\varrho(y)=P_{n-1}(x)
\end{equation}
holds.
To prove  (\ref{eqn:sufficient}) for any $n$, one can again proceed by induction. This formula is obviously true for $n=0,1$.
Assuming it is true up to index $n$, with (\ref{eqn:rec}) we get 
$$
\int_{-2}^2 \frac{P_{n+1}(x)-P_{n+1}(y)}{x-y}\rd\varrho(y)
=
x P_{n-1}(x)+\int P_{n-1}(y)\rd\varrho(y)-P_{n-2}(x)=P_n(x)
$$
where we used that $P_{n-1}$ is orthogonal to 1 with respect to the semicircle measure.
 This concludes the proof of (\ref{eqn:sufficient}) and therefore (\ref{eqn:diago}) for all  $n$.

The conclusion of the lemma now easily follows:
the kernel, defined through $(e^{-t K } f)(x) = \int p_t(x,y) f(y)\rd\varrho(y)$, can be written in the eigenbasis as 
$
p_t(x,y)=\sum_{n\geq 0} e^{-\frac{n}{2}t}P_n(x)P_n(y).
$
Using the representation $P_n(2\cos\theta)=\sin((n+1)\theta)/\sin\theta$ and expanding the $\sin$ to get four geometric series concludes the proof.
\end{proof}

We record some properties of the kernel \eqref{pt} that easily follow 
from the explicit formula and from the asymptotics $\gamma_j+2\sim (j/N)^{2/3}$
for $j\le N/2$,
\be\label{pupp}
   p_t (\gamma_i, \gamma_j)\le \frac{Ct}{t^2 + (\gamma_i-\gamma_j)^2}, \qquad i\in \llbracket \alpha N, (1-\al)N\rrbracket,  \quad j\in \llbracket 1, N \rrbracket, \quad t\le 1,
\ee
\be\label{psum}
  \sum_j  p_t (\gamma_i, \gamma_j)
\le \sum_j \frac{Ct}{t^2 + (\gamma_i-\gamma_j)^2}
 \le C, \qquad i\in \llbracket \alpha N, (1-\al)N\rrbracket,    \quad t\le 1,
\ee
\be\label{pderi}
 |\pt_x  p_t (\gamma_i, x )|\le
\frac{Ct|\gamma_i-x|}{ (t^2 + (\gamma_i-x)^2)^2}
 \qquad i\in \llbracket \alpha N, (1-\al)N\rrbracket,  \quad t\le 1,
\ee
where the constant $C$ depends only on the positive parameter $\al>0$.

\nc

\subsection{The homogenization result.}\
For  any $\delta\in \R$  and $E\in (-2,2)$  we define the index set
\be\label{definICE}
I (\delta)=I( E, \, \delta):= \{i:\;  \left|\gamma _i -E\right|\le N^{-1+\delta }\}.
\ee
The main result of this section is the following theorem.  

 \begin{theorem}\label{T1}
 Suppose $\bx(t)$ and $\by(t)$ are two DBM 
 driven by the same Brownian motions (see (\ref{DBMdynamics})) and with initial data given by the spectra of two generalized Wigner matrices.
There exist positive constants 
 $\tau_0\le 1/4$,  $\delta_1,\de_2,\de_3$ such that 
 for any  $t\in[ 2t_0,1]$, with $t_0:=N^{-\tau_0}/2$, 
  and $|E|<2-\kappa$ with $\kappa>0$  we have 
 \be\label{zhimao}
\P\left( \max_{ i\in I (E, \delta_1)} \left|Nx_i(t)-N  \Big (y_i(t)  + 
 (\Psi_{ t-t_0} \b x (t_0))_i -  (\Psi_{t-t_0} \b y (t_0))_i \Big)  \right|\ge N^{-\delta_2} \right) \le N^{-\delta_3},
\ee
where $ \Psi_s$ is a linear operator defined by 
\be\label{Psidef}
(\Psi_s \b x)_i := e^{-s/2}  \frac{1}{N}
 \sum_j p_s(\gamma_{i}, \gamma_j)  x_j.
\ee
\end{theorem}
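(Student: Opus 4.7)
The plan is to start from the regularized system established in Section~3.1: on a set $\mathcal{G}$ of probability $\geq 1 - CN^{-1}$ one has $Ne^{t/2}(x_i(t) - y_i(t)) = v_i(t) + O(N^{-1})$, where $\bv(t)$ satisfies the parabolic equation $\partial_t \bv = -\cB(t)\bv$ with $\cB$ defined in (\ref{Be}) and initial data $\bv(t_0) = N(\bx(t_0) - \by(t_0))$. Unwinding the prefactors $e^{\pm s/2}$ in the definition (\ref{Psidef}), the conclusion (\ref{zhimao}) reduces to showing that, uniformly for $i \in I(E,\delta_1)$ and outside an exceptional set of probability $\le N^{-\delta_3}$,
\begin{equation*}
v_i(t) \,=\, \frac{1}{N}\sum_j p_{t-t_0}(\gamma_i,\gamma_j)\, v_j(t_0) + O(N^{-\delta_2}).
\end{equation*}

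The core of the argument is to interpolate between the random discrete evolution above and the continuous heat flow of $K$ through the deterministic discrete operator $\cU$ from (\ref{Ufdef}). Let $\bu(t)$ solve $\partial_t \bu = -\cU \bu$ with the matching initial data $\bu(t_0) = \bv(t_0)$. A Duhamel expansion gives
\begin{equation*}
(\bv - \bu)(t) \,=\, \int_{t_0}^{t} U_\cB(s,t)\,\bigl(\cU - \cB(s)\bigr)\,\bu(s)\, ds,
\end{equation*}
where $U_\cB$ is the semigroup of (\ref{ucAdef}). Since $U_\cB$ is an $\ell^\infty$ contraction, it suffices to bound $\|(\cU - \cB(s))\bu(s)\|_\infty$ at bulk indices, uniformly in $s \in [t_0,t]$. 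Writing $A_{ij} = 1/(N(\gamma_i-\gamma_j)^2)$, I would estimate $((\cU - \cB(s))\bu(s))_i = \sum_{j \ne i}(A_{ij} - B_{ij}(s))(u_i(s) - u_j(s))$ by a short-range/long-range split at a scale $M = N^\nu$. On the long range $|i - j| > M$, the rigidity estimate (\ref{rigi}) shows that $|A_{ij} - B_{ij}(s)|$ is suppressed by $|\gamma_i - \gamma_j|$, and direct summation gives a polynomial gain $N^{-\delta}$. On the short range $|i - j| \le M$ the coefficients are singular, but $|u_i(s) - u_j(s)|$ is small by a Hölder regularity estimate for solutions of (\ref{ve1}), established via a discrete De~Giorgi--Nash--Moser argument of the type used in \cite{ErdYau2012}; this Hölder decay compensates exactly for the $(\gamma_i - \gamma_j)^{-2}$ singularity.

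Once $\bv(t) \approx \bu(t)$ has been proved, I would pass from $\bu(t)$ to the target $\tfrac{1}{N}\sum_j p_{t-t_0}(\gamma_i,\gamma_j) v_j(t_0)$ using Lemma~\ref{lem:diagonalization}: $\cU$ is the discretization of $K$ on the grid $\{\gamma_j\}$, and $p_t$ is the continuous heat kernel of $K$. The discrepancy is a Riemann-sum type error, which can be estimated using the pointwise bounds (\ref{pupp})--(\ref{pderi}) together with the regular spacing of the classical locations $\gamma_j$ in the bulk, and is again $O(N^{-\delta_2})$ for suitable choices of parameters.

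The main obstacle is the Hölder regularity input. Unlike classical divergence-form parabolic theory, here the coefficients $B_{ij}(t)$ are random, time-dependent, and genuinely singular near the diagonal, so the level repulsion bound of Corollary~\ref{cor:Wegner} and the short-time regularization performed in Section~3.1 are both essential. Quantifying the Hölder exponent and tracking its dependence on the scale $s - t_0$ — in particular determining how small $\tau_0$ must be chosen so that all error terms beat the target $N^{-\delta_2}$ — is the delicate step that ultimately dictates the admissible values of the constants $\tau_0, \delta_1, \delta_2, \delta_3$ in (\ref{zhimao}).
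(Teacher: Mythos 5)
Your overall strategy---interpolate between the random discrete flow $U_\cB$ and a deterministic kernel via Duhamel, split short and long ranges, feed in rigidity and H\"older regularity---is in the right family of ideas, and the comparison operator $\cU$ from (\ref{Ufdef}) is a natural choice. But the specific Duhamel bound you propose has a gap that is the crux of the whole homogenization argument.

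You write $(\bv - \bu)(t) = \int_{t_0}^t U_\cB(s,t)\,(\cU-\cB(s))\,\bu(s)\,ds$ and then control the integrand by $\|(\cU-\cB(s))\bu(s)\|_\infty$, using only that $U_\cB$ is an $\ell^\infty$ contraction. This norm is simply not integrable in $s$ near $t_0$. For $|i-j| \lesssim N^\xi$ the coefficient gap $|A_{ij}-B_{ij}(s)|$ is of the same size as $A_{ij}\sim N$ itself (rigidity gives no gain there), and for $s$ close to $t_0$ the solution $\bu(s)$ has not smoothed at all---$\bu(t_0)=\bv(t_0)=N(\bx(t_0)-\by(t_0))$ fluctuates on the microscopic scale, so the oscillations $|u_i(s)-u_j(s)|$ for $|i-j|\lesssim N^\xi$ are of order $\|\bu\|_\infty$, not small. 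The sum is then $\sim N\|\bu\|_\infty$, and integrating in $s$ gives nothing useful. You gesture at this in your last paragraph (``tracking the dependence on $s-t_0$''), but the whole difficulty is precisely here, and an $\ell^\infty$ bound cannot resolve it: what is needed is a bound in a \emph{weaker} norm that exploits the smoothness of the propagator itself.

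This is where the paper's proof of Theorem~\ref{thm:appr1} diverges from yours. It does not bound $\|(\cU-\cB)\bu\|_\infty$ (or $\|(\cU-\cB)\bv\|_\infty$). Instead it writes $e_{\bv(t)}-f(t)=\Phi-\Omega$ and treats $\Phi-\Omega$ as a space-time integral against $p_{t-s}$, then splits space-time into regions: (a) for $|x-y|\ge\ell$, $\Phi$ and $\Omega$ are symmetrized and their leading parts \emph{cancel}, something invisible at the level of your pointwise decomposition; (b) for $|x-y|\le\ell$ and $s\in[0,t-t_1]$, a Cauchy--Schwarz/energy argument (the quantity $W_2^D$ in \eqref{W2}) pairs the singular error against the smoothness of $p_{t-s}(z,\cdot)$, giving a bound that is \emph{macroscopic} ($\lesssim N^\xi\|\bv\|_\infty^2$) rather than the divergent $\ell^\infty$ bound; (c) only for $s\in[t-t_1,t]$ is H\"older regularity (Lemma~\ref{holder}) invoked, and it is applied to the \emph{random} solution $\bv$ of \eqref{ve1}, which by that time has already smoothed. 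Your proposal applies H\"older regularity to $\bu$, which solves $\partial_t\bu=-\cU\bu$, while citing the De~Giorgi--Nash--Moser input from \cite{ErdYau2012singlegap} that is stated for \eqref{ve1}; these are different equations, and in any case the regularity is not available near $s=t_0$. Finally, the paper's proof of Theorem~\ref{T1} also needs a bulk/edge decomposition $\bv=\bw+\bu$ and an $L^1\to L^\infty$ heat-kernel decay bound (Proposition~\ref{decayB}) to control the $\|\bv\|_\infty\sim N^{1/3+\xi}$ blow-up coming from edge indices; you do not address this. In short: the Duhamel ansatz is sound, but the hard work---cancellation in the long range, the energy/Cauchy--Schwarz bound in the early-time region, the correct object to which H\"older regularity applies, and the edge cutoff---is missing.
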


The main tool to prove Theorem \ref{T1} is a homogenization result. In order to state it, 
we first construct a partition of unity as follows.  
 For any $j=1,2,\ldots, N$ let 
$$
\tilde g_j : = \min (\gamma_{j+1}  - \gamma_j, \gamma_{j}-\gamma_{j-1})  \sim N^{-2/3}(\wh j)^{-1/3}
$$
 with the convention that $\gamma_{N+1} = \infty$, $\gamma_{ 0} = - \infty$. 
For all even indices $j$ define a smooth  function $\xi_j$ supported in $[\gamma_{j-1}+\tilde g_j/100, \gamma_{j+1}
-  \tilde g_j/100]$ 
and   $\xi_j(x) =1$ for $ |x-\gamma_j| \le  \tilde g_j/100 $
    such that 
\be\label{normal}
 \int_{\gamma_{j-1}}^{\gamma_j}  \xi_j(x)  \rd\varrho(x) 
= \int_{\gamma_{j}}^{\gamma_{j+1}}  \xi_j(x)  \rd\varrho(x)   \nc
 = \frac{1}{2N}.
\ee
For odd indices $j$, we
define $\xi_j$ by $\xi_j(x) = 1-\xi_{j-1}(x)$ 
for $\gamma_{j-1} \le x \le \gamma_j$ and
by $\xi_j(x) = 1-\xi_{j+1}(x)$ 
for $\gamma_{j} \le x \le \gamma_{j+1}$.  We thus
have $\xi_j (x) + \xi_{j+1}(x) =1$ for any $j$ whenever  $x\in [\gamma_j, \gamma_{j+1}]$. 
In particular,
$\sum_j  \xi_j(x) = 1$.  Notice that
by construction 
\be\int   \xi_j(x) \rd\varrho(x)  = \frac{1}{N},\qquad 
\mbox{supp} \,\xi_j\subset [\gamma_{j-1}+\tilde g_j/100, \gamma_{j+1}
-  \tilde g_j/100]
\label{normxi}
\ee  
hold for all $j=1,2,\ldots, N$. 
For any discrete function  (i.e. vector)  ${\bf v}: i\to v_i$ define 
 its continuous extension by
\be\label{e-def}
 e_{\bf v}  (x):= \sum_j \xi_j(x) v_j.
\ee
 Notice that $e_\bv(\gamma_i)= v_i$.

The main homogenization result is the  following theorem.  It is
formulated for the parabolic equation \eqref{ve1} with general
random coefficients $B_{ij}(t)$ under certain conditions. Later we will verify  
that rigidity and level repulsion for $\bx(t)$ and $\by(t)$  imply
that $B_{ij}(t)$ defined in \eqref{Be} satisfy these conditions.

\begin{theorem}\label{thm:appr1}  
Fix three small positive constants, $\xi, \rho, \al$. Choose $\tau_0\in [0,  \frac{1}{4}] $,
 and set  $t_0:= N^{- \tau_0}/2$.
Consider the equation \eqref{ve1} with time dependent random coefficients  $B_{ij}(s)$
 in the time interval $s\in [ t_{in}, t_{end}]$ with $t_0< t_{end}-t_{in}\le C$. 
Denote by  $\cF= \cF_{\xi}$ the event on which the following two bounds hold:
\be\label{B1}
  \Big| B_{ij}(s) - \frac{1}{N(\gamma_i -\gamma_j)^2}\Big| \le \frac{ N^{ -\frac{2}{3}+\xi}\big[ (\wh i)^{-1/3}
+ (\wh j)^{-1/3}\big] \nc }{N(\gamma_i -\gamma_j)^3} ,  
 \qquad \forall i, j, \; |i-j|\ge N^\xi, \quad \forall s\in [ t_{in}, t_{end} ],
\ee
\be\label{B2}
    B_{ij}(s) \ge \frac{N^{-\xi}}{N(\gamma_i -\gamma_j)^2}, \qquad \forall i,j, \quad \forall
 s\in [t_{in}, t_{end}].
\ee
Furthermore we assume that 
\be\label{EB}
   \max_{ij} \max_{s\in [t_{in}, t_{end}]} \E \; \big[ \cF  |B_{ij}(s)|\big]
 \le \frac{N^{\rho}}{N|\gamma_i-\gamma_j|^2}.
\ee
If $\xi$  and $\rho$ are small enough, then 
there are  constants $c_4,  c_5> 0$,
so that the following holds.
For any fixed space-time
point  $(t, i)\in [ t_{in}+t_0, t_{end} \nc]\times  \llbracket \alpha N,  (1- \alpha) N \rrbracket$
there is an event $\cS\subset\cF $ 
with $\P(\cF\setminus \cS)\le N^{-c_4}$ so that on $\cS$ we  have 
\be\label{eq:appr}
     \Big|     (U_\cB(t_{in},  t) \bv)_{i} -  
      \int p_{ t-t_{in}\nc}(\gamma_{i}, y) 
       e_{\bv} (y)\rd\varrho(y) \Big| \le N^{-c_5}
     \| \bv\|_{\infty}
\ee
for any   vector $\bv\in \bR^N$
and for any sufficiently large $N\ge N_0(\al)$.  
Note that the set $\cS$ depends on the choice $(t, i)$, but the exponents $c_4, c_5$ do not.
\end{theorem}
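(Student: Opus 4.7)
The plan is to compare the discrete evolution with its continuum counterpart via a Duhamel argument. Set $g(s, x) := (e^{-(s - t_{in}) K} e_{\bv})(x)$ and $g_i(s) := g(s, \gamma_i)$, so $g_i(t_{in}) = v_i$. Writing $w_i(s) := (U_{\mathcal{B}}(t_{in}, s)\bv)_i$, the error $f_i(s) := w_i(s) - g_i(s)$ vanishes at $s = t_{in}$ and satisfies
\begin{equation*}
\pt_s f_i(s) = -(\mathcal{B}(s) f)_i(s) + R_i(s), \quad R_i(s) := (K g(s, \cdot))(\gamma_i) - \sum_j B_{ij}(s)\bigl(g_i(s) - g_j(s)\bigr).
\end{equation*}
Since $\mathcal{B}(s)$ is symmetric with nonnegative off-diagonal entries by \eqref{B2}, the propagator $U_{\mathcal{B}}$ is an $L^\infty$-contraction, so Duhamel gives $|f_i(t)| \leq \int_{t_{in}}^t \|R(s)\|_\infty \dd s$. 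The task reduces to bounding the residual $R$ with high probability.

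I decompose $R_i = R_i^{(1)} + R_i^{(2)}$ through the deterministic intermediate operator $\cU$ of \eqref{Ufdef}:
\begin{equation*}
R_i^{(1)} := (Kg)(\gamma_i) - (\cU g)_i, \qquad R_i^{(2)} := \sum_j \Bigl[\tfrac{1}{N(\gamma_i - \gamma_j)^2} - B_{ij}(s)\Bigr]\bigl(g_i - g_j\bigr).
\end{equation*}
The term $R_i^{(1)}$ is a purely deterministic quadrature error for the principal-value integral defining $K$: using the explicit bounds \eqref{pupp}--\eqref{pderi} to control the first- and second-order spatial derivatives of $g(s,\cdot)$ by $\|\bv\|_\infty$ times negative powers of $s - t_{in}$, the standard Riemann-sum analysis on the quantile grid gives a bound of order $N^{-1+C\xi} \|\bv\|_\infty$ times mild singularities in $s - t_{in}$. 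For $R_i^{(2)}$ I split the sum at $|i - j| = N^\xi$: on the far regime, \eqref{B1} paired with $|g_i - g_j| \leq |\gamma_i - \gamma_j|\,\|g'(s, \cdot)\|_\infty$ produces a power-saving bound; on the close-diagonal regime, I Taylor-expand $g_j - g_i$ to second order around $\gamma_i$, so that the first-order contribution reduces to $g'(\gamma_i)\sum_j[\,\cdot\,](\gamma_i - \gamma_j)$ and benefits from the near-antisymmetry of the quantile spacing, while the second-order contribution is bounded by $\|g''\|_\infty \sum_j B_{ij}(\gamma_i-\gamma_j)^2$. Both near-diagonal sums are controlled in expectation by \eqref{EB}, and Markov's inequality upgrades them to pointwise bounds on an event $\cS \subset \cF$ with $\P(\cF \setminus \cS) \leq N^{-c_4}$.

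\textbf{The main obstacle} is the lack of smoothness of $e_\bv$, which is only $L^\infty$: the derivative bounds $\|g'(s,\cdot)\|_\infty \lesssim \|\bv\|_\infty (s - t_{in})^{-1}$ and $\|g''(s,\cdot)\|_\infty \lesssim \|\bv\|_\infty (s - t_{in})^{-2}$ deteriorate as $s \to t_{in}$, so the pointwise bound on $R(s)$ blows up near the initial time. To close the Duhamel integral one splits $[t_{in}, t]$ into $[t_{in}, t_{in} + t_0]$, handled by the crude contraction bound $\|f\|_\infty \leq 2\|\bv\|_\infty$, and $[t_{in} + t_0, t]$, where the refined residual analysis applies after a smoothing time at least $t_0$ has regularized $g$. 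The restriction $\tau_0 \leq 1/4$ allows one to trade the factor $N^{C\tau_0}$ arising from the short-time singularity against the $N^{-1}$ gain from spatial discretization and operator comparison, leaving a net power-saving $N^{-c_5}$. A secondary obstacle is that no deterministic upper bound on $B_{ij}$ is available at close distances; this is precisely where the level-repulsion-type expectation bound \eqref{EB} becomes indispensable, and propagating it through a union bound over $O(N^\xi)$ near-diagonal indices requires $\xi$ small enough that the probability loss stays subpolynomial.
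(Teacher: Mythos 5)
Your Duhamel uses the discrete propagator: $f_i(t) = \int_{t_{in}}^t [U_\cB(s,t)R(s)]_i\,\rd s$, with the residual $R$ built from the continuum solution $g(s,\cdot) = e^{-(s-t_{in})K}e_\bv$. You correctly identify the obstacle --- $\|g^{(n)}(s,\cdot)\|_\infty \lesssim (s-t_{in})^{-n}\|\bv\|_\infty$ forces $\|R(s)\|_\infty$ to blow up as $s\to t_{in}$ --- but the fix you propose does not repair it. The ``crude contraction bound'' on $[t_{in}, t_{in}+t_0]$ gives no power saving: at the endpoint $t = t_{in}+t_0$, which is explicitly allowed by the statement, the \emph{entire} Duhamel integral falls in the bad window, and your estimate collapses to $|f_i(t)|\le 2\|\bv\|_\infty$, trivially true and useless. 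For larger $t$ the situation is no better: once you bound $|f_i(t)| \leq \int_{t_{in}}^t \|R(s)\|_\infty\,\rd s$, the integrand is not integrable near $s = t_{in}$; passing to the operator norm has thrown away the cancellation inside $\int U_\cB(s,t)R(s)\,\rd s$ that would save you, and $U_\cB(s,t)$ is merely an $L^\infty$-contraction, so there is no mechanism to recover the lost power of $N$ from the propagator.

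The paper avoids this by running Duhamel in the opposite direction. Instead of evolving the error $w-g$ under $\cB$, it compares $e_{\bv(s)}$ --- the extension of the \emph{discrete} trajectory $\bv(s) = U_\cB(t_{in},s)\bv$ --- with $f_s := e^{-(s-t_{in})K}e_\bv$, obtaining
\[
e_{\bv(t)}-f_t = \int_{t_{in}}^t e^{-(t-s)K}\bigl[K e_{\bv(s)} - \cR_s\bv(s)\bigr]\,\rd s .
\]
Two things change. First, the propagator in Duhamel is now the continuum semigroup $e^{-(t-s)K}$, which genuinely \emph{smooths}; for $s$ far from $t$ --- exactly your trouble zone --- the paper exploits this by Cauchy--Schwarz and the Sobolev inequality \eqref{sobolev1}, with one factor bounded by the dissipated energy $\frac{1}{N}\sum_j\bigl[v_j^2(t_{in})-v_j^2(s)\bigr]\le\|\bv\|_\infty^2$, so that no pointwise derivative bound on the continuum solution is ever needed. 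Second, the residual now involves the \emph{discrete} solution $\bv(s)$, and for $s$ near $t$ (where $e^{-(t-s)K}\approx\Id$) its local oscillations are controlled not by smoothness of $g$ but by the discrete H\"older estimate imported from \cite{ErdYau2012singlegap}, together with the spatial decay of $p_{t-s}$ away from $\gamma_i$. Your residual never sees $\bv(s)$, so the H\"older input cannot enter your argument at all; the pieces you do have --- the decomposition through $\cU$, the near-diagonal Taylor expansion, the Markov upgrade from \eqref{EB} --- are in the spirit of the paper's Step~1, but on their own they cannot close the short-time gap.
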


{\it Remark.}   Our proof  can easily be extended to hold for any $\tau_0<1/3$, but
then the smallness of $\xi, \rho, c_4, c_5$ will depend on how close $\tau_0$ is to 1/3.
However, even the 1/3 threshold for the exponent $\tau_0$ is not 
 optimal, it is
due to various cutoffs that  can  be improved with more work.   We do not pursue this direction 
since, for the purpose of this paper,
 only  the small $\tau_0$ regime is needed. 

The following statement asserts that rigidity and level repulsion
estimates on the DBM trajectories ensure that the conditions
 in Theorem~\ref{thm:appr1} hold for $B_{ij}$ given 
in \eqref{Be}  with a high probability provided that $\tau_0$ is small.

\begin{theorem}\label{thm:appr2} 
 There exist positive constants $c_4, c_5>0$ and $\tau_0>0$ \nc
such that the following holds.  Fix   $ \alpha \in (0,1)$,  
 set $t_0:= N^{-\tau_0}/2$. 
Consider the equation \eqref{ve1} with coefficients $B_{ij}(s)$  given by  two coupled DBM $\bx(s)$
and $\by(s)$, $s\in [0,1]$,
 as defined in \eqref{Be}. Then
for any space-time
point  $(t, i)\in [2 \nc t_0,1]\times  \llbracket \alpha N,  (1- \alpha) N \rrbracket$
 there exists a set $\cS= \cS(t, i)$ in the
joint probability space of the coupled DBM's  $\bx(s)$
and $\by(s)$, with $\P (\cS) \ge 1-   N^{-c_4}$,
such that on the set $\cS$ \eqref{eq:appr} holds
for any $ \bv \nc \in \bR^N$ 
and $N\ge N_0(\al, \tau_0)$.
\end{theorem}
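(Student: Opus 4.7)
The strategy is to apply Theorem~\ref{thm:appr1} with $t_{in}=t_0$ and $t_{end}=1$ to the coefficients $B_{ij}(s)$ defined in \eqref{Be}. It therefore suffices to produce a high probability event $\cF$ (in the joint probability space of $\bx(s),\by(s)$) on which the three hypotheses \eqref{B1}, \eqref{B2}, \eqref{EB} hold with small parameters $\xi,\rho$, and then invoke Theorem~\ref{thm:appr1} to extract the subset $\cS\subset \cF$ for a fixed space–time point $(t,i)$.

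\textbf{Verification of \eqref{B1}.} On the rigidity event (\ref{rigi}), applied with a small exponent $\xi'\ll \xi$ and simultaneously for both $\bx(s)$ and $\by(s)$ in $s\in[t_0,1]$, we have $|x_i(s)-\gamma_i|+|y_i(s)-\gamma_i|\le N^{-2/3+\xi'}(\widehat i)^{-1/3}$. For $|i-j|\ge N^\xi$ this forces
\[
 |x_i-x_j-(\gamma_i-\gamma_j)|+|y_i-y_j-(\gamma_i-\gamma_j)| \;\le\; C N^{-2/3+\xi'}\bigl[(\widehat i)^{-1/3}+(\widehat j)^{-1/3}\bigr] \;\ll\; |\gamma_i-\gamma_j|,
\]
since $|\gamma_i-\gamma_j|\gtrsim N^{\xi-1}(\widehat i\wedge\widehat j)^{-1/3}$ in the bulk and at the edges. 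Writing $x_i-x_j+\epsilon_{ij} = (\gamma_i-\gamma_j)(1+O(N^{\xi'-\xi}))$ (and similarly for $\by$, noting $\epsilon=N^{-3C_0-100}$ is negligible), a first order Taylor expansion of $(N(x_i-x_j+\epsilon_{ij})(y_i-y_j+\epsilon_{ij}))^{-1}$ around $(N(\gamma_i-\gamma_j)^2)^{-1}$ produces exactly the error bound \eqref{B1}, after renaming $\xi$.

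\textbf{Verification of \eqref{B2} and \eqref{EB}.} These are level-repulsion statements. By Corollary \ref{cor:Wegner} applied to both $\bx$ and $\by$ (the same one used in Section~\ref{sub:regul} to control \eqref{lr}), for all $s\in[t_0,1]$ with $s=N^{-\tau}$,
\[
   \P\Bigl(\, x_i(s)-x_j(s)+\epsilon_{ij} \,\le\, \eta \, (\gamma_i-\gamma_j)\Bigr) \;\le\; C\eta\, N^{C_0\tau_0+\xi}
\]
and likewise for $\by$. Taking $\eta=N^{-\xi}$ and a union bound over $i,j$ and over a time grid (with continuity in $s$ filling the gaps) gives \eqref{B2} on an event of probability at least $1-N^{-c_6}$ for $\tau_0$ small enough (since $C_0\tau_0\ll \xi$). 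For \eqref{EB}, Cauchy–Schwarz yields
\[
  \E\bigl[\cF\,|B_{ij}(s)|\bigr] \le \frac{1}{N}\bigl(\E(x_i-x_j+\epsilon_{ij})^{-2}\bigr)^{1/2}\bigl(\E(y_i-y_j+\epsilon_{ij})^{-2}\bigr)^{1/2}\le \frac{N^{C_0\tau_0+\xi}}{N(\gamma_i-\gamma_j)^2},
\]
where the last inequality follows by splitting the expectation according to the dyadic scales of $x_i-x_j$ and again invoking level repulsion. Choosing $\tau_0$ small this is bounded by $N^\rho/(N(\gamma_i-\gamma_j)^2)$, establishing \eqref{EB}.

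\textbf{Conclusion.} Define $\cF$ as the intersection of the rigidity event, the uniform level-repulsion lower bound for \eqref{B2}, and a high-probability version of the moment bound \eqref{EB} (via Markov). Then $\P(\cF)\ge 1-N^{-c_6}$, and Theorem~\ref{thm:appr1} applied to this $\cF$ produces an event $\cS(t,i)\subset \cF$ with $\P(\cF\setminus\cS)\le N^{-c_4}$ on which the homogenization estimate \eqref{eq:appr} holds, yielding Theorem~\ref{thm:appr2} with $\P(\cS)\ge 1-N^{-c_4'}$ for some $c_4'>0$.

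\textbf{Main obstacle.} The delicate point is ensuring that the exponents fit together: the level-repulsion penalty $N^{C_0\tau_0}$ in both \eqref{B2} and \eqref{EB} must be absorbable into the $N^{-\xi}$ in \eqref{B2} and the $N^\rho$ in \eqref{EB}, which forces $\tau_0$ to be small. This is the reason Theorem~\ref{thm:appr2} only holds for $\tau_0$ below an explicit threshold and is the mechanism underlying the $\tau_0\le 1/3$ restriction mentioned in the remark. A secondary technical issue is that all bounds must hold uniformly in $s\in[t_0,1]$, handled by a standard grid plus continuity argument as in Lemma 9.3 of \cite{ErdYau2012singlegap}.
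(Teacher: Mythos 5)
Your high-level strategy is exactly the paper's: apply Theorem~\ref{thm:appr1} with $t_{in}=t_0$, $t_{end}=1$ after verifying \eqref{B1}, \eqref{B2}, \eqref{EB} on a high-probability event. Your verifications of \eqref{B1} (Taylor expansion on the rigidity event) and \eqref{EB} (Cauchy--Schwarz plus the level-repulsion Corollary~\ref{cor:Wegner}) match the paper in substance.

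However, there is a genuine error in your treatment of \eqref{B2}. You claim \eqref{B2} is a level-repulsion statement, but the inequality goes the other way. Since $B_{ij}(s)=\bigl(N(x_i-x_j+\e_{ij})(y_i-y_j+\e_{ij})\bigr)^{-1}$, the lower bound $B_{ij}(s)\ge N^{-\xi}/(N(\gamma_i-\gamma_j)^2)$ is equivalent to the \emph{upper} bound $|x_i-x_j+\e_{ij}|\,|y_i-y_j+\e_{ij}|\le N^\xi(\gamma_i-\gamma_j)^2$ on the gap products. This is not a small-gap (Wegner) estimate at all; it follows deterministically on the rigidity event, because $|x_i-x_j|\le |\gamma_i-\gamma_j|+N^{-2/3+\xi'}[(\wh i)^{-1/3}+(\wh j)^{-1/3}]\le CN^{\xi'}|\gamma_i-\gamma_j|$ for all $i\ne j$ (including consecutive ones), and $\e$ is far below the rigidity scale. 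Indeed, the paper proves \eqref{B1} and \eqref{B2} simultaneously from the single rigidity event $\wt\cF_\xi=\bigcap_i\cG_{\xi,\bx,i}\cap\cG_{\xi,\by,i}$, with no level repulsion and no union bound. Quantitatively your route would also fail: with $\eta=N^{-\xi}$ your claimed per-pair probability bound $C\eta N^{C_0\tau_0+\xi}=CN^{C_0\tau_0}$ is not even less than $1$, and even a quadratic-in-$\eta$ version (what Corollary~\ref{cor:Wegner} actually gives) cannot survive a union bound over $\sim N^2$ pairs. Level repulsion is only needed where you also use it, namely for the moment bound \eqref{EB} when $|i-j|$ is small; for $|i-j|\ge N^\xi$ even that reduces to rigidity, as in the paper.
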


{\it Proof.}  Choose $t_{in}:= t_0$, $t_{end}:=1$ in Theorem~\ref{thm:appr1}. 
The estimates \eqref{B1} and \eqref{B2}  directly
follow from the rigidity bound \eqref{rigi}  on the set 
$$
  \wt\cF_\xi:= \bigcap_i \cG_{\xi, \bx, i}\cap \cG_{\xi, \by, i},
$$
thus $\cF_\xi\supset \wt\cF_\xi$
(here we used the fact that the parameter 
  $\e$  in the definition  of $B_{ij}$
is much smaller than the rigidity threshold $N^{-1+\xi}$).
 From \eqref{rigi} $ \wt\cF_\xi$  has  a very high probability, 
 $\P(\wt\cF_\xi)\ge 1-N^{-D}$
for any $D$ (note that $\cF$ and $\wt\cF$ are independent of $\tau_0$). 
For \eqref{EB} 
 we claim that
$$
  \E \cF_\xi  |B_{jk}(s)| \le \frac{N^{C_0\tau_0+3\xi}}{N|\gamma_j-\gamma_k|^2}
$$
holds for any $\xi>0$. Indeed, for $|j-k|\ge N^\xi$ this 
follows form the rigidity estimates. For $|j-k|\le N^\xi$ with $j<k$
one may estimate $B_{jk}(s) \le B_{j,j+1}(s)$ and then use 
a Schwarz inequality similar to \eqref{schwar}. Finally, applying
Corollary~\ref{cor:Wegner} as in \eqref{lr}, we get
$$
  \E B_{j,j+1}(s) \le N^{C_0\tau+\xi} N^{1/3} (\wh j)^{2/3}|\log \e| 
  \le \frac{N^{C_0\tau_0+2\xi}}{N|\gamma_j-\gamma_{j+1}|^2} 
    \le \frac{N^{C_0\tau_0+3\xi}}{N|\gamma_j-\gamma_{k}|^2} .
$$
Setting $\rho = C_0\tau_0 +3\xi$, we verified \eqref{EB}.
Choosing $\tau_0$ and $\xi$  sufficiently small,
 we can apply Theorem~\ref{thm:appr1}
to conclude \eqref{eq:appr}. For the probability of $\cS$ we have
$\P(\cS) \ge 1- N^{-D} - N^{-c_4}$ which 
satisfies the required bound by reducing $c_4$ a bit.
\qed

\begin{proof}[Proof of Theorem \ref{T1}]  
Pick positive constants $c_4, c_5, \tau_0$ sufficiently small  so that
 Theorem \ref{thm:appr2} applies. Without loss of generality
 we may assume that  $\tau_0, c_4, c_5\le 1/100$ and $\tau_0\le c_5/100$. 
Recall the notation $t_0= N^{-\tau_0}/2$. 
For brevity we write
$\bx:=\bx(t_0)$ and $\by:= \by(t_0)$.  We would like to apply
 Theorem \ref{thm:appr2} for the vector $\bv$  of the form 
there is such a factor: x and y evolve by OU, but v does not. \nc  
\be\label{eqn:bound1}
 v_j : = N e^{t_0 /2}(x_j- y_j) \lesssim N^\xi  (\wh j/N)^{-1/3},
\ee
but then $\|\bv\|_\infty \sim N^{1/3+\xi}$ in (\ref{eq:appr}) 
would be too large, as the edge indices contribute.
 So we have to perform a cutoff  and  use \eqref{eq:appr} only for the bulk indices
and use an $L^1\to L^\infty$ heat kernel  bound to control the contribution near the edge.
  We therefore
rewrite $\bv = \bw+ \bu$ where $w_j:=v_j$ if  $N^{1-\nu} \le j \le N - N^{1-\nu} $ and $w_j:=0$ 
otherwise, for some exponent   $\nu>0$ chosen later. 
 Equation  \eqref{eq:appr} with initial condition $\bw$ yields 
\be\label{eq:appr11}
    \Big|     (U_\cB(t_0, t) \bw)_{i} -   \int p_{t-t_0}(\gamma_{i}, y)  e_{\bw} (y) \rd \varrho(y) \Big|  \le N^{-c_5}
     \| \bw\|_{\infty}\le N^{-c_5 + \frac{\nu}{3} + \xi}
\ee
on the set $\cal S(t, i)$ for any $i\in I(E,\delta_1)$. 
 Using the definition of $e_\bw$, 
from \eqref{pderi}, \eqref{psum} and  \eqref{normxi} we have
\begin{align*}
\Big | \int p_{t-t_0}(\gamma_{i}, y) & e_{\bw} (y)\rd \varrho(y)  - \frac{1}{N} \sum_j p_{t-t_0}
(\gamma_{i}, \gamma_j) w_j \Big| \\
&
 \le  \sum_j |w_j| \Big| \int p_{t-t_0}(\gamma_i, y) \xi_j(y)\rd\varrho(y) - 
 \frac{1}{N} p_{t-t_0}(\gamma_i,\gamma_j)\Big| \\
&
 \le \sum_j |w_j|  \int |p_{t-t_0}(\gamma_i, y)-  p_{t-t_0}(\gamma_i,\gamma_j)| \xi_j(y)\rd\varrho(y)
\\
& \le C  N^{-2+\frac{2\nu}{3}+\xi} \sum_j 
\frac{t|\gamma_i-\gamma_j|}{ (t^2 + (\gamma_i-\gamma_j)^2)^2}
  \le  C t^{-1} N^{-1+\frac{2\nu}{3}+\xi}, 
 \quad  i\in I(E,\delta_1), \quad  t\ge 2t_0. 
\end{align*}
since $|y-\gamma_j|\le CN^{-1+\nu/3}$
and $|w_j|\le N^{\frac{\nu}{3}+\xi}$
 on the support of $\xi_j$ with $\wh j\ge N^{1-\nu}$.
Moreover, from \eqref{pupp} and using that $u_j\ne0$
 only for $\wh j\le N^{1-\nu}$, 
\be\label{ud}
 \Big| \frac{1}{N} \sum_j p_{t-t_0}
(\gamma_{i}, \gamma_j) u_j \Big|  \le \frac{Ct}{N}\sum_j |u_j| \le 
Ct N^{-\frac{2}{3}\nu+\xi},
\quad  i\in I(E,\delta_1), \quad  t\ge 2t_0. 
\ee
Together with \eqref{eq:appr11} 
this gives   that \nc
\be\label{eq:appr1}
    \Big|    (U_\cB(t_0,t) \bw)_{i}  -  \frac{1}{N}\sum_j p_{t-t_0}(\gamma_{i}, \gamma_j) v_j \Big| 
\le C \left(N^{-c_5 + \frac{\nu}{3} + \xi} + 
  t^{-1} N^{-1+\frac{2\nu}{3}+\xi}  
 +t N^{-\frac{2}{3}\nu+\xi}
\right)
\ee
on the set  $\cS(t, i)$ 
for any $t\ge 2t_0$,  $i\in I(E,\delta_1)$.

Moreover, thanks to the following  Proposition \ref{decayB}  (in our application $b=N^{-\xi}$,  we use \eqref{B2} 
and we shift the initial time from 0 to $t_0$ \nc) we have 
\be\nonumber
\Lnorm \infty  {  U_\cB(t_0, t)\bu -\bar u} \le C N^{3\xi } t^{- 3}  N^{-1} \sum_j |u_j- \bar u|, \qquad
 t\ge 2t_0,
\ee
where $ \bar u: =  N^{-1}  \sum_j u_j$ and thus $|\bar u|  \le CN^{- 2\nu/3+\xi}$.  Hence we have proved that 
\be\label{infb}
\Lnorm \infty  { U_\cB(t_0, t)\bu } \le  CN^{- \frac{2}{3}\nu+\xi} + t^{- 3}  N^{-\frac{2}{3}\nu+4\xi}.
\ee
Combining this with
 \eqref{eq:appr1}, choosing  $\nu= c_5$, $\xi= c_5/100$ 
  and recalling $\tau_0\le c_5/100$, 
we have  proved that
\be\label{conl}
\Big|     (U_\cB(t_0,t) \bv)_{i} -  N^{-1} \sum_j p_{t-t_0}(\gamma_{i}, \gamma_j) v_j \Big|  \le
 C N^{-c_5/2}, 
\ee  
on the set $\cS(t, i)$     for any $i\in I(E,\delta_1)$
and any $t$ with $1 \ge  t\ge  2t_0$. 

Finally, we need to guarantee that \eqref{conl} holds for all $i \in I(E,\delta_1)$ simultaneously,
i.e. we take the intersection 
$$
    \cS(t): = \bigcap_{i\in I(E, \delta_1)} \cS(t, i).
$$
The cardinality of $I(E, \delta_1)$ is bounded by $CN^{\delta_1}$ and
$\P(\cS(t, i))\ge 1-N^{-c_4}$,  so by choosing $\delta_1<c_4$, 
we obtain that $\P (\cS(t))\ge 1-\frac{1}{2} N^{-c_4}$. Now we choose $\delta_2 < c_5/2$ and $\delta_3< c_4$ and
together with (\ref{eqn:regularisee}), we conclude the proof of Theorem~\ref{T1}. \nc
\end{proof}

For any $\bu\in\bR^N$ we define the $\ell^p$ norms as
$$
  \| \bu\|_p : = \big(  \frac{1}{N}  \sum_{i=1}^N |u_i|^p\big)^{1/p}.
$$
The following decay estimate extends   Proposition  10.4   of \cite{BouErdYau2013}. 
Notice that the convention of $\ell^p$ norm in this paper differs from that used in 
\cite{BouErdYau2013} by a normalization factor $N^{-1}$. 

\begin{proposition}\label{decayB} 
  Suppose that the coefficients of the equation \eqref{ve1}    satisfy  for some
constant $b$  that 
\be\label{B}
  B_{jk}(s) \ge   \frac b   { N (\gamma_j-\gamma_k)^2}, \quad 0 \le s \le \sigma.
\ee
Then  for any  $\bu$ with $\sum_j u_j = 0$  we have the decay estimate
\be\label{decay}
\|  U_\cB(0,s)\bu  \|_\infty  \le C ( sb)^{-3} 
    \| \bu \|_{ 1 }, \quad 0 \le s \le \sigma.
\ee
\end{proposition}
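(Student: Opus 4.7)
The plan is to extend Proposition 10.4 of \cite{BouErdYau2013} by tracking the dependence on the lower bound parameter $b$ throughout its Nash-inequality argument. First, since $(\cB(s)\bw)_i = \sum_j B_{ij}(s)(w_i-w_j)$ with $B_{ij}\ge 0$ by \eqref{B2}, the matrix $-\cB(s)$ is a time-dependent Markov generator with zero row sums. Consequently $U_\cB(0,s)$ is a stochastic propagator: it preserves zero mean, so $\sum_j v_j(s) = 0$ for all $s\ge 0$ whenever $\sum_j u_j = 0$, and it is a contraction on every $\ell^p$; in particular $\|\bv(s)\|_1 \le \|\bu\|_1$. The hypothesis \eqref{B} translates to a Dirichlet form bound
$$D_\cB(\bw) := -\frac{1}{N}\scalar{\bw}{\cB\bw} = \frac{1}{2N}\sum_{jk}B_{jk}(w_j-w_k)^2 \ge b\, D_0(\bw), \qquad D_0(\bw) := \frac{1}{2N^2}\sum_{jk}\frac{(w_j-w_k)^2}{(\gamma_j-\gamma_k)^2}.$$

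The core technical step is a Nash-type inequality for $D_0$ on the zero-mean subspace: there is an exponent $\theta>0$, tuned so that $2/\theta = 3$, and a constant $c>0$ independent of $N$ such that
$$\|\bw\|_2^{2+\theta} \le C\, D_0(\bw)\|\bw\|_1^{\theta}\qquad \textrm{whenever } \sum_j w_j = 0.$$
I would derive this by comparing $D_0$ to the $H^{1/2}$-Sobolev seminorm of the piecewise-constant extension $e_{\bw}$ from \eqref{e-def}, invoking the classical Nash inequality for the half-Laplacian on $(-2,2)$, and accommodating the nonuniform spacing $\gamma_{j+1}-\gamma_j\sim N^{-2/3}(\wh j)^{-1/3}$ through the semicircle weight $\rd\varrho$ implicit in $e_{\bw}$; the zero-mean condition projects out the constant function, which spans the kernel of $D_0$. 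This step is the main obstacle, since the comparison between the discrete form on the non-uniform lattice $\{\gamma_j\}$ and a continuum fractional Sobolev norm must be uniform in $N$ and in particular valid near the spectral edges.

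Setting $y(s) := \|\bv(s)\|_2^2$, the preceding estimates yield the differential inequality
$$y'(s) = -2D_\cB(\bv(s)) \le -2b\, D_0(\bv(s)) \le -c\, b\, y(s)^{1+\theta/2}\|\bu\|_1^{-\theta},$$
which integrates to $\|\bv(s)\|_2^2 \le C(bs)^{-3}\|\bu\|_1^2$; equivalently, $\|U_\cB(0,s)\|_{\ell^1 \to \ell^2} \le C(bs)^{-3/2}$ on the zero-mean subspace. Since $\cB(s)$ is symmetric, the adjoint propagator satisfies the same equation with reversed time and coefficients obeying the same lower bound, so the identical argument yields $\|U_\cB(s/2,s)\|_{\ell^2\to\ell^\infty} \le C(bs)^{-3/2}$. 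The semigroup decomposition $U_\cB(0,s) = U_\cB(s/2,s)U_\cB(0,s/2)$ finally gives $\|U_\cB(0,s)\bu\|_\infty \le C(bs)^{-3}\|\bu\|_1$, as required.
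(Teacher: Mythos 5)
Your proposal follows essentially the same Nash-inequality strategy as the paper: a Dirichlet-form lower bound, a Nash/Sobolev inequality relating $D_0$ to $\|\cdot\|_2$ and $\|\cdot\|_1$, integration of the resulting ODE for $\|\bv(s)\|_2^2$, and an $\ell^1\to\ell^2$ plus duality/symmetrization step to reach $\ell^1\to\ell^\infty$. Your exponent $\theta=2/3$ (giving the $2+\theta=8/3$ power and the final rate $(sb)^{-3}$) matches the paper exactly.

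The one place where your sketch diverges from the paper's actual route is the source of the Nash inequality. The paper does not derive it from the classical Nash inequality for the half-Laplacian; instead it cites Biane's free hypercontractivity bound (Corollary 4 of \cite{Bia1997}, reproduced as (10.19) of \cite{BouErdYau2013}), namely $\|f\|_{\varrho,3}^2\le C\big(\|f\|_{\varrho,2}^2+\langle f,Kf\rangle_\varrho\big)$, kills the $\|f\|_2^2$ term with the spectral gap $\tfrac12$ of $K$ from \eqref{eqn:diago}, and then applies H\"older $\|f\|_3^2\ge\|f\|_2^{8/3}\|f\|_1^{-2/3}$ to land exactly on your $\theta=2/3$. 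Be aware that the ``classical Nash inequality for the half-Laplacian'' you invoke would, if it transferred cleanly, give the \emph{stronger} exponent $\theta=2$ (hence $t^{-1}$ decay), which is not what you need and is also not achievable uniformly in $N$ in the semicircle-weighted, edge-degenerate setting; the $L^3$ route is precisely what survives the vanishing of $\varrho$ at $\pm 2$. The discrete version of the Sobolev inequality is then obtained by the interpolation argument of \cite{BouErdYau2013}, which the paper explicitly defers to. Your handling of the non-self-adjoint time-ordered propagator via $U_\cB(0,s)=U_\cB(s/2,s)U_\cB(0,s/2)$ and time reversal is correct and is the right replacement for the paper's self-adjoint computation $\langle g,e^{-2tK}f_0\rangle=\langle e^{-tK}g,e^{-tK}f_0\rangle$ in the continuum.
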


\begin{proof}
We first prove the same inequality for
the operator $K$, i.e., 
 for any mean zero function $f$ that 
\be\label{decayK}
  \| e^{-2 t K} f\|_\infty \le \frac{C}{ t^3 }  \|f\|_{ \varrho,  1},
\ee
where
\be
 \| f \|_{\varrho, p} := \left ( \int |f(x)|^p   \rd \varrho(x) \right )^{1/p}.
\ee
Recall Corollary 4 of \cite{Bia1997}  (see also  (10.19) of \cite{BouErdYau2013}) asserting  that there is a constant $C$  so that 
\be\label{sobolev1}
\| f \|_{\varrho, 3}^2 \le C \left (\| f \|_{\varrho, 2}^2+ \langle f, K f \rangle_\varrho\right).
\ee
By  the explicit diagonalization  of $K$, \eqref{eqn:diago},
 the spectral gap of $K$ is equal to  $1/2$.  Hence 
for  $\int f \rd \varrho =0$, we have $\| f \|_{\varrho, 2}^2 \le  2  \langle f, K f \rangle_\varrho$
and thus   
\be\label{sobolev}
\| f \|_{\varrho, 3}^2 \le  C \langle f, K f \rangle_\varrho.
\ee
We shall drop the subscript $\varrho$ in the following argument. 
Suppose $f_t$ solves the equation 
\be
\partial_t f_t = -K f_t
\ee
and the initial data has zero mean, i.e.,  $\int f_0  \rd \varrho = 0$. 
Then we have 
$$
   \partial_s \| f_s\|_2^2 = -\langle f_s, K f_s \rangle  \le -  C  
  \|f_s\|_2^{\frac{8}{3}} \|f_s\|_1^{-\frac{2}{3}},
$$
where we have used \eqref{sobolev} and  the H\"older inequality 
$$
   \| f \|_3^2 \ge \|f\|_2^{\frac{8}{3}} \|f\|_1^{-\frac{2}{3}}.
$$
Since $\|f_s\|_1$ is non-increasing, we can integrate this inequality to have 
$$
\| f_t\|_2 \le \frac{C}{ t^{3/2}}  \|f_0\|_1.
$$
For any $g$ with $\int g\rd\varrho=0$,  we have 
\be
| \langle g, e^{-tK} f_t \rangle| = | \langle e^{-tK} g,  f_t \rangle|  \le \Lnorm 2 {e^{-tK} g} \Lnorm 2 {f_t} 
\le \frac{C}{ t^3 }  \|g\|_1 \|f_0\|_1.
\ee
Since $\int e^{-tK} f_t \rd \varrho = \int f_0 \rd \varrho = 0$ by assumption, the mean zero condition of $g$ 
can be removed and we have thus proved  \eqref{decayK}.

We can now follow the similar argument to  prove  \eqref{decay}. After a time rescaling, we can assume that $b=1$. 
The key ingredient in the previous argument 
is the Sobolev inequality \eqref{sobolev1}. Now we will need a discrete version. This can be achieved 
by extending  a discrete function to the continuum 
with a simple interpolation procedure.
This idea was used in \cite{BouErdYau2013} and we will not repeat it here. 
Once a discrete version of \eqref{sobolev1} is proved, the rest of the proof is identical to the one in the continuum. 
Thus  we have  proved \eqref{decay}.
\end{proof}

\subsection{Proof of Theorem \ref{thm:appr1}.}\
Without loss of generality, we can assume that $t_{in}=0$
 by a simple time shift. For simplicity, we also set
 $t_{end}=1$,  as the actual value of $t_{end}$ influences only
 irrelevant constant prefactors. 
 By definition \eqref{e-def} and the equation \eqref{ve1}, we have 
\be\label{ucAdef-1}
 \pt_t e_{\bv(t)}  = - \cR_t  \bv(t),
\ee
where 
\be
(\cR_t  \bv) (x) := \sum_{j, k=1}^N  \xi_j(x) ( v_j- v_k ) B_{kj}(t) 
\ee
takes  the vector $\bv$ to the  function $\cR_t \bv$ for any fixed $t$. 
Suppose that $f=f(t,x)$ is a solution to the continuum equation 
\be
\partial_t f(x) = -(K f)(x),
\ee
where $K$ is defined in \eqref{Kdef}. 
Then we have 
\be
\partial_t (e_{\bv(t)} - f(t))  = -K (e_\bv(t) - f(t )) + [K e_{\bv(t)} - \cR_t \bv(t)  ].
\ee
We will need to solve this equation from time $t_{in}=0$ to $t$. 
We will take the  initial condition  at time $t_{in}=0$ 
to be  $\bv$ for the discrete equation and $f(0) = e_{\bv}$ 
for the continuous one.

By the Duhamel formula, we have  
\be
e_{\bv(t)} - f(t) = \int^t_{0} e^{-(t-s) K }  [K e_{\bv(s)} - \cR_s \bv(s)  ]\rd s =   \Phi -\Omega,  
\ee
where  the functions $\Phi, \Omega$ are given by 
\begin{align}
& \Omega (z)  :=
\int_{0}^t \rd s \int  d\varrho(x)   p_{t-s}(z, x) \int   \rd \varrho(y) \frac{e_\bv (s, y)- e_\bv (s, x)}{|x-y|^2},   \label{Omega1}\\
& \Phi  (z)  : =  \int_{0}^t  \rd s \int \int \rd \varrho(x) \rd \varrho(y)  \sum_{j, k}  p_{t-s}(z, x)  \xi_j(x)  (v_k(s)- v_j(s) ) B_{kj}(s)    \label{Phi1}.
\end{align} 
We have used that $\int   \rd \varrho(y)  =1$ in \eqref{Phi1}.
 We will need these functions for $z:=\gamma_i$ in order to obtain \eqref{eq:appr},
but we will keep the shorter $z$ notation. Note that $e_{\bv (t)}(\gamma_i) = v_i(t) = (U_\cB(0,t )\bv)_i$,
thus the left hand side of \eqref{eq:appr} is  $\Phi(z)-\Omega(z)$ with $z=\gamma_i$. 

\nc

\bigskip
 
\noindent 
{\it Step 1: Cutoff of long range part.} We first cutoff the contributions to $\Omega, \Phi$ when $|x-y| \ge \ell$ for some 
$  N^{-2/3}  \ll \ell \ll 1 $ 
to be fixed later on.  In this regime, we will need to use cancellation between $\Omega$ and $\Phi$. 
We start with the following definition that for any subset $D$ in $\R^2\times \bR$ define 
\be
\Omega_D   :=    \int^{t }_0  \rd s \int  \int {\bf 1}_D (x, y, s)  \rd\varrho(x) \rd \varrho(y)  p_{t-s}(z, x)   \frac{e_\bv (s, y)- e_\bv (s, x)}{|x-y|^2}.
\ee
If $D$ is  symmetric under $x \leftrightarrow y$, then we have 
\be
\Omega_D = \frac 1 2  \int^{t }_0  \rd s \int\int {\bf 1}_D (x, y, t)  \rd\varrho(x) \rd \varrho(y)  [p_{t-s}(z, x) - p_{t-s}(z, y)   ] \frac{e_\bv (s, y)- e_\bv (s, x)}{ |x-y|^2}.
\ee
Similarly we can define 
\be\label{Phi11}
\Phi_D:=  \int_0^t \rd s \int \int \rd \varrho(x) \rd \varrho(y)  {\bf 1}_D (x, y, t)  \sum_{j, k}  p_{t-s}(z, x)  \xi_j(x)  (v_k(s)- v_j(s) ) B_{kj}(s),
\ee
and for symmetric $D$ we have 
\be
\Phi_D= \frac 1 2  \int_0^t \rd s \int \int \rd \varrho(x) \rd \varrho(y)  {\bf 1}_D (x, y, t) \sum_{j, k}  [p_{t-s}(z, x)  \xi_j(x) - p_{t-s}(z, y)   \xi_k(y) ]  (v_k(s)- v_j(s) ) B_{kj}(s)  \label{Phi2}.
\ee 
Let 
\be
\widehat A_{\ell} :=  \{ (x, y): 
|x-y| \ge \ell  \},  \quad D_\ell: =  \widehat A_{\ell} \times [0, t].
\ee
Using \eqref{Phi1} and \eqref{Omega1}, we can 
decompose  the error term  $\Phi_{D_\ell}-\Omega_{D_\ell}$ in this region into 
$\Phi_{D_\ell}^1-\Omega_{D_\ell}^1 + \Phi_{D_\ell}^2-\Omega_{D_\ell}^2$,
where 
\be\label{d4}
\Phi_{D_\ell}^1-\Omega_{D_\ell}^1 :=  - \int_{0}^ {t} \rd s \int\int  \rd \varrho(x)  \rd \varrho(y)   
{\bf 1}_{\widehat A_{\ell}}(x, y)   p_{t-s}(z, x)     
 \sum_{j}   \xi_j(x)   v_j(s)   \Big [  \sum_k    B_{jk}(s) - \frac{1}{|x-y|^2} \Big ].
\ee
The second term is 
\begin{align}
 \Phi_{D_\ell}^2-\Omega_{D_\ell}^2 =  \int_{0}^ {t} \rd s\int  \int  \rd \varrho(x) \rd \varrho(y) {\bf 1}_{\widehat A_{\ell}}(x, y)  p_{t-s}(z, x)     
 \sum_{k}   v_k(s)  \Big [   \sum_j  \xi_j (x)       B_{jk}(s) -   \xi_k(y)  \frac{1}{|x-y|^2} \Big ].
\end{align}
Since $\sum_j \xi_j (x)=1$, we can replace   $ \xi_k(y)  \frac{1}{|x-y|^2}$ by 
$\sum_j \xi_j (x) \xi_k(y)  \frac{1}{|x-y|^2}$. Recall  the normalization condition 
 $\int   \rd \varrho(y)  \xi_k(y) =1/N$ \eqref{normxi}.  
If  we could neglect the factor ${\bf 1}_{\widehat A_{\ell}}(x, y)$  in this normalization,   we had 
\begin{align}\label{d2}
 \Phi_{D_\ell}^2-\Omega_{D_\ell}^2    \approx \int_{0}^ {t} \rd s \int\int  \rd \varrho(x) \rd \varrho(y)  {\bf 1}_{\widehat A_{\ell}}(x, y)   p_{t-s}(z, x)     
 \sum_{k, j}   v_k(s)   \xi_j (x)   \xi_k(y)  \Big [N  B_{jk}(s) -     \frac{1}{|x-y|^2} \Big ].
\end{align}
 Indeed, the difference between the two sides of \eqref{d2} is
\be\label{d3}
\int_{0}^ {t} \rd s \int  \rd \varrho(x)     p_{t-s}(z, x)     
 \sum_{k, j}   v_k(s)   \xi_j (x)  B_{jk}(s)\int_{|x-y|\ge \ell} \rd \varrho(y) \Big[ 1-  \xi_k(y)  N\Big].
\ee
Notice that $|\gamma_j -x |\le \wt g_j$ from the support property of $\xi_j(x)$ and that
the last integral is zero unless the support of $\xi_k(y)$ overlaps with one of the
boundaries $y= x\pm \ell$ of the integration regime.  Thus \eqref{d3} can be bounded from above by
\begin{align}
 \int_{0}^ {t}  \rd s \int\int_{|x-y|\ge \ell} & \rd \varrho(x) \rd \varrho(y)    p_{t-s}(z, x)     
 {
   \sum_{k, j, ||\gamma_j-\gamma_k| - \ell| \le 4 \wt g_j + 4 \wt g_k}  } |v_k(s)|  \xi_j (x)   \xi_k(y)  N  B_{jk}(s)
\nonumber \\
\le & C\| \bv\|_\infty\ell^{-2} \int_{0}^ {t}  \rd s \int\int_{|x-y|\ge \ell}  \rd \varrho(x) \rd \varrho(y)    p_{t-s}(z, x)     
 {
   \sum_{k, j, ||\gamma_j-\gamma_k| - \ell| \le 4 \wt g_j + 4 \wt g_k}  }  \xi_j (x)   \xi_k(y)\nonumber
\\
\le & C\| \bv\|_\infty\ell^{-2} \int_{0}^ {t}  \rd s \int\int {\bf 1}\Big( \big| |x-y|-\ell \big|\le CN^{-1} [\varrho(x)^{-1} +
\varrho(y)^{-1}] \Big)
\rd \varrho(x) \rd \varrho(y)    p_{t-s}(z, x)\nonumber
\\ \le & C \| \bv\|_\infty\ell^{-2} N^{-1}t, \label{bdr}
\end{align}
where in the first step we used that under the constraints
on the summations, we have $NB_{jk}\le C\ell^{-2}$ from \eqref{B1} and from the fact that $\wt g_j, \wt g_k \ll \ell$.
In the second step we translated the constraint on the indices $j,k$ to a constraint
on $x, y$ using that $\wt g_j \sim \varrho^{-1}(\gamma_j)$, 
and finally we integrated out $y$, $x$ and $s$ in this order.  We also used
the contraction property $\|\bv (s) \|_\infty \le \|\bv\|_\infty$.

 For the term on the r.h.s. of \eqref{d2},
we will  use the coordinate system  $x = 2 \cos \Theta(x)$ with $0 \le \Theta(x) \le \pi$. 
From the  estimate \eqref{B1},  for $|x-y| \ge \ell  \gg  N^{-2/3}\nc $ we have  
\be
 \sum_{k, j}    \xi_j (x)   \xi_k(y)  \Big |N  B_{jk}(s) -     \frac{1}{|x-y|^2} \Big | 
\le N^{-1+ \xi}  \frac{1}{ |x-y|^3 }    [ (\sin \Theta(x))^{ -1}+ (\sin \Theta(y))^{-1}   ] . 
\ee
Together with  $\|\bv (t) \|_\infty \le \|\bv\|_\infty$, we have  
\begin{align}
\Phi_{D_\ell}^{ 2 }-\Omega_{D_\ell}^{ 2} & \le  N^{-1 + \xi} \Lnorm \infty {\bv}  \int_0^ {t} \rd s \int\int_{ |x-y| \ge \ell}\rd \varrho(x)  \rd\varrho(y)    p_{t-s}(z, x)      
 \frac{1}{|x-y|^3}  [ (\sin \Theta(x))^{-1 }+ (\sin \Theta(y))^{ -1}   ]
  \nonumber \\
 &   \label{err1}
\le 
\frac { t  |\log \ell|  } {\ell^2}  N^{-1 + \xi} \Lnorm \infty {\bv}.
\end{align}
 In the last step we used that $z$ is away from the edge, so in the regime where $x$ is
near the edge and $[\sin \Theta(x)]^{-1}\sim \varrho(x)^{-1}$ becomes singular,
we know that $p_{t-s}(z,x)\le C$ from \eqref{pt}.

The estimate of the $\Phi_{D_\ell}^1-\Omega_{D_\ell}^1$ term is similar.  We write the $\rd \varrho(y)$ integration 
in \eqref{d4} as
\begin{align*}
  \int_{|x-y|\ge \ell} \rd \varrho(y) & \Big [  \sum_k    B_{jk}(s) - \frac{1}{|x-y|^2} \Big ]
\\
&  =  \int_{|x-y|\ge \ell} \rd \varrho(y)  \sum_k \xi_k(y)  \Big[ NB_{jk}(s) -\frac{1}{|x-y|^2} \Big ] 
 +  \sum_k    B_{jk}(s)  \int_{|x-y|\ge \ell} \rd \varrho(y) (1-N\xi_k(y))
\end{align*}
The first term can be estimated  exactly the r.h.s. of \eqref{d2} (the only difference
is $v_j(s)$ in \eqref{d4} instead of $v_k(s)$ in \eqref{d2} but these factors
are estimated by $\| \bv\|_\infty$ anyway). The second term is analogous to \eqref{d3}.
This completes the estimate of the regime $|x-y|\ge \ell$.

From now on, we will  work on the complement of $\wh A_\ell$, i.e. in the regime $|x-y|\le \ell$.
We will   not use cancellation between $\Phi$ and $\Omega$ and will estimate them separately
 by splitting the integrals into further subregions. 
As the estimates for $\Phi$ and $\Omega$ 
are similar,   we will work out only  one of them in every region.\\

\noindent 
{\it Step 2: Time region away from the final time $t$  via  
the energy bound. } 
 In this step, we estimate the
 contribution to the integrals \eqref{Omega1}, \eqref{Phi1}  for times $s \in  [0, t-t_1]$ 
with some $t_1 \ll t$.  The main idea to deal with this regime
 is to use energy bound for 
the dynamics \eqref{ve1} and the regularity of the continuous evolution kernel $p_{t-s}(x, y)$.

 We start with a general estimate to show how energy bound is used to control $\Omega_D$.
For a set $D\subset \R^2\times [0,t]$, symmetric under $x\leftrightarrow y$, using 
 the Schwarz inequality, we have 
\begin{align}\label{OmS}
&\Omega_D =   \frac{1}{2}   \int^{t}_0  \rd s \int\int {\bf 1}_D (x, y, t)  \rd\varrho(x) \rd \varrho(y)  [p_{t-s}(z, x) - p_{t-s}(z, y)   ] \frac{e_\bv (s, y)- e_\bv (s, x)}{|x-y|^2}  \le \sqrt { W_1^D W_2^D}, \\
&W_1^D :=   \frac{1}{2}  \int^{t }_0\rd s\int\int  \rd\varrho(x) \rd \varrho(y)  {\bf 1}_D [p_{t-s}(z, x) - p_{t-s}(z, y)   ] ^2   \frac{1}{|x-y|^2}, \\
&W_2^D :=    \frac{1}{2}  \int^{t }_0  \rd s\int\int   \rd\varrho(x) \rd \varrho(y)  {\bf 1}_D \frac{ [e_\bv (y)- e_\bv (x)]^2}{|x-y|^2}.
\end{align}
We start with the second term $W_2^D$.
For $\gamma_j \le x \le \gamma_{j+1} $ and $\gamma_k \le y \le \gamma_{k+1} $, from the construction of $\xi$'s, we have  
\begin{align}
e_\bv (x) - e_\bv (y)  & = \xi_j(x) v_j+ \xi_{j+1}(x) v_{j+1} -\xi_k(y) v_k -\xi_{k+1}(y) v_{k+1} \nonumber \\
& = (1-\xi_{j+1}(x)) v_j+ \xi_{j+1}(x) v_{j+1} -(1-\xi_{k+1}(y)) v_k -\xi_{k+1}(y) v_{k+1}\nonumber\\
& =v_j - v_k +  \xi_{j+1}(x)  (v_{j+1} - v_j) - 
   \xi_{k+1}(y)  (v_{k+1} - v_k).
\end{align}
In particular, when $j=k$, we have 
\[
|e_\bv (x) - e_\bv (y)| = \Big |   \xi_{j+1}(x) -\xi_{j+1}(y)  \Big | 
 |v_{j+1} - v_j| \le C
 \wt g_{j+1}^{-1}  |x-y| |v_{j+1} - v_j|  \le \frac{C |x-y| |v_{j+1} - v_j|}{|\gamma_{j+1}-\gamma_j|^2}. \nc 
\]
For neighboring indices, i.e.   when $k=j+1$,
\begin{align}
e_\bv (x) - e_\bv (y)   =  [\xi_{j+1}(x)-1]  (v_{j+1} - v_j) - 
   \xi_{j+2}(y)  (v_{j+2} - v_{j+1}) .
\end{align}
Notice that when $|x-y| \le  \wt g_{j+1}/400 $, we have $|x-\gamma_{j+1}| \le \wt g_{j+1}/200$,
 $|y-\gamma_{j+1}| \le \wt g_{j+1}/200$  and by definition  of the $\xi$'s we have
$\xi_{j+1}(x) =1$, $\xi_{j+2}(y)=0$, so \nc $e_\bv (x) - e_\bv (y)  = 0$. 
Therefore for any set $D$, we can bound  $W_2^D$  by   
\begin{align}
W_2^D  & = \sum_{j,  k} \int^{t}_0   \int_{\gamma_j}^{\gamma_{j+1} }   \rd\varrho(x) 
\int_{\gamma_k}^{\gamma_{k+1}} \rd \varrho(y)   \frac{ [e_\bv (x) - e_\bv (y)]^2}{|x-y|^2}  \rd s  \nonumber  \\
& \le
C  \sum_{j \not =  k} \int^{t}_0    \frac{ [v_j (s)- v_k (s)]^2}{   N^2 |\gamma_j- \gamma_k|^2} \rd s 
+ C \sum_j  \int^{t}_0
\frac{ [v_j (s)- v_{j+1} (s)]^2}{   N^2 |\gamma_j- \gamma_{j+1}|^2}  \rd s  \nonumber\\
& \le  N^\xi  
 \frac 1 N  \sum_{j,  k} \int^{t}_0    [v_j (s)- v_k (s)]^2 B_{kj}(s)  \rd s  \label{W2} \\ 
& =  N^\xi   \frac 1 N  \sum_j \big[ v_j^2(0) - v_j^2(t)\big]   \le N^\xi   \|\bv\|_2^2 \le   N^{\xi}  \|\bv\|_\infty^2. \nonumber 
\end{align}
In \eqref{W2} we have used \eqref{B2}.
 The last step is the energy estimate that can be obtained by integrating the time derivative
$\pt_s\| \bv(s)\|_2^2$.

Choose $t_1    = t N^{-2\fa} $ with some $\fa>0$ to be fixed later
and define  
$ D_1: = \{ |x-y| \le \ell \} \times [0, t-t_1]$
(Here  $D_1$ is
a new set, not to be confused with $D_\ell$ defined earlier). 
For $z$ in the bulk,  we can use the explicit formula of $p_t$ \eqref{pt}
so that
\be
W_1^{D_1}=   \int^{t-t_1}_0   \rd s \int_{ |x-y| \le \ell}   \rd\varrho(x) \rd \varrho(y) [p_{t-s} (z, x) - p_{t-s} (z, y)  ]^2  \frac{1}{|x-y|^2}
 \le   \frac \ell { t_1^2} . 
\ee
Together with \eqref{OmS} and \eqref{W2}, we have proved that  $|\Omega_{D_1}| \le C  N^\xi \frac {\sqrt \ell}  { t_1} \Lnorm \infty {\bv} $. 
Similarly, we can bound $\Phi_{D_1}$  and obtain
\be\label{err2}
|\Phi_{D_1}| + |\Omega_{D_1}| \le C N^\xi \frac {\sqrt \ell}  { t_1} \Lnorm \infty {\bv}.
\ee

\noindent 
{\it Step 3: Time region near the final time $t$  via   the H\"older regularity. } 
 In this step  and the next one  we consider the final time region $s \in [t-t_1, t]$. 
Notice that we will not use  the smoothness of the continuous kernel $p_{t-s}(x, y)$ which depends on  $t-s$
and becomes singular when $s$ is close to $t$.  Instead, in Step 3 we consider the regime in \eqref{Phi2} where $x$
(hence also $y$) is not too far from the fixed reference point $z$. In this case
we will use the the H\"older 
regularity of the solution to the equation \eqref{ve1}. In Step 4, we look at the complement
regime,  when $x$ and $y$ are far from $z$,  and we can use the large distance decay
of the kernel $p_t$. \nc

 We first recall this basic H\"older estimate from 
  \cite{ErdYau2012singlegap}. We will need
this result in the following form and in Appendix \ref{sec:Holder}
we will explain how this particular version follows from
the general statement in \cite{ErdYau2012singlegap}.

\begin{lemma}\label{holder}
For any $t \ge t_0=N^{-\tau_0}/2$ and a small constant $0< \fa<1-\tau_0$ fixed, 
we set $\ell_1= tN^{-\fa}$.
For any real $z$ with $|z| < 2$ 
define 
\be\label{psi}
\Xi_z(\ell_1):= 
 \big\{ (j, k): 1 \le j, k \le N, 
|\gamma_j- z|\le  \ell_1, \;  |\gamma_k-  z|\le   \ell_1 \big\}.
\ee
Consider the equation \eqref{ve1} with coefficients \eqref{Be} satisfying \eqref{B1}--\eqref{EB}.
If the exponent $\rho>0$ in \eqref{EB} is sufficiently small, depending on $\fa$,  then there exists
 a set  $\cG\subset [t -  t N^{-\fa} ,t ]$ 
of ``good times"  with Lebesgue measure
\be\label{goodtime}
   |[t-t N^{-\fa}, t]\setminus \cG|\le   ( t N^{  -  \fa})^{1/4} N^{-3/4} ,
\ee
and a set 
$\cal R_{z, t}$ in the probability space with
 \be\label{R}
\P (\cal R_{z, t}) \ge 1 -  N^{-\rho} 
\ee 
such that in the set  $  \cal R_{z, t}$ the following  oscillation estimate holds: for any time $s \in \cG $ and  indices $j, k \in  \Xi_z(\ell_1)$  we have
\be \label{holder1}
     \big| v_j(s) - v_k(s)\big|  \le   N^{-\fq \fa }   \|\bv\|_\infty.
\ee
Here the exponent $\fq$ is a positive constant independent
of any parameters.
\end{lemma}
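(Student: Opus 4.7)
My plan is to deduce this lemma from the general De Giorgi--Nash--Moser type H\"older regularity theorem for discrete parabolic equations with random coefficients developed in \cite{ErdYau2012singlegap}. The first step is to set up the dictionary: the abstract theorem there applies to equations of the form $\partial_s\bv = -\cB(s)\bv$ provided the coefficients $B_{ij}(s)$ satisfy a lower bound comparable to the "unperturbed" kernel $[N(\gamma_i-\gamma_j)^2]^{-1}$, an upper bound in expectation of the same order, and short-range regularity. These are exactly encoded by our hypotheses \eqref{B1}, \eqref{B2} and \eqref{EB}; in particular \eqref{B2} gives the essential parabolic ellipticity on all scales that powers the Nash-type energy/Sobolev iteration, while \eqref{EB} controls the expected size of the long-range coefficients so that the heat kernel associated to $\cB$ is comparable to that of the continuum operator $K$ on the relevant mesoscopic scale.

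Assuming this dictionary, the plan is as follows. Fix the spatial window $\Xi_z(\ell_1)$ with $\ell_1=tN^{-\fa}$ around the reference energy $z$. I apply the general H\"older estimate of \cite{ErdYau2012singlegap} inside the parabolic cylinder of spatial radius $\ell_1$ and time radius $\sim\ell_1$, which is contained in $[t-tN^{-\fa},t]$. The theorem produces an exponent $\fq>0$, independent of all parameters, such that for a generic time slice $s$ in the cylinder one has oscillation control
\[
  |v_j(s)-v_k(s)| \le N^{-\fq\fa}\|\bv\|_\infty, \qquad j,k\in\Xi_z(\ell_1).
\]
The "generic time slice" qualifier is necessary because the abstract theorem is formulated in the parabolic sense: it gives an $L^p$-mean oscillation estimate over the cylinder that translates into a pointwise bound only on a set $\cG$ of times of nearly full Lebesgue measure. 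Markov's inequality applied to the resulting time-integrated defect yields the quantitative bound \eqref{goodtime} on the complement.

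The second issue to handle is the randomness of the coefficients. The hypotheses \eqref{B1}--\eqref{EB} are stated deterministically, so I define $\cR_{z,t}$ to be the event on which these hypotheses hold throughout $[t-tN^{-\fa},t]$ with the chosen parameters $\xi,\rho$. The probability bound \eqref{R} is inherited from rigidity plus level repulsion: the rigidity estimates for $\bx$ and $\by$ force \eqref{B1} and \eqref{B2} with overwhelming probability (as observed in the proof of Theorem \ref{thm:appr2}), while \eqref{EB} is controlled on the same event with probability $\ge 1-N^{-\rho}$ once $\rho$ is taken large enough compared to the exponent $C_0\tau_0+3\xi$ that appeared earlier. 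Shrinking $\rho$ as needed (this is why the hypothesis requires $\rho$ small enough depending on $\fa$), the abstract theorem's parameters cooperate with ours.

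The main obstacle, in my view, is not the application of the general theorem itself but verifying that the version in \cite{ErdYau2012singlegap}, which is stated for the specific DBM coefficients $B_{ij}(t)=[N(x_i(t)-x_j(t))^2]^{-1}$, really survives the regularization $\e_{ij}$ present in \eqref{Be} and the slightly weaker assumption \eqref{EB} in place of a pointwise upper bound. This is where the need for the appendix arises: one has to track through the Nash--Moser iteration and check that $B$-dependence enters only through the two-sided comparability conditions \eqref{B1}--\eqref{B2} and through second-moment information on $B$, both of which are implied by our hypotheses. Once this check is performed, the exponent $\fq$ emerges from the Nash iteration with dependencies only on the geometric comparison constants, hence is independent of all parameters as claimed.
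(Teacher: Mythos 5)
Your overall strategy is the same as the paper's: both cite the De Giorgi--Nash--Moser type H\"older regularity theorem (Theorem 10.3) of \cite{ErdYau2012singlegap} and reduce the lemma to a verification of its hypotheses, with the conclusion (oscillation bound on a large set $\cG$ of times, on a high-probability event $\cR_{z,t}$) being exactly the form of the abstract theorem's output. That part is right.

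However, there is a genuine confusion in how you handle the probability bound \eqref{R}. You define $\cR_{z,t}$ as ``the event on which these hypotheses hold throughout $[t-tN^{-\fa},t]$,'' including \eqref{EB}. But \eqref{EB} is an expectation bound, not a pathwise statement, so it cannot define an event; it is assumed to hold as part of the lemma's hypotheses and carries no probability cost by itself. The actual source of \eqref{R} is different: Theorem 10.3 of \cite{ErdYau2012singlegap} requires a pathwise ``strong regularity'' condition (condition $({\bf C1})_\rho$, i.e.\ \eqref{Kassnew3}) which bounds, for each of a dyadic family of space-time cylinders around $(z,t)$, a time-averaged spatial average of $B_{ij}$. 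That pathwise condition is \emph{derived} from the expectation bound \eqref{EB} by Markov's inequality applied to each dyadic cylinder, and the resulting failure probability (of order $N^{-\rho}$ times logarithmic factors) defines $\cR_{z,t}$. In other words, $\cR_{z,t}$ is the event on which the strong regularity condition holds, not the event on which the hypotheses of the lemma hold. Related to this, your appeal to rigidity and level repulsion to get \eqref{R} conflates this lemma with the later verification step in Theorem \ref{thm:appr2}: the present lemma is a statement about arbitrary coefficients satisfying \eqref{B1}--\eqref{EB}, so rigidity and level repulsion play no role here; they only enter when one later checks that DBM coefficients satisfy \eqref{B1}--\eqref{EB}. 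With this correction, the rest of your outline (conditions $({\bf C2})_\xi$ being supplied by \eqref{B1} and \eqref{B2}, the good-time set $\cG$ coming out of the abstract theorem, and the exponent $\fq$ from the Nash iteration) matches the paper's argument.
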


If \eqref{holder1} holds, 
we say that H\"older regularity holds at the space time point  $(z,t)$.\\

For any  $t_1\ll \ell_1  \ll t$ and $ N^{-2/3}\ll  \ell \ll \ell_1$,  denote  by 
\be
A_{\ell, \ell_1} :=  \{ (x, y): |x-y| \le \ell; \;   |z-x| \le \ell_1  \text{ and }       
|z-y| \le \ell_1  \} 
\ee
and  consider  $\Phi_D$ from \eqref{Phi11} with $D_{\ell, \ell_1}:= A_{\ell, \ell_1}\times [t-t_1, t]$. 
With a similar estimate on the boundary terms of the set $A_{\ell, \ell_1}$ as in \eqref{bdr}, one obtains
\be\label{syme}
    \Phi_{ D_{\ell, \ell_1}} = \wt \Phi_{ D_{\ell, \ell_1}} + O\Big( \| \bv\|_\infty \ell^{-2} N^{-2/3} t_1\Big),
\ee
where  
\be\label{tilphi}
\wt \Phi_{ D_{\ell, \ell_1}}: =  \frac{1}{2} \int_{t -{t_1} }^t \int   \rd \varrho(x) \rd \varrho(y)   {\bf 1}_{A_{\ell, \ell_1}}(x, y)  [p_{t-s}(z, x)  - p_{t-s}(z, y)  ]  \sum_{j, k}  \xi_j(x)   \xi_k(y)
N B_{kj}(s) (v_k(s)- v_j(s) )   \rd s.
\ee  \nc
Notice that the characteristic function on $x, y$ puts a constraint on the indices $j, k$
via the support properties of $\xi$'s,  in particular $(j,k)  \in \Xi_z(\ell_1)$. \nc
From Lemma~\ref{holder}  there is a set $\cG$ of "good times" and an event $\cR_{z,t}$ \nc such that 
the H\"older estimate \eqref{holder1} holds  in the intersection of  $\cR_{z,t}$  and
the event $\cal F$ defined in Theorem \ref{thm:appr1}.  
Thus there is a positive constant  $\frak q > 0$ such that for $j, k$  with 
$\xi_j(x)   \xi_k(y) {\bf 1}_{A_{\ell, \ell_1}}(x, y) \not = 0$
 and $t-t_1 \le s \le t$ we have  
\be\label{expused}
  \E   {\bf 1}(\cal G)  {\bf 1}(  \cal  F \cap \cR_{z,t}) 
  |B_{jk}(s)|  \big| v_j(s) - v_k(s)\big|  \le N^{-\frak q \fa }       
   \|\bv\|_\infty \E    \cF   |B_{jk}(s)|.
\ee
Using the  estimate  \eqref{EB}, we have 
\be \label{expused2}
 \E    {\bf 1}(\cal G)  {\bf 1}(  \cal  F \cap \cR_{z,t})    |B_{jk}(s)|  \big| v_j(s) - v_k(s)\big| \le  N^{\rho }
   \frac{1}{N |\gamma_j-\gamma_k|^{2}  } 
 N^{-\frak q \fa } \|\bv\|_\infty.
\ee
We can use
\be\label{regull}
    \frac{1}{|\gamma_j-\gamma_k|^{2}} \le \frac{C}{|x-y|^2 + N^{-2}},
\ee
whenever $\xi_j(x)   \xi_k(y)\ne 0$ and $j\ne k$. \nc
By splitting the time integration into good and bad times,
 we can bound the expectation 
of $\wt\Phi_{ D_{\ell, \ell_1}}$ by   
\begin{align} \nonumber
\E  {\bf 1}(  \cal  F \cap \cR_{z,t})  \wt\Phi_{ D_{\ell, \ell_1}} & \le  N^{\rho } N^{-\frak q \fa } \|\bv\|_\infty 
  \int_{t-t_1}^ {t}\rd s  \int  \rd \varrho(x)  \rd\varrho(y) {\bf 1}_{A_{\ell, \ell_1}}(x, y)     \frac{|p_{t-s}(z, x)    - p_{t-s}(z, y)| }{ |x- y|^{2} +N^{-2} }
+   N^{-\fa+\rho}\|\bv\|_\infty 
\\ \label{err3}
& \le  N^{\rho }  N^{-\frak q \fa }  \log N \|\bv\|_\infty,
\end{align}
 where the second term comes from the ``bad'' times $s$ after using the estimate \eqref{goodtime}
and estimating $|v_k(s)-v_j(s)|\le 2\|\bv \|_\infty$ in \eqref{tilphi}.

\bigskip

\noindent 
{\it Step 4: Time region near the final time $t$  via  the  decay of  the kernel $p_{t-s}(x, y)$. }
We now consider the contribution from the region 
\be
\tilde A_{\ell, \ell_1} :=  \{ (x, y): |x-y| \le \ell;  |z-x| >  \ell_1   \text { or }    \quad
|z-y| >  \ell_1    \}, \qquad \wt D_{\ell, \ell_1}:= \tilde A_{\ell, \ell_1}\times [t-t_1, t] ,
\ee
 i.e. estimate $\Phi_{ \wt D_{\ell, \ell_1}}$, see \eqref{Phi2}. As in \eqref{syme}, it is sufficient
to consider the more symmetrized version 
$$
\wt\Phi_{ \wt D_{\ell, \ell_1}}:=  \frac{1}{2} \nc \int_{t -{t_1} }^t \int   \rd \varrho(x) \rd \varrho(y)   {\bf 1}_{\wt A_{\ell, \ell_1}}(x, y)  [p_{t-s}(z, x)  - p_{t-s}(z, y)  ]  \sum_{j, k}  \xi_j(x)   \xi_k(y)
N B_{kj}(s) (v_k(s)- v_j(s) )   \rd s
$$
with a common factor $\xi_j(x)\xi_k(y)$. 
 Using $\ell \ll \ell_1$,  we see that both $|z-x|$ and $|z-y|$ are bounded from below by $\ell_1/2$, so
the  $p_{t-s}$ kernels are not singular. 
By \eqref{EB}   and  $\|\bv (t) \|_\infty \le \|\bv\|_\infty$, we have 
\be
    \E \cF  |B_{jk}(s)|  \big| v_j(s) - v_k(s)\big|  \le   N^{\rho}     \frac{1}{ N |\gamma_j-\gamma_k|^{ 2}  }  
       \|\bv\|_\infty.
\ee
 Using \eqref{regull},    we can thus bound the expectation of  $\wt\Phi_{\tilde D_{\ell, \ell_1}}$ by 
\begin{align} \nonumber
\E \cF  \wt\Phi_{\wt D_{\ell, \ell_1}}  & \le
 N^{\rho  }   \|\bv\|_\infty  \int_{t-t_1}^ {t} \rd s \int \rd \varrho(x)  \rd\varrho(y)   {\bf 1} _{\wt A_{\ell, \ell_1} }(x, y)
 |p_{t-s}(z, x)    - p_{t-s}(z, y)  |   \frac{ 1 }{ |x- y|^{2} + N^{-2}   } 
\\ \label{err4}
& \le N^{\rho }  \frac { t_1^2} { \ell_1^2}   \|\bv\|_\infty.
\end{align}

\noindent{\it Step 5: The conclusion.}  
Collecting all error terms   from \eqref{bdr}, \eqref{err1}, \eqref{err2}, \eqref{err3}, \eqref{err4},
  and neglecting irrelevant logarithmic factors,  we have 
\be
 \E   {\bf 1}(  \cal  F \cap \cR_{z,t}) |\Phi-\Omega| \le \Big [
\frac { t } {\ell^2}  N^{-1 + \xi} + \frac {\sqrt  \ell} { t_1} 
+ N^{\rho }N^{-\fq \fa } + N^{\rho }  \frac { t_1^2} { \ell_1^2} \Big ]   \|\bv\|_\infty.
\ee
 Recall  the choices $\ell_1= tN^{-\fa}$, $t_1= tN^{-2\fa}$, 
we have 
\be
 \E   {\bf 1}(  \cal  F \cap \cR_{z,t})   |\Phi-\Omega| \le  N^{\xi+\rho}
\Big [ \frac { t } {\ell^2}  N^{-1}
+ \frac {\sqrt  \ell} { t} N^{2\fa}  + N^{-\fq\fa} \Big ]   \|\bv\|_\infty.
\ee
Choosing  $\ell = t^2 N^{-5\fa}$  so that the second term is small, we have 
\be
 \E  {\bf 1}(  \cal  F \cap \cR_{z,t}) |\Phi-\Omega| \le  N^{\xi+\rho}
 \Big [ t^{-3}  N^{-1 +   10 \fa} +  N^{- \fa\fq }  \Big ]   \|\bv\|_\infty.
\ee
Hence for $t \ge t_0 = N^{-\tau_0}/2$ with $\tau_0\le 1/4$, one can choose
sufficiently small positive exponents $\xi, \rho, \fa$, so that  
 $\E {\bf 1}(  \cal  F \cap \cR_{z,t}) |\Phi-\Omega| \le N^{-c}   \|\bv\|_\infty$
with some positive $c>0$. 
 We can choose
$c\le \rho$. After a
Markov inequality and using \eqref{R}, we see that $ |\Phi-\Omega| \le N^{-c/2}   \|\bv\|_\infty$
on an event $\cS$ with probability larger than $1-N^{-c/2}$  
 This completes the proof of Theorem \ref{thm:appr1}. \qed

\section{Proof of the universality at fixed energy}

In this section, we prove our main result Theorem \ref{thm:main}. The key ingredient of the
 proof is Lemma~\ref{main lem} below, asserting that local eigenvalue statistics of DBM for sufficiently  large   but still of order $\oo(1)$
times  converges to 
those of GOE. 
In order to state this lemma, we first introduce some notations.

The trajectory $(\bx(t))_{t\geq 0}$will always denote Dyson Brownian motion dynamics, on the simplex $x_1(t)\leq \dots\leq
x_N(t)$, with initial condition given by eigenvalues of a generalized Wigner. See (\ref{DBMdynamics}). The processes $(\by(t))_{t\geq 0}$, $(\bz(t))_{t\geq 0}$ follow the same dynamics on the simplex, with different, independent, initial conditions, given by the spectrum of a GOE. Omission of the time parameter means initial condition: $x_k=x_k(0)$.

For any $k\in \N$ and any smooth function  $O: \R^k\to \R$, we 
denote the  $W^{2,\infty}(\R^k)$ Sobolev norm by 
\be\label{Sobnorm}
\| O \|_{W^{2,\infty}}= \sum_{\sum_j \alpha_j \le 2}  \Big \| \prod_{j=1}^k  \partial_{x_j}^{\alpha_j} O(\bx) \Big \|_\infty.
\ee
 We will consider test functions $O\in W^{2,\infty}(\R^k)$ that are compactly supported
in $[-L,L]^k$ for some $L>0$.

 For an initial Wigner matrix $H_0$ we define the Ornstein-Uhlenbeck matrix flow 
as the solution of the SDE
\be\label{def Ht}
  \rd H_t = \frac{\rd B_t}{\sqrt{N}}  - \frac{H_t}{2} \rd t, \qquad H_{t=0}=H_0,
\ee
where $B_t$ is a matrix of standard real or complex Brownian motions in the same symmetry class
as $H_0$.  The distribution of $H_t$ coincides with 
\be\label{def Ht1}
H_t \overset{d}\sim e^{-t/2} H_0 +(1-e^{-t})^{1/2} H^G, 
\ee
where $H^G$ is a standard GOE matrix, independent of $H_0$. 
Recall the well-known fact that the law of the solution $\bx(t)$  to the DBM \eqref{DBMdynamics} 
is the same as that of the eigenvalues of  $H_t$ provided that the law of the initial data for \eqref{DBMdynamics} 
is given by the eigenvalues of  $H_0$. 
Recall the definition  
of the correlation functions $\rho_k^{(N)}$  
from Section~\ref{sec:main}   and  define the  rescaled correlation functions
 around a fixed energy $E$ 
by
\be\label{rescro}
 \rho_{k,E}^{(N, \, \rm{resc})}(\bv) := \frac{1}{\varrho(E)^k}
\rho_{k}^{(N)}\left(E+\frac{\bv}{N\varrho(E)}\right).
\ee
We will use $  \rho_{k,E,t}^{(N ,\, \rm{resc})}(\bv)$ for the rescaled correlation functions 
of the eigenvalues of $H_t$.

 \begin{lemma}\label{main lem}  
  For a fixed $k\in \N$ and $L>0$, let $O\in W^{2,\infty}(\R^k)$ be a test function supported in  $[-L,L]^k$. 
 Suppose that $H_0$ satisfies all the assumptions in Definition \ref{def:wig}
and \eqref{eqn:forHansonWright}. 
For a  fixed positive number $\tau$ we set
$t=N^{-\tau}.$
Fix any $\kappa>0$. Then for 
 any   $|E|\le 2-\kappa$ 
we have  
\be\label{co}
\left| \int \rd\bv O(\bv)  \rho_{k,E,t}^{(N, \, \rm{resc})}(\bv)
-
\int\rd\bv O(\bv)  \rho_{k}^{(\text{\rm\tiny GOE})}( \bv) \right|\le C  \| O \|_{W^{2,\infty}}\tau^{1/2}
\ee
holds for  any small enough $\tau\le \tau_0(\kappa)$
 any sufficiently large  $N\ge N_0(\tau, \kappa)$. Here the constant $C$  depends only on $L$ and $\kappa$.  
 \end{lemma}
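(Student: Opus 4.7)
The plan is to compare $\E\mathcal{Q}(\bx(t),E)$ against $\E\mathcal{Q}(\bz(t),E)$, where $\mathcal Q$ is the observable built from $O$ as in \eqref{aim}, $\bx(t)$ evolves from $H_0$, and $\bz(t)$ from an independent GOE matrix via the same DBM dynamics. Since the OU flow preserves Gaussianity, $\bz(t)$ is again GOE-distributed, so the second expectation converges to $\int O\,\rho_k^{(\text{\rm\tiny GOE})}$ via \eqref{GOEconv} with an effective, quantitative rate that is uniform for $|E|\le 2-\kappa$. The main work is therefore to match the Wigner side to the GOE side using the homogenization result of Theorem \ref{T1}.

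First, I would couple two DBMs $\bx(t),\by(t)$ driven by the same Brownian motions with Wigner and GOE initial data, and couple a second, independent GOE process $\bz(t)$ with $\by'(t)$ analogously. Applying Theorem \ref{T1} to each pair yields, for any bulk index $\ell$ and with overwhelming probability,
\begin{equation*}
x_\ell(t)-y_\ell(t)=\tilde\zeta_t^{\bx}-\tilde\zeta_t^{\by}+\OO(N^{-1-\delta}),\qquad z_\ell(t)-y'_\ell(t)=\tilde\zeta_t^{\bz}-\tilde\zeta_t^{\by'}+\OO(N^{-1-\delta}),
\end{equation*}
where $\tilde\zeta_t^{\bu}$ is the mesoscopic linear statistic from \eqref{linearstat}. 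Since $O\in W^{2,\infty}$, shifts of eigenvalues inside $\mathcal{Q}$ by $\OO(N^{-1-\delta})$ produce only a negligible error, giving
\begin{equation*}
\E\mathcal{Q}(\bx(t),E)=\E\mathcal{Q}\bigl(\by(t)-\tilde\zeta_t^{\by},E-\tilde\zeta_t^{\bx}\bigr)+o(1),
\end{equation*}
and the analogous identity with $\bz,\by'$. Because $\tilde\zeta_t^{\bx}$ and $\tilde\zeta_t^{\bz}$ are independent of $(\by(t),\tilde\zeta_t^{\by})$ and $(\by'(t),\tilde\zeta_t^{\by'})$ respectively, the comparison reduces to the form \eqref{aim3}, a question purely about the laws of the mesoscopic observables.

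Next, decompose $O$ in its first variable into a low-frequency piece with Fourier support in $[-m,m]$ and a high-frequency remainder; the remainder contributes at most $C\|O\|_{W^{2,\infty}}m^{-1}$ by a standard integration-by-parts estimate $|\widehat O(\lambda)|\lesssim \|O''\|_{\infty}\lambda^{-2}$. For the low-frequency piece, the mesoscopic CLT \eqref{gaussmes} gives
$\widehat\mu_{\bz}(\lambda)=e^{-\lambda^2\tau\log N/2-\ii\lambda b}+\OO(N^{-c})$,
which stays at least $N^{-c/2}$ on $|\lambda|\le m$ under the compatibility condition $\tau m^2\le c$. Setting $F(a):=\E\mathcal{Q}(\by(t)-\tilde\zeta_t^{\by},E-a)$, the effective bulk translation invariance of GOE on microscopic scales, combined with the second identity in \eqref{resulthomo}, forces $(F*\mu_{\bz})(a-h)-(F*\mu_{\bz})(a)=\OO(N^{-c})$ for $|h|\lesssim N^{-1}$. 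Dividing on the Fourier side by $\widehat\mu_{\bz}$ transfers this bound to $F$ itself on $|\lambda|\le m$; running the analogous argument with $\tilde\zeta_t^{\bx}$ then yields \eqref{aim3}. Balancing the cutoff $m\sim \tau^{-1/2}$ against the Fourier error $m^{-1}$ produces the $\tau^{1/2}$ rate claimed in \eqref{co}.

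The hardest step is the sharp form \eqref{gaussmes} of the mesoscopic CLT past the natural scale $(\tau\log N)^{1/2}$, where the Gaussian main term is already exponentially small: Johansson's Laplace-transform technique yields the correct Gaussian profile, but extracting a polynomial error $N^{-c}$ uniformly in $\lambda\in[-m,m]$ (rather than only on the support of the natural Gaussian) requires combining it with the fine mesoscopic rigidity of \cite{BouErdYau2013}. A secondary technical point is to make the convergence \eqref{GOEconv} effective and uniform in $E$ at a rate at least $\tau^{1/2}$, which is what allows the $\bz(t)$ side of the comparison to be identified with the GOE correlation functions of the statement with the claimed quantitative error.
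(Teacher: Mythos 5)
Your proposal follows essentially the same route as the paper: reduce via the quantitative GOE asymptotics (Lemma~\ref{lem: Gaussian}) to comparing $\E\mathcal Q$ observables; apply the homogenization theorem (Theorem~\ref{T1}) to express the difference in terms of mesoscopic linear statistics; truncate $O$ in Fourier; combine the mesoscopic CLT (Theorem~\ref{thm:GaussFluct}, i.e.\ \eqref{gaussmes}) with a reverse-heat-flow argument to handle the low-frequency piece under $\tau m^2\lesssim 1$; and balance $m\sim\tau^{-1/2}$ against the $m^{-1}$ truncation error to get the $\tau^{1/2}$ rate. The only inessential deviation is your use of a second independent reference process $\by'$, whereas the paper couples $\bz$ to the same $\by$; since only the joint law of $(\by(t),\wt\zeta_t^\by)$ matters, this is a harmless reformulation.

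Two places in your sketch are materially incomplete as written, though both are filled by the paper's Lemmas~\ref{lemmaOO}--\ref{cor: mon} and Lemma~\ref{lem: FF}. First, the bound $|\E\mathcal Q(\bx(t),E)-\E\mathcal Q_m(\bx(t),E)|\lesssim \|O\|_{W^{2,\infty}}m^{-1}$ does not follow from the pointwise decay $|\widehat O(\lambda)|\lesssim\lambda^{-2}$ alone: $\mathcal Q$ sums over $N^2$ index pairs, so one needs the $x$-weighted approximation $|(Q_m-Q)(x,y)|\lesssim(1+x^2)^{-1}m^{-1}$ from Lemma~\ref{lemmaOO} together with the microscopic density bound $\E\,\#(\bx(t),E,1,L)\le C$ of Corollary~\ref{cor: mon} — and the latter is itself obtained by applying the compact-Fourier-support lemma (Lemma~\ref{lem: flatO}) to a special observable, a mild bootstrapping your sketch skips. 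Second, ``dividing on the Fourier side by $\widehat\mu_\bz$'' is the right heuristic, but $F$ is not a priori bandlimited; the paper instead inverts the Gaussian convolution by a convergent power series in derivatives of $F_h$, which requires the uniform bounds $\|F^{(\alpha)}\|_\infty\lesssim d_\alpha$ of Lemma~\ref{lem: F}. These come precisely from the compact Fourier support of $Q_m$ via \eqref{teyz}, which is the actual reason this class of test functions is singled out.
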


Throughout this section we use the relation $t=N^{-\tau}$ between $t$ and $\tau$, and
we will use both letters in parallel. 
In order to extend the universality result from 
 Wigner ensembles $H_t$ with a Gaussian component of size of order
 $t=N^{-\tau}$   to all Wigner ensembles, 
we follow  the standard approach via the following  Green function comparison theorem. 

\begin{lemma}
\label{weak lemma1} 
Consider
two  $N\times N$ generalized  Wigner matrices, 
$H^{(v)}$ 
and $H^{(w)}$  with matrix elements  $h_{ij}$
given by the random variables $N^{-1/2} v_{ij}$ and 
$N^{-1/2} w_{ij}$, respectively, and 
satisfying  the assumptions in Definition \ref{def:wig} and the moment condition
 \eqref{eqn:forHansonWright}. 
We assume that the first four moments of
  $v_{ij}$ and $w_{ij}$ satisfy, for some $\delta > 0$, that  
\be\label{4}
    \big | \E  (\Re \,  v_{ij})^a (\Im \, v_{ij})^b  -
  \E  (\Re \,  w_{ij})^a (\Im \, w_{ij})^b  \big | \le N^{-\delta -2+ (a+b)/2},
  \qquad 1\le a+b\le 4.
\ee
Let $\rho_{k}^{(N, v)}$ and $\rho_{k}^{(N, w)}$
be the  $k-$point correlation functions of the eigenvalues w.r.t. the probability law of the matrix $H^{(v)}$
and $H^{(w)}$, respectively. 
Then for any test function $O$ and any $|E|\le 2-\kappa$   we have   
\be \label{6.3} 
\lim_{N\to\infty} \int \rd\bv O(\bv)\left( \rho_{k,E}^{(N, \, v, \, \rm{resc})}(\bv)
 -\rho_{k,E}^{(N, \, w, \, \rm{resc})}(\bv) \right) =0.
\ee
 \end{lemma}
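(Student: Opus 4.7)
The plan is to follow the classical four-moment Green Function Comparison (GFT) strategy, in the spirit of Section~6 of \cite{ErdYauYin2012Univ} (see also \cite{TaoVu2011, ErdKnoYauYin2012}). It proceeds in three steps: reformulate the correlation function integral as a smooth functional of the resolvent; perform a Lindeberg-style entry-by-entry replacement; and control the high derivatives of the resulting functional via the entrywise local semicircle law.

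\emph{Step 1 (resolvent reformulation).} Fix a small parameter $\sigma>0$ and set $\eta := N^{-1-\sigma}$. Using the Poisson representation $\pi^{-1}\eta/(x^2+\eta^2) = \pi^{-1}\im(x-i\eta)^{-1}$, one shows, for $z_j := E+v_j/(N\varrho(E))+i\eta$, that
\[
\int O(\bv)\,\rho_{k,E}^{(N,\mathrm{resc})}(\bv)\,\rd\bv \;=\; \E\,\Phi_O\pa{\im\tr G(z_1),\dots,\im\tr G(z_k)} + o_N(1),
\]
where $\Phi_O$ is a smooth bounded functional obtained from $O$ by Poisson mollification. The $o_N(1)$ error is controlled by the optimal rigidity of \cite{ErdYauYin2012Rig}, which is valid for both $H^{(v)}$ and $H^{(w)}$ under \eqref{eqn:forHansonWright}. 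The same identity applies with $H^{(w)}$, so it suffices to compare the two expectations of the common functional $\Phi_O$.

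\emph{Step 2 (Lindeberg swapping).} Enumerate the $M = O(N^2)$ independent entries $(i,j)$, $i\le j$, and interpolate $H^{(v)} = H^{(0)}, H^{(1)}, \dots, H^{(M)} = H^{(w)}$ by replacing one entry at a time. Taylor expand $\E\Phi_O$ to fifth order in each swapped entry; the first four orders are matched by \eqref{4} up to $N^{-\delta-2+(a+b)/2}$. After incorporating the scaling $h_{ij} = v_{ij}/\sqrt{N}$, each matched order contributes $N^{-\delta-2}\,\|\partial^{a+b}\Phi_O\|_\infty$ per swap, while the fifth-order remainder contributes $N^{-5/2}\,\|\partial^5\Phi_O\|_\infty$ per swap using \eqref{eqn:forHansonWright}. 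Summing over the $O(N^2)$ swaps yields a total error
\[
\pa{N^{-\delta} + N^{-1/2}}\,\sup_{r\le 5}\|\partial^r\Phi_O\|_\infty.
\]

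\emph{Step 3 (derivative bounds).} Each $\partial_{h_{ij}} G = -G(E_{ij}+E_{ji})G$ produces one extra resolvent factor, so derivatives of $\Phi_O$ up to order five are bounded by products of a fixed number of entries $G_{ab}(z_j)$. The entrywise local semicircle law applied to the interpolating matrices at scale $\eta = N^{-1-\sigma}$ gives $|G_{ab}(z_j)| \le N^{C\sigma}$ with overwhelming probability; to pass from the standard scale $\eta_0 = N^{-1+\sigma'}$ down to $\eta<1/N$, one uses the monotonicity $\eta\mapsto\eta\,\im G_{aa}(E+i\eta)$ together with rigidity. This yields $\sup_{r\le 5}\|\partial^r\Phi_O\|_\infty \le N^{C\sigma}$, and choosing $\sigma$ so small that $C\sigma<\delta/2$ makes the total error in Step~2 equal to $o(1)$, which proves \eqref{6.3}.

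\emph{Main obstacle.} The most delicate point is Step~1: converting the singular pointwise correlation function at a \emph{fixed} energy $E$ into a smooth resolvent functional with accuracy better than $N^{-\delta}$. This requires pushing $\eta$ strictly below the microscopic eigenvalue spacing $1/N$, where the standard local semicircle law degrades; rigidity \cite{ErdYauYin2012Rig} and an interpolation argument in $\eta$ (controlling $\partial_\eta\,\im m_N$) bridge the gap. Once this reformulation is in place, the rest of the argument is essentially a mechanical bookkeeping of the four-moment expansion.
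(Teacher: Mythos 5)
Your proposal is a correct sketch of the standard four-moment Green function comparison argument, which is exactly the approach the paper is relying on: the paper states Lemma~\ref{weak lemma1} without an in-text proof, citing it as ``of type close to the one introduced in \cite{ErdYauYin2012Univ}.'' Your three steps — Poisson-mollified resolvent reformulation at scale $\eta = N^{-1-\sigma}$, Lindeberg entry swapping with Taylor expansion to fifth order, and derivative control via the entrywise local law pushed below $1/N$ by the monotonicity of $\eta\mapsto\eta\im G_{aa}$ plus rigidity — are precisely the skeleton of the GFT of \cite{ErdYauYin2012Univ}, and you have correctly identified the resolvent reformulation at sub-microscopic scale as the delicate step.
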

  
\begin{proof}  Recall \cite[Lemma 3.4]{EYYBernoulli}, where it was  proved  that for any  real random variable  $\theta$   such that 
$$
\E\,\theta=0, \quad \E\, \theta^2=1, \quad \E |\theta|^4\le C,
$$
and small $t>0$, there exists random variable $\wt\theta=\wt \theta (\theta, t)$
and an independent, standard normal random variable $X\sim  \cal N(0,1)$  such that 
\begin{enumerate}
\item $\wt \theta$ has subexponential decay; 
\item the first three moments of 
$e^{-t/2}\wt \theta+(1-e^{-t})^{1/2} X$ equal to those of $\theta$;
\item  the difference between the fourth moment of $e^{-t/2}\wt \theta+(1-e^{-t})^{1/2}X$ and $\theta$ is $\OO(t)$. 
\end{enumerate}
 Inspecting the proof in \cite{EYYBernoulli}, one can easily show that (i) can be strengthened to
require that $\wt\theta$ has a Gaussian decay. Moreover,
one  can easily extend this result to complex random variables $\theta$ 
 if  (1)  $\Re(\theta), \Im(\theta)$ are independent, or (2) the law of $\theta$ is isotropic,  i.e. $|\theta|$ is independent of $\arg \theta$, which is uniform on $(0,2\pi)$.  In this case   there exists a complex random variable $\wt \theta$ satisfying the corresponding condition (1) or (2) and each item (i)-(iii).

We apply this result to  each entry of $H$. Therefore, there exists a generalized Wigner matrix $\wt H$,
   satisfying the assumptions   in Definition \ref{def:wig}   and \eqref{eqn:forHansonWright} 
such that if we define 
   $$
   \wt H_t  \; = \; e^{-t/2} \wt H  +(1-e^{-t})^{1/2} H^{G},
   $$
   then    the first four moments of the matrix entries of $\wt H_t$ almost match those of $H$ in the following sense:  
    $$
\E\, \big[ \Re(\wt H_t)_{ij}\big] ^a \big[\Im (\wt H_t)_{ij}\big]^b=\E \, \big[\Re  H_{ij}\big]^a
\big[ \Im  H_{ij}\big]^b, \quad \quad 0\le a,b\le 3, \quad  1\le a+b\le 3,
$$
$$
\left|\E\, \big[ \Re(\wt H_t)_{ij}\big] ^a \big[\Im (\wt H_t)_{ij}\big]^b-\E \, \big[\Re  H_{ij}\big]^a
\big[ \Im  H_{ij}\big]^b\right|
\le CN^{-2} t, \quad \quad a+b=4.
$$
 Furthermore, $\wt H_t$ satisfies the assumptions in Definition \ref{def:wig} and
the decay condition \eqref{eqn:forHansonWright}. Applying Lemma \ref{weak lemma1} 
with the choice $ H^{(v)}=H$ and $ H^{(w)}=\wt H_t$,  $t= N^{-\tau}$    and $\delta:=\tau$,  we obtain that  
 the correlation functions of $H$ asymptotically match those of $\wt H_t$, i.e.,    

\be\label{ssaid-2}
\lim_{N\to \infty} 
\int \rd\bv O(\bv)  \left(\rho_{k,E}^{(N, \, \rm{resc})}(\bv )-\wt \rho_{k,E,t}^{(N, \, \rm{resc})}(\bv)\right)
=0
\ee
 for any test function $O$.   
 Now we can apply  \eqref{co}  with $\wt H$ and $\wt H_t$  playing the role of $H_0$ and $H_t$, respectively, 
 since $\wt H$ satisfies the assumption in Definition \ref{def:wig} and \eqref{eqn:forHansonWright}.
 We obtain that the  correlation functions of $\wt H_t$  asymptotically match  those of $ H^G$: 
 \be\label{ssaid-3}
 \limsup_{N\to \infty} 
\left| \int \rd\bv O(\bv) \wt  \rho_{k,E,t}^{(N, \, \rm{resc})}(\bv)
-
\int\rd\bv O(\bv)  \rho_{k}^{(\text{\rm\tiny GOE})}(\bv) \right|\le C  \| O \|_{W^{2,\infty}}\tau^{1/2}.
\ee 
Combining \eqref{ssaid-2} and \eqref{ssaid-3}, and letting $\tau\to 0$
after the $N\to\infty$ limit,  we obtain  that
\be\label{eq:mai}
\lim_{N\to \infty} 
 \int \rd\bv O(\bv) \left( \rho_{k,E}^{(N, \, \rm{resc})}(\bv )
- \rho_{k}^{(\text{\rm\tiny GOE})}( \bv)  \right)=0
\ee 
holds for compactly supported
test functions $O\in W^{2,\infty}$. To extend this result to
a general continuous 
 function $O$ supported in $[-L, L]^k$, we use a simple approximation.  For  any $\e>0$, there exist $W^{2,\infty}$  
functions  $O_{+,\e}$ and   $O_{-,\e}$, supported in $[-L-\e, L+\e]^k$, such that 
  $$
 O_{+,\e}\ge O\ge O_{-,\e}, \quad \| O_{+,\e}-O_{-,\e}\|_\infty\le 2\e.
  $$
 Applying \eqref{eq:mai}  to  $O_{\pm,\e}$, we obtain
 $$
  \limsup_{N\to\infty} \int \rd\bv O(\bv)\rho_{k,E}^{(N, \, \rm{resc})}(\bv)
\le 
\int\rd\bv O_{+,\e}(\bv)\rho_{k}^{(\text{\rm\tiny GOE})}\left(\bv\right).
$$  and similar lower bound for $\liminf$. 
 Together with the fact that $ \rho_{k}^{(\text{\rm\tiny GOE})} $ is bounded, 
   it implies that \eqref{eq:mai} holds for any continuous, compactly supported observable, 
which completes the proof of Theorem \ref{thm:main}.
\end{proof}
 
\subsection{Reduction to observables with compact Fourier support. } 
 This section presents an approximation argument: we show that universality for a special class of test functions 
can be extended to $W^{2,k}$ test functions as required in  Lemma \ref{main lem}. After a change of
variables, we will work with test functions
that have a compact support in the Fourier space in the energy variable. Universality for such test functions
is stated in Lemma~\ref{lem: flatO} below and  will be proven in the
subsequent Section~\ref{sec:pftech}. 

First we will need the following  precise estimates on the correlation functions of GOE,  
 which were proved in \cite[Theorem 3]{Shch10} and \cite{DeiGio2009}. 

\begin{lemma}\label{lem: Gaussian} 
 As in \eqref{GOEconv},   
 \be\label{GUEsine}
\begin{split}
\frac{1}{\varrho(E)^k}\rho_{k}^{(N,\text{\rm\tiny GOE})}\left(E+\frac{\bv}{N\varrho(E)}\right)=  \rho_{k}^{(\text{\rm\tiny GOE})}\left(\bv\right)+O(N^{-1/2}), \quad 
 \end{split}
\ee
uniformly holds for $(\bv, E)$ in any fixed compact subset of $\R^k\times (-2,2)$ (for matrices from the GUE, the same statement holds with a different limit
$\rho_{k}^{(\text{\rm\tiny GUE})}\left(\bv\right)$).
 \end{lemma}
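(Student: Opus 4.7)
The result is classical and the full proof is carried out in \cite{Shch10,DeiGio2009}; here I describe how one would organize it. The starting point is the exact Pfaffian (resp. determinantal) structure of the GOE (resp. GUE) correlation functions. In the GUE case one has
\[
 \rho_k^{(N,\text{\tiny GUE})}(\lambda_1,\ldots,\lambda_k)=\det\bigl(K_N(\lambda_i,\lambda_j)\bigr)_{i,j=1}^k,
\]
where $K_N$ is the Christoffel--Darboux kernel built from the normalized Hermite functions. In the GOE case one similarly writes $\rho_k^{(N,\text{\tiny GOE})}$ as the Pfaffian of a $2k\times 2k$ antisymmetric matrix whose blocks are the $2\times 2$ matrix kernel built from Hermite functions, their derivatives, and antiderivatives (the so-called skew-orthogonal polynomial construction). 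Therefore the whole problem reduces to obtaining pointwise bulk asymptotics of these kernels with an explicit error of order $N^{-1/2}$ that is uniform in $E$ away from $\pm 2$.

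The second step is to establish Plancherel--Rotach asymptotics for Hermite polynomials in the bulk at a fixed energy $E=2\cos\theta$, $\theta\in(0,\pi)$. I would do this via the Riemann--Hilbert steepest descent approach of Deift and collaborators: the orthogonality weight $e^{-x^2/2}$ leads to a $2\times 2$ RH problem, which after rescaling $\lambda=E+v/(N\varrho(E))$ and the usual $g$-function/lens opening transformations converges to the constant parametrix away from the edges, with a parametrix built out of Bessel/sine kernels locally. Keeping track of the first subleading term in the RH parametrix analysis yields, for the GUE kernel,
\[
 \frac{1}{N\varrho(E)}K_N\Bigl(E+\tfrac{v}{N\varrho(E)},E+\tfrac{v'}{N\varrho(E)}\Bigr)
 =\frac{\sin\pi(v-v')}{\pi(v-v')}+O(N^{-1}),
\]
uniformly for $(v,v',E)$ in a compact set of $\R^2\times(-2,2)$. (An $O(N^{-1/2})$ rate is already enough for the lemma, and in fact in the GUE case one gets $O(N^{-1})$.) Expanding the determinant one obtains \eqref{GUEsine} in the unitary case.

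For the GOE (and GSE) case the analogous step is more delicate because the matrix kernel involves not only $K_N$ but also $DK_N$, its derivatives, and integral operators $IK_N$. Here I would follow Deift--Gioev \cite{DeiGio2009}: they derive the skew-orthogonal polynomials as explicit linear combinations of Hermite polynomials of nearby degrees, reducing the bulk asymptotics of the $2\times 2$ matrix kernel entirely to the Plancherel--Rotach asymptotics above, with subleading corrections carefully tracked. The resulting matrix kernel converges to the standard GOE sine matrix kernel, and keeping the first correction term gives the $O(N^{-1/2})$ rate (the $1/2$ loss compared with GUE comes from boundary terms in the integral operator $IK_N$ and from the combinatorics of the skew orthogonalization). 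Substituting into the Pfaffian representation and uniformly bounding the finite sum of products yields \eqref{GUEsine}.

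The main obstacle is therefore the GOE matrix-kernel asymptotics with an effective rate: one must control $IK_N$ (an antiderivative of the Hermite kernel) uniformly in $E$ in compact subsets of the bulk, not just pointwise. This uniformity in $E$ is what allows us later to apply the lemma while varying the reference energy inside the bulk, and it is the genuinely nontrivial input that justifies the $N^{-1/2}$ bound.
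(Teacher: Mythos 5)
Your proposal takes the same route as the paper: the paper gives no self-contained proof of Lemma \ref{lem: Gaussian} and simply appeals to \cite[Theorem 3]{Shch10} and \cite{DeiGio2009}, exactly the references you invoke. Your sketch of the Riemann--Hilbert/Plancherel--Rotach asymptotics for the Christoffel--Darboux kernel and the Deift--Gioev reduction of the GOE matrix kernel to skew-orthogonal-polynomial/Hermite asymptotics with effective $O(N^{-1/2})$ rate uniform in the bulk is a correct expansion of what those references establish, so it is consistent with the paper's (citation-level) argument.
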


Let $\bx^G$ be  the vector of ordered  eigenvalues of $H^G$ and let $\bx(t)$  be the eigenvalues of 
  $H_t$ in \eqref{def Ht}. 
Simply rescaling the variables in $O$ with $\varrho(E)$,  
the above lemma shows that  for the proof of
 \eqref{co} it is sufficient to prove that  
\be\label{ssaid}
  \left|
 \E  \, {\sum_{i_1, i_2\cdots i_k=1}^N} O \left(\big\{N( x_{i_j}(t) -E)\big\}_{j=1}^k\right)
 -
  \E   \, {\sum_{i_1, i_2\cdots i_k=1}^N} O \left(\big\{N(x^G_{i_j  } -E)\big\}_{j=1}^k\right)
 \right|
\le C\tau^{1/2} 
\ee 
holds for any compactly supported $O\in W^{2,\infty}(\R^k)$.

For brevity, 
 we assume that $O$  has only two arguments, 
i.e., $k=2$;  the general case is proven analogously. 
Furthermore, with a change of variables   $(a,b)\to (a, b-a)$, we 
use the test function of the form 
\be\label{change}
Q(N(x_i-E), N(x_j-x_i)) \quad \mbox{instead of} \quad O(N(x_i-E), N(x_j-E)).
\ee 
The new test function $Q$ is still  compactly supported and lies in $W^{2, \infty}$;  
its advantage is that it depends on $E$ only through its first variable.  
  Therefore, under the assumption of Lemma \ref{main lem}, it is sufficient to prove that for small enough $\tau$,  
   \be\label{ssaidO2}
 \left|
 \E \, {\sum_{i, j=1}^N} Q \Big( N(x_{i}(t)-E), N(x_{j }(t)-x_{i}(t))\Big)
 -
  \E  \, {\sum_{i, j=1}^N} Q \Big(N(x^G_{i }-E), N(x^G_{j }-x^G_{i })\Big)
 \right|
\le C\|Q\|_{W^{2,\infty}}\tau^{1/2} 
\ee 
holds for sufficiently large  $N$ and with  $C$  depending only on $L$ and $\kappa$. 
For simplicity,  we   define 
\be\label{lfei}
  \mathcal Q(\bx, E) := \sum_{i,j=1}^NQ \Big( N(x_{i}-E), N(x_{j}-x_{i})\Big), \quad  E\in \R. 
\ee
 Let $\wh Q(p, y)$  be the Fourier transform of $Q$ w.r.t. the first argument, i.e.,  
\be\label{defOK}
\wh Q(p, y)=\int_\R Q(x, y)e^{-  i p x  }\rd x .
\ee
 In this section, hat always denotes 
 a partial Fourier transform, i.e. Fourier transform only in the first variable.
 The Fourier-space variables will be denoted by $p$.
 We will also say that $\wh Q(p,y)\in W^{2,\infty} $ if $\wh Q$ as a function of $p, y$ is in the Sobolev space.

 The following lemma, proven in Section~\ref{sec:pftech},   states that 
 that universality for large time holds for observables whose Fourier transforms 
have compact support.

\begin{lemma}\label{lem: flatO}
Let $Q: \R^2\to \R$ be a   function such that  $\wh Q\in W^{2,\infty} $, and  
\be\label{iwin}
\supp \,\wh Q\subset[-m,m\,]\times [-L,L]
\ee
for some fixed $m$, $L\in \mathbb N$.
There exists a constant $\delta_0 $ independent of $m$   and $L$ 
  such that 
for any   $  \kappa>0$
\be\label{bus0-1}
   \E \, \mathcal Q(\bx(t), E) -  \E \, \mathcal Q(\bx^G, E) = {\rm O}(N^{-\tau}) , \qquad t=N^{-\tau}
\ee
holds  uniformly for $ |E|\le 2-\kappa$ and $\tau \le \frac{\delta_0}{m^2+1}$. 
  \end{lemma}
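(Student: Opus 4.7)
The plan is to execute the coupling/homogenization/reverse heat flow scheme sketched in Section~\ref{sec:main}, which is tailored precisely to the case where $\wh Q$ has compact Fourier support. I will couple $\bx(t)$ with two auxiliary DBM trajectories $\by(t)$ and $\bz(t)$, driven by the \emph{same} Brownian motions as $\bx(t)$ but initialized from two independent GOE matrices. Because the OU-type DBM \eqref{DBMdynamics} preserves the GOE law, $\bz(t)$ is again a GOE spectrum, so it suffices to show $\E\,\mathcal Q(\bx(t),E) - \E\,\mathcal Q(\bz(t),E) = \OO(N^{-\tau})$.

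\textbf{Reduction via homogenization.} Applying Theorem~\ref{T1} to the pairs $(\bx,\by)$ and $(\bz,\by)$, for $i\in I(E,\delta_1)$ I obtain, with error $\OO(N^{-\delta_2})$,
\[
  N x_i(t) = N y_i(t) + \eta_t^\bx - \eta_t^\by, \qquad N z_i(t) = N y_i(t) + \eta_t^\bz - \eta_t^\by,
\]
where $\eta_t^\bx := N\, e^{-(t-t_0)/2}\sum_j p_{t-t_0}(\gamma_i,\gamma_j)(x_j(t_0)-\gamma_j)$ up to an $i$-dependent deterministic shift that cancels in all differences (here I used $\int p_s(x,y)\,\rd\varrho(y)=1$ from Lemma~\ref{lem:diagonalization}). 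The decisive structural feature is that $\eta_t^\bx$ is measurable with respect to $\bx(t_0)$ alone, hence $\eta_t^\bx,\,\eta_t^\bz$ are independent of each other and of $(\by(t),\eta_t^\by)$. Since $\mathcal Q(\cdot,E)$ depends on $E$ only through the translation $x_i-E$, I transfer the particle-side discrepancy into the energy argument:
\[
  \mathcal Q(\bx(t),E) \approx \mathcal Q\bigl(\by(t)-N^{-1}\eta_t^\by,\;E - N^{-1}\eta_t^\bx\bigr),
\]
and analogously for $\bz$. Writing $F(a):=\E\,\mathcal Q(\by(t)-N^{-1}\eta_t^\by,\,E-a)$ (expectation only over $\by$ and the shared Brownian motions), the goal becomes $\E F(N^{-1}\eta_t^\bx)-\E F(N^{-1}\eta_t^\bz) = \OO(N^{-\tau})$.

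\textbf{Reverse heat flow.} A quantitative CLT for the Gaussian mesoscopic statistic gives $\wh\mu_\bz(\lambda) = e^{-\lambda^2\tau\log N/2 -\mathrm{i}\lambda b} + \OO(N^{-c})$ uniformly on a broad range of $\lambda$, where $\mu_\bz$ is the law of the properly rescaled $\eta_t^\bz$. For the increment $F_h(a):=F(a-h)-F(a)$, the second homogenization identity reassembles the convolution $(F_h\ast\mu_\bz)(a)$ into
\[
  \E\,\mathcal Q(\bz(t),E-a+h) - \E\,\mathcal Q(\bz(t),E-a) + \OO(N^{-c}),
\]
and, since $\bz(t)\sim\bx^G$, the polynomial rate \eqref{GUEsine} in Lemma~\ref{lem: Gaussian} together with translation invariance of the bulk GOE statistics bounds this difference by $\OO(N^{-c})$, uniformly in $a,h$. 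The hypothesis $\supp\wh Q\subset[-m,m]\times[-L,L]$ forces $\wh F_h$ to be supported in $|\lambda|\le m$, so Fourier inversion yields
\[
  |\wh F_h(\lambda)| \le \frac{\OO(N^{-c})}{|\wh\mu_\bz(\lambda)|} = \OO\bigl(N^{m^2\tau/2 - c}\bigr),\qquad |\lambda|\le m.
\]
Taking $\delta_0 < c$ small enough so that $\tau \le \delta_0/(m^2+1)$ forces the exponent to be $\le -\tau$, I conclude $F_h=\OO(N^{-\tau})$ uniformly; the target estimate for $\E F(N^{-1}\eta_t^\bx)-\E F(N^{-1}\eta_t^\bz)$ then follows by a convolution argument using only that the mean and variance of $\mu_\bx$ are bounded on the same scale.

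\textbf{Main obstacle.} The genuine difficulty is the sharp CLT estimate on $\wh\mu_\bz(\lambda)$ at scales $|\lambda|\sim m$, far beyond the natural Gaussian scale $(\tau\log N)^{-1/2}$. Dividing by $\wh\mu_\bz$ in the reverse heat flow blows up any error by a factor $e^{\lambda^2\tau\log N/2}$, so the Gaussian profile must be known precisely (not merely perturbatively near $\lambda=0$), and one must rule out oscillatory cancellations that could make $|\wh\mu_\bz|$ anomalously small. This is precisely what dictates the $1/m^2$ dependence in $\delta_0$ and what demands adapting Johansson's cumulant/loop-equation approach~\cite{Joh1998} using the optimal rigidity of~\cite{BouErdYau2013}; this is carried out in Section~\ref{sec:gauss}. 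All remaining ingredients — homogenization, GOE translation invariance with a polynomial rate, and invariance of the coupling — are already available.
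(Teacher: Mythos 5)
Your proposal follows the same overall strategy as the paper's actual proof and matches the informal sketch in Section~\ref{sec:main}: couple $\bx$ with two auxiliary GOE flows $\by,\bz$ via shared Brownian paths, use the homogenization theorem to transfer the microscopic discrepancy into a mesoscopic linear statistic, exploit polynomial-rate translation invariance of the GOE bulk to control the convolution $F_h * \mu_\bz$, and reverse the heat flow using the compact Fourier support of $Q$. You have identified all of the essential ingredients, including the need for the CLT at scales well beyond the natural Gaussian scale and the explanation of the $1/m^2$ dependence.

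The one place where your route diverges from the paper's, and where it would not compile as written, is the reverse heat flow step. You argue directly in Fourier variables: from $(F_h*\mu_\bz)(a)=\OO(N^{-c})$ you wish to deduce $\widehat F_h(\lambda)=\OO(N^{-c}/\widehat\mu_\bz(\lambda))$. But the estimate on $F_h*\mu_\bz$ is an $L^\infty$ bound, and an $L^\infty$ bound on a function does not give a pointwise bound on its Fourier transform — for that you would need $\|F_h*\mu_\bz\|_1=\OO(N^{-c})$, which is not what the homogenization and GOE translation invariance supply (the most one gets cheaply is a loss of a power, since $\|F_h\|_1\sim 1$). The paper sidesteps this in two stages. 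First, instead of showing $F_h*\mu_\bz$ is small, it proves the stronger statement $|\E F_h^{(\alpha)}(\zeta^{\bz(t_0)}_{t-t_0}+r)|\le C d_{\alpha+1}N^{-\delta_Q/2}$ for \emph{every} derivative order $\alpha$ (eq.~\eqref{nanar}); this requires the a priori derivative bounds in Lemma~\ref{lem: F} and the factorial/geometric growth $d_\alpha\le C m^\alpha$ from \eqref{teyz} — both enabled by the compact Fourier support. It then transfers this to a genuine Gaussian $\zeta$ via Lemma~\ref{mzaiq}, where the Fourier argument is applied correctly (it bounds $\|\widehat{F_{h,r,\alpha}}\|_1$, not $\widehat F_h$ pointwise, and pays an $N^{3\delta_Q}$ price from \eqref{pyi}). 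Second, the actual backward heat flow in Step~3 is done by the Taylor series $U(r,s)=\sum_\alpha \frac{s^\alpha}{\alpha!}F_h^{(2\alpha)}(r)$, which is analytic precisely because $\widehat Q$ has compact support, and then $F_h(r)=\sum_\alpha\frac{(-\sigma^2)^\alpha}{\alpha!}\E F_h^{(2\alpha)}(\zeta+\mu)$; summing the convergent series gives $|F_h|\le Cd_2N^{m^2\tau-\delta_Q/2}$. So you should replace your single Fourier-inversion step with the derivative-wise bound followed by the power series summation, as the paper does — or, if you insist on the Fourier route, supply the additional argument controlling $\|\widehat{F_h*\mu_\bz}\|_\infty$ in terms of the observable's decay.

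One smaller point: you assert that $\eta_t^\bx$ is independent of $(\by(t),\eta_t^\by)$ because it is measurable w.r.t.~$\bx(t_0)$. With a coupling started at $t=0$ (as you set it up), $\bx(t_0)$ and $\by(t_0)$ share the same Brownian increments on $[0,t_0]$ and are therefore not independent. The paper handles this by starting the coupling at time $t_0$ — a point flagged in the sketch but that must be made explicit for your independence claim to hold.
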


We now  prove Lemma \ref{main lem}  assuming Lemma \ref{lem: flatO} holds. 
The first step is to  approximate a  compactly supported 
observable  $Q(x, y) \in W^{2,\infty}$  by an observable $Q(x, y)$ whose Fourier transform    $\wh Q(p, y)$ 
is compactly supported as required in Lemma \ref{lem: flatO}. 
The following lemma provides an effective control on this approximation.

\begin{lemma}\label{lemmaOO} Let  
  $q\in W^{\, 2 , \infty}(\R) $ be a symmetric cutoff function, supported on $[-1, 1]$ such that $q(p)=1$ for $|p|\le 1/2$,  and $q\,'(p)\le0$ for $p>0$.  For $Q\in W^{2,\infty}$ and $\supp Q\in [-L,L]^2$,   define $Q_m$  via
its partial Fourier transform as 
\be\label{dkps}
\wh {Q_m}(p, y):=\wh {Q }(p, y)  q_m (p ),     \quad \quad q_m(p ):= q(p /m).
\ee
Then  there exists a constant $C$, depending only on $L$ and $q$,  such that  for   any $m\in \N$, $(x, y) \in \R^2$,  we have 
\begin{align}
&|   (Q_m-Q) (x, y )|\le  \frac{ C\|Q\|_{W^{ 2, \infty}}}{(1+x ^2)  } m^{-1},\label{rO1}\\
&\|\wh {Q_m}\|_{W^{2, \infty}} \le C \|Q \|_{W^{2, \infty}}. \label{rO2}
\end{align}
\end{lemma}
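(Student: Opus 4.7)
The proof is a routine Fourier analysis computation exploiting both the compact support of $Q$ in its first variable and the fact that $q_m-1$ vanishes on $[-m/2,m/2]$.

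For \eqref{rO1}, I start from Fourier inversion in the first variable,
\[
(Q_m-Q)(x,y)=\frac{1}{2\pi}\int_{\R}\wh Q(p,y)\,(q_m(p)-1)\,e^{ipx}\,\rd p,
\]
where the integrand is supported in $|p|\ge m/2$. The key input is the decay estimate
$|\partial_p^j \wh Q(p,y)|\le C\|Q\|_{W^{2,\infty}}(1+p^2)^{-1}$ for $j=0,1,2$,
which follows by writing $\partial_p^j\wh Q(p,y)=(-i)^j\int_{-L}^L x^j Q(x,y)e^{-ipx}\,\rd x$ and integrating by parts twice in $x$; boundary terms vanish because $Q$ has compact support in $x$ and belongs to $W^{2,\infty}\subset C^1$.

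The uniform bound $|(Q_m-Q)(x,y)|\le C\|Q\|_{W^{2,\infty}}m^{-1}$ then follows by integrating $(1+p^2)^{-1}$ over $|p|\ge m/2$. To gain the additional factor $(1+x^2)^{-1}$, I multiply by $x^2$, use $x^2 e^{ipx}=-\partial_p^2 e^{ipx}$, and integrate by parts twice in $p$ to move the derivatives onto $\wh Q(p,y)(q_m(p)-1)$. Each derivative falling on $q_m$ produces a factor $\|q^{(k)}\|_\infty m^{-k}$ supported in $|p|\ge m/2$, while each derivative on $\wh Q$ preserves the $(1+p^2)^{-1}$ decay established above. Expanding via Leibniz and integrating over $|p|\ge m/2$ yields $x^2|(Q_m-Q)(x,y)|\le C\|Q\|_{W^{2,\infty}}m^{-1}$. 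Combining the two bounds via
\[
(1+x^2)|(Q_m-Q)(x,y)|\le 2C\|Q\|_{W^{2,\infty}}m^{-1}
\]
gives \eqref{rO1}.

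For \eqref{rO2}, I write $\wh{Q_m}(p,y)=\wh Q(p,y)q_m(p)$ and apply Leibniz:
\[
\partial_p^a\partial_y^b\wh{Q_m}(p,y)=\sum_{j=0}^a\binom{a}{j}\bigl(\partial_p^j\partial_y^b\wh Q\bigr)(p,y)\,\partial_p^{a-j}q_m(p),\qquad a+b\le 2.
\]
For $m\ge 1$, $|\partial_p^{a-j}q_m(p)|\le m^{-(a-j)}\|q^{(a-j)}\|_\infty$ is bounded uniformly in $p$ and $m$, and
$|\partial_p^j\partial_y^b\wh Q(p,y)|\le 2L^{j+1}\|\partial_y^b Q\|_\infty\le C\|Q\|_{W^{2,\infty}}$ follows directly from the defining integral using $\supp Q\subset[-L,L]^2$. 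Summing over $a+b\le 2$ yields \eqref{rO2}. There is no real obstacle in this argument; the only point requiring care is the justification of the two integrations by parts, which is straightforward from the compact support of $Q$ in the first variable together with the decay of $\wh Q$ and its $p$-derivatives at $\pm\infty$ already established for \eqref{rO1}.
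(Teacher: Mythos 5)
Your proof is correct and follows essentially the same route as the paper's: Fourier inversion in the first variable, the bound $|\partial_p^j\widehat Q(p,y)|\le C\|Q\|_{W^{2,\infty}}(1+p^2)^{-1}$ obtained by two integrations by parts using the compact support of $Q$, and a Leibniz expansion of $\partial_p^n\bigl[\widehat Q\,(q_m-1)\bigr]$ distinguishing the case where a derivative hits $q_m$ (giving a factor $m^{-k}$) from the case where $q_m-1$ is undifferentiated (giving the support restriction $|p|\ge m/2$). Combining $n=0$ and $n=2$ gives \eqref{rO1}, and \eqref{rO2} is the same Leibniz computation applied directly to $\widehat{Q_m}$, exactly as in the paper.
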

 
\begin{proof}[Proof of Lemma \ref{lemmaOO}] We have
\begin{align}
\label{FFor1}
&x^n  (Q_m-Q) (x, y)
= \int_{  \bR }   \frac{(i \partial_{p})^n}{2\pi}  \left[ \left(\wh {Q_m}-\wh Q \right) (p, y)\right]e^{  ipx  }\rd p,\\
\label{FFor2}
&p^{n_1} (\partial_{p})^{ n_2} \wh Q  (p, y)= \int_{  \bR } (-i)^{n_1+n_2} (\partial_{x})^{n_1}  \left[  x ^{n_2}Q  (x, y)\right]e^{- ipx  }\rd x.
\end{align}
Using \eqref{FFor2} with $n_1=0,2$,   since $Q$ is compactly supported in $[-L,L]^2$,  we have 
\be\label{wltt}|(\partial_{p})^n \wh Q (p, y)|\le C_{n,L}(1+|p|^2)^{-1} \|Q\|_{W^{2,\infty}}. 
\ee
  Similarly, using \eqref{FFor1} we have  
\[
    \left| x^n  (Q_m-Q) (x, y)\right| 
\le C_{n }  \int  
\sum_{n_1+n_2 =n} \left|  (\partial_{p})^{n_1}(q_m-1)(p)\right|
\left|(\partial_{p})^{n_2} \wh Q   (p, y)\right|
\rd p.
\]
 By definition, if $n_1\neq 0$ then $\left|(\partial_{p})^{n_1}  (q_m-1)\right|  \le C m^{-1}$. If $n_1= 0$ then $\left|(\partial_{p})^{n_1}  (q_m-1)\right| $    is supported in  $\{p: |p| \ge m/2\}$. Together with \eqref{wltt},   we have  
$$\left| x^n  (Q_m-Q) (x, y)\right| \le C_{n,  L} \|Q\|_{W^{2,\infty}} \,\,m^{-1}.
$$ 
Choosing $n=0 $ and $2$, we complete the proof of  \eqref{rO1}.  The  \eqref{rO2} can be easily derived 
from the definition of $Q_m$ and \eqref{FFor2} (with $n_1=0$).  
\end{proof}

Lemma \ref{lemmaOO} provides an approximation for any smooth observables with compact support
by observables with compact support in the Fourier space. On the  other hand,  to estimate the error resulting from 
this approximation, we will need the following corollary which gives an effective  bound on the density of $\bx(t)$, the
 eigenvalues of $H_t$, at the local scale $1/N$.

 \begin{corollary}\label{cor: mon} 
Define
\be\label{def HEcc}
  \#(\bx, E, s_1,s_2) :=\, \left|\left\{(i, j)\in \mathbb N^2 : |x_{i}-E  |\le 
 \frac {s_1}N, \quad |x_{i}-x_{j} |\le  \frac {s_2} N\right\}\right|.
 \ee
For any fixed $L\ge 1$ and $\kappa>0$, with the $\delta_0$ in Lemma \ref{lem: flatO},  there exists constant $C>0$ such that for  $\tau\le \delta_0/2$ 
 \be\label{figC}
 \limsup_{N\to \infty} \max_{|E|\le 2-\kappa}\E \, \#(\bx(t), E, 1, L)\le C, \quad\quad\quad t=N^{-\tau}, \quad \tau\le \delta_0/2.
 \ee 
 \end{corollary}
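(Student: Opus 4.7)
The plan is to dominate $\#(\bx(t),E,1,L)$ by a smooth observable of the form \eqref{lfei}, transfer its expectation to the GOE side via the Fourier truncation of Lemma~\ref{lemmaOO} combined with the universality result of Lemma~\ref{lem: flatO}, and then bound the GOE-side quantity explicitly using Lemma~\ref{lem: Gaussian}.

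Concretely, one first chooses a non-negative $Q\in W^{2,\infty}(\R^2)$, compactly supported in $[-2,2]\times[-2L,2L]$, with $Q(x,y)\ge\b{1}(|x|\le 1)\b{1}(|y|\le L)$; this gives the pointwise domination $\#(\bx(t),E,1,L)\le\mathcal Q(\bx(t),E)$. Since $\tau\le\delta_0/2$, the compatibility condition $\tau\le\delta_0/(m^2+1)$ of Lemma~\ref{lem: flatO} is satisfied for $m=1$, and Lemma~\ref{lemmaOO} produces a decomposition $Q=Q_m+R_m$ such that $\widehat{Q_m}$ has compact Fourier support in the first variable and $|R_m(x,y)|\le C(1+x^2)^{-1}\b{1}(|y|\le 2L)$ by~\eqref{rO1}. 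Applying Lemma~\ref{lem: flatO} to $\mathcal Q_m$ yields
\begin{equation}\label{planeq1}
\E\mathcal Q_m(\bx(t),E)=\E\mathcal Q_m(\bx^G,E)+O(N^{-\tau}).
\end{equation}
By Lemma~\ref{lem: Gaussian} the rescaled 1- and 2-point correlation functions of GOE converge uniformly on compacta in the bulk to bounded limits; combined with the integrable decay of $|Q_m|$ outside the support of $Q$, this gives $\E\mathcal Q_m(\bx^G,E)\le C$ uniformly for $E\in[-2+\kappa,2-\kappa]$ and $N\ge N_0(\kappa)$.

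The main obstacle is controlling the DBM-side truncation error $\E\mathcal R_m(\bx(t),E)=\E(\mathcal Q-\mathcal Q_m)(\bx(t),E)$. Using the pointwise bound on $R_m$ together with a dyadic decomposition of the range of $N(x_i-E)$, this is dominated by
\begin{equation*}
\sum_{|k|\le\kappa N/2}\frac{C}{1+k^2}\,\E\#\bigl(\bx(t),E+k/N,1,2L+1\bigr)
\end{equation*}
plus negligible tails from $|k|>\kappa N/2$, which is a self-referential expression in the quantity we seek to bound. The fundamental difficulty is that at $\tau=\delta_0/2$ only $m=1$ is admissible, so the prefactor produced by the sum $\sum_k(1+k^2)^{-1}$ is not a priori small enough to close the inequality directly.

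To overcome this, I would combine (i) a crude a priori bound $\E\#(\bx(t),E,1,L)\le C_L N^{2\xi}$ for arbitrarily small $\xi>0$, following from the rigidity estimate~\eqref{rigi} and the Wegner/level-repulsion estimate (Corollary~\ref{cor:Wegner}); and (ii) an iteration argument in which the DBM trajectory is split into short sub-intervals, applying~\eqref{planeq1} on each with the largest admissible value of $m$ (so that $(C'/m)^n$ becomes a genuine contraction). After finitely many iterations the contracting prefactors absorb the initial $N^{2\xi}$ factor, and passing to $\limsup_{N\to\infty}$ delivers the uniform bound~\eqref{figC}; the most delicate aspect is arranging this iteration so that all constants remain uniform in $E$, which is why one works throughout with a slightly enlarged bulk window $[-2+\kappa/2,2-\kappa/2]$.
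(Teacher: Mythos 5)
Your first two steps (dominating the counting function by a smooth $\mathcal{Q}$ and appealing to Lemmas~\ref{lem: flatO} and \ref{lem: Gaussian} on the GOE side) are fine, but the route you take from there does not close. Passing through the Fourier truncation of Lemma~\ref{lemmaOO} leaves a remainder $R_m$ that decays only like $(1+x^2)^{-1}$, and as you correctly observe this produces a self-referential bound on $\E\,\#(\bx(t),E,1,L)$. The iteration you propose does not resolve this: the constraint $\tau\le\delta_0/(m^2+1)$ in Lemma~\ref{lem: flatO} ties the admissible $m$ to the exponent $\tau$ appearing in $t=N^{-\tau}$, and splitting the DBM trajectory into shorter sub-intervals only increases the effective $\tau$, hence \emph{shrinks} the admissible $m$ rather than enlarging it, so no contraction appears. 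Moreover ``applying Lemma~\ref{lem: flatO} on each sub-interval'' has no operational content: that lemma compares the time-$t$ process to GOE, not one time-slice of the flow to a later one, so there is no telescoping identity to iterate. As written, the argument has a genuine gap at this step, and the crude a priori $N^{2\xi}$ input does not repair it.

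The idea you are missing is that for this corollary one should not invoke Lemma~\ref{lemmaOO} at all: instead, construct a nonnegative observable whose partial Fourier transform is \emph{already} compactly supported, so that Lemma~\ref{lem: flatO} applies with no remainder. The paper does this via a squaring trick. Take a real $g\in W^{2,\infty}(\R)$ with $\supp\widehat g\subset[-1,1]$ and $\min_{|x|\le a}g(x)\ge b>0$ for some $a\in(0,1]$ and $b>0$, and take $h\in W^{2,\infty}(\R)$ with $0\le h\le 1$, $h\equiv1$ on $[-L,L]$, $\supp h\subset[-2L,2L]$. Set $Q(x,y):=b^{-2}g(x)^2h(y)$. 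Since $\widehat{g^2}=\widehat g\star\widehat g$ is supported in $[-2,2]$, $Q$ satisfies the hypotheses of Lemma~\ref{lem: flatO} directly (with $m=2$ and $L$ replaced by $2L$), so $\E\,\mathcal{Q}(\bx(t),E)=\E\,\mathcal{Q}(\bx^G,E)+O(N^{-\tau})$ with no error from a tail. The right side is $O(1)$ uniformly in the bulk by Lemma~\ref{lem: Gaussian}, and since $Q\ge0$ everywhere with $Q\ge1$ on $[-a,a]\times[-L,L]$, this yields $\E\,\#(\bx(t),E,a,L)\le C$ and hence \eqref{figC} because $a\sim1$. This entirely sidesteps the decomposition, the tail control, and the rigidity/level-repulsion bootstrap in your sketch.
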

\begin{proof} Let $ g, h \in W^{2,\infty}(\R)$ be two real 
functions such that $ \supp \, (\,\wh  g\,)=[-1,1]$, $\supp (h)=[-2L,2L]$.
 We assume that $\min_{|x| \le a} g(x)\ge b$  for some $0<a\le 1$ and $b>0$.  Furthermore, we assume that $1\ge h(x)\ge0$ for any $x\in \mathbb R$ and $h(x)=1$ for $|x|\le L$.   Define 
\be\label{volO}
Q(x, y):=    b^{-2} g^2(x) h(y).
\ee
 Since $\wh{g^2} = \wh g\star \wh g$, it
 is clear that  $Q$ satisfies the assumption in Lemma \ref{lem: flatO} with $m=2$, 
 and $ L$ being replaced by  $2L$.  Then applying \eqref{bus0-1} to   $Q$ 
defined in \eqref{volO}, and using that $\E \, \mathcal Q(\bx^G, E)$ is
bounded   from Lemma \ref{lem: Gaussian}, we have that 
\be\label{heyf}
\limsup_{N\to \infty}\E \, \mathcal Q(\bx(t), E)\le C 
\ee holds uniformly  for any $ |E|\le 2-\kappa$. 
 From the definition of $Q$ in \eqref{volO}, we have   
$$
\min_{x,y\in\R}Q (x,y) \ge 0 \quad {\rm and } \quad  Q(x, y )\ge 1, \quad (x,y)\in [-a,a]\times [L, L]. 
$$
Then \eqref{heyf}  implies   $$\limsup_{N\to \infty}\max_{|E|\le 2-\kappa}\E\,    \#(\bx(t), E, a, L)\le C$$ for some constant $C$. 
  Hence \eqref{figC} also holds, since $a\sim 1$,  which completes the proof of  Corollary \ref{cor: mon}. 
\end{proof}

We now have all the ingredients to complete   the proof of Lemma \ref{main lem}. 

\begin{proof}[Proof of Lemma \ref{main lem}]
For any compactly supported $Q\in W^{2,\infty}(\R^2)$,
 we   construct $Q_m$ as in \eqref{dkps}. The definition of $Q_m$,   and \eqref{rO2} guarantee that $Q_m$ satisfy the assumption of Lemma \ref{lem: flatO}.  Then applying Lemma \ref{lem: flatO} for $Q_m$,
  we obtain  for any fixed $m\in N$ that
\be\label{bus1}
    \E \, \mathcal Q_m(\bx(t), E) =  \E \, \mathcal Q_m(\bx ^G, E) + {\rm O}(  N^{-\tau}),  \qquad
t=N^{-\tau}, \qquad  \tau \le \frac{\delta_0}{m^2+1},
\ee
where $\delta_0$ is from  \eqref{bus0-1}. 
On the other hand,  we will show below 
 that  if  $\tau\le \delta_0/2$,  then 
\be\label{bus2}
\left| \E \, \mathcal Q (\bx(t), E)
-
  \E \, \mathcal Q_m(\bx(t), E)\right|\le C\,\|  Q\|_{W^{2, \infty}}  m^{-1} 
\ee
holds for some $C$ independent of $m$ and  $\tau$ and 
 and large enough $N$. Notice that \eqref{bus2}
  also holds if we replace the $\bx(t)$ with $\bx^G$, since $\bx (t) \overset{d}{\sim }  \bx^G$ if $\bx(0) \overset{d}{\sim }  \bx^G$.  Combining \eqref{bus1} and \eqref{bus2}, and choosing $m=\delta\tau^{-1/2}$ with a small $\delta\le \delta_0$, 
  we obtain \eqref{ssaidO2}, i.e., \eqref{ssaid} in the case $k=2$. One can easily extend the above proof to the general $k$ 
case. Together with Lemma \ref{lem: Gaussian}, it implies the desired result \eqref{co}.

  Hence it only remains to prove \eqref{bus2}.    Using   \eqref{rO1}, we have 
 $$
 \max_y \max_{x:|x-n|\le 1}\left|(Q_m-Q)(x, y)\right|\le \frac{C\|Q\|_{W^{2,\infty} }}{1+n^2} m^{-1}.
$$
With the definition of $\#(\bx(t), E+\frac n N, 1, L)$ in \eqref{def HEcc}, it implies that 
\be\label{yin30}
\left| \E \, \mathcal Q (\bx(t), E)
-
  \E \, \mathcal Q_m(\bx(t), E)\right|
\le  \|Q\|_{W^{2,\infty}}  \frac{1}{m} \sum_n \frac{C}{1+n^2}\E\,  \#(\bx(t), E+\frac n N, 1, L).
\ee
For $n\le N^{1/2}$, $ \E\, \#(\bx(t), E+\frac n N, 1, L)$ can be bounded by \eqref{figC}
 (after replacing $\kappa$ with $\kappa/2$).  For  $n\ge N^{1/2}$ we can use the trivial bound
$$ \E\, \#(\bx(t), E+\frac n N, 1, L)\le N^{\xi}
$$
for any $\xi>0$
that directly follows from the  rigidity of eigenvalues
 of $H_t$ if $N\ge N_0(\xi)$ is large enough. 
 Inserting these bounds into \eqref{yin30}, we obtain \eqref{bus2}  and complete the proof of Lemma \ref{main lem}. 
\end{proof}

\subsection{Universality for test functions with compact Fourier support: Proof of Lemma \ref{lem: flatO}. }\label{sec:pftech}
 
Recall that $E$ satisfies $|E|\le 2-\kappa$. 
All the constants in the following proof depend  on $\kappa$, but we will not carry this dependence 
explicitly in the notation.  For any nonnegative integer $\al$ introduce the notation  
$$
Q^{(\al)}(x, y):= (\partial_{x})^\al Q(x,y).
$$
 Recall   
$$
x^{n_1} (\partial_{x})^{n_2}Q (x, y)
= \int  \frac{  i^{n_1+n_2}}{2\pi} (\partial_{p})^{n_1} \left[p^{n_2}\wh Q  (p, y)\right]e^{  i px }\rd p.
$$
 As in \eqref{rO1}, with assumption \eqref{iwin} and $\wh Q\in W^{2,\infty} $  we obtain that there exists  some constant $C$ such that for any  $\al\in \mathbb Z_{\ge 0}$, and $y\in\R$
 \be\label{teyz}
 \left| Q^{(\al)} (x, y) \right|\le C d_{\al }(1+x^2)^{-1} , \quad\quad  d_\al:=  \left(m^2+\al^2\right)  m ^{\al -1} \|\wh Q\|_{W^{2, \infty}}.  \quad  \ee
The multiindex $\al$ used in this section has nothing to do with the 
threshold $\al$ to indicate indices away from the edge, see e.g., \eqref{pupp}.\nc

 The main input to prove Lemma \ref{lem: flatO} is
 the homogenization result,  Theorem 
 \ref{T1}, stating that for any  $\tau<\tau_0$ with a sufficiently small  $\tau_0$,  two coupled 
  DBMs \eqref{DBMdynamics} driven by the same Brownian motions satisfy the estimate 
\be\label{520}
Nx_i(t)-N  \Big (y_i(t)  +  (\Psi_{t-t_0} \b x (t_0))_i 
-  (\Psi_{t-t_0} \b y (t_0))_i\Big) = O(N^{-\delta_2}), \qquad t=N^{-\tau}\ge 2t_0 = N^{-\tau_0},
\ee
for all $i\in I (\delta_1)$ with probability bigger than $1-N^{-\delta_3}$. 
 We recall from \eqref{Psidef} that    $\Psi_t \b x$ is given by 
$
(\Psi _t \bx)_i=N^{-1}\sum _k p_t(\gamma_{i}, \gamma_k ) x_k,
$
and
$\delta_1, \delta_2, \delta_3$ are small positive exponents.

 In our application  we choose $\by( t_0 )$  to be distributed by  $\mu_G$, i.e, 
the eigenvalue distribution of  a Gaussian matrix ensemble. 
Since $\bx( t_0)$'s are  the eigenvalues of a generalized Wigner matrix, 
we denote their  distribution by  $\mu_W$.  The joint distribution of the coupled DBM processes
$ \{\bx (s)\}_{0\le s\le t}$ and $ \{\by (s)\}_{0\le s\le t}$,  
as defined in \eqref{DBMdynamics}, is given by 
$\mu_W\otimes\mu_G\otimes \mu_ B$, where 
$\mu_B=\mu \left(\{ B_\ell (t)\}_{1\leq \ell\leq N, 0\le s\le t}\right)$
is the measure of the independent Brownian motions.
  For simplicity, for expectation w.r.t. $\mu^W\otimes\mu^G\otimes \mu ^ B$, we just use $\E$. 
 For the expectation of functionals $f$ of $\bx(t)$, we will sometimes use  $\E^{\mu_W}f (\bx(t))$
instead of $\E^{\mu_W\otimes \mu_B}f (\bx(t))$ and 
similarly we use
 $\E^{\mu_G}f (\by(t))$ for functionals of $\by(t)$.

Below, we apply the  homogenization result \eqref{520} to $Q^{(\al)}$. 
Recall the definition of $I(\delta)$ from \eqref{definICE}.

 \begin{lemma}\label{loan}
For $\bx\in \R^N$ and $s\ge 0$,  define  
\be\label{defxi}
  \xi^{\bx }_s: = \xi^{\bx }_s(E):= 
N(\Psi_s {\bx})_{i _0}- N(\Psi_s {\b \gamma})_{i _0}, \quad i_0:= \min \{ i:\; \gamma_{i}\ge E\}.
\ee 
(The notation $\xi^{\bx}_s$ 
should not be confused  with the rigidity exponent $\xi$.)  Recalling $\tau_0$ provided by 
Theorem~\ref{T1},
 there exists $\delta_Q\le \tau_0$  such that for any $0<\delta\le \delta_Q$ and any $0<\tau\le  \delta/5$
we have  (with the usual $t=N^{-\tau}$, $t_0=N^{-\tau_0}/2$ conventions) 
\begin{multline}\label{chqu3}
\max_{|E'-E|\le N^{4\tau-1}} \left|\E^{\mu_W}\; \mathcal Q^{(\al)}(\bx(t), E')- \sum_{ i,j  \in  I(\delta)}\E \; Q^{(\al)}
\Big(N(y_{i}(t)-E')+\xi^{\bx(t_0)}_{ t-t_0} -\xi^{\by(t_0)}_{ t-t_0} ,  \quad  N( y_{j}(t)-y_{i}(t))\Big)\right|\\
\le  3 N^{-\delta/2}d_{\al+1} 
\end{multline}
  for large enough $N\ge N_0$ where $N_0$ is independent  of $\al$, the order of derivatives.  
\end{lemma}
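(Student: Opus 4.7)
The plan is to apply the homogenization estimate \eqref{zhimao} term-by-term to the summands of $\mathcal Q^{(\al)}(\bx(t),E') = \sum_{i,j} Q^{(\al)}(N(x_i(t)-E'), N(x_j(t)-x_i(t)))$, replacing each $\bx(t)$ by the coupled $\by(t)$ plus the mesoscopic shift $(\xi^{\bx(t_0)}_{t-t_0}-\xi^{\by(t_0)}_{t-t_0})/N$, and then to absorb the cost via the Lipschitz bound $\|Q^{(\al+1)}\|_\infty\le d_{\al+1}$ furnished by \eqref{teyz}. Three reductions are required: (a) restrict the double sum to $(i,j)\in I(\delta)^2$; (b) invoke \eqref{zhimao} with the index-dependent quantity $\zeta^{\bx(t_0)}_{t-t_0}(i) := N(\Psi_{t-t_0}\bx(t_0))_i - N(\Psi_{t-t_0}\b\gamma)_i$ (so $\xi^{\bx(t_0)}_{t-t_0}=\zeta^{\bx(t_0)}_{t-t_0}(i_0)$); and (c) replace $\zeta^{\bx(t_0)}_{t-t_0}(i)$ by $\xi^{\bx(t_0)}_{t-t_0}$ for $i\in I(\delta)$, and likewise for $\by$.

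For (a), if $i\notin I(\delta)$ the assumptions $|E-E'|\le N^{4\tau-1}$ and $\tau\le\delta/5$, combined with rigidity \eqref{rigi} of $\bx(t)$, force $|N(x_i(t)-E')|\gtrsim N^\delta/2$; the decay \eqref{teyz} then bounds each summand by $Cd_\al N^{-2\delta}$. Since $\supp Q\subset\R\times[-L,L]$ restricts $j$ to $O(L)$ neighbors of $i$, and rigidity plus equidistribution of the $\gamma_i$'s yield $\sum_i (1+N^2(x_i(t)-E')^2)^{-1}=O(1)$, the truncation cost is $O(Ld_\al N^{-\delta})$, absorbed in the target $N^{-\delta/2}d_{\al+1}$ provided $\delta_Q$ is chosen small. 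For (b), on the event $\cS$ of Theorem~\ref{T1} (probability $\ge 1-N^{-\delta_3}$) one has, for every $i\in I(\delta_1)\supset I(\delta)$,
\[
N(x_i(t)-E')=N(y_i(t)-E')+\zeta^{\bx(t_0)}_{t-t_0}(i)-\zeta^{\by(t_0)}_{t-t_0}(i)+O(N^{-\delta_2}),
\]
and an analogous identity for $N(x_j(t)-x_i(t))$; the contribution from $\cS^c$ is bounded deterministically by the $L^\infty$-norm of the summand times the trivial count $N\cdot O(L)$ and the probability $N^{-\delta_3}$, hence negligible.

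The heart of the argument is (c). Writing
\[
\zeta^{\bx(t_0)}_s(i)-\zeta^{\bx(t_0)}_s(i_0) = e^{-s/2}\sum_k \bigl[p_s(\gamma_i,\gamma_k)-p_s(\gamma_{i_0},\gamma_k)\bigr]\bigl(x_k(t_0)-\gamma_k\bigr),
\]
applying the mean-value theorem together with the derivative bound \eqref{pderi} on $p_s$ and rigidity $|x_k(t_0)-\gamma_k|\lesssim N^{-2/3+\xi}(\wh k)^{-1/3}$, one obtains
\[
\bigl|\zeta^{\bx(t_0)}_{t-t_0}(i)-\xi^{\bx(t_0)}_{t-t_0}\bigr|\lesssim |\gamma_i-\gamma_{i_0}|\cdot \frac{N^\xi}{t-t_0}\lesssim N^{-1+\delta+\tau+\xi},
\]
uniformly in $i\in I(\delta)$, and similarly for $\by$. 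Applied to two indices $i,j\in I(\delta)$ simultaneously, the $\zeta$-increments between them cancel to the same order, so on $\cS$ one has $N(x_j(t)-x_i(t)) = N(y_j(t)-y_i(t))+O(N^{-1+\delta+\tau+\xi})+O(N^{-\delta_2})$.

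Assembling the three reductions via $|Q^{(\al)}(a,b)-Q^{(\al)}(a',b')|\le d_{\al+1}(|a-a'|+|b-b'|)$, summing against the convergent weight $(1+N^2(y_i(t)-E')^2)^{-1}$ from the decay of $Q^{(\al+1)}$ and the $O(L)$ weight in $j$, yields the claimed bound $3N^{-\delta/2}d_{\al+1}$ provided $\tau\le\delta/5$ and $\delta\le\delta_Q$ is chosen so that $1-\delta-\tau-\xi\ge\delta/2$ and $\delta/2\le\min(\delta_2,\delta_3)$. The main obstacle is the regularity estimate in (c): the heat-kernel gradient $|\pt_x p_s|$ integrates to $1/s=N^\tau$, which grows with $\tau$, and this blow-up must be defeated by both the mesoscopic smallness $|\gamma_i-\gamma_{i_0}|\le N^{-1+\delta}$ for $i\in I(\delta)$ and optimal rigidity of $\bx(t_0)$ — precisely the balance dictating the compatibility condition $\tau\le\delta/5$ in the statement.
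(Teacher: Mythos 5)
Your proof structure matches the paper's: truncate the double sum to $I(\delta)^2$, apply the homogenization estimate \eqref{zhimao} with the index-dependent mesoscopic shift $\zeta^{\bx(t_0)}_{t-t_0}(i)$, and then replace $\zeta^{\bx(t_0)}_{t-t_0}(i)$ by $\xi^{\bx(t_0)}_{t-t_0}=\zeta^{\bx(t_0)}_{t-t_0}(i_0)$. For the index-replacement step (c), the paper iterates the one-step finite-difference bound \eqref{wezia} over the $\lesssim N^\delta$ intermediate indices and then sums against $\sum_k|x_k(t_0)-\gamma_k|\lesssim N^\xi$ by rigidity; your direct mean-value argument, combined with the kernel derivative decay \eqref{pderi}, yields essentially the same conclusion (and in fact a slightly sharper power of $N$ because you retain the $k$-decay of the derivative rather than replacing it by its supremum). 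Both implementations are correct and rely on the same two inputs: $|\gamma_i-\gamma_{i_0}|\le N^{-1+\delta}\ll t-t_0$ and optimal rigidity of $\bx(t_0)$.

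There is, however, a genuine gap in your treatment of the complement event $\cS^c$. You claim that its contribution is controlled by ``the $L^\infty$-norm of the summand times the trivial count $N\cdot O(L)$ and the probability $N^{-\delta_3}$, hence negligible.'' But the resulting bound $d_\al\cdot NL\cdot N^{-\delta_3}$ is of size $N^{1-\delta_3}$, which \emph{grows} with $N$ since $\delta_3$ is a small constant; it is nowhere near the target $N^{-\delta/2}d_{\al+1}$. Moreover, the count $N\cdot O(L)$ is not even a deterministic bound: without rigidity nothing prevents a macroscopic number of $j$'s from satisfying $|x_j-x_i|\le L/N$. The paper avoids this by splitting $\cS^c$ further into $\cS^c\cap\mathcal{G}$ (rigidity holds) and $\mathcal{G}^c$: on $\cS^c\cap\mathcal{G}$ rigidity plus the decay \eqref{teyz} force the number of contributing pairs to be $O(N^\xi)$, giving a contribution $\lesssim d_\al N^{\xi}\P(\cS^c)$ which is $\ll N^{-\delta/2}d_{\al+1}$ after choosing $\xi$ small relative to $\delta$ and $\delta_3$; on $\mathcal{G}^c$ one uses that $\P(\mathcal{G}^c)\le N^{-D}$ for any $D$, which dominates the trivial count $N^2$. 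Without this further decomposition the argument does not close.
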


 Notice that although Lemma \ref{lem: flatO} is formulated at a fixed energy $E$, for its proof we will
need to understand $\E\; \mathcal Q^{(\al)}(\bx(t), E')$ for nearby  energies $E'$ as well,
which explains the introduction of $E'$ in \eqref{chqu3}.

\begin{proof}  First we show that the summation over all indices $i,j$ in
the definition of $\mathcal Q$ \eqref{lfei} can be restricted to the interval $I(\delta)$.
 This directly follows from 
the rigidity of eigenvalues $\bx(t)$  and from the bound \eqref{teyz}: there exists some $\delta_{\rm c}>0$ 
 (here we use subscript $c$ for cutoff) such that for  $ 0<\delta \le  \delta_{\rm c}$ and $\tau\le \delta/5$,  we have 
\be\label{chqu}
\max_{|E'-E|\le N^{-1+4\tau}} \left|\mathcal Q^{(\al)} (\bx(t), E')-\sum_{ i,j \in I ( \delta)  }Q^{(\al)} \Big( N(x_{i }(t)-E'), \quad  N(x_{j }(t)-x_{i }(t))\Big)\right|\le   d_\al N^{-\delta/2 }
\ee
holds with probability greater than $1-N^{-10}$ for large enough $N$ independent of $\al$. 
 
With Theorem \ref{T1}, and 
from the derivative estimate  from \eqref{pderi},  
 one can easily check that there exists some constants   $\delta_{h}$  (``$h$" stands for homogenization), and $\tau_0$
 such that  \eqref{zhimao} holds for any  $0<\tau\le  \tau_0 $ and $\delta_1$, $\delta_2$, $\delta_3 \le \delta_h$
and we also have
 \be\label{wezia}
  \left|p_t(\gamma_{i}, \gamma_j)-p_t(\gamma_{i+1}, \gamma_j)\right|\le 
 N^{-3\delta_h}, \quad \quad \forall\;  i \in I (\delta_h) , \quad 1\le  j\le N.
\ee
Using \eqref{wezia} and the rigidity of eigenvalues, we know that  for any $0<\tau< \tau_0$,
 \be\label{juanmao}
\P\left(\max_{i,  j\in I (\delta_h)} \left|(\Psi_{t-t_0} ({\bx}(t_0)-{\b \gamma}))_{i }-
(\Psi_{t-t_0} ({\bx}(t_0)-{\b \gamma}))_{j}\right|\ge N^{-1-\delta_h } \right) \le N^{-10}.
\ee
Define $\mathcal Q^{(\al)} ( \bx, E)$  as in \eqref{lfei} with $Q^{(\al)}$  replacing $Q$. 
 Combining \eqref{zhimao}, \eqref{juanmao} and \eqref{chqu}, and using rigidity and \eqref{teyz}, we obtain that  for  any $\delta$: $0<\delta\le \delta_Q:=\min (\delta_c, \delta_h){ /3}$ and $0<\tau<\min( \delta/5, \tau_0)$,  
 \be\label{chqu2}
\max_{|E'-E|\le N^{4\tau-1}} \left|\mathcal Q^{(\al)} (\bx(t), E')-\sum_{ i,j  \in  I(\delta)}Q^{(\al)} 
\Big(N(y_{i}(t)-E')+\xi^{\bx(t_0)}_{t-t_0}-\xi^{\by(t_0)}_{t-t_0}, \quad  N( y_{j}(t)-y_{i}(t)) \Big)\right|\le   2 N^{-\delta/2 }d_{ \al +1}
\ee
holds with probability larger than $1-2N^{-\delta }$  for large enough $N$ independent of $\al$.   Here for the second variable of $Q^{(\al)}$ we first 
use  \eqref{zhimao}, for any $i, j\in I(\delta)$,
\begin{align*}
   x_{j }(t)-x_{i }(t) = & y_{j}(t)-y_{i}(t) +  (\Psi_{t-t_0}\bx)_{j} - (\Psi_{t-t_0}\bx)_i  -  (\Psi_{t-t_0}\by)_{j} 
 + (\Psi_{t-t_0}\by)_i 
  + O(N^{-1-\delta}) \\
    = &  y_{j}(t)-y_{i}(t) +  (\Psi_{t-t_0}(\bx-{\b\gamma}))_{j} - (\Psi_{t-t_0}(\bx - {\b\gamma}))_i  - (\Psi_{t-t_0}(\by-{\b\gamma}))_{j}  
+ (\Psi_{t-t_0}(\by - {\b\gamma}))_i
  + O(N^{-1-\delta}) \\
  = &  y_{j}(t)-y_{i}(t) + O(N^{-1-\delta}),
\end{align*}
with the shorthand writing $\bx = \bx(t_0)$, $\by=\by(t_0)$, 
where in the second step we smuggled in the $\gamma$'s and in the last step we used \eqref{juanmao}.
Similar argument applies to the first variable of $Q^{(\al)}$. 

  On the complement event of probability at most $2N^{-\delta}$ but still on the event where
the rigidity holds, we use that 
 for any fixed $\tau$, $\delta$ and $\xi>0$, 
 $$
\left| \mathcal Q^{(\al)} ( \bx(t), E') \right|
+
\left|\sum_{ i,j  \in  I(\delta)}Q^{(\al)} 
\Big(N(y_{i}(t)-E')+\xi^{\bx(t_0)}_{t-t_0}-\xi^{\by(t_0)}_{t-t_0}, \quad N( y_{j}(t)-y_{i}(t))\Big)
\right|\le d_{ \al }N^\xi
 $$
 holds for all $E':$ $ |E'-E|\le N^{4\tau-1} $.   Finally, on the event where the rigidity does not
hold, we can estimate $Q^{\al}$ by maximum norm; the contribution of this event is still negligible in
the expectation. 
  Together with \eqref{chqu2}, we obtain  \eqref{chqu3} and  complete the proof of Lemma \ref{loan}. 
\end{proof} 
 
 To understand the second term in \eqref{chqu3}, we define a (non-random) function $F$ as follows:
\be\label{defFA}
F(a):=\E^{\mu_G} \sum_{ i,j \in  I (\delta_Q) }  Q   \Big(N(y_{i}(t)-E)+a-\xi^{\by(t_0)}_{t-t_0}, \quad  N(
 y_{j}(t)-y_{i}(t))\Big),
\ee 
 and we always assume
$
 \delta_Q\le 10^{-4}.
$
 We can now rewrite \eqref{chqu3} as follows: for  $\tau<\min( \delta/5, \tau_0)$
\be\label{chqu31}
\max_{|h|\le N^{4\tau}} \left|\E^{\mu_W} \; \mathcal Q^{(\al)}(\bx(t), E+ h/N )- \E^{\mu_W} \; F^{(\al)}(\xi^{\bx(t_0)}_{t-t_0}
- h)\right|\le  3 N^{-\delta_Q/2}d_{\al+1}. 
\ee
 We do not have a direct understanding of $\xi^{\bx(t_0)}_{t-t_0}$; although it concerns local statistics on
a relatively large mesoscopic scale $t-t_0\sim N^{-\tau}$, but in \eqref{chqu31} we would need it with a precision that cannot be obtained
from the available local semicircle laws for Wigner matrices. The key observation is that $F$ is essentially
a constant function, so the actual distribution of  $\xi^{\bx(t_0)}_{t-t_0}$ does not matter.   
 The following lemma formalizes the statement that $F$ is essentially a constant: 
 \begin{lemma} \label{lem: FF} With   
the choice  $\delta_0:=\min(\delta_Q/3, \tau_0)$ and $\tau < \frac{\delta_0}{m^2+1}$,   
we have 
\be\label{TI}
F (a)-F(0)=O(d_2 N^{-\tau}), \quad \forall \;  a:\;   |a|\le N^{4\tau}.
\ee
\end{lemma}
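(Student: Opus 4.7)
The plan is a reverse heat flow argument. The key structural observation is that the parameter $a$ enters $F$ only through the first argument of $Q$; since $\widehat Q(\cdot, y)$ is supported in $[-m, m]$, the partial Fourier transform $\widehat F$ of $F$ in $a$ is itself supported in $[-m, m]$. Explicitly,
\[
\widehat F(\lambda) = \E^{\mu_G}\sum_{i, j \in I(\delta_Q)} e^{i\lambda(N(y_i(t) - E) - \xi^{\by(t_0)}_{t-t_0})}\, \widehat Q(\lambda, N(y_j(t) - y_i(t))).
\]
Compact Fourier support is what will allow us to invert an approximately Gaussian convolution later on.

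First I would introduce an auxiliary independent GOE trajectory $\bz(s)$, with $\bz(0) \sim \mu_G$, coupled to $\by(s)$ via an independent family of Brownian motions. Applying Lemma~\ref{loan} with $\bx(s)$ replaced by $\bz(s)$ (legitimate since a GOE matrix is a generalized Wigner matrix) gives, for $|h|\le N^{4\tau}$,
\[
\E^{\mu_G}\mathcal Q(\bz(t), E + h/N) \;=\; \E\, F\!\left(\xi^{\bz(t_0)}_{t-t_0} - h\right) + O(N^{-\delta_Q/2}\, d_1).
\]
Because $\mu_G$ is stationary under the Ornstein--Uhlenbeck DBM, $\bz(t) \overset{d}{=} \bz(0) \sim \mu_G$, and Lemma~\ref{lem: Gaussian} (the polynomial-speed convergence of GOE correlation functions, uniform in $E$ in the bulk) yields $\E^{\mu_G}\mathcal Q(\bz, E + h/N) = \E^{\mu_G}\mathcal Q(\bz, E) + O(N^{-c})$ uniformly for $|h|\le N^{4\tau}$. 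Combining the two, the convolution $(F \ast \mu_\bz)(h) := \E\, F(\xi^{\bz(t_0)}_{t-t_0} - h)$, with $\mu_\bz$ the law of $-\xi^{\bz(t_0)}_{t-t_0}$, satisfies
\[
(F\ast\mu_\bz)(h) - (F\ast\mu_\bz)(0) = O(d_1 N^{-c}), \qquad |h| \le N^{4\tau}.
\]

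The final step is Fourier inversion. Invoking the mesoscopic CLT to be established in Section~\ref{sec:gauss} (see \eqref{gaussmes}), $\widehat{\mu_\bz}(\lambda) = e^{-\lambda^2\tau\log N / 2 - i\lambda b}(1 + O(N^{-c}))$, which crucially gives $|\widehat{\mu_\bz}(\lambda)|^{-1} \le CN^{m^2\tau/2}$ uniformly on the Fourier support $[-m, m]$ of $\widehat F$. Taking Fourier transforms of the convolution identity above and dividing by $\widehat{\mu_\bz}$ on $[-m, m]$ yields $|\widehat F(\lambda)| \le C d_1 N^{-c + m^2\tau/2}$ there. Writing $F(a) - F(0) = \tfrac{1}{2\pi}\int_{-m}^{m}(e^{i\lambda a} - 1)\widehat F(\lambda)\, d\lambda$ and using $|e^{i\lambda a} - 1| \le m N^{4\tau}$ gives $|F(a) - F(0)| \le C d_2 N^{O(\tau) + m^2\tau/2 - c}$, which is at most $d_2 N^{-\tau}$ once $\tau \le \delta_0 /(m^2 + 1)$ with $\delta_0$ sufficiently small.

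The main obstacle is the reverse heat flow step itself: inverting the almost-Gaussian convolution costs a factor $N^{m^2\tau/2}$ that threatens to destroy the $N^{-c}$ gain. This is affordable only because \eqref{gaussmes} provides a genuinely polynomial error (not a standard CLT error), and because the compact Fourier support of $Q$ restricts the inversion to $|\lambda|\le m$; the hypothesis $\tau \le \delta_0/(m^2+1)$ is calibrated precisely to balance these two competing factors. A secondary challenge is ensuring that Lemma~\ref{lem: Gaussian} is applied with enough uniformity in $E$ to control shifts $|h|\le N^{4\tau}$, which is why a polynomial--quantitative version of GOE universality (rather than merely qualitative convergence) is needed as an input.
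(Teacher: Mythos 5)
Your overall architecture — introduce an independent GOE trajectory $\bz$, use homogenization (Lemma~\ref{loan}) and the quantitative GOE translation invariance (Lemma~\ref{lem: Gaussian}) to show that a convolution of $F$ with the law of the mesoscopic statistic is essentially constant, then undo the convolution using the near-Gaussianity from Theorem~\ref{thm:GaussFluct} and the compact Fourier support of $Q$ — matches both the Section~2 sketch (Step~6) and the actual proof in Section~4.8. However, the final inversion step as you describe it has a genuine gap. You write ``$(F\ast\mu_\bz)(h)-(F\ast\mu_\bz)(0)=O(d_1 N^{-c})$, $|h|\le N^{4\tau}$,'' and then ``taking Fourier transforms of the convolution identity above and dividing by $\widehat{\mu_\bz}$ on $[-m,m]$ yields $|\widehat F(\lambda)|\le Cd_1 N^{-c+m^2\tau/2}$.'' But the estimate on $(F\ast\mu_\bz)(h)-(F\ast\mu_\bz)(0)$ is available \emph{only} for $|h|\le N^{4\tau}$ (the homogenization and the uniformity in Lemma~\ref{lem: Gaussian} give no control for $h$ far outside this window), so you cannot integrate against $e^{-\ii\lambda h}$ over all of $\R$ to extract a bound on $\widehat F$. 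The a priori bounds (Lemma~\ref{lem: F}) only give $|F(a)|\lesssim N^\xi$ for $|a|\le N^{2\delta_Q}$ and $|F(a)|\lesssim N^\xi|a|^{-2}$ beyond, which is far too weak to make $\|\widehat{F\ast\mu_\bz}\|_\infty$ small — the $L^1$ norm of $F\ast\mu_\bz$ is polynomially large.

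The paper closes exactly this gap with the backward heat flow power series (Step~3 of Section~4.8). Instead of Fourier-dividing, one writes $U(r,s)=\sum_{\alpha\ge 0}\frac{s^\alpha}{\alpha!}F_h^{(2\alpha)}(r)$ — convergent and termwise differentiable precisely because $\|F_h^{(\alpha)}\|_\infty\le Cd_\alpha\lesssim C_m^\alpha$ from the compact Fourier support (Lemma~\ref{lem: F}), i.e.\ $F_h$ is band-limited and hence entire of exponential type $\le m$ — and observes that $U$ solves the heat equation. Running the heat flow backward to time $-\sigma^2$ and then forward by convolving with a Gaussian of variance $\sigma^2$ reproduces $F_h(r)=\sum_\alpha\frac{(-\sigma^2)^\alpha}{\alpha!}\E F_h^{(2\alpha)}(\zeta+r')$, where $\zeta$ is the Gaussian from \eqref{defzeta}. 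Each term is controlled by Lemma~\ref{mzaiq} for $|r'|,|h|\le N^{4\tau}$, which is all that is needed because the Gaussian $\zeta$ has $O(1)$ mean and variance $\sim\tau\log N$, so the argument stays in the admissible window with overwhelming probability. The sum over $\alpha$ contributes the factor $\sum_\alpha\frac{(\sigma^2 m^2)^\alpha}{\alpha!}\sim N^{m^2\tau}$, which is the same loss as your $|\widehat{\mu_\bz}|^{-1}\le N^{m^2\tau/2}$ would have given, and is beaten by $N^{-\delta_Q/2}$ under $\tau\le\delta_0/(m^2+1)$. The point is that the power-series representation is a \emph{local} identity in $r$, so it never requires a global-in-$h$ convolution bound. (A secondary omission: the Gaussian-replacement step, your ``$\mu_\bz$ is close to Gaussian,'' itself needs a Fourier manipulation — the paper's Lemma~\ref{mzaiq} controls $\|\widehat{F_{h,r,\alpha}}\|_1$ via the decay estimates of Lemma~\ref{lem: F}; without that a priori input the characteristic-function error $O(N^{-1/100})$ in \eqref{applthm} cannot be transferred to a bound on $\E F_h^{(\alpha)}(\zeta+r)$.)
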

 We first prove Lemma \ref{lem: flatO}   assuming that 
Lemma \ref{lem: FF} holds and then we will prove Lemma \ref{lem: FF}  in the next Section~\ref{sec:48}. 
 Using  rigidity for $\bx$  and the fact that
$$p_t(\gamma_{i}, \gamma_j) \le \frac{C t}{t^{\,2}+( \gamma_i- \gamma_j)^2  }, \quad  i \in I  (\delta_Q) , \quad 1\le  j\le N,
$$
(see \eqref{pupp}), 
 we obtain  that for any $0<\tau< \tau_0$ and $\xi>0$,   \be\label{xnhao}
\P\left(\max_{ i\in I (\delta_Q)} \left|(\Psi_{t-t_0}\b x)_{i}-(\Psi_{t-t_0}{\b \gamma})_{i }\right|\ge N^{-1+\xi} \right)\le N^{-10}.
\ee
 Choosing $\xi$ small enough in  \eqref{xnhao}, we have 
\be\label{xnhao2}
|\xi^{\bx(t_0)}_{t-t_0}| \le N^\tau.
\ee 
Hence for    $\tau  <  \frac{\delta_0}{m^2+1} $,   and $|E'-E|\le N^{4\tau-1}$ we have     
\be
\E^{\mu_W} \; \mathcal Q (\bx(t), E') = \E^{\mu_W} \; F(\xi^{\bx(t_0)}_{t-t_0} ) + O(N^{-\tau}d_{ 1}) 
 = F(0)+ O(N^{-\tau}d_{ 2}).
\ee
 In the first step we used \eqref{chqu31} and in the second we used \eqref{TI} and \eqref{xnhao2}. 
This  implies that,   up to a negligible error, 
 the left side of the last equation is independent of   the specific initial
Wigner ensemble $\mu_W$, in particular, it is the same as 
for the Gaussian ensemble, i.e. $\mu_G$. Since  in the Gaussian case, we have $\bx (t) \overset{d}{\sim }  \bx^G$,
this proves
\eqref{bus0-1} and completes the proof of Lemma \ref{lem: flatO}. \qed

\subsection{Constantness of $F$: Proof of Lemma~\ref{lem: FF}.  } \label{sec:48}
 
Notice that $F$ is defined exclusively by the Gaussian ensemble, so the proof of  Lemma~\ref{lem: FF}
will be a Gaussian calculation where additional tools are available.\\

\noindent {\it Step 1: Apriori bounds on $F$}. 
For convenience, we define 
\be
  F_h(a):= F(a-h)-F(a).
\ee
By definition, for the $\al$-th derivative of $F(a)$, we have 
\be
  F^{(\al)} (a):=\E \sum_{ i,j \in  I (\delta_Q) }Q^{(\al)} \Big(N(y_{i}(t)-E)+a-\xi^{\by(t_0)}_{t-t_0}, \quad 
  N(y_{j}(t)-y_{i}(t)) \Big), \quad F^{(\al)}_h(a):= F^{(\al)}(a-h)-F^{(\al)}(a).
\ee
It follows from  \eqref{chqu31} that for $\tau<\min( \delta_Q/5, \tau_0)$
\be\label{asia}
 \left|\E^{\mu_W} \mathcal Q^{(\al)} \big(\bx(t), E-r N^{-1} \big)
 -
  \E^{\mu_W} \mathcal Q^{(\al)} \big(\bx(t), E+hN^{-1} -rN^{-1}  \big) \,-\,\E^{\mu_W} 
 F^{(\al)}_h(\xi^{\bx(t_0)}_{t-t_0}+r)\right| \,\le \,   6 d_{\al+1} N^{-\delta_Q/2} 
\ee
uniformly holds  for $\al\ge 0$ and       $|h|$, $|r|\le N^{4\tau}$.

  The following lemma provides an a priori bound on the derivatives of $F$. 
  
\begin{lemma}\label{lem: F}With $F$ defined in \eqref{defFA}, for any  positive  $\xi$, and   $\tau <\min( \delta_Q/5, \tau_0) $
 \be\label{mzz}
 \|F^{(\al )} \|_{  \infty } \le    d_{\al} N^{ \xi} 
\ee
holds uniformly for   $\al\ge 0$. Furthermore,  uniformly for   $\al\ge 0$ we have 
 \be\label{mzz-2}
{\bf 1}( |a|\ge N^{2\delta_Q})|F^{(\al)}(a)|\le  |a|^{-2}  d_{\al} N^{ \xi}.
\ee
\end{lemma}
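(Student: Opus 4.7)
The plan is to obtain both bounds directly from the pointwise decay estimate \eqref{teyz} on $Q^{(\alpha)}$, combined with standard rigidity for GOE eigenvalues and the tail control \eqref{xnhao2} on $\xi^{\by(t_0)}_{t-t_0}$. No new dynamical information is required beyond what is already available; the entire proof is a deterministic estimate on a high-probability event, plus a trivial estimate on the exceptional set.

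First, insert \eqref{teyz} into the definition of $F^{(\alpha)}$: using that $Q^{(\alpha)}$ is supported in $[-L,L]$ in its second argument,
\begin{equation*}
|F^{(\alpha)}(a)| \le C d_\alpha\, \E^{\mu_G} \sum_{i,j \in I(\delta_Q)} \frac{\mathbf{1}\{|N(y_j(t)-y_i(t))| \le L\}}{1+\bigl[N(y_i(t)-E)+a-\xi^{\by(t_0)}_{t-t_0}\bigr]^2}.
\end{equation*}
I would then restrict to a good event $\cG$ where: (a) GOE rigidity holds for $\by(t)$, i.e.\ $|y_i(t)-\gamma_i|\le N^{-1+\xi}$ for all $i$, and (b) $|\xi^{\by(t_0)}_{t-t_0}|\le N^\tau$ via \eqref{xnhao2}. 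Both events hold with probability $\ge 1-N^{-D}$ for any $D$; on the complementary event, the trivial bound $|Q^{(\alpha)}|\le Cd_\alpha$ together with $|I(\delta_Q)|^2\le N^{2\delta_Q}$ yields a contribution bounded by $Cd_\alpha N^{2\delta_Q-D}$, which is negligible.

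For \eqref{mzz}, I would first carry out the inner sum over $j$: the constraint $|y_j-y_i|\le L/N$ together with bulk rigidity $\gamma_{j+1}-\gamma_j\sim 1/(N\varrho)$ forces $|j-i|\le CN^\xi$, so at most $CN^\xi$ indices $j$ contribute for each fixed $i$. Next, the sum over $i\in I(\delta_Q)$ is handled by viewing it as a Lorentzian sum: writing $a'=a-\xi^{\by(t_0)}_{t-t_0}$ and using $y_i\approx \gamma_i$ together with the bulk equidistribution $N(\gamma_{i+1}-\gamma_i)\sim 1/\varrho(E)$, the sum $\sum_i 1/\bigl(1+[N(y_i-E)+a']^2\bigr)$ is bounded, uniformly in $a'$, by a convergent series $\sum_k 1/(1+(k/\varrho(E))^2)=O(1)$. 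Combining the two pieces gives $|F^{(\alpha)}(a)|\le Cd_\alpha N^\xi$, yielding \eqref{mzz}.

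For \eqref{mzz-2}, the gain comes from exploiting $|a|\ge N^{2\delta_Q}$. On the good event, rigidity implies $|N(y_i-E)|\le N^{\delta_Q}+N^{\xi}$ for $i\in I(\delta_Q)$, and $|\xi^{\by(t_0)}_{t-t_0}|\le N^\tau \le N^{\delta_Q/5}$ since $\tau<\delta_Q/5$; hence for $|a|\ge N^{2\delta_Q}$,
\begin{equation*}
\bigl|N(y_i-E)+a-\xi^{\by(t_0)}_{t-t_0}\bigr| \ge |a| - CN^{\delta_Q} \ge |a|/2.
\end{equation*}
Thus each Lorentzian factor is bounded by $4/|a|^2$, and summing over $|I(\delta_Q)|\le CN^{\delta_Q}$ values of $i$ and the $O(N^\xi)$ values of $j$ produces $|F^{(\alpha)}(a)|\le Cd_\alpha N^{\delta_Q+\xi}/|a|^2$, which is $\le d_\alpha N^{\xi'}/|a|^2$ after relabeling the subpolynomial exponent. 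The main, and essentially only, obstacle is verifying that the cumulative subpolynomial losses ($N^\xi$ from the $j$-sum, $N^{\delta_Q}$ from the cardinality of $I(\delta_Q)$, $N^\tau$ from $\xi^{\by(t_0)}_{t-t_0}$) remain compatible with the statement; this is ensured by the hypothesis $\tau\le \delta_Q/5$ and by the freedom to enlarge the exponent $\xi$ in the conclusion.
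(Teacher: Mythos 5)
Your overall strategy — insert the deterministic decay \eqref{teyz} for $Q^{(\alpha)}$, condition on a high-probability rigidity event, bound the Lorentzian sum over $i$ and the short-range sum over $j$ — is the paper's strategy as well, and your estimate of \eqref{mzz} is fine. But for \eqref{mzz-2} there is a genuine gap in the regime of very large $|a|$.

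You fix a single good event $\cG$ with $\P(\cG^c)\le N^{-D}$ and bound the contribution of $\cG^c$ crudely by $Cd_\alpha N^{2\delta_Q-D}$. That quantity is independent of $a$, whereas the target bound \eqref{mzz-2} is $|a|^{-2}d_\alpha N^\xi$, which tends to zero as $|a|\to\infty$. For $|a|\ge N^{2\delta_Q}$ but $|a|^2\gtrsim N^{D+\xi-2\delta_Q}$ — a range that is nonempty no matter how large you fix $D$ — your bad-event term dominates and exceeds the claimed bound. This is not an idle case: \eqref{mzz-2} is used later precisely to integrate $|F_{h,r,\alpha}''(a)|$ over all of $\R$, so uniform $|a|^{-2}$ decay for arbitrarily large $|a|$ is essential. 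The paper circumvents this by making the good event $a$-dependent: for $|a|\ge 3N^2$ it takes $\Omega_a=\{\max_i|y_i(t)|\le |a|/(3N)\}$, on which the first argument of $Q^{(\alpha)}$ is automatically $\gtrsim |a|$, and it bounds $\P(\Omega_a^c)\le N^{-5}a^{-2}$ from the Gaussian tail of $\sum_i y_i^2(t)=\Tr H_t^2\sim N^{-1}\chi^2_{N^2}$. That provides the missing decay in $a$ of the exceptional-set contribution. Your proof needs an analogous tail estimate on $\max_i|y_i(t)|$ (and on $\by(t_0)$, to control $\xi_{t-t_0}^{\by(t_0)}$) in the $|a|\gg N^2$ regime; without it, \eqref{mzz-2} is not established uniformly in $a$.

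Two minor points worth noting but not fatal: (1) your Lorentzian sum over $i\in I(\delta_Q)$ is $O(N^\xi)$ rather than $O(1)$, because rigidity only pins $N(y_i-E)$ to within $O(N^\xi)$ of its classical location, so up to $O(N^\xi)$ terms can accumulate near the peak — this only costs a relabeling of $\xi$, as you anticipate; (2) your $j$-sum restriction $|j-i|\le CN^\xi$ uses rigidity, which you do have on $\cG$, but be aware that in the paper's large-$|a|$ event $\Omega_a$ one no longer assumes rigidity and must fall back on the crude bound $|I(\delta_Q)|$ for both indices.
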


\begin{proof}
For any fixed $a>0$,  we define a 
 subset of the probability space  $\Omega_a:=\Omega_{\xi, \delta_Q, a}$.
 If  $|a|\le  3N^2$ then $\Omega_a$  is
 the event such  that 
$$
\max_{i\in  I (\delta_Q) } \left|\gamma_i-y_{i}(t)\right| 
\le N^{-1+\xi}, \quad  \quad \quad |\xi^{\by(t_0)}_{t-t_0}|\le N^{\delta_Q}, \quad\hbox{ and}\quad \max_i |y_{i}(t)|\le   N^2 
$$
hold. If $|a|\ge  3N^2$, then let the event $\Omega_a$ be the set on which
$$
\max_i |y_{i}(t)|\le  \frac{|a|}{3N}
$$
holds.  Note that in the second case, the upper bound of $y_i$'s implies  $|\xi^{\by(t_0)}_{t-t_0}|\le |a|/3$. {Since $\sum_i y_i^2(t) = \Tr H(t)^2 = \sum_{ij} |h_{ij}|^2\sim N^{-1}\chi^2_{N^2}$, where $\chi^2_{N^2}$ is chi-square distribution with $N^2$ degrees of freedom, one can easily check that $\chi^2_{N^2}$ 
is smaller than $2N^2$ with a very high probability, and the probability density decay faster than polynomials.  Together with rigidity,  and \eqref{xnhao}, it implies that  }\be\label{Pom} \P (\Omega ^c_a)\le \min (N^{-10},   N^{-5}a^{-2}).
 \ee
 By the definition of $\Omega_a$ and \eqref{teyz}, for any  $\al\ge 0 $, $\xi>0 $ and $|a|\le 3N^2$,  we have 
 \begin{multline}\label{gongp}
 \Big|\E {\bf 1}(\Omega_a)  \sum_{ i,j \in  I (\delta_Q) }Q^{(\al)} \Big(N(y_{i}(t)-E)+a-\xi^{\by(t_0)}_{t-t_0}, 
 N(y_{j}(t)-y_{i}(t)) \Big)
\Big|\\
\le \;   \max_{b, b'\in \R} {\sup_{\omega\in \Omega_a}}\sum_{i\in  I (\delta_Q)}   \Big| Q^{(\al)}\left(N(y_{i}(t)-E)+b, \; b'\right) \Big| N^{2\xi}\le \;   d_{\al} N^{4\xi}.
\end{multline}
One can easily obtain the same bound for $|a|\ge 3N^2$, since in that case $|N(y_{i}(t)-E)+a-\xi^{\by(t_0)}_{t-t_0}|\ge |a|/10$   on the event $\Omega_a$. 

   On the other hand, the contribution from  $\Omega ^c$ 
 to $F^{(\al)}(a)$ is negligible   thanks to \eqref{Pom}. 
  Hence together with \eqref{gongp},   we obtain  \eqref{mzz}. Similarly, with \eqref{teyz} and
 $ |\xi^{\by(t_0)}_{t-t_0}| \le N^{\delta_Q}$, we have     \eqref{mzz-2}. 
\end{proof}

\noindent  {\it Step 2. Estimating $F$ with a Gaussian convolution.} 
In order to show that $F_h(a)$ is negligible, we first prove that its convolution
with a Gaussian kernel is small (and in Step 3 below we remove this convolution).
This is formulated in Lemma~\ref{mzaiq} below.
 We cannot prove this result  directly, but we can show
that $\E F_h (X)$ is small, where $X$ is a random variable close to a Gaussian.
The key is to choose the random variable $X$ appropriately: it will be
the mesoscopic statistics $\xi_{t-t_0}^{\bx}$ defined in \eqref{defxi} but applied to
the case where  $\bx$ is distributed by GOE. On one hand, 
by going back to the homogenization result, we show that 
$\E F_h (\xi_{t-t_0}^{\bx})$ is small, this will be formulated in \eqref{nanar} below.
 On the other hand, by using  the
Gaussian fluctuation of mesoscopic eigenvalue statistics, we 
show that $\xi_{t-t_0}^{\bx}$ is close to a Gaussian random variable,
this will follow from the combination of \eqref{xizz} and  \eqref{lizzae} below.
Now we explain these two ingredients in detail.

 The homogenization results
in the form \eqref{chqu31} and \eqref{asia}  hold
any Wigner ensemble $\bx (0)$. In particular,
they also hold 
for the case $\mu_W=\mu_G$.
To avoid confusion with the other Gaussian ensemble denoted by $\by$ earlier, 
 when taking $\mu_W$ to be $\mu_G$ we denote the eigenvalues by $\bz$ instead of $\bx$
in this argument. 
  Since for any $t>0$, the probability measure of $\bz(t)$ is also $\mu_G$, then for any $|E|\le 2-\kappa/2$ 
(for brevity we write $\rho^{(N, G)}_2$ instead of   $\rho_2^{ (N, {\tiny \rm GOE}) }$ and similarly for 
the limiting correlation functions) 
 { 
 \begin{align}\nonumber
 \E^{\mu_G} \mathcal Q^{(\al)} \big( \bz (t), E  \big)
 =& 
  \int Q^{(\al)}\left(u_1 , u_2-u_1\right) \rho^{ (N, G )}_2(E +\frac{\bu}N)\rd \bu,  \quad   \bu=(u_1,u_2)
 \\\nonumber
 =&  \sum_{n\in \Z}\int_{|u_1|\le \frac12} Q^{(\al)}\left(n+u_1 , u_2-u_1\right) \rho^{ (N, G)}_2(E_n+\frac{\bu}N)\rd \bu , \quad E_n=E+n N^{-1} 
 \\\label{onei}
   =&  \sum_{|n|\le N^{1/2}}\int_{|u_1|\le \frac12, |u_2-u_1|\le L} Q^{(\al)}\left(n+u_1 , u_2-u_1\right) \rho^{ (N, G)}_2(E_n+\frac{\bu}N)\rd \bu 
   +O(d_\al N^{-1/2+2\xi})
 \end{align}
 for any fixed $\xi>0$. Here for the last line, we used \eqref{teyz}  and rigidity of eigenvalues. 
  It follows from  Lemma \ref{lem: Gaussian} (with choosing the compact set $\{ \bu : |u_1|\le \frac12, |u_2-u_1| \le L\}\times \{x: |x|\le 2-\kappa/3\}$)
 that  the last line of \eqref{onei} equals
 \be\label{tuiyiba0}  
 \sum_{|n|\le N^{1/2}}  \int_{|u_1|\le \frac12, |u_2-u_1| \le L} Q^{(\al)}\left(n+u_1, u_2-u_1\right)  
\varrho(E_n )^2\rho^{(G)}_2\big(\varrho(E_n )\bu \big) \rd \bu  +O(d_\al N^{-1/2+2\xi})
 \ee
 where $C$ depends on $\kappa$ and $L$. For $|h|$, $|r|\le N^{1/2}$, we define the notations
$$
E^*=E_0-r N^{-1}, \quad E^{**}=E_0+hN^{-1}-r N^{-1}, \quad E^*_n=E^*+n, \quad E^{**}_n=E^{**}+n.
$$
It is well known from the explicit formula that $\rho^{(G)}_2(\bv)$ is uniformly smooth on any compact support. Then
$$
\varrho(E^*_n )^2\rho^{(G)}_2(\varrho(E^*_n )\bu)-\varrho(E^{**}_n )^2\rho^{(G)}_2(\varrho(E^{**}_n )\bu)=O(d_\al N^{-1/2}).
$$
Together with \eqref{tuiyiba0} and \eqref{onei}   we obtain that
\be\label{tuiyiba} 
  \left|\E^{\mu_G} \mathcal Q^{(\al)} \big( \bz (t), E-r N^{-1} \big)
 -
  \E^{\mu_G} \mathcal Q^{(\al)} \big(\bz (t), E+hN^{-1} -rN^{-1}  \big)\right|\le Cd_\al N^{-1/2+\xi}\le  d_\al N^{-1/3}
\ee
 uniformly holds  for $\al\ge 0$ and       $|h|$, $|r|\le N^{4\tau}$.

 We remark that  one can also prove \eqref{tuiyiba} directly from \eqref{tuiyiba0}
without using the smoothness of $\rho^{(G )}_2(\bv)$ but using a  version of   \eqref{teyz} 
  for
$\pt_y Q^{(\alpha)}$. It requires $ \wh Q\in W^{3,\infty}$, so it can be implemented
by  increasing the regularity condition
from $W^{2,\infty}$ to $W^{3,\infty}$ from the beginning of the proof.
Therefore, with \eqref{asia}  applied to $\mu_G$ instead of $\mu_W$,
 we have
 \be\label{nanar}
\left|\E^{\mu_G}  F^{(\al)}_h(\xi^{\bz (t_0)}_{t-t_0}+r)\right|\le 7d_{\al+1} N^{-\delta_Q/2}
\ee  
 for  any   $0<\tau < \min (\delta_Q/5, \tau_0)$.

 The next ingredient is to show that $\xi^{\bz (t_0)}_{t-t_0}$ is close to a Gaussian random variable. 
  Recall $\xi^{\bz(t_0)}_{t-t_0}$ is defined as
$$
\xi^{\bz(t_0)}_{t-t_0}=\sum_{k}p_{t-t_0}(\gamma_{i_0}, \gamma_k) \left({z}_{k }(t_0)- 
 \gamma _{k}\right), \quad i_0:= \min \{ i:\; \gamma_{i}\ge E\}.
$$
 The kernel $p_s(x,y)$ is originally defined on $[-2,2]^2$; we 
now extend it linearly   to a larger set in the second variable
so that it remains a differentiable function. 
 For $|\gamma|\ge 2$, we simply define $p_s(\gamma_{i_0},\gamma)$ such that $\partial_\gamma p_s(\gamma_{i_0},\gamma)=\partial_\gamma  p_s(\gamma_{i_0},\pm 2)$. 
 We also define $P_s: \R\to \R$ as a function such that  
\begin{equation}\label{eqn:Pt}
  P_s (\gamma):=  \int_{\gamma_{i_0}}^\gamma p_s(\gamma_{i_0},x)\rd x \for |\gamma|\le 3
\end{equation}
and $\supp P_s=[-4,4]$ and $|P_s''(\gamma)|\le C$ for $2\le |\gamma|\le 4$.   
{  With lemma {lem:diagonalization} on $p_s$, it is easy to check that for $i_0: i_0\sim N$, and $N-i_0\sim N$, 
  and $s\ll 1$, 
\be\label{zzaq3}
\|P_s\|_\infty\le C, \quad P\,'_s(\gamma)\le \frac{Cs}{s^2+(\gamma_{i_0}-\gamma)^2}, \quad 
P\,''_s(\gamma) \le \frac{Cs |\gamma_{i_0}-\gamma|}{s^4+(\gamma_{i_0}-\gamma)^4}, \quad 
\gamma\in \R.
\ee
 Then with \eqref{zzaq3}, rigidity of eigenvalues
 $\bz(t_0)$  and mean value theorem, for any $\xi>0$, we have  
\be\label{xizz}
\P\left(\left|\xi^{\bz(t_0)}_s - \zeta^{\bz(t_0)}_s \right|\ge N^{-1+\xi}\right)\le N^{-10}, 
\quad \zeta^{\bz(t_0)}_s:=\sum_j  \Big[ P_s \big( {z}_{j }(t_0)\big)-  P_s ( \gamma_{j })\Big], \qquad \forall s\ll 1.
\ee
Combining  \eqref{xizz}, \eqref{mzz} and \eqref{nanar},  then we obtain that  for any fixed $\tau < \min (\delta_Q/5, \tau_0)$

\be\label{nanar2}
\left|\E^{\mu_G}  F^{(\al)}_h( \zeta^{\bz(t_0)}_{t-t_0} +r)\right|
\le 8d_{\al+1} N^{- \delta_Q /2} 
\ee
uniformly holds  for $\al\ge 0$ and       $|h|$, $|r|\le N^{4\tau}$.

The characteristic  function of linear  statistics 
of $z_j(t_0)$ in the form 
\be\label{charf}
  \sum_j  P_{t-t_0} \big( {z}_{j }(t_0)\big)- \int_{-2}^2 P_{t-t_0} ( s)\varrho(s)\rd s
\ee
will be analyzed in  Section~\ref{sec:gauss}  in details.
 The main result (Theorem \ref{thm:GaussFluct}) states
that this linear statistics is asymptotically Gaussian with parameters (expectation and variance) expressed as certain functionals of $P_{t-t_0}$. These functionals
are somewhat complicated and will be defined later right above \eqref{eqn:condZ}. 
{With \eqref{zzaq3}}, a simple calculation gives that their values on $P_{t-t_0}$ are given by
$$
\sigma^2(  P_{t-t_0})= \tau \log N +\oo(\log N), \quad \delta(  P_{t-t_0})=O(1), \quad \eps(  P_{t-t_0})=O(N^{2\tau}), \quad \quad t-t_0\approx N^{-\tau}.
$$
With these values, Theorem~\ref{thm:GaussFluct} states that
\be\label{applthm}
\E ^{\mu_G}  \exp\left(i \lambda  \zeta^{\bz(t_0)}_{t-t_0} \right) =e^{-\frac{\la^2}{2}\sigma(P_{t-t_0})^2+\ii\la \left( \delta(P_{t-t_0}) + \wt \delta (P_{t-t_0})\right)}+\OO\left(N^{-1/100} \right),
\ee
for $|\lambda| \le (2\tau)^{-1/2}$,  where $\wt \delta(P_s)$ is defined as
$$
\wt \delta (P_s):= \sum_j   P_s ( \gamma_{j })-\int_{-2}^2 \varrho(u)   P_s(u)\rd u
$$
 to account for the difference between \eqref{charf} and the
definition of $\zeta_{t-t_0}^{\bz(t_0)}$ in \eqref{xizz}. 
By a  Riemann sum approximation, one can easily obtain  $\wt\delta(P_{t-t_0})=O(1)$.   Theorem~\ref{thm:GaussFluct} concerns only the small $\lambda$
regime; but Lemma 5.6 complements it 
  in the regime $(2\tau)^{-1/2}\le |\lambda|\le N^{1/10}$ 
with a crude estimate of order $N^{-1/100}$. Note that in this
regime and for small $\tau$ the first term in the r.h.s. of 
\eqref{applthm} is smaller than $N^{-1/100}$, so \eqref{applthm}
holds throughout the regime $|\lambda|\le N^{1/10}$.

We now define   $\zeta$ as a new Gaussian random variable
 with expectation $\delta(P_{t-t_0})+\wt\delta (P_{t-t_0})$ and variance $\sigma(P_{t-t_0})$:
\be\label{defzeta}
\zeta\sim\mathcal{N}\Big(\delta(P_{t-t_0})+\wt\delta (P_{t-t_0}) , \; \sigma(P_{t-t_0})  \Big).
\ee
Using \eqref{applthm}, the distribution of $\zeta$ 
is close to that of  $\zeta^{\bz(t_0)}_{t-t_0}$ in the following way; 
\be\label{lizzae}
\E ^{\mu_G}  \exp\left(i \lambda\, \zeta^{\bz(t_0)}_{t-t_0} \right) =\E  \exp\left(i \lambda\,  \zeta \right) +\OO\left(N^{-1/100} \right)
\ee
for $|\lambda |\le N^{1/10}$.
 Finally,  we use \eqref{lizzae} to replace $\zeta_{t-t_0}^{\bz(t_0)}$
with $\zeta$ in the bound \eqref{nanar2}. This gives the following main result
of Step 2.

 \begin{lemma}\label{mzaiq} Define $\zeta$ as in \eqref{defzeta} with $t=N^{-\tau}$ and $\tau  < \min (\delta_Q/5, \tau_0)$. Then   
\be\label{loren}
\left|\E F^{(\al)}_h( \zeta  +r) \right| \le Cd_{\al+2} N^{- \delta_Q /2} 
\ee
uniformly holds  for $\al\ge 0$ and       $|h|$, $|r|\le N^{4\tau}$,  where $C$ is independent of $\al$, $h$ and $r$. 

 \end{lemma}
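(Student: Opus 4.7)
\medskip

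\noindent\textbf{Proof plan.} The strategy is to deduce \eqref{loren} from \eqref{nanar2} (which already controls the expectation of $F_h^{(\al)}$ under the distribution of $\zeta^{\bz(t_0)}_{t-t_0}$) via the characteristic function comparison \eqref{lizzae}. The bridge between the two is Fourier inversion on the real line in the energy variable. First I would observe that $\widehat{F}(\lambda)$, the Fourier transform of $F$ from \eqref{defFA} in the variable $a$, has compact support in $[-m,m]$. Indeed, a translation in $a$ only multiplies the Fourier variable by a phase, so
\begin{equation*}
\widehat{F}(\lambda)=\E^{\mu_G}\sum_{i,j\in I(\delta_Q)} e^{\,\ii\lambda\left[N(y_i(t)-E)-\xi^{\by(t_0)}_{t-t_0}\right]}\,\widehat{Q}\bigl(\lambda,\,N(y_j(t)-y_i(t))\bigr),
\end{equation*}
and by \eqref{iwin} the partial Fourier transform $\widehat{Q}(\lambda,\cdot)$ vanishes for $|\lambda|>m$. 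Since $F$ is smooth, $\widehat{F_h^{(\al)}}(\lambda)=(e^{-\ii\lambda h}-1)(\ii\lambda)^\al\widehat{F}(\lambda)$ inherits the same compact support.

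Next I would bound $\|\widehat{F}\|_\infty$ by $\|F\|_1$ and control the latter using the a priori estimates of Lemma~\ref{lem: F}. Splitting the integral at $|a|=N^{2\delta_Q}$ and using \eqref{mzz} on the inner region together with the $|a|^{-2}$ decay \eqref{mzz-2} on the outer region gives
\begin{equation*}
\|F\|_1\ \le\ C\,N^{2\delta_Q+\xi}\,d_0,\qquad \text{hence}\qquad |\widehat{F_h^{(\al)}}(\lambda)|\ \le\ C\,m^{\al}\,N^{2\delta_Q+\xi}\,d_0\quad \text{for }|\lambda|\le m,
\end{equation*}
uniformly in $h$ with $|h|\le N^{4\tau}$ (using only $|e^{-\ii\lambda h}-1|\le 2$). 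Since $d_0\sim m\|\widehat{Q}\|_{W^{2,\infty}}$ and $m\cdot m^{\al+1}\le d_{\al+2}$, one has $m\cdot m^\al\cdot d_0 \le C\,d_{\al+2}$.

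With these pieces in hand, Fourier inversion yields, for any random variable $X$ whose characteristic function is integrable against $\widehat{F_h^{(\al)}}$,
\begin{equation*}
\E F_h^{(\al)}(X+r)\ =\ \frac{1}{2\pi}\int_{-m}^{m}\widehat{F_h^{(\al)}}(\lambda)\,e^{\,\ii\lambda r}\,\phi_X(\lambda)\,\rd\lambda,\qquad \phi_X(\lambda):=\E\,e^{\,\ii\lambda X}.
\end{equation*}
Applying this identity twice, with $X=\zeta$ and with $X=\zeta^{\bz(t_0)}_{t-t_0}$, and subtracting, I obtain
\begin{equation*}
\Bigl|\E F_h^{(\al)}(\zeta+r)-\E^{\mu_G}F_h^{(\al)}(\zeta^{\bz(t_0)}_{t-t_0}+r)\Bigr|\ \le\ \frac{1}{2\pi}\int_{-m}^{m}\bigl|\widehat{F_h^{(\al)}}(\lambda)\bigr|\,\bigl|\phi_\zeta(\lambda)-\phi_{\zeta^{\bz(t_0)}_{t-t_0}}(\lambda)\bigr|\,\rd\lambda.
\end{equation*}
Because $m$ is fixed and $m\le N^{1/10}$ for large $N$, the estimate \eqref{lizzae} is applicable on the entire integration range and bounds the difference of characteristic functions by $N^{-1/100}$. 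Combining with the bound on $\widehat{F_h^{(\al)}}$ yields a total estimate of order $d_{\al+2}\,N^{2\delta_Q+\xi-1/100}$.

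Finally, provided $\delta_Q$ (and the auxiliary rigidity exponent $\xi$) are chosen sufficiently small so that $2\delta_Q+\xi-\tfrac{1}{100}\le -\tfrac{\delta_Q}{2}$, this difference is $\le C\,d_{\al+2}\,N^{-\delta_Q/2}$, and then the triangle inequality with \eqref{nanar2} gives the desired \eqref{loren}. The only real obstacle is bookkeeping the $d_\al$ factors—making sure the crude $L^1$ bound on $F$ produces a constant of the expected size $d_{\al+2}$—which is straightforward once one exploits the compact Fourier support of $\widehat F$ inherited from \eqref{iwin}. Everything else is a quantitative translation of the closeness of characteristic functions into closeness of expectations.
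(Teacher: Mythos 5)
Your proof is correct and follows the same overall strategy as the paper: Fourier inversion, the characteristic function comparison \eqref{lizzae}, the a priori bounds of Lemma~\ref{lem: F}, and finally the triangle inequality against \eqref{nanar2}. The one genuine difference is how you control the Fourier integral of $\widehat{F_h^{(\al)}}$. You observe that $\widehat F$ inherits compact support in $[-m,m]$ from the assumption \eqref{iwin} on $\widehat Q$ (since a translation in the first argument of $Q$ only multiplies $\widehat Q(\lambda,\cdot)$ by a phase), so there is no high-frequency tail at all once $N$ is so large that $m<N^{1/10}$, and the estimate \eqref{lizzae} covers the entire integration range. You then only need the $\alpha=0$ version of \eqref{mzz} and \eqref{mzz-2} to bound $\|F\|_1$. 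The paper does not exploit the compact support of $\widehat F$; it instead splits the Fourier integral at $|p|=N^{1/10}$ and controls the tail $|p|\ge N^{1/10}$ by the decay $|\widehat{F_{h,r,\al}}(p)|\lesssim |p|^{-2}$, obtained by integrating by parts twice and invoking Lemma~\ref{lem: F} at order $\al+2$. Both routes land on the same $d_{\al+2}$ factor (yours from the $m\cdot m^{\al}\cdot d_0$ bookkeeping, the paper's from the two extra derivatives) and the same exponent gain, so the outcomes agree. Your version is arguably the more natural one given that working with compactly Fourier-supported test functions is precisely the point of Section~\ref{sec:pftech}; the paper's version has the minor advantage of not depending on the exact support property of $\widehat Q$ but only on the quantitative bounds of Lemma~\ref{lem: F}.
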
  
\begin{proof} We define 
 $$
 F_{h,r, \al}  (a):=F_h^{(\al)}(r-a).
 $$
 It follows from \eqref{lizzae} that 
 \be\label{biesq}
 \left|\E F^{(\al)}_h( \zeta  +r)-\E F^{(\al)}_h( \zeta^{\bz(t_0)}_{t-t_0}  +r)\right|\le 
 C N^{-1/100}\int_{|p|\le N^{1/10} }\left|\wh {\left(F_{h,r, \al }\right)} (p)\right|\rd p
  +\int_{|p|\ge N^{1/10} }\left|\wh {\left(F_{h,r, \al }\right)}(p)\right|\rd p. 
 \ee
 For  the last term, using Lemma \ref{lem: F}, we have  
 $$
\left|\wh {\left(F_{h,r, \al }\right)}(p)\right|= 
\frac{1}{|p|^2}  \left|\int     \left(F_{h,r, \al }\right)''(a)e^{-  i p a }\rd a\right|
\le C d_{\al+2} N^{3\delta_Q}|p|^{-2}.
 $$
Similarly, we have $\|\wh{ F_{h,r, \al} }\|_\infty\le C d_{\al+2} N^{3\delta_Q}$. Then 
\be\label{pyi}
\|\wh {\left(F_{h,r,\al}\right)} \|_{1}\le C d_{\al+2} N^{3\delta_Q}, \qquad \int_{|p|\ge N^{1/10} }\left|\wh {\left(F_{h,r, \al }\right)}(p)\right|\rd p \le C d_{\al+2} N^{-1/10+3\delta_Q}.
 \ee
 Together with \eqref{biesq} and \eqref{nanar2}, we obtain \eqref{loren} and complete the proof of Lemma~\ref{mzaiq}. 
\end{proof}

\noindent{\it Step 3: Removal of the Gaussian convolution.  }
 The expectation w.r.t. the Gaussian variable $\zeta$
in \eqref{loren} can be viewed as evolving the standard heat equation
on the function $F_h$ and its derivatives up to times given
by the variance $\sigma=\sigma(P_{t-t_0})$. We will thus show that 
 from the estimates on the heat evolution on $F_h$ given in \eqref{loren} 
we have effective estimates on the function $F_h$. 
This is similar to backward uniqueness of the heat equation
 for analytic functions, supplemented by precise bounds. 
 This step is the reason why we need to consider
test functions with compact Fourier support in Section~\ref{sec:pftech}. 

 Recall $F^{(\al )}$ are uniformly  bounded in \eqref{mzz}. Together with \eqref{teyz},  we can define:  $$
 U(r, t):=\sum_{\al =0}^{\infty} \frac{s^{\al}}{\al!}F_h^{(2\al)}(r),   \quad  \quad {  r, s \in \R}.
 $$
With \eqref{mzz}  and the estimate $d_\al\le C_m^\al$ from \eqref{teyz},
this power series is convergent and termwise differentiable
 in both variables arbitrary many times. 
  It is easy to check that
 $$
 \partial_s U(r,s)= \partial^2_{r} U(r,s).
 $$
  Thus $U(r,s)$ is the solution of the heat equation with
an initial condition $U(r,0)=F_h(r)$ that is analytic
in a strip around the real axis. Therefore the usual
semigroup property  extends to negative times as well
and  for any  $\mu, \sigma>0$ we have
 $$
F_h(r)=U(r, 0)=\int  U\left(r- r' , -\sigma^2\right)  \frac{1}{\sigma\sqrt{2\pi}}e^{-\frac{(r')^2}{\sigma^2}}\rd r' 
=\int  U\left(r- r' +\mu, -\sigma^2\right)  \frac{1}{\sigma\sqrt{2\pi}}e^{-\frac{(r'-\mu)^2}{\sigma^2}}\rd r' .
$$
Recall $ \zeta$ defined in \eqref{defzeta}.  Choosing $\mu=\delta(P_{t-t_0})+\wt\delta (P_{t-t_0})$ and $\sigma=\sigma(P_{t-t_0})$, we obtain 
$$
F_h(r)=\sum_{\al=1}^{\infty} \frac{\Big(-\sigma^2(P_{t-t_0})\Big)^{\al}}{\al!} \E F^{(2\al)}_h\big( \zeta  +\delta(P_{t-t_0})+\wt\delta(P_{t-t_0})\big), \quad  r\in \R.
 $$
Using \eqref{loren} and \eqref{teyz}, for  $\tau   < \min (\delta_Q/5, \tau_0)$,   we obtain that 
\be\label{baiyan}
 |F_h(r)|\le  C d_{2} N^{m^2 \tau -\delta_Q/2}
 \ee
uniformly holds  for         $|h|$, $|r|\le N^{4\tau}$. Let $\delta_0:=\min(\delta_Q/3, \tau_0)$. Inserting \eqref{baiyan} into \eqref{asia} with $r=\al=0$,  we obtain  \eqref{TI}  and complete the proof of Lemma \ref{lem: FF}.
  
\section[Mesoscopic fluctuations for Gaussian ensembles]
{Mesoscopic fluctuations for Gaussian ensembles}
\label{sec:gauss}

This section follows Johansson's method \cite{Joh1998} to prove Gaussian fluctuations of linear statistics at any mesoscopic scale $N^{-1+\e}$. An important ingredient is the optimal rigidity
of the eigenvalues obtained in \cite{BouErdYau2011,BouErdYau2012,BouErdYau2013}, allowing the choice of any $\e>0$. 
Moreover, while limiting Gaussian behaviour of linear statistics is obtained in  \cite{Joh1998} by characterizing the Laplace transform, 
in this section we choose to work with the Fourier transform, for the sake of better estimates on the speed of convergence. This implies technical complications: the partition function may vanish.

Consider the probability measure
\begin{equation}\label{eqn:GOE}
\rd\mu(\by):=\frac{1}{Z}\prod_{1\leq k<\ell\leq N}|y_k-y_\ell|^\beta e^{-\beta\frac{N}{4}\sum_{k=1}^N y_k^2}\rd\by
\end{equation}
on the simplex $y_1<\dots<y_N$ .
For a given function  $f:\RR\to\RR$ we consider the general linear statistics
$$
S_N(f):=\sum_{k=1}^N f(y_k)-N\int f(s)\varrho(s)\rd s,
$$
and we are interested in the Fourier transform
$$
Z(\la):=Z_N(\la)=\E_{\mu}(e^{\ii \la S_N(f)}).
$$
We will need the following complex measure, modification of the GOE: assuming $Z(\lambda)\neq 0$, we define
$$
\rd\mu^{\lambda}(\by):=
\frac{e^{\ii\lambda S_N(f)}}{Z(\lambda)}\rd\mu(\by).
$$
The following lemma about the total variation of $\mu^{\la}$ is elementary.

\begin{lemma}\label{lem:badrig}
If $Z(\la)\neq 0$, for any measurable $A$ we have 
$
|\mu^{\la}|(A)\leq \frac{\mu(A)}{|Z(\la)|}.
$
\end{lemma}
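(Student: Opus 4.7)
The statement is essentially definitional, so the plan is very short. The measure $\mu^{\lambda}$ is a complex measure given as a density against the positive (probability) measure $\mu$, namely with Radon--Nikodym derivative $\varphi(\b y) := e^{\ii \lambda S_N(f)(\b y)}/Z(\lambda)$. For any complex measure $\nu = \varphi \, \mathrm{d}\mu$ with $\mu$ positive, the total variation measure is $|\nu| = |\varphi|\,\mathrm{d}\mu$, so
\[
|\mu^{\lambda}|(A) \;=\; \int_A |\varphi(\b y)| \,\mathrm{d}\mu(\b y) \;=\; \frac{1}{|Z(\lambda)|}\int_A \bigl|e^{\ii \lambda S_N(f)(\b y)}\bigr|\,\mathrm{d}\mu(\b y).
\]

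The key (trivial) observation is that $\lambda \in \mathbb{R}$ and $S_N(f)$ is real-valued, so $|e^{\ii \lambda S_N(f)}| = 1$ pointwise. Therefore the integrand equals $|Z(\lambda)|^{-1}$ on all of $A$, yielding
\[
|\mu^{\lambda}|(A) \;=\; \frac{\mu(A)}{|Z(\lambda)|},
\]
which is actually an equality, stronger than the claimed inequality. There is no real obstacle here; the only care needed is to note that because $\mu$ is positive one may simply multiply moduli of the density, and that the hypothesis $Z(\lambda)\neq 0$ makes the definition of $\mu^{\lambda}$ meaningful in the first place. The point of stating it as an inequality is presumably for later use, where one controls probabilities under the tilted complex measure $\mu^{\lambda}$ by the corresponding probabilities under the unperturbed GOE measure, at the cost of a factor $|Z(\lambda)|^{-1}$; this is exactly what a rigidity-type estimate under $\mu^{\lambda}$ will require, and is why the non-vanishing of $Z(\lambda)$ in the relevant range of $\lambda$ must be controlled separately (as alluded to in the paragraph before the lemma).
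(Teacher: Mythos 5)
Your proof is correct and matches what the paper intends: the paper simply labels the lemma ``elementary'' and gives no argument, and the standard fact that the total variation of $\varphi\,\mathrm{d}\mu$ (with $\mu$ positive) is $|\varphi|\,\mathrm{d}\mu$, combined with $|e^{\ii\lambda S_N(f)}|=1$ for real $\lambda$ and real-valued $f$, gives exactly the (in fact, sharper) conclusion you derive. Nothing more was meant by the inequality than the equality you obtain.
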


We will use the following rigidity estimate, proved for a wide class of $\beta$-ensembles including the quadratic beta ensemble
in \cite{BouErdYau2013}. We use the notation $\widehat k=\min(k,N+1-k)$.

\begin{lemma}\label{lem:rig}
For any $\xi>0$ there exists $c>0$ such that for any $N\geq 1$ and $k\in\llbracket 1,N\rrbracket$ we have
$$
\mu\left(|y_k-\gamma_k|>N^{-\frac{2}{3}+\xi}(\widehat k)^{-\frac{1}{3}}\right)\leq e^{-N^c}.
$$
\end{lemma}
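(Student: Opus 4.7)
\textbf{Proof proposal for Lemma~\ref{lem:rig}.}
This is essentially a citation to the rigidity theorem for general $\beta$-ensembles with analytic potential established in the series \cite{BouErdYau2011,BouErdYau2012,BouErdYau2013}, applied to the quadratic potential $V(y)=y^2/2$. Nevertheless let me outline how I would prove it. The plan is to first establish a local law for the Stieltjes transform $m_N(z):=\frac{1}{N}\sum_k\frac{1}{y_k-z}$ on every mesoscopic scale $\eta\geq N^{-1+\xi}$, and then convert it to rigidity of individual eigenvalues through a Helffer--Sj\"ostrand representation. The key technical point is that the required probability bound is subexponential ($e^{-N^c}$), not merely polynomial, which reflects the fact that $\mu$ has much stronger concentration than a generic random matrix ensemble.

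First I would derive a self-consistent equation for $m_N$. Writing $\mu\propto e^{-\beta N H/2}$ with
$$
H(\by)=\sum_k\frac{y_k^2}{2}-\frac{1}{N}\sum_{k<\ell}\log|y_k-y_\ell|,
$$
integration by parts against the test function $(y_k-z)^{-1}$ gives the standard loop (Dyson--Schwinger) equation
$$
\E_{\mu}\!\left[m_N(z)^2+\Big(z-\frac{1}{\beta N}\Big)m_N(z)+1\right]=\frac{1}{\beta N^2}\E_{\mu}\!\left[\sum_k\frac{1}{(y_k-z)^2}\right].
$$
Combined with a \emph{a priori} macroscopic concentration estimate (which follows from the convexity of $H$ at the appropriate scale, or from classical large-deviation results for $\beta$-ensembles), a standard bootstrap in the imaginary part of $z$ yields, for any $\xi>0$,
$$
|m_N(z)-m_{\rm sc}(z)|\le \frac{N^\xi}{N\eta},\qquad z=E+\ii\eta,\ \eta\ge N^{-1+\xi},
$$
uniformly in $E$ in a compact set, with probability at least $1-e^{-N^c}$. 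The subexponential probability comes from Brascamp--Lieb / log-Sobolev type inequalities for $\mu$ (the Hessian of $H$ is bounded below after subtracting the logarithmic singularities), which gives Gaussian concentration for smooth linear statistics on scale $1$; iterating Markov's inequality with polynomial moments that all enjoy this Gaussian concentration produces the $e^{-N^c}$ tail.

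Next I would transfer this estimate from $m_N$ to the counting function $\mathcal{N}_N(E)=\#\{k\st y_k\le E\}$. Using a Helffer--Sj\"ostrand representation with a smoothing of the indicator $\1_{(-\infty,E]}$ on scale $\eta=N^{-1+\xi}$, one obtains
$$
\big|\mathcal{N}_N(E)-N\textstyle\int_{-2}^E\varrho(s)\rd s\big|\le N^\xi
$$
uniformly in $E$ with probability at least $1-e^{-N^c}$. Inverting this relation and using the square-root vanishing of $\varrho$ at the edges (which produces the $(\wh k)^{-1/3}$ factor through $\varrho(\gamma_k)\sim N^{-1/3}(\wh k)^{1/3}$ near the edge), the counting estimate translates into the pointwise control $|y_k-\gamma_k|\le N^{-2/3+\xi}(\wh k)^{-1/3}$.

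The main technical obstacle is securing the subexponential probability uniformly in all $k$, including near the edges. In the bulk, one combines the local law with the concentration inequality directly. Near the edges one needs a separate argument that exploits the confining effect of the quadratic potential together with a large-deviations bound for the extreme eigenvalues; this is exactly the content of the edge rigidity component of \cite{BouErdYau2013}. Once the uniform bound is in hand, a union bound over $k\in\llbracket 1,N\rrbracket$ costs only a factor $N$ and is absorbed by $e^{-N^c}$ after shrinking $c$.
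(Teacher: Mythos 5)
The paper provides no proof of this lemma: it is stated as a citation to \cite{BouErdYau2013} (``proved for a wide class of $\beta$-ensembles including the quadratic beta ensemble in \cite{BouErdYau2013}''), and your proposal correctly identifies it as such. Your sketch of how the proof would go is broadly faithful to the actual strategy in that reference: strict convexity of the quadratic confining potential together with convexity of the log-interaction on the simplex gives a Bakry--\'Emery log-Sobolev inequality at rate $\sim N$, hence Gaussian (subexponential) concentration for Lipschitz observables; a local law on mesoscopic scales is then propagated through Helffer--Sj\"ostrand to the counting function and inverted to get eigenvalue rigidity, with a separate argument near the edge. A few small caveats: the precise loop equation you wrote is not quite right --- for the measure \eqref{eqn:GOE} with $s_N(z)=\frac1N\sum_k(z-y_k)^{-1}$, the correct first loop equation is
\[
  \E\bigl[s_N(z)^2\bigr] - z\,\E\bigl[s_N(z)\bigr] + 1
  = \Bigl(\frac{2}{\beta}-1\Bigr)\frac{1}{N}\,\E\bigl[s_N'(z)\bigr],
\]
which, after separating $\E[s_N^2]=(\E s_N)^2+\var(s_N)$, is the $\lambda=0$ case of \eqref{eqn:loop}; your version has the wrong coefficient on the $N^{-1}$ term and omits the variance contribution, which is exactly the quantity one must control by concentration. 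Also, your phrase ``the Hessian of $H$ is bounded below after subtracting the logarithmic singularities'' is a slightly misleading way to say the right thing: the $-\log|y_k-y_\ell|$ terms are themselves convex on the ordered simplex, so no subtraction is needed; the lower Hessian bound comes entirely from the $\frac{\beta N}{4}\sum y_k^2$ part. These are cosmetic for a sketch, and the overall route matches the cited work.
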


As an easy consequence of Lemmas \ref{lem:badrig} and \ref{lem:rig}, rigidity estimates for $\mu$ yield rigidity estimates for $\mu^\la$, at the expense of a factor $Z(\la)^{-1}$. It also gives estimates on the $1$-point function and variances for the measure $\mu^\la$.  We recall the definition
of the correlation functions from \eqref{def:corr}, in particular
the $1$-point function satisfies 
$$
N\int h(s) \varrho_1^{(N,\la)}(s)\rd s=\E_{\mu^\la}\left(\sum_k h(y_k)\right)=\int \sum_k h(y_k)\rd\mu^\la(\by),
$$
for any continuous bounded test-function $h$. We also define
the complex variance  by
$
\var^{\mu^\la}\left(X\right)=\E^{\mu^\la}\left(X^2\right)-\left(\E^{\mu^\la} X\right)^2
$. 
We introduce the notation
for the Stieltjes transform of the empirical measure, and its expectation w.r.t. $\mu^\la$, by
$$
s_N(z):=\frac{1}{N}\sum_k\frac{1}{z-y_k},\qquad
 m_{N,\la}(z):=\E^{\mu^\la}(s_N(z)).
$$
We will
also use the following notation for the  Stieltjes transform of the semicircle distribution :
$$
m(z):=\int\frac{\varrho(s)}{z-s} \rd s=\frac{z-\sqrt{z^2-4}}{2},
$$
where the square root is chosen so that $m$ is holomorphic on $[-2,2]^{\rm c}$ and $m(z)\to 0$ as $|z|\to\infty$.

\begin{lemma}\label{lem:rigandcons}
Assume that $Z(\la)\neq 0$. For any $\xi>0$ there exists $c>0$ such that for any $N\geq 1$ and $k\in\llbracket 1,N\rrbracket$ we have
\begin{equation}\label{eqn:poorRig}
\left|\mu^\la\right|\left(|y_k-\gamma_k|>N^{-\frac{2}{3}+\xi}(\widehat k)^{-\frac{1}{3}}\right)\leq \frac{e^{-N^c}}{|Z(\la)|}.
\end{equation}
As a consequence, the following estimates hold: for fixed $\xi>0$, 
for any $0<|\eta|<1$ (remember $z=E+\ii\eta$),
$N\geq 1$,  and $f\in \mathscr{C}^2(\R)$  we have
\begin{align}
\int_{\mathbb{R}}\frac{f'(s)}{z-s}\varrho_1^{(N,\la)}(s)\rd s\label{eqn:1}
-\int_{\mathbb{R}}\frac{f'(s)}{z-s}\varrho(s)\rd s
&=\frac{N^{-1+\xi}}{|Z(\la)|}\OO\left(\int \frac{|f''(s)|}{|z-s|}\rd s+\int \frac{|f'(s)|}{|z-s|^2}\rd s+\frac{e^{-N^c}}{\eta^2}(\|f'\|_\infty+\|f''\|_\infty)\right),\\
\label{eqn:etaone}
m_{N,\la}(z)-m(z)&=\OO\left(\frac{N^{-1+\xi}}{|\eta Z(\la)|}\right),\\
m_{N,\la}'(z)-m'(z)&=\OO\left(\frac{N^{-1+\xi}}{\eta^2 |Z(\la)|}\right),\label{eqn:2}\\
\var^{\mu^\la}\left(\frac{1}{N}\sum_k\frac{1}{z-y_k}\right)&=\OO\left(\frac{N^{-2+2\xi}}{\eta^2 |Z(\la)|^2}
\right).\label{eqn:3}
\end{align}
\end{lemma}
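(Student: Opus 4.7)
The first statement \eqref{eqn:poorRig} is immediate: apply Lemma~\ref{lem:badrig} to the bad rigidity set $A_k=\{|y_k-\gamma_k|>N^{-2/3+\xi}(\widehat k)^{-1/3}\}$ and invoke Lemma~\ref{lem:rig}, since $|\mu^\la|(A_k)\le \mu(A_k)/|Z(\la)|$.

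For \eqref{eqn:1}, I would set $g(s):=f'(s)/(z-s)$ and split the error as
\begin{equation*}
\E^{\mu^\la}\frac{1}{N}\sum_k g(y_k) - \int g(s)\varrho(s)\rd s
= \E^{\mu^\la}\frac{1}{N}\sum_k\bigl[g(y_k)-g(\gamma_k)\bigr] + \Bigl[\frac{1}{N}\sum_k g(\gamma_k) - \int g(s)\varrho(s)\rd s\Bigr].
\end{equation*}
The Riemann-sum term is deterministic; since the $\gamma_k$ are the semicircle quantiles, its size is controlled by $N^{-1}\int|g'(s)|\rd s$, which is exactly the contribution $\int |f''|/|z-s|+\int |f'|/|z-s|^2$ appearing on the right-hand side. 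For the random term, I apply a first-order Taylor expansion $g(y_k)-g(\gamma_k)=(y_k-\gamma_k)g'(\xi_k)$ with $\xi_k$ between $y_k$ and $\gamma_k$. I then split the $|\mu^\la|$-integral according to whether all the rigidity events in \eqref{eqn:poorRig} hold: on the good event $|y_k-\gamma_k|\le N^{-2/3+\xi}(\widehat k)^{-1/3}$ and the sum is bounded by $N^{-1+\xi}$ times $\int(|f''|/|z-s|+|f'|/|z-s|^2)\rd s$ (after converting the sum $N^{-1}\sum_k (\widehat k/N)^{-1/3}|g'(\gamma_k)|$ to an integral), while on the complementary event the total $|\mu^\la|$-mass is bounded by $e^{-N^c}/|Z(\la)|$ and the integrand is crudely bounded by $2(\|f'\|_\infty/\eta+\|f''\|_\infty)/\eta$, producing the last term on the right.

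The Stieltjes transform estimates \eqref{eqn:etaone} and \eqref{eqn:2} follow by the same decomposition applied to $g(s)=1/(z-s)$ and $g(s)=1/(z-s)^2$ respectively, rather than by quoting \eqref{eqn:1} (where one would lose a power of $\eta$). For \eqref{eqn:etaone} the key computation is
\begin{equation*}
\frac{1}{N}\sum_k\Bigl|\frac{y_k-\gamma_k}{(z-y_k)(z-\gamma_k)}\Bigr|
\lesssim \frac{N^{-1+\xi}}{N}\sum_k\frac{1}{|z-\gamma_k|^2}
\approx N^{-1+\xi}\int\frac{\varrho(s)\rd s}{|z-s|^2}\lesssim \frac{N^{-1+\xi}}{\eta}
\end{equation*}
on the good rigidity event (using that $|y_k-\gamma_k|\lesssim N^{-1+\xi}$ in the bulk, together with the sum $\sum_k(\widehat k)^{-1/3}\sim N^{2/3}$ near the edge); on the bad event the negligible $|\mu^\la|$-mass $e^{-N^c}/|Z(\la)|$ absorbs the crude $1/\eta$ bound. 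The Riemann-sum correction contributes $O(1/(N\eta))$. For \eqref{eqn:2} an extra factor $|z-y_k|^{-1}\le\eta^{-1}$ appears, yielding $\eta^{-2}$.

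Finally for \eqref{eqn:3}, the point is to exploit the identity $\var(X)=\E((X-c)^2)-(\E(X-c))^2$ with $c=m(z)$; the variance is invariant under the deterministic shift but the quantity $X-c$ is small on the rigidity event. I bound
\begin{equation*}
\Bigl|\E^{\mu^\la}\bigl((s_N(z)-m(z))^2\bigr)\Bigr|\le \int|s_N(z)-m(z)|^2\rd|\mu^\la|\le \frac{1}{|Z(\la)|}\int |s_N(z)-m(z)|^2\rd\mu,
\end{equation*}
and the $\mu$-integral is $O(N^{-2+2\xi}/\eta^2)$ by the same rigidity argument as above (applied under $\mu$, where no $1/|Z|$ factor appears). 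This gives the first contribution $O(N^{-2+2\xi}/(\eta^2|Z(\la)|))$. The square of the mean, $(m_{N,\la}(z)-m(z))^2$, is bounded by $O(N^{-2+2\xi}/(\eta^2|Z(\la)|^2))$ directly from \eqref{eqn:etaone}. The second term dominates whenever $|Z(\la)|<1$, which is precisely the regime where the factor $|Z(\la)|^{-2}$ in \eqref{eqn:3} is meaningful. The main subtlety throughout is that $\mu^\la$ is a complex, possibly very small, measure, so positivity-based bounds must be replaced by total-variation estimates combined with Lemma~\ref{lem:badrig}; this is the only place the factor $1/|Z(\la)|$ (and its square) enters the argument.
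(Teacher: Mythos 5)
Your proof is correct and follows the same strategy as the paper's: deduce \eqref{eqn:poorRig} from Lemmas~\ref{lem:badrig} and \ref{lem:rig}, then for each statistic split into a deterministic Riemann-sum error plus a random deviation, bounding the latter on the rigidity event (where the deviation is $\lesssim N^{-1+\xi}$ times an integral of $|g'|$) and on its complement (where the $|\mu^\la|$-mass is $\le e^{-N^c}/|Z(\la)|$ and the integrand is bounded crudely in $\eta$). The only cosmetic difference is that the paper writes the random term as $\frac{1}{N}\sum_k\int_{\gamma_k}^{y_k}\partial_s g(s)\,\rd s$ and uses the overlap count $|\{k: s\in I_k\}|\le N^\xi$ to pass to $\int|g'|$, which is slightly cleaner than your mean-value form; also, your parenthetical claim that quoting \eqref{eqn:1} for \eqref{eqn:etaone} and \eqref{eqn:2} would "lose a power of $\eta$" is not accurate (plugging $f(s)=s$ gives $\int |f'|/|z-s|^2\,\rd s\sim\eta^{-1}$, recovering \eqref{eqn:etaone} exactly; similarly $f'(s)=1/(z-s)$ recovers \eqref{eqn:2}), though the direct computation you carry out is what the paper does as well and is perfectly fine.
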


\begin{proof}
The rigidity estimate (\ref{eqn:poorRig}) is immediate from Lemmas \ref{lem:badrig} and \ref{lem:rig}. For the proof of (\ref{eqn:1}), we first write the left hand side of (\ref{eqn:1}) as
\begin{equation}\label{eqn:inter1}
\frac{1}{N}\sum_{k=1}^N\E^{\mu^\la}\left(\int_{\gamma_k}^{y_k}\partial_s\left(\frac{f'(s)}{z-s}\right)\rd s\right)+\sum_{k=1}^N\int_{\gamma_{k-1}}^{\gamma_{k}}\rd s\varrho(s)\int_{s}^{\gamma_k}\partial_u\left(\frac{f'(u)}{z-u}\right)\rd u.
\end{equation}
Let $\gamma(u):=\max\{\gamma_k:\gamma_k\leq u\}$. The second sum above is easily bounded by
$$
\int_{-2}^2\left|\partial_u\frac{f'(u)}{z-u}\right|\rd u \int_{\gamma(u)}^u\varrho(s)\rd s
=
\OO\left(\frac{1}{N}\int \left(\frac{|f'(s)|}{|z-s|^2}+\frac{|f''(s)|}{|z-s|}\right)\rd s\right).
$$
To bound the first term in (\ref{eqn:inter1}), we first
denote $A=\{\forall k\in\llbracket 1,N\rrbracket,\ |y_k-\gamma_k|<N^{-\frac{2}{3}+\xi}(\widehat k)^{-\frac{1}{3}}\}$. Thanks to (\ref{eqn:poorRig}),
$$
\frac{1}{N}\sum_{k=1}^N\E^{\mu^\la}\left(\1_{A^c}\int_{\gamma_k}^{y_k}\partial_s\left(\frac{f'(s)}{z-s}\right)\rd s\right)
=
\OO\left(\frac{e^{-N^c}}{|Z(\la)|\eta^2}(\|f'\|_\infty+\|f''\|_\infty)\right).
$$
In the event $A$, we have
$$
\frac{1}{N}\sum_{k=1}^N\E^{\mu^\la}\left(\1_{A}\int_{\gamma_k}^{y_k}\partial_s\left(\frac{f'(s)}{z-s}\right)\rd s\right)
=
\OO\left(
\frac{1}{|Z(\la)|N}\int \left|\partial_s\frac{f'(s)}{z-s}\right|\sum_{k=1}^N\1_{s\in I_k}\rd s
\right)
$$
where $I_k=\{s:|s-\gamma_k|\leq N^{-\frac{2}{3}+\xi}(\widehat k)^{-\frac{1}{3}}\}$. This concludes the proof of (\ref{eqn:1}) by noting that for any fixed $s$ we have $|\{k:s\in I_k\}|\leq N^\xi$.

The bounds (\ref{eqn:etaone}), (\ref{eqn:2}) and (\ref{eqn:3}) can be proved the same way, by discussing the cases $A$ and $A^{\rm c}$. For example,
the considered variance can be written
$$
\E^{\mu^\la}\left(\left(\frac{1}{N}\sum_k\frac{1}{z-y_k}-\frac{1}{z-\gamma_k}\right)^2\right)-\left(\E^{\mu^\la}\frac{1}{N}\sum_k\left(\frac{1}{z-y_k}-\frac{1}{z-\gamma_k}\right)\right)^2.
$$
The first term can be bounded as previously and yields an error of size $\frac{N^{-2+2\xi}}{|Z(\la)|\eta^2}$ (in $A$). The second one 
yields the higher order error $\frac{N^{-2+2\xi}}{|Z(\la)|^2\eta^2}$, concluding the proof.
\end{proof}

For the following theorem,  we need the notations
\begin{align*}
&\kappa(s):=\max\{N^{-2/3},\min(|s-2|,|s+2|)\},\\
&\rd\nu(s):=\frac{1}{2}\left(\delta_{s-2}+\delta_{s+2}-\frac{1}{2\pi}\frac{\rd s}{\sqrt{4-s^2}}\right).
\end{align*}

\begin{theorem}\label{thm:GaussFluct}
Let $f$ be a ($N$-dependent) real function of class $\mathscr{C}^2$ such that, for any $N$, we have $\|f\|_\infty<C$, $\|f'\|_\infty,\|f''\|_\infty\leq N^C$, $ \int|f'|<C$. Let
\begin{align*}
&\sigma(f)^2:=\frac{1}{2\pi^2\beta}\iint_{(-2,2)^2}\left(\frac{f(x)-f(y)}{x-y}\right)^2\frac{4-x y}{\sqrt{4-x^2}\sqrt{4-y^2}}\rd x\rd y,\\
&\delta(f):=\left(\frac{2}{\beta}-1\right)\int f(s)\rd\nu(s),\\
&\e(f):=\left(1+\int |f''(s)|\kappa(s)^{-\frac{1}{2}}\rd s\right)^2.
\end{align*}
For any fixed $\xi>0$, uniformly in the set
\begin{equation}\label{eqn:condZ}
\Big\{\la\; : \;
\la^2\sigma(f)^2<\Big|\log|N^{-1+3\xi}\e(f)|\Big|
\Big\}\cap\{|\la|<N^{\xi}\}
\end{equation}
we have
$$
Z(\la)=\E^\mu\left(e^{\ii\la S_N(f)}\right)
=
e^{-\frac{\la^2}{2}\sigma(f)^2+\ii\la \delta(f)}+\OO\left(N^{-1+3\xi}\e(f)\right).
$$
\end{theorem}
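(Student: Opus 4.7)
The plan is to adapt Johansson's method \cite{Joh1998} of integrating the ODE
\[
\partial_\lambda \log Z(\lambda) = i\,\E^{\mu^\lambda}\bigl(S_N(f)\bigr) = iN\int f(s)\bigl(\varrho_1^{(N,\lambda)}(s)-\varrho(s)\bigr)\rd s,
\]
which is valid on any interval where $Z(\lambda)\ne 0$. The key task is a sharp expansion of $\varrho_1^{(N,\lambda)}-\varrho$, which I reduce via a Helffer--Sj\"ostrand (or holomorphic) representation of $f$ to an expansion of $m_{N,\lambda}(z)-m(z)$ for $z=E+i\eta$ integrated against a suitable contour. The novelty relative to Johansson's Laplace-transform setting is that $Z(\lambda)$ can vanish, so the whole argument must be closed by a bootstrap on $|Z(\lambda)|$.

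\textbf{Loop equation and Stieltjes transform expansion.} To compute $m_{N,\lambda}(z)-m(z)$, I would derive the Dyson--Schwinger equation for $\mu^\lambda$. Integration by parts in each $y_i$ against the density of $\mu^\lambda$ produces an extra drift $i\lambda f'(y_i)$ on top of the usual $\beta$-ensemble drift, and testing against $(z-y)^{-1}$ gives
\[
m_{N,\lambda}(z)^2 + z\, m_{N,\lambda}(z) + 1 + \frac{i\lambda}{N}\int \frac{f'(s)}{z-s}\varrho_1^{(N,\lambda)}(s)\rd s = \mathcal E(z),
\]
where $\mathcal E(z)$ collects the $(2/\beta-1)$-correction involving $m_{N,\lambda}'(z)$ and the variance $\var^{\mu^\lambda}(s_N(z))$; both are controlled by Lemma \ref{lem:rigandcons} with factors that degrade by $|Z(\lambda)|^{-1}$ and $|Z(\lambda)|^{-2}$ respectively. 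Subtracting the semicircle identity $m(z)^2+z\,m(z)+1=0$, dividing by $m_{N,\lambda}(z)+m(z)+z\approx \sqrt{z^2-4}$, and replacing $\varrho_1^{(N,\lambda)}$ by $\varrho$ via (\ref{eqn:1}), I obtain
\[
m_{N,\lambda}(z)-m(z) = -\frac{i\lambda/N}{\sqrt{z^2-4}}\int \frac{f'(s)}{z-s}\varrho(s)\rd s + R(z),
\]
with a remainder $R(z)$ of order $N^{-2+2\xi}/(\eta^2|Z(\lambda)|^2)$ once the edge behaviour has been accounted for. Feeding this back through the contour representation, the $O(\lambda)$ piece integrates to $-i\lambda\,\sigma(f)^2$---the standard $\beta=2$ variance, recovered by an elementary residue computation involving the branch cut of $\sqrt{z^2-4}$---while the $(2/\beta-1)$ correction inside $\mathcal E(z)$ produces the edge shift $\delta(f)$. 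Altogether,
\[
\E^{\mu^\lambda}S_N(f) = -i\lambda\,\sigma(f)^2 + \delta(f) + \OO\!\left(\frac{N^{-1+3\xi}\e(f)}{|Z(\lambda)|^2}\right).
\]

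\textbf{Bootstrap on $|Z(\lambda)|$.} Because the error is weighted by $|Z(\lambda)|^{-2}$, the ODE for $\log Z$ cannot be integrated directly. I would let $\lambda_\ast\in[0,N^\xi]$ be the supremum of $\lambda_0$ such that $|Z(\lambda)|\ge \frac{1}{2}e^{-\lambda^2\sigma(f)^2/2}$ on $[0,\lambda_0]$. On $[0,\lambda_\ast]$ integrating the ODE yields
\[
\Big|\log Z(\lambda) + \frac{\lambda^2}{2}\sigma(f)^2 - i\lambda\,\delta(f)\Big| \le C|\lambda|\, N^{-1+3\xi}\e(f)\, e^{\lambda^2\sigma(f)^2},
\]
which under (\ref{eqn:condZ}) is $\oo(1)$. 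Hence $|Z(\lambda_\ast)|>\frac{1}{2}e^{-\lambda_\ast^2\sigma(f)^2/2}$ strictly, so by continuity $\lambda_\ast = N^\xi$, which closes the bootstrap and gives the claimed asymptotic for $Z(\lambda)$. The main obstacle will be the precise book-keeping of the $|Z(\lambda)|^{-k}$ factors alongside the edge contributions to the contour integrations: the range (\ref{eqn:condZ}) is exactly what the bootstrap can absorb, so any loss of power of $N$ in the loop-equation manipulations---in particular near $\pm 2$, where the weight $\kappa^{-1/2}$ in the definition of $\e(f)$ is saturated---would shrink the admissible window of $\lambda$.
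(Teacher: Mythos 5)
Your plan matches the paper's first two steps (loop equation for $\mu^\la$ plus the Helffer--Sj\"ostrand contour, with errors from Lemma~\ref{lem:rigandcons} carrying $|Z(\la)|^{-1}$ and $|Z(\la)|^{-2}$ factors), but your final integration step has a genuine gap. Your bootstrap aims at $\la_\ast=N^\xi$, i.e.\ $|Z(\la)|\ge\tfrac12 e^{-\la^2\sigma(f)^2/2}$ on all of $[0,N^\xi]$; this cannot hold in general, since outside~(\ref{eqn:condZ}) the additive error $\OO(N^{-1+3\xi}\e(f))$ in $Z$ dominates $e^{-\la^2\sigma(f)^2/2}$. Even if you restrict the bootstrap to~(\ref{eqn:condZ}), integrating the ODE yields an error in $\log Z$ of size $\OO\bigl(N^{-1+3\xi}\e(f)\,e^{\la^2\sigma(f)^2}\bigr)$, and (\ref{eqn:condZ}) only gives $e^{\la^2\sigma(f)^2}<\bigl(N^{-1+3\xi}\e(f)\bigr)^{-1}$; near the boundary of the window the error is therefore $\OO(1)$ with an unspecified absolute constant, not $\oo(1)$ as you claim, so $|Z(\la_\ast)|$ is not guaranteed to exceed $\tfrac12 e^{-\la_\ast^2\sigma(f)^2/2}$ and the continuity argument stalls.

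The paper avoids any a priori lower bound on $|Z(\la)|$ by working with $g(\la)=e^{\la^2\widetilde\sigma(f)^2-2\ii\widetilde\delta(f)\la}Z(\la)^2$. Since $Z'(\la)=Z(\la)\bigl(-\la\widetilde\sigma^2+\ii\widetilde\delta\bigr)+\OO\bigl(N^{-1+2\xi}\e(f)/|Z(\la)|\bigr)$, the product $2Z(\la)Z'(\la)$, and hence $g'(\la)$, has an error $\OO(N^{-1+2\xi}\e(f))$ with \emph{no} $|Z|^{-1}$ factor at all: the $|Z|^{-2}$ degradation in $\E^{\mu^\la}(\ii S_N(f))$ is exactly absorbed by $Z^2$. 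Integrating $g'$ from $g(0)=1$ gives $Z(\la)^2=e^{-\la^2\widetilde\sigma^2+2\ii\widetilde\delta\la}+\OO(N^{-1+3\xi}\e(f))$, and the estimate for $Z(\la)$ itself follows by continuity of the branch of $\sqrt{Z^2}$ from $Z(0)=1$, which is precisely valid on~(\ref{eqn:condZ}) where $Z^2$ stays away from the origin. You should replace the bootstrap by this computation. Two smaller remarks: the loop equation is quadratic in $X_N(z)=m_{N,\la}(z)-m(z)$, and selecting the small root $c_N(z)/b(z)$ rather than the root near $b(z)=\sqrt{z^2-4}$ requires the continuity-in-$\eta$ argument that the paper makes explicit and your sketch presupposes; and the factor you divide by should be $m_{N,\la}(z)+m(z)-z=X_N(z)-\sqrt{z^2-4}$ (since $2m(z)-z=-\sqrt{z^2-4}$), not $m_{N,\la}+m+z$.
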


\begin{proof}
The main tools for the proof of this theorem are the loop equation (\ref{eqn:loop}) and the Helffer-Sj{\"o}strand
formula to go from the Stieltjes transform to any test function. To derive proper asymptotics in the loop equation, an important input is the optimal rigidity and its consequences, Lemma \ref{lem:rigandcons}.

We begin with $
\frac{\rd}{\rd\la}\log Z(\la)=\E^{\mu^{\la}}\left(\ii S_N(f)\right)$, and therefore want to estimate expectation of general linear statistics for the measure $\mu^\la$. We begin with the expectation of the Stieltjes transform.\\

\noindent{\it First step: analysis of the loop equation.}
The loop equation  is a well-known algebraic identity for the
expectation of the empirical measure. In our case it takes the
following form: 
\begin{multline}\label{eqn:loop}
(m_{N,\la}(z)-m(z))^2-\sqrt{z^2-4}\, (m_{N,\la}(z)-m(z))
+\ii\frac{\la}{\beta N}\int_{\mathbb{R}}\frac{f'(s)}{z-s}\varrho_1^{(N,\la)}(s)\rd s\\
-\frac{1}{N}\left(\frac{2}{\beta}-1\right)m_{N,\la}'(z)-\var^{\mu^{(\la)}}\left(s_N(z)\right)
=0.
\end{multline}
Note that, when compared to the loop equation initiated in \cite{Joh1998} (written in a form closer  to (\ref{eqn:loop}) in \cite{BouErdYau2013} Section 6.2), we only consider the special case of quadratic external potential, hence extra simplifications occur.
From the estimates from Lemma \ref{lem:rigandcons} and our assumptions for the theorem, the loop equation (\ref{eqn:loop}) implies
that uniformly in $\eta>N^{-1+\xi}$ we have
\begin{align}
&X_N(z)^2-b(z)X_N(z)+c_{N}(z)=\OO(\om_N(z)),\label{eqn:quad}\\
&X_N(z)=m_{N,\la}(z)-m(z),\notag\\
&b(z)=\sqrt{z^2-4},\notag\\
&c_{N}(z)=\ii\frac{\la}{\beta N}\int_{\mathbb{R}}\frac{f'(s)}{z-s}\varrho(s)\rd s-\frac{1}{N}\left(\frac{2}{\beta}-1\right)m'(z),\notag\\
&\om_N(z)=\frac{N^{-2+{2\xi}}}{|Z(\la)|^2}\left(\frac{1}{\eta}\int|f''|+\frac{1}{\eta^2}\right).\notag
\end{align}
Let 
$$
\Omega_N:=\{z=E+\ii\eta: N^\xi\min(N^{-2/3},N^{-1}\kappa(E)^{-1/2}) \leq |\eta|\leq 3, |E|\leq 4\}.
$$
A simple analysis exercise shows that
$$
  \sup_{s\in[-2,2]}\frac{\varrho(s)}{|z-s|}\leq C\eta^{-1} \max(\eta,\kappa(E))^{1/2}.
$$
Using this estimate together with
 $\int|f'|<C$ and $|\la|\leq N^\xi$,
   we have $|b(z)|^2>c N^\xi |c_N(z)|$
  for any $z\in\Omega_N$. 
We consider two cases to identify the relevant root of (\ref{eqn:quad}).
\begin{enumerate}[(i)]
\item If $|b(z)|^2>N^\xi \om_N(z)$, by monotonicity we also have $|b(z')|^2>N^\xi \om_N(z')$ for any $z'=E+\ii\eta',|\eta'|>|\eta|$.

Moreover, using (\ref{eqn:etaone}),
together with $|Z(\la)|>N^{-1+\frac{3\xi}{2}}$ (obtained from our assumption $|b(z)|^2>N^\xi \om_N(z)$),
we have $m_{N,\la}(z)-m(z)\to 0$ when $|\eta|$ is of order 1. 

All together, by continuity we proved that in this case, for any $z\in\Omega_N$,
$$
m_{N,\la}(z)-m(z)=c_N(z)/b(z)+\OO(\om_N(z)/b(z)).
$$
\item Assume $|b(z)|^2\leq N^\xi \om_N(z)$ (in particular $|\om_N(z)|>|c_N(z)|$). Any solution of (\ref{eqn:quad}) satisfies
$$
|X_N(z)|\leq C\max(|b(z)|, \sqrt{|c_N(z)|+|\om_N(z)|})\leq C N^{\xi/2}\sqrt{\om_N(z)}\leq C N^\xi |\om_N(z)/b(z)|.
$$
\end{enumerate}
In all cases, we therefore proved that uniformly in $\Omega_N$ we have
\begin{equation}\label{eqn:goodRoot}
m_{N,\la}(z)-m(z)=\frac{c_N(z)}{b(z)}+\OO\left(N^\xi\frac{\om_N(z)}{|b(z)|}\right).
\end{equation}
~\vspace{0.3cm}
\noindent{\it Second step, integration.}
Let $\widetilde f$ coincide with $f$ on $(-3,3)$, such that $\widetilde f(x)=0$ for $|x|>4$ and $\|(f-\widetilde f)^{(\ell)}\|_\infty<C$
for $\ell=0,1,2$. From (\ref{eqn:poorRig}) we have
$$
\E^{\mu^\la}\left(\ii S_N(f)\right)
=
\E^{\mu^\la}\left(\ii S_N(\widetilde f)\right)+\OO\left(\frac{e^{-{N^c}}}{|Z(\la)|}\right).
$$
Let $\chi:\RR\to\RR^+$ be a smooth symmetric function such that $\chi(x)=1$ for $x\in(-1,1)$ and $\chi(x)=0$ for $|x|>2$. By the Helffer-Sj{\"o}strand formula
\cite{HelSjo1989}
we have
$$
\E^{\mu^\la}\left(\ii S_N(\widetilde f)\right)
=
-\frac{\ii}{2\pi}\iint_{\RR^2} \left(\ii y \widetilde f''(x)\chi(y)+\ii(\widetilde f(x)+\ii y \widetilde f'(x))\chi'(y)\right)N(m_{N,\la}(x+\ii y)-m(x+\ii y))\rd x\rd y.
$$
We now bound some error terms.
\begin{enumerate}[(i)]
\item Using the estimate (\ref{eqn:etaone}), we have (note that $\widetilde f(x)\chi'(y)$ and $\widetilde f'(x))\chi'(y)$ both vanish for $z=x+\ii y\not\in\Omega_N$, and $f''(x)\chi(y)=f''(x)$ when $z\not\in\Omega_N$)
\begin{multline*}
\iint_{\Omega_N^{\rm c}} \left(\ii y \widetilde f''(x)\chi(y)+\ii(\widetilde f(x)+\ii y \widetilde f'(x))\chi'(y)\right)N(m_{N,\la}(x+\ii y)-m(x+\ii y))\rd x\rd y\\
=
\iint_{\Omega_N^{\rm c}} \ii y \widetilde f''(x)\chi(y)N(m_{N,\la}(x+\ii y)-m(x+\ii y))\rd x\rd y
=\OO\left(\frac{N^{2\xi}}{N|Z(\la)|}\int |\widetilde f''(x)|\kappa(x)^{-1/2}\rd x\right).
\end{multline*}
\item A simple calculation yields (note that $|b(x+\ii y)|>c$ when $\chi'(y)\neq 0$)
$$
\iint_{\Omega_N} (|\widetilde f(x)|+|y \widetilde f'(x)|)|\chi'(y)|N\frac{|\om_N(z)|}{|b(z)|}\rd x\rd y
=\OO\left(\frac{N^{-1+2\xi}}{|Z(\la)|^2}\left(1+\int|f''|\right)\right).
$$
Moreover,
$$
\iint_{\Omega_N} |\widetilde f''(x)|y\chi(y)N\frac{\om_N(z)}{|b(z)|}\rd x\rd y
=\OO\left(\frac{N^{-1+2\xi}}{|Z(\la)|^2}\left(1+\int|f''|\right)\int|\widetilde f''(x)|\kappa(x)^{-1/2}\rd x\right).
$$
\item Finally, thanks to the easy estimate $|c_N(z)|\leq C(|\la|/(Ny)+1/(Ny))$, we have
$$
\iint_{\Omega_N^{\rm c}} \left(\ii y \widetilde f''(x)\chi(y)+\ii(\widetilde f(x)+\ii y \widetilde f'(x))\chi'(y)\right)N\frac{c_N(z)}{b(z)}\rd x\rd y=
N^{-1+\xi}\OO\left(\int |\widetilde f''(x)|\kappa(x)^{-1/2}\rd x\right).
$$
\end{enumerate}
Let 
\begin{align*}
&\widetilde\sigma(f)^2:=
-\frac{1}{2\pi\beta}\iint_{\RR^2} \left(\ii y \widetilde f''(x)\chi(y)+\ii(\widetilde f(x)+\ii y \widetilde f'(x))\chi'(y)\right)b(z)^{-1}\left(\int\frac{f'(s)}{z-s}\varrho(s)\rd s\right)\rd x\rd y,\\
&\widetilde\delta(f):=\left(\frac{2}{\beta}-1\right)\frac{1}{2\pi}
\iint_{\RR^2} \left(\ii y \widetilde f''(x)\chi(y)+\ii(\widetilde f(x)+\ii y \widetilde f'(x))\chi'(y)\right)b(z)^{-1}m'(z)\rd x\rd y.
\end{align*}
Using (i), (ii) and (iii) all together, we proved that
$$
\E^{\mu^\la}\left(\ii S_N(f)\right)
=
-\la\widetilde\sigma(f)^2+\ii\widetilde\delta(f)
+
\OO\left(\frac{N^{-1+2\xi}}{|Z(\la)|^2}\e(f)\right).
$$
Let $g(\la)=e^{\la^2\widetilde\sigma(f)^2-2\ii\widetilde\delta(f)\la}Z(\la)^2$. The above equation implies
$
g'(\la)=e^{\la^2\widetilde\sigma(f)^2-2\ii\widetilde\delta(f)\la}\OO(N^{-1+2\xi}\e(f)),
$
so
$g(\la)=1+e^{\la^2\widetilde\sigma(f)^2}\OO(N^{-1+3\xi}\e(f))$. On our set (\ref{eqn:condZ}), by continuity in $\la$ 
this implies
\begin{equation}\label{eqn:CLT}
Z(\la)=
e^{-\frac{\la^2}{2}\widetilde\sigma(f)^2+\ii\la \widetilde\delta(f)}+\OO\left(N^{-1+3\xi}\e(f)\right).
\end{equation}
We now want to prove $\widetilde\sigma(f)^2=\sigma(f)^2$ and $\widetilde\delta(f)=\delta(f)$.
If $f$ is fixed independent of $N$, (\ref{eqn:CLT}) proves that $S_N(f)$ converges to a Gaussian random variable with variance $\widetilde\sigma(f)^2$ and shift $\widetilde\delta(f)$.
Thanks to \cite[Theorem 2.4]{Joh1998} we can identify this shift: we know that $S_N(f)$ converges to a Gaussian with shift $\delta(f)$. Thanks to \cite[Theorem 2]{LytPas2009}, we can identify the variance: for $\beta=1$, $S_N(f)$ converges to a Gaussian with variance $\sigma(f)^2$. This implies the identity $\widetilde\sigma(f)^2=\sigma(f)^2$ and $\widetilde\delta(f)=\delta(f)$ for any $f$, and concludes the proof.
\end{proof}

\begin{remark}
In the previous theorem, the error term $\e(f)$ is quadratic in $\int|f''|$, which is sufficient for our purpose, as we will apply it for $f$ fluctuating at
the mesoscopic scale $N^{-\tau}$ for some small $\tau$. 

If one is interested in the mesoscopic statistics at scale $N^{-1+\e}$ for some small $\e$ and the support of $f$ is of order 1, the above reasoning fails. On the other hand, if $f$ is supported in the bulk, with support size $(\int |f''|)^{-1}$, then by taking in the previous reasoning $\chi$ a cutoff function on scale $(\int |f''|)^{-1}$ one obtains an error linear in $\int|f''|$ instead of quadratic, which is sufficient to prove Gaussianity of $S_N(f)$ at this very small mesoscopic scale.
\end{remark}

Assuming $\e(f)$ has size $N^\theta$ for some $\theta\in(0,1)$, Theorem \ref{thm:GaussFluct} gives a very accurate control of $Z(\la)$ in the regime $|\la|\leq c(\theta)(\log N)^{1/2}/\sigma(f)$. The purpose of the following lemma is to get a rough polynomial bound on $Z$ in the regime $|\la|>c(\theta)(\log N)^{1/2}/\sigma(f)$.

\begin{lemma}\label{n100}
Let $f$ be a ($N$-dependent) real function of class $\mathscr{C}^2$ such that, for any $N$, we have $\|f\|_\infty<C$, $\|f'\|_\infty,\|f''\|_\infty\leq N^C$, $ \int|f'|<C$. 

Assume that $\e(f)\leq N^{1/2}$, $c\leq (\log N)^{1/2}/\sigma(f)$, and $\sigma(f)>c$. Then for any $|\la|\in[(\log N)^{1/2}/\sigma(f),N^{1/10}]$ we have
\be\label{result n100}
|Z(\la)|\leq C\ N^{-1/100}.
\ee
\end{lemma}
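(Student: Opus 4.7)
My plan is to split the range of $|\lambda|$ into two subranges and treat them separately, using Theorem \ref{thm:GaussFluct} directly on the smaller part and a bootstrap via the loop equation on the larger part. Throughout, $\xi > 0$ denotes an arbitrarily small constant.

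\emph{First subrange: $(\log N)^{1/2}/\sigma(f) \leq |\lambda| \leq \lambda_\star$, where $\lambda_\star^2 \sigma(f)^2 = (1/2 - 4\xi)\log N$.} Here the hypothesis \eqref{eqn:condZ} of Theorem \ref{thm:GaussFluct} is satisfied, since the assumption $\epsilon(f) \leq N^{1/2}$ gives $|\log(N^{-1+3\xi}\epsilon(f))| \geq (1/2 - 3\xi)\log N$. Applying the theorem, together with the lower bound $|\lambda|\sigma(f) \geq (\log N)^{1/2}$, yields
\begin{equation*}
|Z(\lambda)| \leq e^{-\lambda^2\sigma(f)^2/2} + O(N^{-1+3\xi}\epsilon(f)) \leq N^{-1/2} + O(N^{-1/2+3\xi}),
\end{equation*}
which is bounded by $CN^{-1/100}$ once $\xi$ is fixed sufficiently small. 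Note that at the upper endpoint we even obtain $|Z(\lambda_\star)| \leq CN^{-(1-8\xi)/4}$, a substantial improvement over $N^{-1/100}$ that we will exploit below.

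\emph{Second subrange: $\lambda_\star \leq |\lambda| \leq N^{1/10}$.} Here Theorem \ref{thm:GaussFluct} is no longer applicable, since the continuation of the loop equation in its proof requires $|Z(\lambda)| \geq N^{-1+3\xi/2}$, while the Gaussian factor would predict $|Z(\lambda)| \ll N^{-1/4}$. My plan is to argue by contradiction. Suppose there exists $\lambda_0 \in (\lambda_\star, N^{1/10}]$ with $|Z(\lambda_0)| > N^{-1/100}$, and let $\lambda_\dagger := \inf\{\lambda > \lambda_\star : |Z(\lambda)| > N^{-1/100}\}$. By continuity, $|Z(\lambda_\dagger)| = N^{-1/100}$ and $|Z(\lambda)| \leq N^{-1/100}$ on $[\lambda_\star, \lambda_\dagger]$. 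I would then rerun the analysis of Theorem \ref{thm:GaussFluct} on this interval, using the weaker a priori bound $|Z(\lambda)| \geq N^{-1/100}$ in place of the self-consistent Gaussian estimate. Under this a priori control, the errors in Lemma \ref{lem:rigandcons} are amplified by at most $N^{1/100}$ (for the one-point function) and $N^{2/100}$ (for the variance), so the error terms in the loop equation \eqref{eqn:quad} become $\omega_N(z) = N^{-2+2\xi+2/100} \eta^{-2}(\ldots)$. Since we are only interested in $z$ in the bulk with $|\eta| \sim 1$, this remains much smaller than $|b(z)|^2 \sim 1$, so the loop equation still uniquely selects the correct root $X_N(z) = c_N(z)/b(z) + O(N^{2/100}\omega_N/b(z))$. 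Propagating through the Helffer--Sjöstrand representation yields $g'(\lambda) = e^{\lambda^2\sigma^2 - 2i\delta\lambda}\, O(N^{-1+2\xi+2/100}\epsilon(f))$; integrating from $\lambda_\star$ to $\lambda_\dagger$ and using $|Z(\lambda_\star)| \leq CN^{-(1-8\xi)/4}$ gives
\begin{equation*}
|Z(\lambda_\dagger)|^2 \leq e^{-(\lambda_\dagger^2 - \lambda_\star^2)\sigma^2}|Z(\lambda_\star)|^2 + O\!\left(|\lambda_\dagger - \lambda_\star|\, N^{-1+3\xi+2/100}\epsilon(f)\right) \leq CN^{-1/2+\xi'},
\end{equation*}
which for small enough $\xi'$ contradicts $|Z(\lambda_\dagger)| = N^{-1/100}$.

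\emph{Main obstacle.} The hard part is the bootstrap in the second subrange: one must verify that every error estimate used in the proof of Theorem \ref{thm:GaussFluct} survives when we replace the self-consistent Gaussian bound on $|Z(\lambda)|$ by the much weaker $|Z(\lambda)| \geq N^{-1/100}$. The pointwise loop equation step is insensitive to this, but the integration step is delicate because the factor $e^{\lambda^2\sigma^2}$ appearing in $g(\lambda) = e^{\lambda^2\sigma^2 - 2i\delta\lambda}Z(\lambda)^2$ can be as large as $N^{C}$ for some $C$ in this regime. The crucial observation is that on the interval $[\lambda_\star, \lambda_\dagger]$ this factor is at most $N^{1-8\xi} \cdot e^{\lambda_\dagger^2\sigma^2 - \lambda_\star^2\sigma^2}$, and since $|Z(\lambda_\dagger)| \geq N^{-1/100}$ would force $\lambda_\dagger^2\sigma^2 \leq (1/50)\log N + o(\log N)$ (by a Gaussian-type comparison), this factor stays below $N^{1/25}$, which is small enough for the bootstrap to close. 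Care is also needed to track the branch of $\log Z(\lambda)$ and to handle the possible zeros of $Z(\lambda)$, which can be done by working with $|Z(\lambda)|^2 = Z(\lambda)\overline{Z(\lambda)}$ throughout, as in the proof of Theorem \ref{thm:GaussFluct}.
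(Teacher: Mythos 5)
\ Your treatment of $(\log N)^{1/2}/\sigma(f)\le|\la|\le\la_\star$ via a direct application of Theorem~\ref{thm:GaussFluct} is fine, though the paper only needs the single value $\la=(\log N)^{1/2}/\sigma(f)$ as a starting point, not the whole subrange.

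\textbf{The gap in the second subrange.}\ Your plan is to argue by contradiction, set $\la_\dagger:=\inf\{\la>\la_\star:|Z(\la)|>N^{-1/100}\}$, and integrate $g'(\la)$ over $[\la_\star,\la_\dagger]$ using a rerun of the loop-equation analysis. This cannot work as stated. The error terms in the loop equation (and hence in $Z'/Z=\E^{\mu^\la}(\ii S_N(f))$) scale like $1/|Z(\la)|^2$; to control them you need a \emph{lower} bound on $|Z(\la)|$. But by your own construction, on $[\la_\star,\la_\dagger]$ you have $|Z(\la)|\le N^{-1/100}$, an \emph{upper} bound, while at the left endpoint $|Z(\la_\star)|\approx N^{-(1-8\xi)/4}$ is much smaller, and $Z$ may even vanish somewhere in the interval. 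So the bound ``the errors are amplified by at most $N^{2/100}$'' simply does not hold on the interval over which you integrate, and the integration cannot close. (Your remark about working with $|Z|^2=Z\overline{Z}$ to handle zeros addresses branch issues of $\log Z$, but not the blow-up of the loop-equation error itself.)

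\textbf{How the paper avoids this.}\ The paper's proof never integrates $g'$ over an interval. Instead it establishes the pointwise conditional statement: if $\la\in[(\log N)^{1/2}/\sigma(f),N^{1/10}]$ and $|Z(\la)|>N^{-1/100}$, then $\frac{\rd}{\rd\la}\Re\log Z(\la)<0$. To prove this it reruns the loop-equation/Helffer--Sj\"ostrand analysis \emph{at that single $\la$}, where the needed lower bound $|Z(\la)|>N^{-1/100}$ is available by hypothesis (and replaces $\Omega_N$ by the $\la$-dependent domain $\Omega_{N,\la}$ to accommodate $|\la|$ up to $N^{1/10}$). Then, knowing $|Z(\la)|\le N^{-1/100}$ at the left endpoint and that $|Z|$ strictly decreases whenever it exceeds $N^{-1/100}$, a simple continuity argument shows $|Z(\la)|\le N^{-1/100}$ persists throughout the range. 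This is a genuinely different mechanism: you bootstrap via an integral estimate (which needs $|Z|$ bounded below on the whole interval), while the paper bootstraps via a pointwise monotonicity statement (which only needs the lower bound at the single point under examination). The latter is both simpler and avoids your difficulty entirely. Two smaller problems with your proposal: the claim that one is ``only interested in $|\eta|\sim 1$'' is not accurate — the Helffer--Sj\"ostrand integral goes down to small $\eta$ within $\Omega_N$ (or $\Omega_{N,\la}$) — and the ``Gaussian-type comparison'' you invoke to bound $\la_\dagger^2\sigma^2$ is circular, since the Gaussian decay of $Z$ is precisely what is under investigation in this regime.
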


\begin{proof}
Without loss of generality, we can assume $\la>0$. Note that for $\la=(\log N)^{1/2}/\sigma(f)$, from Theorem \ref{thm:GaussFluct} we have $|Z(\la)|\leq N^{-1/100}$, so we only need to prove the following statement: 
if $\la\in[(\log N)^{1/2}/\sigma(f),N^{1/10}]$ and $|Z(\la)|> N^{-1/100}$ then
\begin{equation}\label{eqn:negZ}
\frac{\rd}{\rd\la}\Re\log Z(\la)<0.
\end{equation}
To prove the above statement, we begin as in the proof of Theorem \ref{thm:GaussFluct} with
$\frac{\rd}{\rd\la}\Re\log Z(\la)=\Re\E^{\mu^{\la}}(\ii S_N(f))$. If we repeat exactly the proof of 
Theorem \ref{thm:GaussFluct} except that we substitute $\Omega_N$ with
$$
\Omega_{N,\la}:=
\{z=E+\ii\eta: \la\min(N^{-2/3}, N^{-1}\kappa(E)^{-1/2}) \leq |\eta|\leq 3, |E|\leq 4\}.
$$
Then the following variant of $(\ref{eqn:goodRoot})$ holds uniformly in $\Omega_{N,\la}$:
$$
m_{N,\la}(z)-m(z)=\frac{c_N(z)}{b(z)}+\OO\left(\la \frac{\om_N(z)}{|b(z)|}\right).
$$
This allows us to reproduce all error estimates (i), (ii) and (iii) in the integration step, always replacing $\Omega_N$ with $\Omega_{N,\la}$. We end up with
$$
\Re\E^{\mu^{\la}}(\ii S_N(f))
=
-\la \widetilde\sigma(f)+\OO\left(\frac{N^{-1}\la^2}{|Z(\la)|^2}\e(f)\right).
$$
From our strong assumptions $\la\leq N^{1/10}$, $|Z(\la)|>N^{-1/100}$ and $\e(f)<N^{1/2}$ the above term is positive for large enough $N$. This conclues the proof of (\ref{eqn:negZ}) and the lemma.
\end{proof}

\begin{appendices}

\section{H\"older regularity}
\label{sec:Holder}

We now explain the proof of  Lemma \ref{holder}, i.e., the H\"older regularity for \eqref{ve1}. It  directly follows from 
Theorem 10.3 of \cite{ErdYau2012singlegap} after checking the 
conditions. We recall that the setup of  \cite{ErdYau2012singlegap}
was the  discrete equation  
\be\label{veq7}
   \pt_s \bv(s) = -\cA(s)\bv(s), \qquad \cA(s) = \cB(s) + \cW(s), 
\ee
in a finite $I \subset  \llbracket 1, N \rrbracket$ of size $|I|=K$
and in a time interval $s\in [0,\sigma]$.
Here $\cW(t)$ is an diagonal operator given by $(\cW(t) \bv)_i = 
W_i(t) v_i$. We will apply this result for $I = \llbracket 1, N \rrbracket$,
i.e. $K=N$. The time interval is $[0,\sigma]:= [0, t]$. 
The key assumption  on the coefficients $B_{jk}$  is the following strong  regularity condition 
(Definition 9.7 in  \cite{ErdYau2012singlegap}).  We remind the reader that, 
compared with the scalings of this paper,  the time 
in  \cite{ErdYau2012singlegap} is rescaled by a factor $N$ while   
the coefficient $B_{jk}$ is  rescaled by  a factor $1/N$. 
The microscopic coordinates used in  \cite{ErdYau2012singlegap} are chosen so   that 
the eigenvalue spacing is of order one and the  time to equilibrium is of order $N$. 
In this paper, all scalings are dictated by the original scalings of the DBM,
so the 
 following setup uses the scaling convention in this paper. 
 
 Theorem 10.3 in \cite{ErdYau2012singlegap} had 
two conditions, called $({\bf C1})_\rho$ and $({\bf C2})_\xi$.
The first condition is the following concept of strong regularity: 

\begin{definition}\label{def:strreg}
The equation 
\be\label{veq8}
   \pt_t \bv(t) = -\cB(t)\bv(t) 
\ee
 is called {\bf regular} at the
space-time point $(z, \si)$ with exponent $\rho$, if
\be\label{Kassnew3}
   \sup_{0\le s\le \sigma}\sup_{1 \le M \le N} 
\frac{1}{ 1/N+ |s-\si| }\Big|  \int_s^\si \frac{1}{M}
 \sum_{i\in I\, : \, |i-z| \le M}\sum_{j\in I\, : \, |j-z| \le M} 
 B_{ij}(s')
 \rd s' \Big| \le N^{1+\rho}.
\ee
Furthermore, the equation is called {\bf strongly regular} at the
space-time point $(z, \si)$ with exponent $\rho$ if it is 
regular at all points $\{ z\}\times \{ \sigma \Xi + \si\}$, where 
$$
  \Xi = \big\{ -   2^{-m}(1+2^{-k}) \; : \; 0\le k,m \le C\log N  \big\}.
$$
\end{definition}

Strong regularity \eqref{Kassnew3}  at $(z,t)$ with exponent
$2\rho$  follows from  \eqref{EB} on a set $\cR_{z,t}$  of probability
at least $1-CN^{-\rho}(\log N)^4$. Without the double supremum in \eqref{Kassnew3}
this would clearly follow from  the Markov inequality
and the cardinality $|\Xi|\le C(\log N)^2$. However, the suprema over all $s$ and $M$ can be 
replaced by suprema over  a dyadic choice of  $s= 2^{-a}\sigma$, $M= 2^b$, with intgers $a, b\le C\log N$, 
explaining the additional logarithmic factors. 

The other condition, $({\bf C2})_\xi$, expresses various a priori
bounds on $B_{ij}$ that follow from  \eqref{B1} and \eqref{B2}.
 More precisely, we need
for any $0\le s \le t$ 
\begin{align}\label{g3}
     B_{ij}(s) \ge & \frac{N^{-\xi}}{N|i-j|^2}, \; \mbox{for any $i,j$
     with $\wh i, \wh j \ge cN$},\\
     \frac{{\bf 1}(\min \{ \wh i, \wh j\} \ge cN)}{ C N|i-j|^2} \le  B_{ij} (s) \le & \frac{C}{N |i-j|^2}, 
  \;  \mbox{for any}  \; |i-j|\ge C'N^{\xi}
\end{align}
with some constants $C, C', c$, 
where recall that $\wh i = \min\{ i, N+1-i\}$ denotes the distance from the edge.  
Finally,  in \cite{ErdYau2012singlegap} the diagonal operator is assumed to satisfy 
\be\label{g4}
  W_i(s)\le   \frac{N^{\xi}}{N \wh i},  \quad \mbox{if}\quad \wh i\ge N^\xi,
\ee  
 but in our application the diagonal operator is not present.
Having verified these conditions  (with a  
possible modified value of $\rho$),  Lemma \ref{holder}
directly follows from Theorem 10.3 of \cite{ErdYau2012singlegap}. \qed

\section{Level repulsion estimate}
\label{sec:Wegner}

The following level repulsion estimate is adapted from \cite{ErdSchYau2010}. 
The main differences are:
\begin{enumerate}[(i)] 
\item it is given for symmetric matrices instead of Hermitian; 
\item we consider the generalized Wigner class instead of Wigner;
\item the matrix entries are smooth on scale $N^{-\tau/2}$ instead of $1$. 
\end{enumerate}
We closely
follow the method from \cite{ErdSchYau2010}, where the Hermitian case was given in details.
Since the adjustment of the proof to the symmetric case requires technical changes,
for the convenience of the reader, we will give the main steps of the proof and
explain the modifications.

\begin{proposition}\label{prop:Wegner}
Let $H_N$ be a symmetric generalized Wigner matrix satisfying (\ref{eqn:forHansonWright}), and
$G_N$ a $N\times N$ GOE matrix. For any $t>0$ we denote $\mu_1(t)\leq\dots\leq \mu_N(t)$ the eigenvalues of
$\sqrt{1-t}H_N+\sqrt{t}G_N$.
Define the set
\begin{equation}\label{eqn:rigid}
\mathcal{G}_{\xi}=\left\{|\mu_i-\gamma_i|\leq N^{-2/3+\xi}(\hat i)^{-1/3}\ \mbox{for all }\ i\in\llbracket 1,N\rrbracket\right\}.
\end{equation}

For any fixed $\kappa$ there exists $C_1>0$ such that for any $k\geq 1$, $\tau,\xi>0$, there exists $C_2>0$ such that for any $N\in \mathbb{N}$, $E\in(-2+\kappa,2-\kappa)$, $t\in[N^{-\tau},1]$ and $\e>0$ we have
$$
\P\left(\left\{\left|\{\mu_i(t)\in [E,E+\e/N]\}\right|\geq k\right\}\cap\mathcal{G}_{\xi}\right)\leq C_2 N^{2 k\xi+ C_1 k^2 \tau} \e^{\frac{k(k+1)}{2}}.
$$
\end{proposition}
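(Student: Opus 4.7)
The plan is to adapt the method of Erdős-Schlein-Yau (ESY10) for Hermitian Wigner matrices to this symmetric, generalized-Wigner, Gaussian-convolved setting. The key reduction is via the Schur complement: writing
$$
H = \begin{pmatrix} B & a \\ a^* & h \end{pmatrix},
$$
with $B$ the top-left $(N-1)\times(N-1)$ minor, $\lambda$ is an eigenvalue of $H$ iff $f(\lambda):= h - \lambda - \sum_{\alpha=1}^{N-1} \frac{u_\alpha}{\mu_\alpha - \lambda} = 0$, where $\mu_\alpha$ are eigenvalues of $B$ with orthonormal eigenvectors $v_\alpha$ and $u_\alpha := \langle v_\alpha, a\rangle^2 \ge 0$ (real inner product in the symmetric case). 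The function $f$ is strictly decreasing between consecutive $\mu_\alpha$, so it has exactly one zero in each pole interval. This yields the Weyl interlacing: if $k$ eigenvalues of $H$ lie in $J:=[E,E+\epsilon/N]$, then at least $k-1$ eigenvalues of $B$ lie in a slightly enlarged interval $\widetilde J$. This sets up an induction on $k$.

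For the base case $k=1$ I would integrate out the single real Gaussian component present in $h$ coming from the $\sqrt{t}\,G_N$ perturbation. Since $G_N$ is GOE, $h$ contains an independent Gaussian of variance $t/N$, so the conditional density of $h$ (given all other entries) is bounded by $C\sqrt{N/t} = CN^{(1+\tau)/2}$. Conditioning on $B$ and $a$, the change of variables $h \mapsto \lambda$ via $h = \lambda + a^*(B-\lambda)^{-1} a$ has Jacobian $|f'(\lambda)| = 1 + \sum_\alpha u_\alpha/(\mu_\alpha-\lambda)^2 \ge 1$, so the conditional density of the eigenvalue is also bounded by $CN^{(1+\tau)/2}$, giving $\P(\lambda_i \in J) \le C\epsilon N^{\tau/2}$, which matches the claim (with $N^{C_1\tau}$ absorbing the $N^{\tau/2}$ and the semicircle density factor).

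For the inductive step, one conditions on $B$ satisfying its rigidity event $\mathcal G_\xi$ and on having $k-1$ of its eigenvalues in $\widetilde J$. On this event the $\mu_\alpha$'s outside $\widetilde J$ stay a macroscopic distance from $J$ (using rigidity), so the secular equation $f(\lambda)=0$ restricted to $\lambda\in J$ is dominated by the $k-1$ "resonant" terms $u_\alpha/(\mu_\alpha-\lambda)$. Writing $f(\lambda)=0$ as a polynomial equation of degree $k$ in the resonant variables after multiplying by $\prod_{\alpha \in \widetilde J}(\mu_\alpha-\lambda)$, one gains additional factors of $\epsilon$ from the requirement that this polynomial have $k$ real roots in $J$. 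Quantitatively, by integrating the Jacobian determinant of the map $(h,\{u_\alpha\}_{\alpha\in\widetilde J})\mapsto (\lambda_{i_1},\dots,\lambda_{i_k})$ — which is essentially a Cauchy/Vandermonde-type determinant giving $\prod_{\alpha<\beta, \alpha,\beta\in\widetilde J}(\mu_\alpha-\mu_\beta)^{-1}$ factors — against the joint density of $(h,a)$, one picks up an extra $\epsilon^k$ factor from the $h$ and $u_\alpha$ integrations (each with density bounded by $C N^{(1+\tau)/2}$ from the Gaussian component of each entry), times the probability from the induction hypothesis $\epsilon^{(k-1)k/2} N^{2(k-1)\xi + C_1(k-1)^2\tau}$. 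Combined, this yields $\epsilon^{k + k(k-1)/2} = \epsilon^{k(k+1)/2}$.

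The main obstacle, and the source of the $N^{2k\xi + C_1 k^2\tau}$ prefactor, is twofold. First, passing from Hermitian to symmetric, the projections $u_\alpha = \langle v_\alpha,a\rangle^2$ have only one real degree of freedom per $\alpha$ instead of two (a complex square); this halves the Vandermonde-like gain in the Jacobian and is precisely what turns the Hermitian exponent $k^2$ into $k(k+1)/2$. Verifying that the real-valued density of $u_\alpha$ (induced by the real Gaussian component of $a_\alpha$) does not concentrate near zero — needed to integrate $\prod u_\alpha^{-1/2}$-type singularities — requires a careful treatment: one exploits the Gaussian component of $a$ coming from $\sqrt{t}G_N$, giving smoothness of $u_\alpha$ at scale $t/N$, each use contributing $N^{\tau/2}$ and accumulating to the $N^{C_1 k^2\tau}$ factor. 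Second, to ensure the inductive step applies, the rigidity event $\mathcal G_\xi$ for $H$ must be propagated to rigidity for the minor $B$ (via Cauchy interlacing) which, combined with standard $N^\xi$-losses when controlling sums like $\sum_\alpha u_\alpha/(\mu_\alpha-\lambda)^2$ on the resonant/non-resonant decomposition, produces the $N^{2k\xi}$ factor.
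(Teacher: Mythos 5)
Your proposal claims to adapt the ESY10 strategy, but the mechanism you actually describe in the inductive step --- a Jacobian change of variables from $(h,\{u_\alpha\}_{\alpha\in\widetilde J})$ to the $k$ eigenvalues in $J$, picking up Vandermonde-type factors $\prod(\mu_\alpha-\mu_\beta)^{-1}$ --- is \emph{not} what ESY10 or the present paper does. The paper's proof never computes such a Jacobian determinant. Instead, it starts from the deterministic inequality \eqref{eqn:base}, which bounds $\mathcal{N}_I$ by $\frac{C\e}{N}\sum_j \big[(\ldots)^2+(\ldots)^2\big]^{-1/2}$ in terms of the Schur-complement quantities $\xi_\alpha^{(j)}$, raises to a power $r$ and applies Markov's inequality, splits by whether the minor has $\geq k+2$ or between $k-1$ and $k+1$ eigenvalues near $E$, and then feeds these into the explicit moment bounds of Lemma~\ref{lem:moments}. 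That lemma (adapted from [ErdSchYau2010, Lemma 8.2]) is where all the hard analysis lives and where the real-versus-complex distinction is encoded: the denominators carry $c_j^{1/2}$ instead of $c_j$ (because $u_\alpha=|b\cdot u_\alpha|^2$ is a $\chi^2_1$ variable with a $u^{-1/2}$ density singularity at $0$, rather than a $\chi^2_2$ variable with bounded density), and three indices $d_{\beta_1},d_{\beta_2},d_{\beta_3}$ are needed for the integrals to converge, rather than two in the Hermitian case.

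Your sketch touches on the $u^{-1/2}$ singularity but does not resolve it, and one sentence is actively misleading: the Gaussian component from $\sqrt{t}G_N$ smooths the density of $a_\alpha$ near $0$, but this does \emph{not} remove the $u^{-1/2}$ singularity of $u_\alpha = a_\alpha^2$ --- that singularity is structural, a consequence of squaring a real variable with bounded density, and persists regardless of the amount of added Gaussian noise. What the Gaussian noise buys, and what Lemma~\ref{lem:smoothness} codifies, is the Fourier decay \eqref{eqn:dens} of the entry densities at polynomial cost $N^{C\tau}$, which feeds into Lemma~\ref{lem:moments}. Relatedly, your diagnosis that passing from Hermitian to symmetric ``halves the Vandermonde-like gain in the Jacobian'' is imprecise: the dimension of the map $(h,\{u_\alpha\})\mapsto(\lambda_{i_1},\dots,\lambda_{i_k})$ is $k\to k$ real dimensions in both cases; what changes is the density of each $u_\alpha$ near zero. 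Finally, even granting the Jacobian framework heuristically, the hard step --- integrating the inverse of a degenerate Vandermonde/Cauchy-type Jacobian against the $\prod u_\alpha^{-1/2}$ density and showing the result is finite with the advertised $\epsilon$ power --- is exactly what the moment estimates \eqref{eqn:first}--\eqref{eqn:third} replace, and your sketch leaves that step at the level of assertion. Your base-case arithmetic is also off by a factor of $N^{1/2}$: the density of $h$ is $O(N^{(1+\tau)/2})$ and $|J|=\epsilon/N$, so the single-eigenvalue probability you compute should read $C\epsilon N^{(\tau-1)/2}$, and the aggregation over pole intervals needs the Schur-complement inequality \eqref{eqn:base} rather than a naive sum over $i$.
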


\noindent
Compared to  \cite[Theorem 3.5]{ErdSchYau2010}, the above Wegner estimate bound has extra $N^\xi$ factors, because our proof does not use subgaussian decay 
of the matrix entries (we only assume condition (\ref{eqn:forHansonWright}) instead). The same comment applies to the following corollary.

\begin{corollary}\label{cor:Wegner}
Assume the same conditions as Proposition \ref{prop:Wegner}.

For any fixed $\alpha>0$ there exists $C_1>0$ such that for any $\tau,\xi>0$, there exists $C_2>0$ sucht that  for any $N\in \mathbb{N}$, $i\in\llbracket\alpha N,(1-\alpha)N\rrbracket$, $t\in[N^{-\tau},1]$ and $\e>0$ we have
$$
\P\left(\left\{|\mu_{i+1}(t)-\mu_i(t)|\leq\frac{\e}{N}\right\} \cap \mathcal{G}_{\xi}\right)\leq C_2 N^{ 2k\xi+ C_1\nc \tau} \e^2.
$$
\end{corollary}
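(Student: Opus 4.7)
The plan is to reduce the gap event to a Wegner-type counting event on short intervals and then apply Proposition~\ref{prop:Wegner} with $k=2$, combined with a union bound over a small collection of intervals dictated by the rigidity scale.

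First, on the event $\mathcal{G}_\xi$, for a bulk index $i\in\llbracket\al N,(1-\al)N\rrbracket$ one has $\wh i\sim N$ and hence $|\mu_i(t)-\gamma_i|\le N^{-1+\xi}$ and $|\mu_{i+1}(t)-\gamma_{i+1}|\le N^{-1+\xi}$. Since $|\gamma_{i+1}-\gamma_i|\le CN^{-1}\ll N^{-1+\xi}$, both $\mu_i(t)$ and $\mu_{i+1}(t)$ lie in the interval $J_i:=[\gamma_i-2N^{-1+\xi},\gamma_i+2N^{-1+\xi}]$ of length $\sim N^{-1+\xi}$. On the event $\{|\mu_{i+1}(t)-\mu_i(t)|\le\e/N\}\cap\mathcal{G}_\xi$ there is therefore some subinterval of $J_i$ of length $\e/N$ containing at least two eigenvalues of the process.

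Second, cover $J_i$ by the overlapping family of intervals $I_k:=[E_k,E_k+\e/N]$, where $E_k=\gamma_i-2N^{-1+\xi}+k\e/(2N)$ and $k=0,1,\ldots,M$ with $M\le CN^\xi/\e$. Any subinterval of $J_i$ of length $\e/N$ is contained in some $I_k$, so
$$
\{|\mu_{i+1}(t)-\mu_i(t)|\le\e/N\}\cap\mathcal{G}_\xi
\;\subset\;
\bigcup_{k=0}^{M}\bigl(\{|\{j:\mu_j(t)\in I_k\}|\ge 2\}\cap\mathcal{G}_\xi\bigr).
$$
Apply Proposition~\ref{prop:Wegner} to each $I_k$ with $k=2$: the probability of the $k$-th event is bounded by $C_2\,N^{4\xi+4C_1\tau}\,\e^{3}$ (after harmlessly adjusting constants because the intervals have length exactly $\e/N$ and the endpoints $E_k$ lie in $(-2+\kappa/2,2-\kappa/2)$ for $N$ large). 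A union bound then yields
$$
\P\bigl(\{|\mu_{i+1}(t)-\mu_i(t)|\le\e/N\}\cap\mathcal{G}_\xi\bigr)
\;\le\; M\cdot C_2\,N^{4\xi+4C_1\tau}\,\e^{3}
\;\le\; C'_2\,N^{5\xi+4C_1\tau}\,\e^{2},
$$
which is the claimed bound after redefining $C_1,C_2$.

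There is essentially no serious obstacle here once Proposition~\ref{prop:Wegner} is available; the only mild care is in making sure that the overlapping cover of $J_i$ actually catches every length-$\e/N$ subinterval (hence the spacing $\e/(2N)$ between the left endpoints $E_k$), and in tracking the cumulative powers of $N^\xi$ and $N^{C_1\tau}$ under the union bound. The main conceptual point is the trade-off that converts the $\e^{k(k+1)/2}=\e^3$ gain for $k=2$ into the desired $\e^2$ gap estimate at the cost of the $N^\xi/\e$ covering number, which is why the bound deteriorates only by a factor $N^{O(\xi)}$ compared to the naive guess.
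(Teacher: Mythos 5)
Your approach is the same as the paper's: use rigidity to localize to a window of width $\mathcal O(N^{-1+\xi})$ around $\gamma_i$, cover it by $\mathcal O(N^\xi/\e)$ short intervals, apply Proposition~\ref{prop:Wegner} with $k=2$ to each, and union bound (trading $\e^3$ for $\e^2$).

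There is however a small but real arithmetic error in the covering step. Intervals $I_k=[E_k,E_k+\e/N]$ of length $\e/N$ with left endpoints spaced $\e/(2N)$ apart do \emph{not} contain every sub-segment of $J_i$ of length $\e/N$: if $\mu_i=E_k+\delta$ with $0<\delta<\e/(2N)$ and $\mu_{i+1}-\mu_i=\e/N$, then $\mu_{i+1}=E_k+\delta+\e/N$ lies strictly to the right of $I_k$, while $\mu_i$ lies strictly to the left of $I_{k+1}=[E_k+\e/(2N),E_k+3\e/(2N)]$, so the pair is caught by neither interval. To make the inclusion true you must enlarge the covering intervals: the paper uses intervals of length $2\e/N$ with left endpoints spaced $\e/N$, i.e.\ $[E_j,E_j+2\e/N]$ with $E_j=\gamma_i+j\e/N$; more generally any length $L$ with spacing $s$ such that $L\ge s+\e/N$ works. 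Applying Proposition~\ref{prop:Wegner} to the wider interval only changes $\e$ by a factor of $2$, so the final bound $C_2'N^{\mathcal O(\xi)+\mathcal O(C_1)\tau}\e^2$ is unaffected; the rest of your argument (the union bound and the bookkeeping of $N^\xi$ and $N^{C_1\tau}$ powers) is correct.
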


\begin{proof}
For any $j\in\mathbb{Z}$, define $E_j=\gamma_i+j\frac{\e}{N}$. We then have the events inclusion
$$
\left\{|\mu_{i+1}(t)-\mu_i(t)|\leq\frac{\e}{N}\right\}\cap\mathcal{G}_{\xi}
\subset
\bigcup_{|j|\leq \frac{N^\xi}{\e}+1}\left(\left\{\left|\left\{\mu_\ell(t)\in\left[E_j,E_j+\frac{2\e}{N}\right]\right\}\right|\geq2\right\}
\cap
\mathcal{G}_{\xi}
\right).
$$
The union bound together with Proposition \ref{prop:Wegner} applied with $k=2$ allow to conclude.
\end{proof}

The above Corollary \ref{cor:Wegner} actually holds for eigenvalues up to the edge (with the exponents $\e N^{-1}$ and $N^{-1+\delta}$ being replaced by $\e N^{-2/3}(\widehat i)^{-1/3}$, $N^{-2/3+\delta}(\widehat i)^{-1/3}$, respectively). The proof requiring just formal changes, we will only present the bulk case here, for notational simplicity.

To prepare the proof of Proposition \ref{prop:Wegner}, we need the following lemmas.
In particular, Proposition \ref{prop:Wegner} will require a regularity assumption of type (\ref{eqn:dens}) for the matrix entries. Note that this condition was weakened in \cite{MalSch2011} to $\int (f'/f)^4f<\infty$ (where $f$ is the density of real and imaginary parts of the matrix entries), but we will not need this improvement.

\begin{lemma}\label{lem:smoothness}
Let $H=(h_{ij})_{1\leq i,j\leq N}$ be a symmetric generalized Wigner matrix satisfying $(\ref{eqn:forHansonWright})$ and $\tau>0$.

We denote $f=e^{-g}$ the probability density of $\sqrt{1-t}\sqrt{N}h_{ij}+\sqrt{t}\mathcal{N}$, where
$t\in[N^{-\tau},1]$ and $\mathcal{N}$ is a standard Gaussian independent from $H$. Then there exists $C>0$ such that for any $a\geq 1$ there exists $c_a>0$ such that uniformly in 
$N,i,j, s\in\RR$,  we have
\begin{equation}\label{eqn:dens}
|\widehat f(s)|\leq c_a\frac{N^{C a\tau}}{(1+s^2)^a},\ \ 
|\widehat{f g''}(s)|\leq c_a\frac{N^{C a\tau}}{(1+s^2)^a}.
\end{equation}
\end{lemma}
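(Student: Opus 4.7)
The random variable in question is $Y=\sqrt{1-t}\,X+\sqrt t\,\mathcal{N}$ with $X:=\sqrt N\,h_{ij}$. Writing $\tilde f$ for the density of $\sqrt{1-t}\,X$ and $\phi_t(x)=(2\pi t)^{-1/2}e^{-x^2/(2t)}$, the density factorizes as a convolution $f=\tilde f*\phi_t$, so
\[
   \widehat f(s)=\widehat{\tilde f}(s)\,e^{-ts^2/2}.
\]
Since $\widehat{\tilde f}$ is a characteristic function, $|\widehat{\tilde f}|\le 1$, whence $|\widehat f(s)|\le e^{-ts^2/2}$. Combining $t\ge N^{-\tau}$ with $e^{-u}\le c_a u^{-a}$ applied at $u=ts^2/2$ gives $|\widehat f(s)|\le c_a N^{a\tau}|s|^{-2a}$ for $|s|>1$, and the trivial $|\widehat f|\le 1$ for $|s|\le 1$ handles the other range, yielding the first estimate with $C=1$.

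The second estimate I would handle by iterated integration by parts via the Fourier duality $(is)^{2a}\widehat{fg''}(s)=\widehat{(fg'')^{(2a)}}(s)$, which gives
\[
   |s|^{2a}\,|\widehat{fg''}(s)|\le \bigl\|(fg'')^{(2a)}\bigr\|_{L^1}.
\]
Combined with the crude $|\widehat{fg''}(s)|\le\|fg''\|_{L^1}$ for $|s|\le 1$, it suffices to establish the a priori $L^1$ bounds $\|(fg'')^{(k)}\|_{L^1}\le C_k\,t^{-k-2}\le C_k\,N^{(k+2)\tau}$ for every $k\ge 0$. Leibniz's rule gives $(fg'')^{(k)}=\sum_{j=0}^k\binom{k}{j}f^{(j)}\,g^{(k-j+2)}$, so the task reduces to bounding each $\|f^{(j)}g^{(\ell)}\|_{L^1}$ by $C_{j,\ell}\,t^{-j/2-\ell}$.

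The factor $f^{(j)}=\phi_t^{(j)}*\tilde f$ is controlled by Cauchy--Schwarz inside the convolution: the pointwise bound on $(f^{(j)})^2/f$ by $(\phi_t^{(j)})^2/\phi_t$ averaged against $\tilde f$ integrates to $\int(\phi_t^{(j)})^2/\phi_t=j!\,t^{-j}$. For $g^{(\ell)}$ I would invoke the Tweedie/de Bruijn representation
\[
   -g'(y)=\frac{\sqrt{1-t}\,\E[X\mid Y=y]-y}{t},\qquad g''(y)=\frac{1}{t}-\frac{1-t}{t^2}\,\var[X\mid Y=y],
\]
with higher derivatives obtained by iterated differentiation of the posterior measure $\mu_y(dx)\propto\tilde f(x)\,\phi_t(y-\sqrt{1-t}\,x)\,dx$: each successive $y$-derivative pulls out a $\sqrt{1-t}/t$-weighted conditional cumulant, so $g^{(\ell)}$ is a universal polynomial of total weight $\ell$ in the first $\ell$ conditional cumulants of $X\mid Y$, normalised by $t^{-\ell}$. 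A further Cauchy--Schwarz splitting $\|f^{(j)}g^{(\ell)}\|_{L^1}\le\|(f^{(j)})^2/f\|_{L^1}^{1/2}\,\|(g^{(\ell)})^2 f\|_{L^1}^{1/2}$ converts the second factor, by the tower property, into unconditional moments $\E[|X|^{M}]$ bounded via \eqref{eqn:forHansonWright}, yielding $\|(g^{(\ell)})^2 f\|_{L^1}\le C_\ell\,t^{-2\ell}$. Multiplying and summing over $j+\ell=k+2$ produces $\|(fg'')^{(k)}\|_{L^1}\le C_k\,t^{-k-2}$, and comparing with $(1+s^2)^{-a}$ in both regimes $|s|\le 1$ and $|s|>1$ completes the second estimate with $C=4$.

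The main technical obstacle is the combinatorial bookkeeping needed to express $g^{(\ell)}$ as a universal polynomial in the conditional cumulants $\kappa_m(\mu_y)$ and to track the exact negative powers of $t$ produced at every step of the Leibniz expansion; one must also verify that the moment hypothesis \eqref{eqn:forHansonWright} provides enough integrability for the cumulants of order up to $2(2a+2)$ to be absorbed, which in turn dictates the lower bound on the fixed exponent $p$ for a given target $a$.
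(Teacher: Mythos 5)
Your proof of the first inequality coincides with the paper's (both read off $|\widehat f(s)|\le e^{-ts^2/2}$ from the convolution structure), but for the second inequality you take a genuinely different route. The paper first rewrites $fg'' = f'^2/f - f''$: the term $\widehat{f''}(s)=(is)^2\widehat f(s)$ is trivial, and the remaining term $f'^2/f$ is handled by $2a$-fold integration by parts, then controlled pointwise via a cutoff at $K=x$ giving $|f'(x)|\le CxN^{C\tau}|f(x)|+CN^{C\tau}e^{-x^2/(2t)}$, together with the lower bound $|f(x)|> cN^{-C\tau}e^{-x^2/(2t)}$. You instead apply Leibniz directly to $(fg'')^{(2a)}$ and split each $\|f^{(j)}g^{(\ell)}\|_{L^1}$ by Cauchy--Schwarz with weight $f$; the Fisher-information-type factor $\int(f^{(j)})^2/f$ is computed exactly via Hermite orthogonality inside the convolution ($=j!\,t^{-j}$), while the other factor $\int(g^{(\ell)})^2 f=\E[g^{(\ell)}(Y)^2]$ is handled via the Tweedie/score representation of $g^{(\ell)}$ as $(\sqrt{1-t}/t)^\ell$ times an $\ell$-th conditional cumulant of $X\mid Y$, reduced to unconditional moments of $X$ by Jensen and the tower property. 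Your version is more structural and avoids the somewhat delicate pointwise lower bound on $f$ and the $K=x$ cutoff trick, at the price of the cumulant bookkeeping you flag at the end; the paper's version is more elementary and pointwise. Both ultimately need moments of $\sqrt N h_{ij}$ up to an order that grows linearly in $a$, so both implicitly cap the usable $a$ in terms of the fixed exponent $p$ in (\ref{eqn:forHansonWright}); the paper treats this silently (and only uses small $a$ downstream, where $p=10$ suffices), whereas you state it explicitly, which is actually more honest. The arithmetic checks out: $\|f^{(j)}g^{(k-j+2)}\|_{L^1}\lesssim t^{-j/2-(k-j+2)}\le t^{-k-2}$, and with $k=2a$, $t\ge N^{-\tau}$ this gives the claimed $N^{Ca\tau}$ with $C=4$.
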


\begin{proof}
The first inequality is elementary:
$$
\left|\int e^{\ii s x} f(x)\rd x\right|=
|\E(e^{\ii s\sqrt{1-t}\sqrt{N}h_{ij}})|
|\E(e^{\ii s\sqrt{t}\mathcal{N}})|\leq e^{-s^2 \frac{t}{2}}\leq c_a\frac{t^{-a}}{(1+s^2)^a}.
$$
For the second one, we have $fg''=f'^2/f-f''$ and
$$
\left|\int e^{\ii s x} f''(x)\rd x\right|\leq s^{2}\left|\int e^{\ii s x} f(x)\rd x\right|\leq
s^2c_{a+1}\frac{t^{-{(a+1)}}}{(1+s^2)^{a+1}}\leq c_{a+1}\frac{t^{-2a}}{(1+s^2)^{a}},
$$
so we only need to bound (without loss of generality we can assume $a$ is an integer)
\begin{equation}\label{eqn:cutbound}
\left|\int e^{\ii s x} \frac{f'(x)^2}{f(x)}\rd x\right|\leq
\frac{2^a}{(1+s^2)^{a}}\left|\int e^{\ii s x} \left(\frac{\rd}{\rd x}\right)^{2a}\frac{f'(x)^2}{f(x)}\rd x\right|\1_{|s|\geq 1}+
\left|\int \frac{f'(x)^2}{f(x)}\rd x\right|\1_{|s|\leq 1}.
\end{equation}
Let $\nu$ be the distribution of $\sqrt{1-t}\sqrt{N}h_{ij}$. Then for any $K>0$ we have
\begin{multline*}
|f'(x)|=\left|\frac{1}{\sqrt{2\pi t}}\int\nu(\rd u)\frac{x-u}{t}e^{-\frac{(x-u)^2}{2t}}\right|
\leq
C\ K t^{-1} |f(x)|+
C\ t^{-3/2} e^{-\frac{K^2}{2t}}\int\nu(\rd u)|x-u|\\
\leq C x N^{C\tau}|f(x)|+C N^{C\tau} e^{-\frac{x^2}{2t}},
\end{multline*}
where we chose $K=x$ and used (\ref{eqn:forHansonWright}) so that $\nu$ has finite first moment.
Moreover, we obviously have $|f(x)|> c N^{-C\tau}e^{-\frac{x^2}{2t}}$, so we can easily bound the second term on the right hand side of (\ref{eqn:cutbound}):
$$\left|\int \frac{f'(x)^2}{f(x)}\rd x\right|\leq 
C\left|\int |f'(x)| (1+|x|)N^{C\tau}\rd x\right|\leq C N^{C\tau}.
$$
For the first term of the right hand side in (\ref{eqn:cutbound}), and expansion of the $2a$-th derivative of this ratio and the same cut argument by $K=x$ yields
$$
\int \left|\left(\frac{\rd}{\rd x}\right)^{2a}\frac{f'(x)^2}{f(x)}\right|\rd x\leq C N^{C\tau a},
$$
which concludes the proof.
\end{proof}

\begin{lemma}\label{lem:moments}
Fix $p\in\mathbb{N}^*$ and $N\geq p+3$. Let $\bu_1,\dots,\bu_{N-1}$be an orthonormal basis in $\mathbb{R}^N$, and set $\xi_\alpha=|\bbb\cdot \bu_\alpha|^2$, where the components of $\bbb$ are independent centered real random variables with density $f=e^{-g}$ satisfying $\var b_i\sim 1$, the decay (\ref{eqn:forHansonWright}) and the density smoothness assumption (\ref{eqn:dens}), uniformly in $N$ and $i\in\llbracket 1,N-1\rrbracket$.

Let $\alpha_1,\dots,\alpha_p,\beta_1,\beta_2,\beta_3$ be distinct indices in $\llbracket 1,N-1\rrbracket$. Let $c_j>0$, $j\in\llbracket 1,p\rrbracket$,  $d_\alpha\in\mathbb{R}$ for all $1\leq \alpha\leq N-1$, $d_{\beta_1}, d_{\beta_2}, d_{\beta_3}>0$.

\begin{enumerate}[(i)]
\item For any $r\in\left(1,\frac{p}{2}+1\right)$, there exists a constant $C_{r,p}<\infty$ such that
\begin{equation}\label{eqn:first}
\E_{\bbb}\left(\left(\sum_{j=1}^p c_j\xi_{\alpha_j}\right)^2+\left(E-\sum_{\alpha=1}^{N-1}d_\alpha \xi_\alpha\right)^2\right)^{-\frac{r}{2}}\leq
C_{r,p}\frac{N^{2C(r-1)\tau}}{\left(\prod_{j=1}^p c_j^{1/2}\right)^{\frac{2(r-1)}{p}}\min(d_{\beta_1},d_{\beta_2},d_{\beta_3})}.
\end{equation}
\item For any $r\in\left(\frac{p+1}{2},\frac{p}{2}+1\right)$, there exists a constant $C_{r,p}<\infty$ such that
\begin{equation}\label{eqn:second}
\E_{\bbb}\left(\left(\sum_{j=1}^p c_j\xi_{\alpha_j}\right)^2+\left(E-\sum_{\alpha=1}^{N-1}d_\alpha \xi_\alpha\right)^2\right)^{-\frac{r}{2}}\leq
C_{r,p}\frac{N^{C p\tau}}{\left(\prod_{j=1}^{p-1} c_j^{1/2}\right)c_p^{r-\frac{p+1}{2}}\min(d_{\beta_1},d_{\beta_2},d_{\beta_3})}.
\end{equation}
\item For any $r\in\left(1,\frac{p}{2}\right)$, there exists a constant $C_{r,p}<\infty$ such that
\begin{equation}\label{eqn:third}
\E_{\bbb}\left(\sum_{j=1}^p c_j\xi_{\alpha_j}\right)^{-r}\leq
C_{r,p}\frac{N^{C(r-1)\tau}}{(\min_jc_j^{1/2})^r}.
\end{equation}
\end{enumerate}
\end{lemma}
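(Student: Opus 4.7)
The argument will adapt Section~4 of \cite{ErdSchYau2010} from the complex Hermitian to the real symmetric setting, while carrying through the $N^{C\tau}$ factors supplied by Lemma \ref{lem:smoothness}. The plan is to first complete $\bu_1,\dots,\bu_{N-1}$ to an orthonormal basis by adjoining some unit vector $\bu_N$, and to change variables $\eta_\alpha := \bbb\cdot\bu_\alpha$ so that $\xi_\alpha = \eta_\alpha^2$. The density of $\boldsymbol\eta$ is $\rho(\boldsymbol\eta) = \prod_i f\big(\sum_\alpha u_{i\alpha}\eta_\alpha\big)$. Next, I would use the Fourier representations
\[
X^{-r} = c_r \int_{\R} e^{isX}|s|^{r-1}\,\rd s, \qquad (X^2+Y^2)^{-r/2} = c_r' \iint_{\R^2} e^{i(sX+uY)}(s^2+u^2)^{r/2-1}\,\rd s\,\rd u,
\]
valid in the principal-value sense for the prescribed ranges of $r$. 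With $X = \sum_j c_j\xi_{\alpha_j}$ and $Y = E-\sum_\alpha d_\alpha\xi_\alpha$, taking expectations reduces everything to estimating the characteristic function
\[
\Phi(s,u) := \E_{\boldsymbol\eta}\exp\Big(is \textstyle\sum_j c_j\eta_{\alpha_j}^2 + iu\sum_\alpha d_\alpha\eta_\alpha^2\Big),
\]
against the singular weight $|s|^{r-1}$ or $(s^2+u^2)^{r/2-1}$, with an extra phase $e^{-iuE}$.

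The heart of the argument is the characteristic function bound. I would use Lemma \ref{lem:smoothness} (which gives $|\widehat f(s)|\le c_a N^{Ca\tau}(1+s^2)^{-a}$) together with the Gaussian model computation $\int e^{iA\eta^2 - \eta^2/2}\,\rd\eta = (1-2iA)^{-1/2}$, whose absolute value decays like $(1+|A|)^{-1/2}$. After integrating out the $\eta_\alpha$'s for $\alpha\notin\{\alpha_1,\dots,\alpha_p,\beta_1,\beta_2,\beta_3\}$, the plan is to establish
\[
|\Phi(s,u)| \le N^{C(p+1)\tau}\prod_{j=1}^p\bigl(1+|sc_j+ud_{\alpha_j}|\bigr)^{-1/2}\cdot\prod_{k=1}^3\bigl(1+|ud_{\beta_k}|\bigr)^{-1/2}.
\]
The three $\beta_k$ directions are precisely what is needed so that the one-dimensional density of $\sum d_\alpha\eta_\alpha^2$ near $E$ is bounded: each real direction supplies only an integrable $z^{-1/2}$ singularity, and three of them combine to an integrable (in fact bounded) density. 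This is the structural reason for the appearance of $\beta_1,\beta_2,\beta_3$ in the statement and for the symmetric vs.\ Hermitian mismatch (where complex directions give $(1+|\cdot|)^{-1}$ per index).

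Once this bound is in hand, the remaining integration is essentially bookkeeping. For part (iii), rescaling $s\mapsto s/\min_j c_j$ in $\int |s|^{r-1}\prod_j(1+|sc_j|)^{-1/2}\rd s$ yields the $(\min_j c_j^{1/2})^{-r}$ factor, the integral being convergent exactly for $r\in(1,p/2)$. For parts (i) and (ii), the $u$-integral against $\prod_k(1+|ud_{\beta_k}|)^{-1/2}$ produces the $\min(d_{\beta_1},d_{\beta_2},d_{\beta_3})^{-1}$ factor after combining three $1/2$-powers with the $(s^2+u^2)^{r/2-1}$ weight. The distinction between (i) and (ii) corresponds to whether the $s$-integral is dominated by $|s|\lesssim|u|$ (case (i), leading to the symmetric bound $\prod_j c_j^{1/2}$ to the power $2(r-1)/p$ by H\"older) or by $|s|\gg |u|$ with one coefficient $c_p$ singled out (case (ii)). The bound on the range of $r$ in each case is dictated precisely by the convergence thresholds of these Fourier integrals.

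The main obstacle is the characteristic function bound itself. Passing from the factorized density in $\bbb$-coordinates to the non-factorized density in $\boldsymbol\eta$-coordinates costs a Fourier expansion, and one must carefully choose which $\eta_\alpha$'s to integrate out first --- essentially the $\alpha_j$'s and $\beta_k$'s --- in order to expose the quadratic form to the smoothing provided by the integration in the complementary directions. Tracking the $N^{C\tau}$ factors through this procedure (each application of \eqref{eqn:dens} costs one such factor) gives the stated $N^{C(r-1)\tau}$ or $N^{Cp\tau}$ prefactors. All other estimates are routine once the characteristic function bound is established.
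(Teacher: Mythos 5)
Your approach is genuinely different from the paper's, and it contains a real gap at the central step.

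\textbf{What the paper does.} The paper explicitly states that it follows the method of Lemma~8.2 in \cite{ErdSchYau2010}. That argument stays entirely in physical space: one changes variables to $\bx = O^*\bbb$, introduces the antiderivative $F(t) = \int_{-\infty}^t\big((\sum_j c_j x_{\alpha_j}^2)^2 + s^2\big)^{-r/2}\,\rd s$ and the dilation operator $D = x_{\beta_1}\partial_{x_{\beta_1}} + x_{\beta_2}\partial_{x_{\beta_2}} + x_{\beta_3}\partial_{x_{\beta_3}}$, and performs an integration by parts in the $x_{\beta_1},x_{\beta_2},x_{\beta_3}$ directions to rewrite the expectation as
$$ I \;=\; \frac12\int\rd\mu(\bx)\,\frac{F\big(E-\sum_\alpha d_\alpha x_\alpha^2\big)}{d_{\beta_1}x_{\beta_1}^2 + d_{\beta_2}x_{\beta_2}^2 + d_{\beta_3}x_{\beta_3}^2}\big(1-D\Phi(\bx)\big), $$
which is then split into explicit cutoff pieces $A_1,\dots,A_7,|B_1|,|B_2|,|B_3|$, each bounded by a pointwise $L^1$ density estimate that Lemma~\ref{lem:smoothness} makes available (this is where the $N^{C\tau}$ losses enter). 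No Fourier representation of $X^{-r}$ or of $(X^2+Y^2)^{-r/2}$ is ever used, and no joint characteristic function bound is needed. Your intuition about the role of $\beta_1,\beta_2,\beta_3$ (three real directions replacing two complex ones for integrability) is correct and matches the paper; only the mechanism by which they enter is different.

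\textbf{Where your proposal has a gap.} Your route Fourier-transforms the target and reduces to the claimed characteristic-function bound
$$ |\Phi(s,u)| \;\le\; N^{C(p+1)\tau}\prod_{j=1}^p\bigl(1+|sc_j+ud_{\alpha_j}|\bigr)^{-1/2}\prod_{k=1}^3\bigl(1+|ud_{\beta_k}|\bigr)^{-1/2}, $$
and you yourself flag this as \emph{the main obstacle}, but the sketch you give does not produce it. The coordinates $\eta_\alpha := \bbb\cdot\bu_\alpha$ are \emph{not} independent for non-Gaussian $\bbb$: the density of $\boldsymbol\eta$ is $\prod_i f\big((O\boldsymbol\eta)_i\big)$, which does not factor over $\alpha$. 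So ``integrating out the $\eta_\alpha$'s for $\alpha$ not among the $\alpha_j$'s and $\beta_k$'s'' does not isolate a clean $(p+3)$-dimensional Gaussian-like integral; you would need some decoupling or anti-concentration input (and the Fourier-expansion route you allude to yields Fresnel factors $\sim|\lambda_\alpha|^{-1/2}$ that blow up for small eigenvalues, with the compensating oscillatory cancellation left unproven). There is also a circularity hazard in the Fourier representation of $(X^2+Y^2)^{-r/2}$: justifying the exchange of expectation and the $(s,u)$-integral via Fubini requires absolute convergence, which is essentially what the lemma is trying to establish. The paper's integration-by-parts approach sidesteps both issues --- all its integrals are manifestly absolutely convergent --- which is presumably why it, and not the Fourier route, is used here. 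If you want to salvage the Fourier approach you would need to actually prove the characteristic-function decay (for example by conditioning or by a stationary-phase argument on the non-factorized density), carefully tracking the $N^{C\tau}$ losses; as written, that step is assumed rather than argued.
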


\begin{proof}
We closely follow the method of Lemma 8.2 in \cite{ErdSchYau2010}. The main differences in the estimates (due to considering real instead of complex random variables) are the exponents $c_j^{1/2}$ in the upper bounds (instead of $c_j$ when the $b_j$'s take complex values),
and the fact that we need to consider $3$  variables $d_{\beta_1}\xi_{\beta_1},d_{\beta_2}\xi_{\beta_2},
d_{\beta_3}\xi_{\beta_3}$ for convergence purpose (instead of 2 when the $b_j$'s take complex values).  
Moreover, the extra error terms $N^{C(r-1)\tau}$ are of course  due to our smoothness scale.

Let $O$ be the orthogonal matrix with columns $\bu_1,\dots,\bu_{N-1}$, $\bx=O^*\bbb$,
$\rd\mu(\bx)=e^{-\Phi(\bx)}\prod_{\alpha=1}^{N-1}\rd x_\alpha$, with $\Phi(\bx)=\sum_{\ell=1}^{N-1}g((O\bx)_\ell)$, $F(t)=\int_{-\infty}^t\left((\sum_{j=1}^pc_j x_{\alpha_j}^2)^2+s^2\right)^{-r/2}$ and $D=x_{\beta_{1}}\partial_{x_{\beta_1}}+x_{\beta_{2}}\partial_{x_{\beta_2}}+x_{\beta_3}\partial_{x_{\beta_3}}$.
Then the analogue of \cite[equation (8.20)]{ErdSchYau2010} is
$$
I:=\E_{\bbb}\left(\left(\sum_{j=1}^p c_j\xi_{\alpha_j}\right)^2+\left(E-\sum_{\alpha=1}^{N-1}d_\alpha \xi_\alpha\right)^2\right)^{-\frac{r}{2}}
=
\frac{1}{2}\int \rd\mu(\bx)\frac{F(E-\sum_{\alpha=1}^{N-1}d_{\alpha}x_\alpha^2)}{d_{\beta_1}x_{\beta_1}^2
+d_{\beta_2}x_{\beta_2}^2+d_{\beta_3}x_{\beta_3}^2}(1-D\Phi(\bx)).
$$
We then can follow \cite[equations (8.21), (8.23)]{ErdSchYau2010} and bound $I\leq (A_1+A_2+A_3+|B_1|+|B_2|+|B_3|)/2$ where
\begin{align*}
A_1&:=\int\rd\mu(\bx)\frac{\1_{\sum_{j=1}^pc_jx_{\al_j}^2\leq \kappa}}{(\sum_{j=1}^p c_jx_{\al_j}^2)^{r-1}},\\
A_2&:=\frac{1}{\kappa^{r-1}}\int\rd\mu(\bx)\frac{1}{x_{\beta_1}^2+x_{\beta_2}^2+x_{\beta_3}^2},\\
A_3&:=\int\rd\mu(\bx)\frac{\1_{\sum_{j=1}^pc_jx_{\al_j}^2\leq \kappa}\1_{x_{\beta_1}^2+x_{\beta_2}^2
+x_{\beta_3}^2\leq 1}}{(\sum_{j=1}^p c_jx_{\al_j}^2)^{r-1}(x_{\beta_1}^2+x_{\beta_2}^2+x_{\beta_3}^2)^2},\\
B_k&:=\int \rd\mu(\bx)\frac{1}{(\sum_{j=1}^pc_jx_{\al_j}^2)^{r-1}}(\partial_{x_{\beta_k}}\Phi)^2,\ k=1,2,3.
\end{align*}
To prove (i), we first bound $A_3$. For this, let 
$$
\widetilde f(\bx):=\frac{\1_{\sum_{k=1}^p c_j x_j^2\leq \kappa}\1_{x_{p+1}^2+x_{p+2}^2+x_{p+3}^2\leq 1}}{(\sum_{j=1}^p c_j x_j^2)^{r-1}(x_{p+1}^2+x_{p+2}^2+x_{p+3}^2)}.
$$
The integral of $\widetilde f$ over $x_{p+1},x_{p+2},x_{p+3}$ is finite (for this we need at least 3 such terms), and changing the other variables $c_j^{1/2} x_j\to x_j$ and using $r<\frac{p}{2}+1$ we have $\|f\|_1\leq C_{r,p}\kappa^{\frac{p}{2}+1-r}/\prod_{j=1}^p c_j^{1/2}$. The reasoning of \cite[equation (8.26)]{ErdSchYau2010}, with the first equation of (\ref{eqn:dens}) as an input, gives
$$
A_3\leq C_{r,p} \|\widetilde f\|_1 N^{C p\tau}\leq C_{r,p}\frac{\kappa^{\frac{p}{2}+1-r}}{\prod_{j=1}^p c_j^{1/2}}N^{C p\tau}.
$$
The terms $A_1$ can be controlled in the same way and $A_2\leq C \kappa^{r-1}$. The bound on 
$B_1, B_2, B_3$ amounts to the same estimate as $A_1, A_2, A_3$, thanks to the representation analogue to \cite[equation (8.28)]{ErdSchYau2010}, and it requires the second estimate in (\ref{eqn:dens}). 
We therefore obtained
$$
I\leq C_{r,p}\left(\frac{\kappa^{\frac{p}{2}+1-r}}{\prod_{j=1}^p c_j^{1/2}}N^{C p\tau}+\frac{1}{\kappa^{r-1}}\right).
$$
Optimization over $\kappa$ concludes the proof of (i).

For (ii), we bound $I\leq (A_4+A_5+A_6+A_7+|B_1|+|B_2|+|B_3|)/2$ where
\begin{align*}
A_4&:=\int\rd\mu(\bx)\frac{\1_{x_{\al_p}^2\leq 1}}{(\sum_{j=1}^{p-1} c_jx_{\al_j}^2+c_p x_{\al_p}^2)^{r-1}},\\
A_5&:=\int\rd\mu(\bx)\frac{1}{(\sum_{j=1}^{p-1} c_jx_{\al_j}^2+c_p)^{r-1}},\\
A_6&:=\int\rd\mu(\bx)\frac{\1_{x_{\beta_1}^2+x_{\beta_2}^2+x_{\beta_3}^2\leq 1}}{(\sum_{j=1}^{p-1} c_jx_{\al_j}^2+c_p)^{r-1}(x_{\beta_1}^2+x_{\beta_2}^2+x_{\beta_3}^2)},\\
A_7&:=\int\rd\mu(\bx)\frac{\1_{x_{\alpha_p}^2\leq 1}\1_{x_{\beta_1}^2+x_{\beta_2}^2+x_{\beta_3}^2\leq 1}}{(\sum_{j=1}^{p-1} c_jx_{\al_j}^2+c_p x_{\alpha_p}^2)^{r-1}(x_{\beta_1}^2+x_{\beta_2}^2+x_{\beta_3}^2)}.
\end{align*}
To bound the term $A_7$, we now introduce the function
$$
\widetilde f(x_1,\dots,x_{p+2}):=\frac{\1_{x_p^2\leq 1}\1_{x_{p+1}^2+x_{p+2}^2+x_{p+3}^2\leq 1}}{(\sum_{j=1}^{p-1}c_j x_j^2+c_p x_p^2)^{r-1}(x_{p+1}^2+x_{p+2}^2+x_{p+3}^2)}.
$$
Again, the integral of $\widetilde f$ over $x_{\beta_1},x_{\beta_2},x_{\beta_3}$ is finite.
By changing the variable $c_j^{1/2}x_j\to x_j$ for $j\in\llbracket 1,p-1\rrbracket$ we get
$$
\|f\|_1\leq \frac{1}{(\prod_{j=1}^{p-1} c_j^{1/2})c_p^{r-\frac{p}{2}-\frac{1}{2}}}\int_{|x_p|<1}\frac{1}{(\sum_{j=1}^{p-1}x_i^2+x_p^2)^{r-1}}\rd x_1\dots\rd x_p\leq C_{r,p}
\frac{1}{(\prod_{j=1}^{p-1} c_j^{1/2})c_p^{r-\frac{p}{2}-\frac{1}{2}}},
$$
where we used that the above integral is finite when $r\in(\frac{p+1}{2},\frac{p}{2}+1)$: after changing variables, it is also of order
$$
\int_{0<x<1,y>0}\frac{x^{-\frac{1}{2}}y^{\frac{p-1}{2}-1}}{(y+x)^{r-1}}\rd x\rd y
\leq 
\int_{0<x<1}x^{-\frac{1}{2}}\int_{a>x}a^{\frac{p-1}{2}-r}\rd a\rd x\leq \int_0^1 x^{\frac{p}{2}-r}\rd x<\infty.
$$
Proceeding as in \cite[equation (8.28)]{ErdSchYau2010} we conclude that 
$$A_7\leq C_{r,p} \frac{N^{C p \tau}}{(\prod_{j=1}^{p-1} c_j^{1/2})c_p^{r-\frac{p}{2}-\frac{1}{2}}}.$$
The terms $A_4, A_5, A_6$ can be bounded in the same way. Similarly, $B_1$, $B_2$ and $B_3$ can be bounded by the previous reasoning after using the analogue of representation \cite[equation (8.28)]{ErdSchYau2010} in the real context. This concludes the proof of (ii).
Finally, the proof of (iii) is elementary.
\end{proof}

\begin{proof}[Proof of Proposition \ref{prop:Wegner}]
We follow the method from \cite[Sections 8 and 9]{ErdSchYau2010}. The required preliminary results from \cite{ErdSchYau2010}
are listed below, as well as their substitute used in our proof, so that we can accomodate the weaker condition (\ref{eqn:forHansonWright}) instead of the subgaussian decay of the matrix entries.
\begin{enumerate}[(1)]
\item The localization result \cite[Theorem 3.1]{ErdSchYau2010} states in particular the following.
Writing $\mathcal{N}_{\eta}(E)=\mathcal{N}_{I}$ the number of eigenvalues in $I=[E-\eta/2,E+\eta/2]$, then for any $\delta>0$ one has
\begin{equation}\label{eqn:rig}
\Prob\left(\left|\frac{\mathcal{N}_{\eta^*}(E)}{N\eta^*}-\varrho(E)\right|\geq \delta\right)
\le Ce^{-c\delta\sqrt{N\eta^*}}.
\end{equation} 
Equation (\ref{eqn:rig}) states that in windows of scale $\eta$ 
the fluctuations of the number of eigenvalues is of order $\sqrt{N\eta}$.
Instead, we will use the rigidity result (remember the notation (\ref{eqn:rigid})), for any (small) $\xi>0$ and (large) $D>0$ we have
\begin{equation}\label{eqn:rigitityi}
\mathbb{P}\left(\mathcal{G}_{\xi}\right)\geq 1-N^{-D}.
\end{equation}
The above rigidity estimate was proved in \cite{ErdYauYin2012Rig}
assuming subexponential decay of the entries distribution, but this is easily weakened to
the finite moment assumption (\ref{eqn:forHansonWright}) (see remark 2.4 in \cite{ErdKnoYauYin2013}).
Compared to  (\ref{eqn:rig}), the above bound on fluctuations of eigenvalues is better for mesoscopic scales ($\eta\sim N^{-1+c}$ for small fixed $c$) but it becomes worse at the microscopic scale ($\eta\sim N^{-1}$), explaining the final extra $N^\xi$ factor in our statement of Proposition \ref{prop:Wegner}.

\item The tail distribution of the eigenvalue gap \cite[Theorem 3.3]{ErdSchYau2010}: denoting by $\mu_\alpha$ the largest eigenvalue greater than $E$, there are constants $C,c>0$ such that uniformly in $N,K\geq 0$ and E in the bulk of the spectrum we have
\begin{equation}\label{eqn:decaynext}
\Prob\left(\mu_{\alpha+1}-E\geq \frac{K}{N},\alpha\leq N-1\right)\leq C e^{-c\sqrt{K}}.
\end{equation}
Again, the above result assumes subgaussian decay of the entries, in this paper it will therefore be substituted by (\ref{eqn:rigitityi}).

\item The analogue of \cite[Theorem 3.4]{ErdSchYau2010} requires smoothness of the entries. We therefore  now assume the $\mu_i(t)$'s are as in Proposition \ref{prop:Wegner}, so that they satisfy the density bounds (\ref{eqn:dens}). The average density of states becomes, in our context: denoting $I=[E-\e/(2N),E+\e/(2N)]$, there exists $C>0$ such that uniformly in  $0\leq \e\leq 1$,
we  have
\begin{equation}\label{eqn:1st}
\Prob(\{\mathcal{N}_I\geq 1\}\cap\mathcal{G}_{\xi})\leq C N^{\xi+C\tau}\e.
\end{equation}
For the proof, we denote $(\lambda^{(j)}_\alpha)_\alpha$  for the eigenvalues of the minor  obtained from $H$ by removing the $j$-th row and column, $(u^{(j)}_\alpha)_\alpha$ the eigenvectors, and $\xi^{(j)}_\alpha=|\bbb^{(j)}\cdot u^{(j)}_\alpha|^2$ where $\bbb^{(j)}=\sqrt{N}(h_{j2},\dots,h_{j,N})$.

The proof of (\ref{eqn:1st}) is the same as \cite[Theorem 3.4]{ErdSchYau2010}, except that: (i) one needs to replace
the definition \cite[(8.3)]{ErdSchYau2010} by $\Delta=N(\lambda^{(1)}_{\gamma+3}-E)$ by $N(\lambda^{(1)}_{\gamma+4}-E)$, because the analogue (\ref{eqn:first}) of \cite[(8.12)]{ErdSchYau2010} requires three indexes $d_\beta$ in the real case instead of two for complex entries, for convergence reasons; (ii) the error term has a factor $N^{C\tau}$ due to the deteriorated smoothness (\ref{eqn:dens}) and its consequeces in (\ref{eqn:first}), (\ref{eqn:third}); (iii)
the rigidity input (\ref{eqn:rig}) and (\ref{eqn:decaynext}) used in \cite{ErdSchYau2010} are replaced by (\ref{eqn:rigitityi}), explaining the above extra 
$N^\xi$ factor in the Wegner estimate (\ref{eqn:1st}).
\end{enumerate}

Thanks to these preliminary results (1), (2), (3),  the analogue of \cite[Theorem 3.5]{ErdSchYau2010}, Proposition \ref{prop:Wegner}, can be proved as follows.
First, the inequality \cite[(9.2)]{ErdSchYau2010} still holds:
\begin{equation}\label{eqn:base}
\mathcal{N}_I\leq \frac{C\e}{N}\sum_{j=1}^N\left(\left(\eta+\frac{\eta}{N}\sum_{\al=1}^{N-1}\frac{\xi^{(j)}_\al}{(\la_{\al}^{(j)}-E)^2+\eta^2}\right)^2+\left(E-h_{jj}+\frac{1}{N}\sum_{\alpha=1}^{N-1}\frac{(\la_\al^{(j)}-E)\xi_\al^{(j)}}{(\la_\al^{(j)}-E)^2+\eta^2}\right)^2\right)^{-\frac{1}{2}}.
\end{equation}
We follow \cite{ErdSchYau2010} and denote
\begin{align}
&d_\al^{(j)}:=\frac{N(\la^{(j)}_\al-E)}{N^2(\la^{(j)}_\alpha-E)^2+\e^2},\ c^{(j)}_\alpha=\frac{\e}{N^2(\la^{(j)}_\al-E)^2+\e^2},\notag \\
&\mu_{\gamma(N)}:=\min\left\{\mu_\al:\mu_\al-E\geq \frac{\e}{N}\right\},\ \Delta_d^{(\mu)}=N(\mu_{\gamma(N)+d-1}-E).\label{defDelta}
\end{align}
In the following, $\Delta_d^{(\mu)}$ is always well defined because we will always consider 
$d=\OO(1)$ as $N\to\infty$: in the set $\mathcal{G}_{\xi}$ there are always many more than $d-1$ 
eigenvalues above 
$E+\e/N$.

Note that the proof of (\ref{eqn:1st}) actually gives a bit more, i.e. the analogue of \cite[Corollary 8.1]{ErdSchYau2010}, which is the first step in the following induction (\ref{eqn:induction}): for any $M,d\geq 1$, we have
\begin{equation}\label{eqn:1st2nd}
\E\left(\1_{\mathcal{N}_I\geq 1}(\Delta^{(\mu)}_d)^M\1_{\mathcal{G}_{\xi}}\nc\right)\leq C N^{ M \xi\nc+C\tau}\e.
\end{equation}

To bound $\Prob(\mathcal{N}_I\geq k,\mathcal{G}_{\xi})$, we introduce the more general quantity
$$
I_N^{(\mu)}(M,k,\ell):=\E(\1_{\mathcal{N}^{(\mu)}_I\geq k}(\Delta_\ell^{(\mu)})^M\1_{\mathcal{G}_{\xi}}\nc).
$$
We will prove that
\begin{equation}\label{eqn:induction}
I_N^{(\mu)}(M,k,\ell)\leq C_k N^{C k\tau} \e^k \max_{1\leq j\leq N}I^{(j)}_{N-1}(M+2,k-1,\ell+1).
\end{equation}
By induction over $k$, together with the initial condition (\ref{eqn:1st2nd}), this will conclude the proof, noting that 
$1+\sum_{j=2}^kj=\frac{k(k+1)}{2}$. To prove (\ref{eqn:induction}), thanks to (\ref{eqn:base}) for any $r\geq 1$ we have
\begin{align*}
I_N^{(\mu)}(M,k,\ell)&\leq C_{k,s} \e^r\max_{1\leq j\leq N}\E\frac{\1_{\mathcal{N}_I^{(j)}\geq k-1}(\Delta_\ell^{(\mu)})^M\1_{\mathcal{G}_{\xi}}\nc}{\left(\left(\sum_{\al=1}^{N-1}c_\al^{(j)} \xi^{(j)}_\al\right)^2+\left(E-h_{jj}+\sum_{\al=1}^{N-1}d^{(j)}_\al\xi^{(j)}_\al\right)^2\right)^\frac{r}{2}}\\
&\leq 
C_{k,s} \e^r\max_{1\leq j\leq N}\E\frac{\1_{\mathcal{N}_I^{(j)}\geq k-1}(\Delta_{\ell+1}^{(\la^{(j)})})^M\1_{\mathcal{G}_{\xi}}\nc}{\left(\left(\sum_{\al=1}^{N-1}c_\al^{(j)} \xi^{(j)}_\al\right)^2+\left(E-h_{jj}+\sum_{\al=1}^{N-1}d^{(j)}_\al\xi^{(j)}_\al\right)^2\right)^\frac{r}{2}},
\end{align*}
where we used Markov's inequality and convexity of $x\mapsto x^r$ in the first inequality, and interlacing in the second.
We used the definition (\ref{defDelta}), applied to the eigenvalues of the minor, $\la^{(j)}=(\la^{(j)}_1,\dots,\la^{(j)}_{N-1})$, instead of $\mu$. Quantities of type $\Delta_{\ell}^{(\la^{(j)})}$ are well defined
becuse $\ell=\OO(1)$ as $N\to\infty$ and 
in the set $\mathcal{G}_{\xi}$ there are always many more than $\ell$ 
eigenvalues above 
$E+\e/N$, by interlacing.\nc
We therefore have $I_N(M,k,\ell)\leq C_{k,s}(\max_{1\leq j\leq N}A_j+\max_{1\leq j\leq N}B_j)$ where
\begin{align*}
A_j&:=\e^r\E\frac{\1_{\mathcal{N}_I^{(j)}\geq k+2}(\Delta_{\ell+1}^{(\la^{(j)})})^M\1_{\mathcal{G}_{\xi}}\nc}{\left(\left(\sum_{\al=1}^{N-1}c_\al^{(j)} \xi^{(j)}_\al\right)^2+\left(E-h_{jj}+\sum_{\al=1}^{N-1}d^{(j)}_\al\xi^{(j)}_\al\right)^2\right)^\frac{r}{2}},\\
B_j&:=\e^r\E\frac{\1_{k-1\leq \mathcal{N}_I^{(j)}\leq k+1}(\Delta_{\ell+1}^{(\la^{(j)})})^M\1_{\mathcal{G}_{\xi}}\nc}{\left(\left(\sum_{\al=1}^{N-1}c_\al^{(j)} \xi^{(j)}_\al\right)^2+\left(E-h_{jj}+\sum_{\al=1}^{N-1}d^{(j)}_\al\xi^{(j)}_\al\right)^2\right)^\frac{r}{2}}.
\end{align*}
To bound $A_j$, denoting $\la^{(j)}_{\al_1},\dots,\la^{(j)}_{\al_{k+2}}$ the first $k+2$ eigenvalues in $I_\eta$, we have $c_{\al_i}\geq 1/(2\e)$, so  (\ref{eqn:third}) gives, by a reasoning identical to \cite[(9.8)]{ErdSchYau2010},
$
A_j\leq C_{k,s}\e^{2r}I^{(j)}_{N-1}(M,k-1,\ell+1),
$ provided $r\in(1,\frac{k+2}{2})$ (in particular one can get the exponent $\e^k$). 

To bound the main term $B_j$,  let $\al_1,\dots,\al_{k-1}$ be indices so that $\la^{(j)}_{\al_i}\in I_\eta$, $1\leq j\leq k-1$. As there are less than $k+2$ eigenvalues in $I_\eta$, we can assume that for $N$ large enough there are four eigenvalues at distance greater than $\e/(2N)$ from  $E$, on its right for example. Let 
$\la_{\al_k}=\min\{\la_\al:\la_\al>E+\frac{\e}{2N}\}$. We also denote $\la^{(j)}_{\beta_1}\leq\la^{(j)}_{\beta_2}\leq\la^{(j)}_{\beta_3}$ the eigenvalues immediately on the right of $\la^{(j)}_{\al_k}$, and $\Delta=\Delta^{(\la^{(j)})}_4=N(\la^{(j)}_{\beta_3}-E)$. Then, analogously to \cite[(9.10)]{ErdSchYau2010}, we have
\begin{multline*}
B_j\leq \e^r
C_{k,s}\E_{\la^{(j)},h_{jj}}\left(
\1_{\mathcal{N}_I^{(j)}\geq k-1}(\Delta_{\ell+1}^{(\la^{(j)})})^M\1_{\mathcal{G}_{\xi}\nc}\right.
\left.\E_{\bbb^{(j)}}\left(\left(\sum_{i=1}^{k-1}\frac{\xi^{(j)}_{\al_i}}{\e}+\frac{\e}{\Delta^2}\xi^{(j)}_{\al_k}\right)^2+\left(E-h_{jj}+\sum_{\al=1}^{N-1}d^{(j)}_\al\xi^{(j)}_\al\right)^2\right)^{-\frac{r}{2}}
\right)
\end{multline*}
We use (\ref{eqn:second}) with $p=k$, $c_j=\e^{-1}$, $1\leq j\leq p-1$, $c_p=\e\Delta^{-2}$,
$\min(d^{(j)}_{\beta_1},d^{(j)}_{\beta_2},d^{(j)}_{\beta_3})\geq 1/(2\Delta)$, $r\in(\frac{k+1}{2},\frac{k}{2}+1)$:
$$
B_j\leq C_{k,r}\e^{r}\frac{1}{(\prod_{i=1}^{k-1}\e^{-1/2})\e^{r-\frac{k+1}{2}}}N^{C k\tau} I_{N-1}^{(j)}(M+2,k-1,\ell+1)\leq \e^k N^{C k\tau} I_{N-1}^{(j)}(M+2,k-1,\ell+1).
$$
This concludes the proof.
\end{proof}

\end{appendices}

\begin{bibdiv}
\begin{biblist}

\bib{Bia1997}{article}{
   author={Biane, P},
   title={Free hypercontractivity},
   journal={Comm. Math. Phys.},
   volume={184},
   date={1997},
   number={2},
   pages={457--474}
}

\bib{BouErdYau2011}{article}{
   author={Bourgade, P.},
   author={Erd{\H{o}}s, L.},
   author={Yau, H.-T.},
   title={Universality of general $\beta$-ensembles},
   journal={Duke Math. J.},
   volume={163},
   date={2014},
   number={6},
   pages={1127--1190}
}

\bib{BouErdYau2012}{article}{
   author={Bourgade, P.},
   author={Erd{\H{o}}s, L.},
   author={Yau, H.-T.},
   title={Bulk universality of general $\beta$-ensembles with non-convex
   potential},
   journal={J. Math. Phys.},
   volume={53},
   date={2012},
   number={9},
   pages={095221, 19}
}

\bib{BouErdYau2013}{article}{
   author={Bourgade, P.},
   author={Erd{\H{o}}s, L.},
   author={Yau, H.-T.},
   title={Edge universality of $\beta$-ensembles},
   journal={to appear in Comm. Math. Phys.},
   date={2013}}

\bib{DeiGio2009}{book}{
   author={Deift, P.},
   author={Gioev, D.},
   title={Random matrix theory: invariant ensembles and universality},
   series={Courant Lecture Notes in Mathematics},
   volume={18},
   publisher={Courant Institute of Mathematical Sciences, New York; American
   Mathematical Society, Providence, RI},
   date={2009},
   pages={x+217},
   isbn={978-0-8218-4737-4},
   review={\MR{2514781 (2011f:60008)}},
}

\bib{Dys1962}{article}{
   author={Dyson, F.},
   title={Statistical theory of the energy levels of complex systems. III},
   journal={J. Mathematical Phys.},
   volume={3},
   date={1962},
   pages={166--175}
}

\bib{Dys1970}{article}{
   author={Dyson, F.},
   title={Correlations between eigenvalues of a random matrix},
   journal={Comm. Math. Phys.},
   volume={19},
   date={1970},
   pages={235--250}
}

\bib{ErdKnoYauYin2012}{article}{
   author={Erd{\H{o}}s, L.},
   author={Knowles, A.},
   author={Yau, H.-T.},
   author={Yin, J.},
   title={Spectral statistics of Erd\H os-R\'enyi Graphs II: Eigenvalue
   spacing and the extreme eigenvalues},
   journal={Comm. Math. Phys.},
   volume={314},
   date={2012},
   number={3},
   pages={587--640}
}

\bib{ErdKnoYauYin2013}{article}{
   author={Erd{\H{o}}s, L.},
   author={Knowles, A.},
   author={Yau, H.-T.},
   author={Yin, J.},
   title={The local semicircle law for a general class of random matrices},
   journal={Elect. J. Prob.},
   volume={18},
   date={2013},
   number={59},
   pages={1--58}
}

\bib{ErdPecRamSchYau2010}{article}{
   author={Erd{\H{o}}s, L.},
   author={P{\'e}ch{\'e}, S.},
   author={Ram{\'{\i}}rez, J. A.},
   author={Schlein, B.},
   author={Yau, H.-T.},
   title={Bulk universality for Wigner matrices},
   journal={Comm. Pure Appl. Math.},
   volume={63},
   date={2010},
   number={7},
   pages={895--925}
}

\bib{ERSTVY}{article}{
   author={Erd{\H{o}}s, L.},
   author={Ram{\'{\i}}rez, J.},
   author={Schlein, B.},
   author={Tao, T.},
   author={Vu, V.},
   author={Yau, H.-T.},
   title={Bulk universality for Wigner Hermitian matrices with
   subexponential decay},
   journal={Math. Res. Lett.},
   volume={17},
   date={2010},
   number={4},
   pages={667--674}
}

\bib{ErdSchYau2009}{article}{
      author={Erd{\H{o}}s, L.},
      author={Schlein, B.},
      author={Yau, H.-T.},
       title={Local semicircle law and complete delocalization for Wigner random matrices},
        date={2009},
     journal={Commun. Math. Phys.},
      volume={287}
       pages={641\ndash655},
}

\bib{ErdSchYau2010}{article}{
   author={Erd{\H{o}}s, L.},
   author={Schlein, B.},
   author={Yau, H.-T.},
   title={Wegner estimate and level repulsion for Wigner random matrices},
   journal={Int. Math. Res. Not. IMRN},
   date={2010},
   number={3},
   pages={436--479}
}

\bib{ErdSchYau2011}{article}{
      author={Erd{\H{o}}s, L.},
      author={Schlein, B.},
      author={Yau, H.-T.},
       title={Universality of random matrices and local relaxation flow},
        date={2011},
     journal={Invent. Math.},
      volume={185},
      number={1},
       pages={75\ndash 119},
}

\bib{ErdSchYauYin2012}{article}{
   author={Erd{\H{o}}s, L.},
   author={Schlein, B.},
   author={Yau, H.-T.},
   author={Yin, J.},
   title={The local relaxation flow approach to universality of the local
   statistics for random matrices},
   journal={Ann. Inst. Henri Poincar\'e Probab. Stat.},
   volume={48},
   date={2012},
   number={1},
   pages={1--46}
}

\bib{ErdYau2011}{article}{
   author={Erd{\H{o}}s, L.},
   author={Yau, H.-T.},
   title={A comment on the Wigner-Dyson-Mehta bulk universality conjecture
   for Wigner matrices},
   journal={Electron. J. Probab.},
   volume={17},
   date={2011},
   pages={no. 28, 5}
}

\bib{ErdYau2012}{article}{
      author={Erd{\H{o}}s, L.},
      author={Yau, H.-T.},
       title={Universality of local spectral statistics of random matrices},
        date={2012},
     journal={Bull. Amer. Math. Soc. (N.S.)},
      volume={49},
      number={3},
       pages={377\ndash 414},
}

\bib{ErdYau2012singlegap}{article}{
      author={Erd{\H{o}}s, L.},
      author={Yau, H.-T.},
       title={Gap universality of generalized Wigner and beta ensembles},
        date={2012},
     journal={preprint, arxiv:1211.3786},
}

\bib{EYYBernoulli}{article}{
      author={Erd{\H{o}}s, L.},
      author={Yau, H.-T.},
      author={Yin, J.},
       title={Universality for generalized {W}igner matrices with {B}ernoulli
  distribution},
        date={2011},
        ISSN={2156-3527},
     journal={J. Comb.},
      volume={2},
      number={1},
       pages={15\ndash 81},
}

\bib{ErdYauYin2012Univ}{article}{
      author={Erd{\H{o}}s, L.},
      author={Yau, H.-T.},
      author={Yin, J.},
       title={Bulk universality for generalized {W}igner matrices},
        date={2012},
     journal={Probab. Theory Related Fields},
      volume={154},
      number={1-2},
       pages={341\ndash 407},
}

\bib{ErdYauYin2012Rig}{article}{
      author={Erd{\H{o}}s, L.},
      author={Yau, H.-T.},
      author={Yin, J.},
       title={Rigidity of eigenvalues of generalized {W}igner matrices},
        date={2012},
     journal={Adv. Math.},
      volume={229},
      number={3},
       pages={1435\ndash 1515},
}

\bib{HelSjo1989}{article}{
      author={Helffer, B.},
      author={Sj\"ostrand, J.},
       title={\'Equation de Schr\"odinger avec champ magn\'etique et equation de Harper},
        date={1989},
     journal={Schr\"odinger operators, Lecture notes in Physics},
      volume={345},
       pages={118\ndash 197},
}

\bib{JimMiwMorSat1980}{article}{
   author={Jimbo, M.},
   author={Miwa, T.},
   author={M{\^o}ri, Y.},
   author={Sato, M.},
   title={Density matrix of an impenetrable Bose gas and the fifth
   Painlev\'e transcendent},
   journal={Phys. D},
   volume={1},
   date={1980},
   number={1},
   pages={80--158}}

\bib{Joh1998}{article}{
   author={Johansson, K.},
   title={On fluctuations of eigenvalues of random Hermitian matrices},
   journal={Duke Math. J.},
   volume={91},
   date={1998},
   number={1},
   pages={151--204}}

\bib{Joh2001}{article}{
   author={Johansson, K.},
   title={Universality of the local spacing distribution in certain
   ensembles of Hermitian Wigner matrices},
   journal={Comm. Math. Phys.},
   volume={215},
   date={2001},
   number={3},
   pages={683--705}
}

\bib{LytPas2009}{article}{
   author={Lytova, A.},
   author={Pastur, L.},
   title={Central limit theorem for linear eigenvalue statistics of the
   Wigner and the sample covariance random matrices},
   journal={Metrika},
   volume={69},
   date={2009},
   number={2-3},
   pages={153--172}
}

\bib{MalSch2011}{article}{
  author={Maltsev, A.} 
  author={Schlein, B.}
  title={A Wegner Estimate for Wigner Matrices}
  journal={Entropy and the Quantum II. Arizona School of Analysis with Applications. R. Sims, D. Ueltschi Editors, American Mathematical Society}
 date={2011}
}

\bib{Meh1991}{book}{
   author={Mehta, M.},
   title={Random matrices},
   edition={2},
   publisher={Academic Press, Inc., Boston, MA},
   date={1991},
   pages={xviii+562},
   isbn={0-12-488051-7},
   review={\MR{1083764 (92f:82002)}},
}

\bib{MehGau1960}{article}{
   author={Mehta, M.},
   author={Gaudin, M.},
   title={On the density of eigenvalues of a random matrix},
   journal={Nuclear Phys.},
   volume={18},
   date={1960},
   pages={420--427}
}

\bib{Shch10}{article}{
   author={Shcherbina, M.},
      title={Orthogonal and Symplectic Matrix Models: Universality and Other Properties},
   journal={Comm. Math. Phys.},
   volume={307},
   date={2011},
   number={3},
   pages={761--790}}
   
 \bib{Tao2013}{article}{
   author={Tao, T.},
   title={The asymptotic distribution of a single eigenvalue gap of a Wigner matrix},
   journal={Probab. Theory Related Fields},
   volume={157},
   date={2010},
   number={1-2},
   pages={81--106}}

\bib{TaoVu2011}{article}{
   author={Tao, T.},
   author={Vu, V.},
   title={Random matrices: universality of local eigenvalue statistics},
   journal={Acta Math.},
   volume={206},
   date={2011},
   number={1}
}

\bib{TaoVu2011EJP}{article}{
   author={Tao, T.},
   author={Vu, V.},
   title={The Wigner-Dyson-Mehta bulk universality conjecture for Wigner matrices},
   journal={Electronic J. Probab.},
   volume={16},
   date={2011},
   pages={2104--2121}
}

\bib{TraWid1996}{article}{
   author={Tracy, C.},
   author={Widom, H.},
   title={On orthogonal and symplectic matrix ensembles},
   journal={Comm. Math. Phys.},
   volume={177},
   date={1996},
   number={3},
   pages={727--754}}

\end{biblist}
\end{bibdiv}

\end{document}